\def\namedlabel#1#2{\begingroup
    #2%
    \def\@currentlabel{#2}%
    \phantomsection\label{#1}\endgroup
}
\numberwithin{equation}{section}
\newtheoremstyle{thmlemcorr}{10pt}{10pt}{\itshape}{}{\bfseries}{.}{10pt}{{\thmname{#1}\thmnumber{ #2}\thmnote{ (#3)}}}
\newtheoremstyle{thmlemcorr*}{10pt}{10pt}{\itshape}{}{\bfseries}{.}\newline{{\thmname{#1}\thmnumber{ #2}\thmnote{ (#3)}}}
\newtheoremstyle{remexample}{10pt}{10pt}{}{}{\bfseries}{.}{10pt}{{\thmname{#1}\thmnumber{ #2}\thmnote{ (#3)}}}
\newtheoremstyle{ass}{10pt}{10pt}{}{}{\bfseries}{.}{10pt}{{\thmname{#1}\thmnumber{ A#2}\thmnote{ (#3)}}}
\theoremstyle{thmlemcorr}
\newtheorem{theorem}{Theorem}
\numberwithin{theorem}{section}
\newtheorem{lemma}[theorem]{Lemma}
\newtheorem{corollary}[theorem]{Corollary}
\newtheorem{proposition}[theorem]{Proposition}
\newtheorem{definition}[theorem]{Definition}
\newtheorem{thmintro}{Theorem}
\theoremstyle{thmlemcorr*}
\newtheorem{theorem*}{Theorem}
\newtheorem{lemma*}[theorem]{Lemma}
\newtheorem{corollary*}[theorem]{Corollary}
\newtheorem{proposition*}[theorem]{Proposition}
\newtheorem{problem*}[theorem]{Problem}
\newtheorem{conjecture*}[theorem]{Conjecture}
\newtheorem{definition*}[theorem]{Definition}
\newtheorem{assumption*}[theorem]{Assumption}
\theoremstyle{remexample}
\newtheorem{remark}[theorem]{Remark}
\newtheorem{example}[theorem]{Example}
\newtheorem*{remark*}{Remark}
\theoremstyle{ass}
\newcommand{\B}{\mathcal{B}}
\newcommand{\T}{\mathbb{T}}
\newcommand{\bad}{\mathscr{B}}
\newcommand{\good}{\mathscr{G}}
\newcommand{\tigma}{\tilde{\sigma}}
\newcommand{\Qu}{\mathscr{Q}}
\definecolor{Gump}{rgb}{0,0.6,0.4}
\definecolor{Hanks}{rgb}{0.7,0.3,0.1}
\newcommand{\veta}{a_{\eta}}
\DeclareMathOperator{\id}{id}
\DeclareMathOperator*{\wlim}{w-lim}
\DeclareMathOperator{\diverg}{div}
\DeclareMathOperator{\curl}{curl}
\newcommand{\norm}[1]{\|#1\|}
\newcommand{\abs}[1]{|#1|}
\newcommand{\dd}{\;\mathrm{d}}
\newcommand{\N}{\mathbb{N}}
\newcommand{\R}{\mathbb{R}}
\newcommand{\Z}{\mathbb{Z}}
\newcommand{\loc}{\mathrm{loc}}
\newcommand{\sym}{\mathrm{sym}}
\newcommand{\skw}{\mathrm{skew}}
\newcommand{\equi}{\mathrm{eq}}
\newcommand{\conc}{\mathrm{co}}
\newcommand{\M}{\mathcal{M}}
\newcommand{\pa}{\mathrm{pa}}
\newcommand{\el}{\mathrm{ell}}
\newcommand{\eps}{\varepsilon}
\newcommand\bra[1]{\left(#1\right)}
\newcommand{\weakto}{\rightharpoonup}
\newcommand{\lweakto}{\longrightharpoonup}
\newcommand{\expo}{{\tilde{p}}}
\newcommand{\tis}{{\tilde{s}}}
\def\XXint#1#2#3{{\setbox0=\hbox{$#1{#2#3}{\int}$}
\vcenter{\hbox{$#2#3$}}\kern-.5\wd0}}
\definecolor{luh-dark-blue}{rgb}{0, 0.313, 0.608}
\definecolor{luh-light-blue}{rgb}{0.6, 0.725, 0.847}
\definecolor{luh-green}{rgb}{0.784, 0.827, 0.09}
\newcommand{\rchange}[1]{\textcolor{black}{#1}}
\newcommand{\schange}[1]{\textcolor{black}{#1}}
\renewcommand{\st}[1]{}
\newcommand{\limit}{\gimel}
\newcommand{\varg}{\mathscr{g}}
\newcommand{\varh}{\mathscr{h}}
\renewcommand{\phi}{\varphi}
\newcommand{\barg}{\bar{g}}
\newcommand{\barh}{\bar{h}}
\newcommand{\barvarg}{\bar{\varg}}
\newcommand{\barvarh}{\bar{\varh}}
\begin{document}


\title[A variational approach to shear-dependent Navier--Stokes]{A variational approach to the Navier--Stokes equations with shear-dependent viscosity}

\author{Christina Lienstromberg, Stefan Schiffer$^\heartsuit$, Richard Schubert$^\ast$}
\address{Institute of Analysis, Dynamics and Modeling, University of Stuttgart, Pfaffenwaldring~57, 70569 Stuttgart, Germany}
\email{christina.lienstromberg@iadm.uni-stuttgart.de}
\email{schiffer@mis.mpg.de}\address{$^{\heartsuit}$ Max-Planck-Institute for Mathematics in the Sciences, Inselstra{\ss}e 22, 04103 Leipzig, Germany}
%
\email{schubert@iam.uni-bonn.de}\address{$^\ast$ Institute for Applied Mathematics, University of Bonn, Endenicher Allee~60, 53115 Bonn, Germany}

\begin{abstract}
 We present a variational approach 
for the construction of
Leray--Hopf solutions to the non-Newtonian Navier--Stokes system. Inspired by the work \cite{OSS} on the corresponding Newtonian problem, we minimise certain stabilised Weighted Inertia-Dissipation-Energy (WIDE) functionals and pass to the limit of a vanishing parameter in order to recover a Leray--Hopf solution of the non-Newtonian Navier--Stokes equations. The investigation of the non-Newtonian Navier--Stokes system via this variational approach is particularly well suited to gain insights into weak, respectively strong convergence properties \schange{of approximating sequences} for different flow-behaviour exponents. With this analysis we extend the results of \cite{BS22} to power-law exponents $\tfrac{2d}{d+2} < p < \tfrac{3d+2}{d+2}$, where weak solutions do not satisfy the energy equality and the involved convergence is genuinely weak. Key of the argument is to pass to the limit in the nonlinear viscosity term in the time-dependent setting. For this we provide an elliptic-parabolic solenoidal Lipschitz truncation that might be of independent interest.
\end{abstract}



\maketitle
\bigskip

\noindent\textsc{MSC (2010): 35Q30, 76A05, 76D05}

\noindent\textsc{Keywords: non-Newtonian Fluids, Navier--Stokes equations, shear-dependent viscosity, Leray--Hopf solutions, variational methods, Euler--Lagrange equation, Lipschitz truncation.}
\bigskip

\noindent\textsc{Acknowledgement. } We are very grateful to Michael Ortiz for his continued interest in the project and many insightful discussions. 

Funded by Deutsche Forschungsgemeinschaft (DFG, German Research Foundation) under Germany's Excellence Strategy -- EXC 2075 - 390740016. We acknowledge the support by the Stuttgart Center for Simulation Science (SimTech). Moreover, S. Schiffer and R. Schubert are grateful towards the University of Stuttgart for the kind hospitality.

\section{Introduction}

\noindent\textbf{Aim of the paper. } 
In this paper we study the non-Newtonian Navier--Stokes system
\begin{equation} \label{eq:PDE_intro1}
\begin{cases}
    \partial_t u + (u\cdot \nabla) u = -\nabla \pi + \diverg\bigl( 2\mu(|\epsilon(u)|) \epsilon(u)\bigr),
    & 
    t > 0,\, x \in \T_d
    \\
    \diverg u = 0,
    &
    t > 0,\, x \in \T_d
       \\
    u(0,x) = u^0(x), 
    & x \in \T_d,
\end{cases}
\end{equation}
describing the flow of an incompressible viscous non-Newtonian fluid with shear-dependent viscosity on the $d$-dimensional torus $\T_d$, $d \geq 2$.
In system \eqref{eq:PDE_intro1} we use the following notation:
\begin{itemize}
    \item $u \colon (0,\infty) \times \T_d \to \R^d$ denotes the \emph{velocity field};
    \item $\epsilon = \epsilon(u)= \tfrac{1}{2} (\nabla u + \nabla u^T)$ denotes the \emph{rate-of-strain};
    \item $\pi \colon (0,\infty) \times \T_d \to \R$ denotes the \emph{pressure};
    \item the function $\mu \colon [0,\infty) \to \R_+$ is the strain-dependent \emph{viscosity} of the fluid.
\end{itemize}
For a concise presentation we will restrict the mathematical analysis to the torus, which might be seen as the cube with periodic boundary conditions. We remark that this is assumed for technical reasons and most of the results should still hold true for Lipschitz domains $\Omega$ and reasonable (e.g. Dirichlet) boundary data.

The aim of this article is to study (Leray--Hopf) solutions to the system \eqref{eq:PDE_intro1} via a variational approach. More precisely, instead of directly solving the PDE, we minimise a family of space-time functionals and prove convergence of minimisers to \emph{Leray--Hopf solutions} of the non-Newtonian Navier--Stokes system \eqref{eq:PDE_intro1}. Our approach is based on the minimisation of so-called WIDE-functionals (WIDE = weighted inertia dissipation energy), cf. \cite{MO08,OSS} \rchange{and the overview \cite{Stefanelli}}, where \cite{OSS} addresses the Newtonian Navier--Stokes system with $\mu \equiv \text{const.} > 0$.
WIDE-functionals in the setting of shear-dependent viscosity have previously been examined in \cite{BS22} for strongly shear-thickening fluids with a focus on outflow boundary conditions.

In the Newtonian case, the Navier--Stokes equations read
\begin{equation} \label{eq:PDE_intro}
\begin{cases}
    \partial_t u + (u\cdot \nabla) u = -\nabla \pi + \mu \Delta u,
    & t > 0,\, x \in \T_d
    \\
    \diverg u = 0,
    & t > 0,\, x \in \T_d.
\end{cases}
\end{equation}
Weak solutions to \eqref{eq:PDE_intro} are called \emph{Leray--Hopf solutions} if they additionally satisfy the global \emph{energy inequality} 
\begin{equation} \label{intro:energyineq}
     E[u](t) + \int_0^t \int_{\T_d} \mu \vert \nabla u(s,x) \vert^2 \dd x \dd s \leq E[u](0)
    ,\quad \text{where} \quad E[u](t) = \tfrac{1}{2} \int_{\T_d} 
 \vert u(t,x) \vert^2 \dd x.
\end{equation}
 Recall that regular solutions to \eqref{eq:PDE_intro} satisfy \eqref{intro:energyineq} with equality.\\


\bigskip
\noindent\textbf{Non-Newtonian fluids. } Although many liquids and gases, such as water and air, may reasonably be considered Newtonian, many real fluids are in fact non-Newtonian. Newtonian fluids are characterised by a constant viscosity $\mu \equiv \text{const.} > 0$ and thus feature Newton's law 
\begin{equation*}
       \sigma = -\pi \id + 2\mu \epsilon
\end{equation*}
of a linear dependence of the shear stresses on the local strain rate, the viscosity being the constant of proportionality.
In contrast to this, the strain-stress relation is nonlinear in the case of a non-Newtonian fluid with shear-dependent viscosity, i.e. the constitutive law reads
\begin{equation*}
    \sigma = -\pi \id + 2\mu(|\epsilon(u)|) \epsilon(u),
\end{equation*}
with a general function $\mu\colon [0,\infty) \to \R_+$. The mathematical analysis in the present paper fits particularly well the setting of so-called \emph{power-law fluids} or \emph{Ostwald--de Waele fluids} the constitutive law of which is given by
\begin{equation*}
    \mu(|\epsilon(u)|) 
    =
    \mu_0 |\epsilon(u)|^{p-2},
    \quad
    p > 1.
\end{equation*}
Here, $p > 1$ and $\mu_0 > 0$ denote the \emph{flow behaviour exponent} and the \emph{flow-consistency index}, respectively. For $1 < p < 2$ the corresponding fluid is called \emph{shear thinning}, as the the viscosity decreases with an increasing strain rate. For $p=2$ we recover the case of a Newtonian fluid, while for $p > 2$ the viscosity is an increasing function of the strain rate and the fluid is called \emph{shear thickening}. In other words, under force the fluid becomes either more liquid ($1<p<2$) or more solid ($p > 2$).
One reason why power-law fluids receive particular attention is their nice mathematical structure. However, real fluids usually feature a Newtonian behaviour at rather low and/or rather high shear rates. In this regard, for instance the Ellis constitutive law \cite{WS} 
\begin{equation}\label{eq:ellis}
    \frac{1}{\mu(|\epsilon|)}
    =
    \frac{1}{\mu_0} \left(1 + \left|\frac{\tigma}{\tigma_{1/2}}\right|^{\alpha- 1}\right)
\end{equation}
seems to be a more appropriate model (for shear-thinning fluids). Here, $\mu_0>0$ denotes the viscosity at zero shear stress, $\tigma = 2\mu(|\epsilon|) \epsilon$  is the viscous part of the Cauchy stress tensor and $\tigma_{1/2} > 0$ denotes the shear stress at which the viscosity has dropped to $\mu_0/2$. The value $\alpha=1$ reflects purely Newtonian behaviour, while $\alpha>1$ corresponds to shear-thinning behaviour.

\bigskip
\noindent\textbf{A variational formulation. }
Inspired by \cite{OSS,BS22}, we define a family $I_\eta$ of functionals with respect to a positive parameter $\eta\to 0$.
As common in the literature we call these \emph{Weighted Inertia-Dissipation-Energy} (WIDE) functional\schange{s} as they are in general obtained as a weighted sum of inertia, dissipation and energy of the viscous fluid. In the present work, we assume 
$u^0$ to be a suitable initial value and choose a constant $C_4\ge \tfrac 12(9C_P^2+1)$, where $C_P$ is the Poincar\'e constant of $\T_d$ for integrability $p=4$. For $\eta>0$ we define the functional
\begin{equation*}
    I_\eta(u) 
    = 
    \int_0^\infty \int_{\T_d} e^{-t/\eta} 
    \left(\tfrac{1}{2} |\partial_t u + (u \cdot \nabla) u|^2 + \tfrac{1}{\eta} W(\epsilon(u)) + \tfrac{C_4}{4}\abs{\nabla u}^4\right) \dd x \dd t
\end{equation*}
for any (measurable) function $u \colon (0,\infty) \times \T_d \to \R^d$ that satisfies the initial condition $u(0,\cdot)=u^0$ in a suitable trace sense, periodic boundary conditions, and the equation $\diverg u =0$ at any time $t>0$ in the sense of distributions. 

The function $W \colon \R^{d \times d}_{\sym} \to [0,\infty)$ is a suitable \emph{energy potential} that will be specified later. In terms of this potential, the conservation-of-momentum equation in \eqref{eq:PDE_intro1} can be rewritten in the form
\begin{equation*}
    \partial_t u + (u\cdot \nabla) u = -\nabla \pi + \diverg DW(\epsilon(u)),
    \quad
    t > 0,\ x \in \T_d.
\end{equation*}
For now, we only mention that 
$W(\epsilon) = \tfrac1p \mu_0\vert \epsilon \vert^p$ 
is admissible and corresponds to the case of power-law fluids. In particular the exponent $p$ is intimately connected to the growth of $W$. Moreover, it is possible to extend the analysis to constitutive laws not allowing for such a potential $W$, c.f. \cite{BS22}, for clarity we however stick to the assumption that such a $W$ exists.
 
We show that minimisers of the functional $I_{\eta}$ converge weakly to \emph{Leray--Hopf solutions} of the Navier--Stokes system \eqref{eq:PDE_intro1}, as $\eta\to 0$.

Heuristically speaking, the convergence of minimisers reduces to convergence of the \emph{Euler--Lagrange equations} associated with $I_{\eta}$ to the Navier--Stokes system \eqref{eq:PDE_intro1}, as $\eta \to 0$. Formally, the Euler--Lagrange equation corresponding to $I_\eta$ reads 
\begin{equation} \label{eq:EL_intro}
\begin{split}
    0
    =&
   \left(
    \partial_t u+ (u\cdot \nabla) u - \diverg DW(\epsilon(u)) + \nabla \pi
    \right) 
    \\
    & 
    - \eta \Bigl(\left(\partial_t^2 u + \partial_t\bigl((u\cdot \nabla)u\bigr)\right)
    -\diverg \left(\partial_t u \otimes u\right)  + (\nabla u)^T \partial_t u
    \\
    &
    - \diverg \left([(u\cdot \nabla)u] \otimes u\right) + (\nabla u)^T\bigl((u\cdot \nabla)u\bigr)
    -C_4\diverg\bra{\abs{\nabla u}^2\nabla u} \Bigr).
\end{split}
\end{equation}
Note that the first line in \eqref{eq:EL_intro} contains the conservation-of-momentum equation, with the term $\nabla \pi$ being the Lagrange multiplier corresponding to the incompressibility condition $\diverg u = 0$. For the remaining two lines we have to guarantee that these terms  vanish, as the parameter $\eta$ tends to zero. 

We explicitly allude to the fact that the Euler--Lagrange equation contains a second-order time derivative, i.e. it might be seen as an elliptic regularisation of the parabolic problem.

Moreover, we turn special attention to the \emph{stabilising term} $\tfrac{C_4}{4} \abs{\nabla u}^4$ incorporated in the functional $I_\eta$, which was first used in \cite{BS22}. A different stabilisation term has been used in \cite{OSS}. From the viewpoint of the Euler--Lagrange equation (cf. the last line in \eqref{eq:EL_intro}), this additional term should not lead to a crucial change in the limit $\eta \to 0$. In the original treatment of WIDE functionals such an additional term was not required \cite{MO08}, as the corresponding equations have a \emph{gradient-flow structure}. On the contrary, in the case of Navier--Stokes flow, the term is necessary in order to obtain suitable bounds allowing for Leray--Hopf solutions.

 \schange{This stabiliser corresponds to approximating the constitutive law by a $q$-fluid law with $q=4$ (\cite{Lady4}). The choice $q=4$ is convenient but more or less arbitrary and any larger $q$ would work as well. We emphasise that apart from this the stabiliser is quite unphysical and only used for the mathematical analysis.} On the other hand, the stabilising term enforces the \emph{energy inequality}, i.e. among all solutions to the Navier--Stokes system, it chooses a solution that at least satisfies the energy inequality. In particular, if the solution $u$ is very smooth (i.e. obeys an energy equality), it seems that we \emph{do not} need the stabilising term (e.g. for $p \geq 4$); whereas the stabiliser is necessary for small $p$ (even for the Newtonian case $p=2$ in three dimensions). 

We point out that also other variational approaches, different from the WIDE approach, have been pursued in the context of the Navier--Stokes equations. While in the WIDE approach we minimise an energy over space \emph{and} time, another ansatz is to discretise in time and solve minimisation problems step-by-step. This \emph{causal approach} has been investigated for instance in \cite{Gigli,BFS}. This approach of time-discretisation has the advantage of being \emph{causal} in time, i.e. that re-starting the system \emph{does not} change the solution. However, it requires to solve multiple minimisation problems at each time step. In contrast, fixing a parameter $\eta>0$ in the WIDE approach, leads to \emph{only one} minimisation problem.

\bigskip
\noindent\textbf{Main results of the paper. }
We briefly summarise the main  results of the article without introducing the exact notation. For a precise definition of Leray--Hopf solutions and of the underlying space $U_\eta$ we refer to Section \ref{sec:LH} and Section \ref{sec:WIDE}, respectively. Leray-Hopf solutions are weak solutions that, in addition, obey the energy inequality
\begin{equation*}
     E[u](t) + \int_0^t \int_{\T_d} DW(\epsilon(u)) \colon \epsilon(u) \dd x \dd s \leq E[u](0)
    ,\quad \text{where} \quad E[u](t) = \tfrac{1}{2} \int_{\T_d} 
 \vert u(t,x) \vert^2 \dd x.
 \end{equation*}
 \schange{We assume that $1<p<\infty$ is a growth exponent and $W$ is convex and satisfies natural $p$-growth as well as $p$-coercivity conditions}. Ignoring for the moment questions of regularity, the main result can be roughly stated as follows.

\begin{thmintro}[Existence of Leray--Hopf solutions, cf. Theorem \ref{thm:main}] \label{thm:intro1}
Let $p > \tfrac{2d}{d+2}$. For each $\eta > 0$ the functional $I_\eta$ possesses a minimiser $u_\eta \in U_\eta$. Moreover, there exists a subsequence $u_\eta$ (not relabeled) that converges weakly to a Leray--Hopf solution of the non-Newtonian Navier--Stokes system 
\begin{equation} \label{eq:PDE_intro_W}
\begin{cases}
    \partial_t u + (u\cdot \nabla) u = -\nabla \pi + \diverg DW(\epsilon(u)),
    &
    t > 0,\, x \in \T_d
    \\
    \diverg u = 0,
    &
    t > 0,\, x \in \T_d
    \\
    u(0,x) = u^0(x), 
    & x \in \T_d.
\end{cases}
\end{equation}
\end{thmintro}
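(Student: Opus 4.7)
The statement has two parts: (i) existence of a minimiser $u_\eta \in U_\eta$ of the stabilised WIDE functional $I_\eta$ for fixed $\eta>0$, and (ii) passage to the limit $\eta\to 0$ to extract a Leray--Hopf solution of \eqref{eq:PDE_intro_W}. For (i) I would run the direct method of the calculus of variations. For (ii) I would first extract $\eta$-uniform bounds by comparing $I_\eta(u_\eta)$ with a judiciously chosen test function, then identify a weak limit $u$, and finally show that $u$ solves \eqref{eq:PDE_intro_W} and satisfies the energy inequality. The genuinely hard step will be passing to the limit in the nonlinear viscosity $\diverg DW(\epsilon(u_\eta))$; this is where the solenoidal Lipschitz truncation enters.

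\textbf{Existence for fixed $\eta$.} Pick a minimising sequence $(u^{(k)})\subset U_\eta$. The exponential weight $e^{-t/\eta}$ reduces the analysis to bounded time intervals in a weighted sense, and the three constituents of $I_\eta$ provide coercivity: $\tfrac{C_4}{4}|\nabla u|^4$ bounds $u^{(k)}$ in a weighted $L^4_t W^{1,4}_x$; $\tfrac1\eta W(\epsilon(u^{(k)}))$ controls the dissipation; and once $\nabla u^{(k)}$ is bounded in weighted $L^4_{t,x}$ the convective term $(u^{(k)}\cdot\nabla)u^{(k)}$ sits in a weighted $L^2_{t,x}$, so the inertia part controls $\partial_t u^{(k)}$ in a weighted $L^2_{t,x}$. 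These bounds combined with Aubin--Lions yield a subsequence with $u^{(k)}\to u_\eta$ strongly in the weighted $L^2_{t,x}$ and weakly in the energy spaces, which suffices to pass to the limit in $(u^{(k)}\cdot\nabla) u^{(k)}$ and to invoke convexity-based weak lower semicontinuity for the remaining convex integrands (assuming $W$ convex). The divergence-free and initial-trace constraints are weakly closed, so $u_\eta \in U_\eta$.

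\textbf{Uniform bounds in $\eta$ and compactness.} The key a~priori estimate comes from testing the minimality of $u_\eta$ against an admissible competitor constructed from the initial datum $u^0$ (e.g.\ a smooth extension of $u^0$ in time on a short initial slab, or a solution of the associated Stokes problem), yielding $I_\eta(u_\eta)\le C/\eta$ with $C$ depending only on $u^0$. Reinterpreting this bound provides $\eta$-uniform control on $\int\!\!\int e^{-t/\eta}W(\epsilon(u_\eta))\,\di x\,\di t$, on $\eta\int\!\!\int e^{-t/\eta}|\partial_t u_\eta +(u_\eta\cdot\nabla)u_\eta|^2\,\di x\,\di t$, and on the stabiliser $\int\!\!\int e^{-t/\eta}|\nabla u_\eta|^4 \,\di x\,\di t$. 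After rescaling/unweighting in time, this translates to bounds of Leray--Hopf type: $u_\eta$ bounded in $L^\infty_t L^2_x\cap L^p_t W^{1,p}_x$ (plus an extra $L^4$-control from the stabiliser), and $\partial_t u_\eta$ in some negative Sobolev space. By Aubin--Lions this gives a weak limit $u$ together with strong $L^q_{t,x}$ convergence, enough to pass to the limit in the inertia $\partial_t u + (u\cdot\nabla)u$. Constant $C_4$ is chosen exactly so that the stabilising $|\nabla u|^4$ term, upon testing the Euler--Lagrange equation against $u_\eta$ itself (more precisely, against a time-truncated variant), yields the energy inequality in the limit.

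\textbf{Limit in the nonlinearity and conclusion.} The regime $p>\tfrac{2d}{d+2}$ is critical because weak convergence of $\epsilon(u_\eta)$ is \emph{not} sufficient to identify the weak limit of $DW(\epsilon(u_\eta))$: one only has $DW(\epsilon(u_\eta))\rightharpoonup \chi$ in some $L^{p'}_{t,x}$, and one must show $\chi=DW(\epsilon(u))$. This is the main obstacle. The plan is the standard Minty-type trick combined with solenoidal Lipschitz truncation: construct divergence-free Lipschitz approximations $v_\eta^\lambda$ of $u_\eta-u$ that coincide with $u_\eta-u$ outside small bad sets, test the difference of the (approximate) Euler--Lagrange equation for $u_\eta$ and the target equation for $u$ against $v_\eta^\lambda$, use the $\lambda$-Lipschitz bound to absorb the convective and time-derivative terms, and derive $\limsup_\eta \int\!\!\int (DW(\epsilon(u_\eta))-DW(\epsilon(u))):\epsilon(u_\eta-u)\,\di x\,\di t \le 0$. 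Monotonicity of $DW$ then forces $\epsilon(u_\eta)\to\epsilon(u)$ almost everywhere (up to a subsequence), identifying $\chi=DW(\epsilon(u))$. The novelty compared with the stationary solenoidal Lipschitz truncation is that the Euler--Lagrange equation \eqref{eq:EL_intro} is elliptic in time (it contains $\partial_t^2 u_\eta$), so the truncation must control both elliptic and parabolic scales simultaneously -- this is the elliptic-parabolic solenoidal Lipschitz truncation advertised in the introduction. Once the viscosity limit is identified and the energy inequality is secured through the stabiliser, $u$ is a Leray--Hopf solution of \eqref{eq:PDE_intro_W}.
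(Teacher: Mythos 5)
Your overall strategy (direct method, competitor bound $I_\eta(u_\eta)\le C\eta^{-1}$, energy-type a~priori estimates, Aubin--Lions compactness, Minty plus an elliptic-parabolic solenoidal Lipschitz truncation) matches the paper's, but the decisive Minty step as you formulate it contains a gap. You claim to derive $\limsup_\eta \int (DW(\epsilon(u_\eta))-DW(\epsilon(u))):\epsilon(u_\eta-u)\,\di x\,\di t \le 0$ over the full space-time domain and then to conclude a.e.\ convergence of $\epsilon(u_\eta)$. That is not what Lipschitz truncation delivers. The truncation controls the dissipation pairing only where $u_\eta^L$ and $u_\eta$ coincide; on the small remaining bad set the concentration part of $\epsilon(u_\eta)$ is uncontrolled and its contribution need not vanish in the subcritical regime $\tfrac{2d}{d+2}<p<\tfrac{3d+2}{d+2}$. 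The paper correspondingly proves the limsup bound only at the level of the truncated quantities (Proposition \ref{prop:hansschaefer}\,\ref{hs:1}), namely $\lim_\eta \int_0^t \langle DW(\epsilon(u_\eta^L)),\epsilon(u_\eta^L)\rangle \dd s \le \int_0^t \langle \chi^L,\epsilon(u)\rangle \dd s + S_L^1$ with $S_L^1\to 0$ as $L\to\infty$; Minty's trick is then run with $u_\eta^L$, $u^L$, $\chi^L$, and $L\to\infty$ is taken last using the strong convergences $\chi^L\to\chi$ and $u^L\to u$ of Lemma \ref{lemma:marcelrisse}. This identifies $\diverg\chi=\diverg DW(\epsilon(u))$ without ever invoking a.e.\ convergence of $\epsilon(u_\eta)$ or your full-domain bound. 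Your displayed inequality also has $DW(\epsilon(u))$ where, before the identification has been made, only the unidentified weak limit $\chi$ can legitimately appear: testing the difference of the equations against the truncation produces a pairing with $\chi$, and the replacement $\chi=DW(\epsilon(u))$ is the conclusion, not an input.

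A second, technical, obstruction: you propose to truncate $u_\eta-u$ directly. The elliptic-parabolic truncation requires uniform bounds such as $\eta^{1/2}\partial_t w_\eta$ in $L_2((0,\infty);L_2)$ and $\eta\,\partial_t^2 w_\eta$ in $L_{s'}((0,T);(V^1_s)')$, cf.\ \ref{P:3}--\ref{P:4}, and neither $\partial_t u$ nor $\partial_t^2 u$ has the integrability needed for $w_\eta=u_\eta-u$ to satisfy them: for $p<2$ one has $\partial_t u\notin L_2((0,\infty);L_2)$, and a second time derivative of the weak limit $u$ is not available in any useful space. The paper instead mollifies $u$ in time at scale $\eta$ and truncates $w_\eta = u_\eta - \phi_\eta\ast u$ (Lemma \ref{prop:weta}); the mollification is exactly what makes the required bounds and the splittings $\partial_t w_\eta = \hat{g}_\eta + \hat{h}_\eta$, $\eta\partial_t^2 w_\eta = \hat{\varg}_\eta + \hat{\varh}_\eta$ hold uniformly in $\eta$. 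Without this device, Lemma \ref{lemma:trunc} does not apply to your sequence $v_\eta^\lambda$.
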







The first part of this paper is structured similarly to
\cite{OSS}, but we have to invest considerable effort in the second part to prove convergence of the nonlinear viscous term $\diverg DW(\epsilon(u_\eta))$ despite
weak convergence. This term is linear in the Newtonian setting of \cite{OSS} and thus agrees well with
weak convergence, while the underlying strong convergence helps in \cite{BS22}.

\rchange{
At this point, we mention that in \cite{BS22}, Theorem \ref{thm:main} is proven in the case $p \geq \tfrac{3d+2}{d+2} (=\tfrac{11}{5} \text{ in 3D})$ and with boundary conditions and more general assumptions on the constitutive law. In this regime of power-law exponents, the method of (elliptic-parabolic) Lipschitz truncation, which is one of the main novelties of the present article (cf. Section \ref{sec:truncation}), is \emph{not needed}. One of the merits of \cite{BS22} is the derivation of the outflow boundary conditions for the incompressible Navier-Stokes equation. Our focus is on the applicability of the WIDE method for lower $p$, and in order to avoid more technicality in the proofs of Section \ref{sec:truncation}, we restrict ourselves to the torus and refrain from using boundary conditions and more general constitutive laws. However, we remark that Lipschitz truncations with boundary values \cite{FJM,DSSV} and with more involved constitutive laws (e.g. \cite{Bulicek2,Bulicek3}) have been achieved in quite similar contexts, which suggests that it might also be possible here for $p<\tfrac{3d+2}{d+2}$.}

We observe that the statement of Theorem \ref{thm:intro1} can be further strengthened in the following sense for large $p$, see Section \ref{sec:definitely_Bergdoktor}.


\begin{thmintro}[\schange{Strongly} shear-thickening fluids -- cf. Theorem \ref{prop:strong}] \label{thm:intro3}
If $p > \tfrac{3d+2}{d+2}$ and $W$ is strictly convex, then minimisers $u_\eta \in U_\eta$ of $I_\eta$ converge \textbf{strongly} to a solution of the non-Newtonian Navier--Stokes system \eqref{eq:PDE_intro_W} that obeys the \textbf{energy equality}
\begin{equation*}
    E[u](t) + \int_s^t \int_{\T_d} DW(\epsilon(u)):\epsilon(u) \dd x \dd \tau = E[u](s) \quad \text{for all } s,t \in [0,\infty).
\end{equation*}

\end{thmintro}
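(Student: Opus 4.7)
The plan is to strengthen Theorem~\ref{thm:intro1}: we already have minimisers $u_\eta$ converging weakly to a Leray--Hopf solution $u$ which satisfies the energy \emph{inequality}, so the remaining task reduces to (a) upgrading the energy inequality to an equality for the limit, and (b) upgrading the weak to strong convergence. The two are tightly linked: once $u$ obeys the energy equality, the dissipation norms of $u_\eta$ must converge to that of $u$, and strict convexity of $W$ then promotes this norm-convergence to pointwise/strong convergence of the strain tensors.

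For (a), the assumption $p>\tfrac{3d+2}{d+2}$ is precisely the threshold at which $u$ itself becomes an admissible test function in the weak formulation of \eqref{eq:PDE_intro_W}. Starting from the Leray--Hopf bounds $u\in L^\infty(0,T;L^2(\T_d))\cap L^p(0,T;W^{1,p}(\T_d))$, Gagliardo--Nirenberg interpolation yields $u\in L^{p(d+2)/d}_{\loc}((0,\infty)\times\T_d)$; a short computation shows $p(d+2)/d\ge 2p/(p-1)$ if and only if $p\ge\tfrac{3d+2}{d+2}$. Consequently $u\otimes u\in L^{p'}_{\loc}$, so the convective term integrated against $u$ is well-defined and vanishes by $\diverg u=0$. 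A standard mollification-in-time argument then licences testing the equation with $u$ itself, producing the energy identity claimed in the statement.

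For (b), I would exploit the minimality of $u_\eta$ by comparing $I_\eta(u_\eta)\le I_\eta(\tilde u)$ against a smooth solenoidal recovery sequence $\tilde u$ approximating $u$ with the correct initial datum, and extracting the dissipative part of $I_\eta$ via the weighted time-averaging $\int_0^\infty \eta^{-1}e^{-t/\eta}(\cdots)\,\di t$. After the inertial and stabilising contributions are shown to be negligible in the $\eta\to 0$ limit, this yields
\begin{equation*}
    \limsup_{\eta\to 0}\int_0^T\!\!\int_{\T_d} DW(\epsilon(u_\eta)){:}\epsilon(u_\eta)\dd x\dd t \le \int_0^T\!\!\int_{\T_d} DW(\epsilon(u)){:}\epsilon(u)\dd x\dd t.
\end{equation*}
The matching liminf inequality follows from weak lower semicontinuity (convexity of $W$) applied to the weak convergence $\epsilon(u_\eta)\weakto\epsilon(u)$ from Theorem~\ref{thm:intro1}. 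A Minty-type monotonicity argument, using \emph{strict} convexity of $W$, then upgrades this equality of norms to strong convergence $\epsilon(u_\eta)\to\epsilon(u)$ in $L^p_{t,x}$. Korn's inequality on the torus (after subtracting off the conserved mean) transfers this to strong convergence of $\nabla u_\eta$, and hence of $u_\eta$ itself.

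The main technical obstacle is the construction of the recovery sequence matching the upper bound in (b) \emph{sharply}: one must produce a solenoidal, initial-condition-preserving approximant $\tilde u$ of $u$ whose time derivative and convective contributions to $I_\eta$ vanish in the $\eta\to 0$ limit, while its dissipation tracks that of $u$. Equally essential is step (a), which relies on the supercritical exponent to give $u\in L^{2p'}_{\loc}$; below the threshold $\tfrac{3d+2}{d+2}$ one would be forced into the Lipschitz-truncation machinery developed in the first part of the paper, and only the energy inequality (and weak convergence) would be available.
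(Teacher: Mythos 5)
Part (a) of your plan is correct and matches Proposition~\ref{prop:energy} in the paper: at $p>\tfrac{3d+2}{d+2}$, parabolic interpolation places $u\otimes u$ in the right dual space, so $u$ is an admissible test function and the energy equality for $u$ follows by testing the limiting equation with $u$ itself. The final Minty/strict-monotonicity step (from norm convergence of the dissipation to strong convergence of $\epsilon(u_\eta)$) is Lemma~\ref{lemma:elmau} almost verbatim, so that part is also fine; note it uses, in addition to $\epsilon(u_\eta)\weakto\epsilon(u)$, the weak convergence $DW(\epsilon(u_\eta))\weakto DW(\epsilon(u))$ which Theorem~\ref{thm:intro1} already supplies.

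The gap is in step (b). Comparing $I_\eta(u_\eta)\le I_\eta(\tilde u)$ against a recovery sequence does not produce the dissipation bound you want. The weight $\eta^{-1}e^{-t/\eta}$ is a Dirac approximation at $t=0$: for a fixed smooth solenoidal $\tilde u$ with $\tilde u(0)=u^0_\eta$ one finds $\int_0^\infty \eta^{-1}e^{-t/\eta}\,W(\epsilon(\tilde u(t)))\,\di t\to W(\epsilon(u^0))$, which is a single-time quantity, not the time-integrated dissipation $\int_0^T W(\epsilon(u))\,\di t$ that must control the right-hand side. Likewise the same weight in $I_\eta(u_\eta)$ localises the dissipation contribution near $t\approx 0$. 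The functional comparison therefore only yields the already known crude bound $I_\eta(u_\eta)\le C\eta^{-1}$ of Proposition~\ref{prop:existence}, which is useless for the present purpose. No $\eta$-dependent reparametrisation of $\tilde u$ repairs this, since the weight always concentrates the $\eta^{-1}W$ term at the initial time. What the paper uses instead is the energy \emph{inequality} for $u_\eta$ (Proposition~\ref{prop:energyinequality1}), obtained from the \emph{first variation} of $I_\eta$ (testing the Euler--Lagrange equation with $\psi u_\eta$), not from comparison of functional values. This inequality reads
\begin{equation*}
E[u_\eta](T)+\int_0^T (1-e^{-t/\eta})\,\langle DW(\epsilon(u_\eta)),\epsilon(u_\eta)\rangle\,\di t\le E[u^0_\eta].
\end{equation*}
Combined with $E[u_\eta](T)\to E[u](T)$ for a.e.\ $T$ (Aubin--Lions), weak lower semicontinuity of $\xi\mapsto DW(\xi):\xi$ (assumption~\ref{it:W2}), and the energy \emph{equality} for $u$ from step (a), this squeezes the dissipations together and gives $\lim_\eta\int_0^T \langle DW(\epsilon(u_\eta)),\epsilon(u_\eta)\rangle = \int_0^T\langle DW(\epsilon(u)),\epsilon(u)\rangle$, after which Lemma~\ref{lemma:elmau} applies. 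The recovery-sequence construction you flag as the main technical obstacle is thus not merely difficult---it is the wrong tool here.
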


The observation of Theorem \ref{thm:intro3} is that the convergence of the minimisers in Theorem \ref{thm:intro1} is actually strong in the regime $p > 2$ in dimension $d=2$, respectively for $p > \tfrac{11}{5} > 2$ in dimension $d=3$. In this case, the convergence of the nonlinear viscosity term is direct and the energy equality is conserved in the limit $\eta \to 0$. The corresponding solution to the non-Newtonian Navier--Stokes system is called an energy solution.
The threshold  $p\geq\frac{11}{5}$ is addressed in \cite{BS22} by similar arguments. We remark furthermore that Theorem~\ref{thm:intro1} answers the question raised in \cite[Section 4, Remark (f), p. 5570]{BS22} in the positive.

In other words, the convergence results reflect the rheological behaviour of the fluid, and allow to gain insights into possibly turbulent behaviour of the fluid flow. Roughly speaking, we obtain rather weak convergence results in the shear-thinning and mildly shear-thickening regimes, whence these flows might exhibit turbulent behaviour (\schange{displayed by an energy \emph{in}equality}). In contrast to that, for `strongly' shear-thickening fluids, we obtain strong convergence of the nonlinear terms and hence \schange{the energy inequality becomes an equality}.

In Section \ref{sec:definitely_Bergdoktor}, the strong convergence result of Theorem \ref{thm:intro3} is obtained as a consequence of Theorem \ref{thm:intro1} and the validity of the energy inequality, while in \cite{BS22} the same result is achieved without the use of Theorem \ref{thm:intro1}. The crucial step in the proof is to show the convergence of the non-linearity $DW(\epsilon(u_\eta))$, provided only weak convergence of $u_\eta$. In the \schange{strongly shear-thickening} regime, this convergence follows from the strong convergence of $u_\eta$, and, in the Newtonian regime this term is actually linear and therefore unproblematic (cf. \cite{OSS}). Our approach reveals the differences between sub- and supercritical exponents and extends the analysis to the former.


\bigskip
\noindent\textbf{The variational approach in the context of related PDE results. }
We emphasise that by no means we attempt to review the huge existing literature on the Navier--Stokes equations. For an overview we refer the reader for instance to the textbooks \cite{Temam,RRS}.
Instead, we only briefly comment on how the results outlined in Theorems \ref{thm:intro1}--\ref{thm:intro3}, achieved via variational methods, fit into the literature on more classical results obtained by PDE methods. 

The existence of solutions obeying an energy equality 
for supercritical exponents $p > \frac{3d+2}{d+2}$ can be traced back to Ladyshenskaya \cite{Lady1,Lady2,Lady3}, cf. also \cite{Lionsbook}. For more recent results on weak solutions and Leray--Hopf solutions of the non-Newtonian Navier--Stokes system we refer to \cite{Malek2,MN01,Malek1,DRW}.


Even in the Newtonian case, it is still unclear whether there exists a solution obeying an energy equality even for very regular initial data. Heuristically speaking, the main issue for subcritical exponents is that the map $u \mapsto \diverg (u \otimes u)=(u\cdot\nabla)u$ is not weakly compact in the right space, i.e. even if $u_k \weakto u$ in $L_p((0,T);W^1_p)$, then $\diverg (u_k \otimes u_k)$ might \emph{not} converge weakly to $\diverg(u \otimes u)$ in the required space $L_q((0,T);(W^1_p)')$, where $\frac 1q+\frac 1p=1$. 
One of the virtues of the variational approach used in this paper is that this phenomenon of lacking weak compactness becomes clearly visible. From a PDE perspective this has been studied via the method of \emph{convex integration}, cf. \cite{DL09,DL10,DL22} for Euler's equation and, more recently, in \cite{BV19,BMS} for both the Newtonian and the non-Newtonian Navier--Stokes system. For a recent discussion of the energy inequality see \cite{BC20}.

\bigskip

\noindent \textbf{Solenoidal Lipschitz truncation. }
We approximate the non-Newtonian Navier--Stokes system \eqref{eq:PDE_intro_W} by a sequence of minimisation problems that can be understood as a sequence of \emph{elliptic} problems in space-time. In contrast, in previous literature on the existence of solutions to the non-Newtonian Navier--Stokes equations for small $p$, cf. \cite{FMS,DMS,BDF,DKS}, the system is approximated by a \emph{parabolic} regularisation that involves introducing a modified constitutive law, i.e. $\tilde{\sigma}(\epsilon) = \sigma(\epsilon) + C_4\eta \vert \epsilon \vert^2 \epsilon$. In this case, one solves the non-Newtonian system with the modified law and considers the limit $\eta \to 0$. 

The main challenge in the mentioned works (cf. also \cite{BDF} for a stationary system) is to show the weak convergence of the nonlinear term $DW(\epsilon_\eta)$. As the main technical tool, based on \cite{AF84}, the method of (solenoidal) Lipschitz truncation has been further developed and applied to problems in fluid mechanics, cf. \cite{BDS} and references mentioned earlier. This technique solves the main issue when showing that $DW(\epsilon_\eta) \weakto DW(\epsilon)$ (as $\eta \to 0$) for a weakly converging sequence $\epsilon_\eta$: while concentrations are uncritical for weak convergence of nonlinear terms, oscillatory effects usually destroy weak convergence when faced with nonlinear terms. The \emph{Lipschitz truncation} deals with the issue as follows: it cuts away concentration effects and allows to show that there are no oscillations (e.g. by a modified version of Minty's trick).

Returning to the setting at hand, as we deal with an elliptification of the problem in time, the different forms of Lipschitz truncation developed in previously mentioned works are not suited to our problem -- they work best with \emph{parabolic problems} (cf. also \cite{KL00,DSSV}). Instead, we modify the parabolic approach to fit into the setting of elliptic regularisation. We refer to Section \ref{sec:5} for the application of the truncation result and to \ref{sec:truncation} for its proof.

\bigskip
\noindent\textbf{Outline of the paper. }
In Section \ref{sec:functional_setup} we introduce the functional analytic setting. More precisely, we introduce the function spaces, define a suitable notion of a Leray--Hopf solution to the non-Newtonian Navier--Stokes problem and introduce the WIDE functional. Moreover, Section \ref{sec:functional_setup} contains some frequently used interpolation results and a detailed discussion on the regularity properties of Leray--Hopf solutions.

Apart from a short intermezzo in Section \ref{sec:definitely_Bergdoktor} that deals with the strong convergence result Theorem \ref{thm:intro3}, the rest of the paper is concerned with the proof of Theorem \ref{thm:intro1}. The first part of the proof is conducted in Section \ref{sec:shearthinning}. In particular, we show the existence of minimisers to $I_\eta$ that satisfy a particular energy inequality. Moreover, we derive bounds in certain function spaces and convergence to a solution $u$ of an equation containing the unidentified limit $\chi$ of the nonlinear term $DW(\epsilon(u_\eta))$. As mentioned before, the main issue is to identify the limit $\chi$ with $DW(\epsilon(u))$. We conclude Section \ref{sec:shearthinning} by a pedagogical presentation of a short argument that finishes the proof under an additional, but unnatural, constraint on the minimising sequence $u_\eta$.

Without this additional constraint the proof is much more involved and hinges on a Lipschitz truncation statement, cf. Lemma \ref{lemma:trunc}. In Section \ref{sec:5} we carry out the proof of Theorem \ref{thm:intro1} when provided with Lemma \ref{lemma:trunc}. As the proof of this Lipschitz truncation result is rather technical and independent of the proof of Theorem \ref{thm:intro1}, we defer the construction of this truncation to the last Section \ref{sec:truncation}.

In between, in Section \ref{sec:definitely_Bergdoktor}, we discuss how to obtain strong convergence and the energy \emph{equality} for supercritical $p$, i.e. we prove Theorem \ref{thm:intro3}.

\section{Functional setup and statement of the main result} \label{sec:functional_setup}

\subsection{Functional setup and the potential $W$}
We denote by $\T_d$ the $d$-dimensional flat torus, $d \geq 2$. Let $1 \leq p \leq \infty$.
We define the space 
\begin{equation*}
    V^1_p \coloneq \{ u \in W^1_{p}(\T_d;\R^d) \colon \diverg u=0 \},
\end{equation*}
where $W^1_{p}$ is the Sobolev space with integrability $p$. Accordingly, we define $V^0_p$ to be the closure of $V^1_p$ in $L_p(\T_d;\R^d)$. Note that $V^1_p$ is weakly and strongly closed in $W^1_{p}(\T_d;\R^d)$ and inherits its norm. By the Hahn--Banach theorem \schange{we may extend any linear functional acting on $V^1_p$ to the entire space $W^1_p(\T_d;\R^d)$ and therefore we may identify  elements in $(V^1_p)'$ with suitable representatives in $(W^1_{p}(\T_d;\R^d))'$}. Moreover, denoting for $1<p<\infty$ the \emph{dual exponent of $p$} by $q=\tfrac{p}{p-1}$, we may identify 
\begin{equation*}
    (W^1_{p}(\T_d;\R^d))' = W^{-1}_q(\T_d;\R^d).
\end{equation*}

\bigskip
\noindent\textbf{Notation. }
As long as we do not fear ambiguity, we use the short hand $L_p$ for the usual (spatial) Lebesgue spaces $L_p(\T_d;\R^d)$ and $L_p(\T_d;\R^{d\times d})$. Moreover, by $\langle\cdot,\cdot\rangle$ we denote the dual pairing between an element of some function space and its dual. The choice of the function space should always be clear from the context. We denote the nonlinear term by either of the forms $(u\cdot\nabla)u=\diverg(u\otimes u)$ which are equal due to the incompressibility condition $\diverg u=0$.


\bigskip
\noindent\textbf{{The potential $W$. }}  For the sake of simplicity, we stick to the case where $\sigma(\epsilon)$ may be written as $\sigma(\epsilon)= - \pi \id + DW(\epsilon)$. In particular, this characterisation is valid for any fluid with $\sigma(\epsilon) = -\pi \id + 2 \mu(\vert \epsilon \vert) \epsilon$ (sometimes called generalised Newtonian). If the law $\sigma= \sigma(\epsilon)$ is not given by an energy potential, then there is a related approach that preserves the nice variational structure, cf. \cite{BS22}.

In this paragraph we specify the conditions on the energy potential $W$. By $\R^{d \times d}_{\sym,0}$ we denote the space of symmetric $(d \times d)$-matrices with zero trace. Observe first that, if $u \in V^1_p$, then the rate-of-strain tensor satisfies
\begin{equation*}
    \epsilon(u) = \tfrac{1}{2} (\nabla u + (\nabla u)^T) \in L_p\bigl(\T_d;\R^{d \times d}_{\sym,0}\bigr).
\end{equation*}
We now consider a \emph{potential} $W \colon \R^{d \times d}_{\sym,0} \to [0,\infty)$ with the following properties
\begin{enumerate} [label=($W$\arabic*)]
     \item\label{it:W1} $W \in C^1\bigl(\R^{d\times d }_{\sym,0};\R\bigr)$;
    \item\label{it:W2} \schange{$W$ is \textbf{convex}.}
    \item\label{it:W3} \textbf{Coercivity and $p$-growth. } \rchange{There exists a constant $C>0$ such that} \begin{equation*}
        C^{-1} |\epsilon|^p - C \leq W(\epsilon) \leq C(1 + |\epsilon|^p)
    \end{equation*} 
    \item\label{it:W4} \textbf{Coercivity  and bounds for $DW$. } \rchange{There exists a constant $C>0$ such that}
    \begin{equation*}
     DW(\epsilon) : \epsilon \geq \max\{C \vert \epsilon \vert^p -C,0\}
     \quad \text{and} \quad 
     \vert DW(\epsilon) \vert \leq C(1+\vert \epsilon \vert^{p-1}).
     \end{equation*}
\end{enumerate}
Observe that if $W$ has the form $W(\epsilon) = \tilde{W}(|\epsilon|)$,
then $DW(\epsilon) = \left(\vert \epsilon \vert^{-1} \tilde{W}'(\vert \epsilon \vert)\right)\epsilon$.
In this case, we may consider
\begin{equation}\label{eq:mu_W}
    \mu(s) \coloneq \tfrac{1}{2s} \tilde{W}'(s)
\end{equation}
as the viscosity of the fluid at shear rate $s \in (0,\infty)$, and by \eqref{eq:mu_W} it is possible to construct $W$ by integration from a given viscosity function $\mu$ .
Moreover, we can rewrite the stress $\sigma$ in terms of $DW$ as
\begin{equation*}
    \sigma(\epsilon) = -\pi \id + DW(\epsilon) 
    = 
    -\pi \id + 2\mu(|\epsilon|) \epsilon.
\end{equation*}


\begin{example}
We briefly discuss the form of the potential $W$ for widely-used constitutive viscosity laws.
\begin{itemize}
    \item \textbf{Newtonian fluids. } Newtonian fluids feature a constant viscosity $\mu(|\epsilon|) \equiv \mu_0 > 0$. In this case, the relation between the viscous stress $DW(\epsilon)$ and the local strain $\epsilon$ is perfectly linear with $DW(\epsilon) = 2 \mu_0 \epsilon$. We assume without loss of generality that $\mu_0 = 1/2$. Then, the potential $W$ is given by
    \begin{equation*}
        W(\epsilon) 
        =
        \tilde{W}(|\epsilon|)
        = 
        \tfrac{1}{2} |\epsilon|^2
        \quad
        \text{with}
        \quad
        DW(\epsilon) = 2 \mu_0 \epsilon = \epsilon.
    \end{equation*}
    \item \textbf{Power-law fluids. } In the case of power-law fluids, the constitutive law for the fluid's viscosity is $\mu(|\epsilon|) = \mu_0 |\epsilon|^{p-2}$ with a flow-consistency index $\mu_0 > 0$ and a flow-behaviour exponent $p$. As above, we assume without loss of generality that $\mu_0=1/2$. Then, the potential $W$ is given by
    \begin{equation*}
        W(\epsilon)
        =
        \tilde{W}(|\epsilon|)
        = 
        \tfrac{1}{p} |\epsilon|^p
        \quad
        \text{with}
        \quad
        DW(\epsilon) = 2 \mu(|\epsilon|) \epsilon = |\epsilon|^{p-2} \epsilon.
    \end{equation*}
    \item \textbf{Ellis fluids. } The implicit characterisation \eqref{eq:ellis} does not allow for an explicit formula for $W$. Nevertheless, the right-hand side of \eqref{eq:ellis} is monotone in both $\mu(s)$ and $s$, and it follows that for $\alpha>1$, $\mu$ is monotonically decreasing. Furthermore  
    \begin{align*}
        \mu(s)\sim \mu_0\min\left\{1,\bra{\frac{\mu_0 s}{\sigma_{1/2}}}^{\frac{1-\alpha}{\alpha}}\right\}.
    \end{align*}
    Thus, for $\alpha>1$, defining $\tilde W(s)\coloneq \int_0^s 2r\mu(r)\dd r$, and $p=\frac{\alpha+1}{\alpha}$ , it follows that $W$ has $p$-growth and $p$-coercivity outside a compact set and thus \ref{it:W3} is satisfied. Since $\mu$ is monotonically decreasing, it follows from \eqref{eq:ellis} that $s\mapsto s\mu(s)$ is monotonically increasing, and hence both $\tilde W$ and $s\tilde W^\prime(s)$ are convex, whence \ref{it:W1}--\ref{it:W4} are satisfied for this $W$.
\end{itemize}
\end{example}


\subsection{Leray--Hopf solutions} \label{sec:LH}
We define a suitable concept of weak solutions to the non-Newtonian Navier--Stokes system. If we express the viscous stresses in terms of the potential $W$ instead of the viscosity function $\mu$, the Navier--Stokes system in strong form reads
\begin{equation} \label{strongform}
    \begin{cases}
        \partial_t u + (u \cdot \nabla) u = - \nabla \pi + \diverg  DW(\epsilon(u)),  
        &
        t > 0,\, x \in \T_d 
        \\
        \diverg u=0, 
        &  
        t > 0,\, x \in \T_d 
        \\
        u(0,x) = u^0(x), 
        & x \in \T_d.
    \end{cases}
\end{equation}
We assume throughout the paper that $p > \frac{2d}{d+2}$, i.e. $p > 1$ in dimension $d=2$ and $p > \tfrac65$ in dimension $d=3$. For solutions $u \in L_{p}((0,\infty);V_p^1)$ this implies in particular that  $u\in L_{p}((0,\infty);L_2)$ by the Sobolev embedding (the embedding also works for $p \geq \frac{2d}{d+2}$, but we avoid dealing with the dual of $V^1_1$ when investigating Leray--Hopf solutions) and we may define the \emph{energy functional} $E[u](\cdot)$ for almost every time $t$ as 
\begin{equation*} 
    E[u](t) \coloneq 
    \tfrac{1}{2}
    \int_{\T_d} \vert u(t) \vert^2 \dd x.
\end{equation*}
Furthermore, we introduce the exponent $\beta$ as 
\begin{equation} \label{def:beta}
        \beta \coloneq \max \left \{p,\tfrac{dp}{dp+2p-2d} \right\}.
\end{equation}
An explanation for the definition of the exponent $\beta$ is given in Remark \ref{rem:regularity-I} below.
We now define the concept of a \emph{Leray--Hopf solution}, which is a solution satisfying certain regularity assumptions \emph{and} a global energy inequality (cf. \cite{Leray,BMS}).

\begin{definition}[Leray--Hopf solution] \label{def:Leray--Hopf}
Let $u^0 \in V^0_2$ and $\tfrac{2d}{d+2}<p<\infty$ be given. We call a function 
\begin{equation*} 
    u \in L_p((0,\infty);V_p^1)
    \quad \text{with} \quad
   \partial_t u \colon (0,\infty) \to (V_{\beta}^1)^\prime
\end{equation*}
a \textbf{Leray--Hopf solution} to the non-Newtonian Navier--Stokes system \eqref{strongform} if it has the following properties.
\begin{enumerate} [label=(\roman*)]
    \item $u$ satisfies \eqref{strongform} at almost every time in the sense of distributions, i.e. 
    \begin{equation} \label{LH:equation}
        \langle \partial_t u, \phi \rangle + \langle (u \cdot \nabla) u, \phi \rangle + \langle DW(\epsilon(u)), \epsilon(\phi) \rangle =0 \quad \text{for a.e. } t>0 
    \end{equation}
    for all test functions $\phi \in C^{\infty}(\T_d;\R^d)$ with $\diverg \phi=0$ and $\int_{\T_d} \phi \dd x=0$;
    \item $u$ satisfies the initial condition $u(0,\cdot) = u^0$ in the weak $L_2$ sense;
    \item $u$ satisfies the \textbf{energy inequality}
        \begin{equation} \label{LH:energy}
            E[u](t) 
            +
            \int_0^t \int_{\T_d} DW(\epsilon(u(\tau))) : \epsilon(u(\tau)) \dd x \dd \tau\leq E[u^0]
        \end{equation}
        for almost every time $t >0$. 
    \end{enumerate}   
\end{definition}

\begin{remark} \label{rem:regularity-I}
We briefly mention some regularity properties of Leray--Hopf solutions that immediately follow from the definition. A more detailed discussion of the regularity and the initial condition is offered in Section \ref{sec:regularity}.
\begin{enumerate} [label=(\roman*)]
    \item\label{it:reg1} By the Sobolev embedding we have $u \in W^1_p \hookrightarrow L_\frac{pd}{d-p}$. Since we require $p>\tfrac{2d}{d+2}$, this implies $W^1_p \hookrightarrow L_{2+2\varepsilon}$ for some small $\varepsilon>0$. Therefore, $(u \otimes u)$ is a well-defined element in $L_{1+\varepsilon}$ and $\diverg (u \otimes u)$ is in $W^{-1}_{1+ \varepsilon}$.
    \item The second term in \eqref{def:beta} is derived as follows: for $u\in V^1_p$ we obtain that $(u\otimes u)$ is well-defined in $L_{\beta'}$ with $\beta' = \tfrac{dp}{2(d-p)}$ and consequently $\diverg(u\otimes u) \in (V^1_\beta)'$. This coincides with the spatial regularity of $\partial_t u$ in the definition \ref{def:Leray--Hopf} of a Leray--Hopf solution. The dual exponent $\beta$ is given by $\beta = \tfrac{dp}{dp+2(p-d)}$. \\
    Moreover, $\beta\to \infty$, as $p\to \tfrac{2d}{d+2}$, while $\beta=p$ for $p\ge \frac{3d}{d+2}$.
    \item  By a density argument we obtain that \eqref{LH:equation} is in fact satisfied for all $\phi \in V^1_\beta$.
    \item In particular, if $p \geq \tfrac{3d}{d+2}$, we have $\beta=p$ and \eqref{LH:equation} holds in the space $(V^1_p)'$.
    \item The energy inequality dictates that $u \in L_{\infty}((0,\infty);L_2)$, as the energy $E[u](\cdot)$ is decreasing in time along solutions.
\end{enumerate}
\end{remark}
Concerning the energy inequality, it may be shown that \eqref{LH:energy} holds with \emph{equality}, whenever $u$ is smooth enough.


\begin{definition}
    We say that a Leray--Hopf solution is an \textbf{energy  solution} to the non-Newtonian Navier--Stokes equation, whenever \eqref{LH:energy} is satisfied with equality for any $t>0$.
\end{definition}


The following threshold of regularity is well-known (e.g. \cite{MNRR,Bulicek}).


\begin{proposition} \label{prop:energy}
Let $p > \tfrac{3d+2}{d+2}$ and let $u$ be a Leray--Hopf solution to the non-Newtonian Navier--Stokes equations. Then the following holds true:
    \begin{enumerate} [label=(\roman*)]
        \item \label{CL:1} $\partial_t u \in L_{q,\loc}((0,\infty);(V^1_p)')$;
        \item \label{CL:2} $u$ is an energy solution.
    \end{enumerate}
\end{proposition}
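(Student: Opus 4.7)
The key observation is that for $p > \tfrac{3d+2}{d+2}$, the nonlinear convective term $(u\cdot\nabla)u = \diverg(u\otimes u)$ belongs to $L_q((0,T);(V^1_p)')$ with $q=p/(p-1)$, which both improves the regularity of $\partial_t u$ beyond that offered by the definition of a Leray--Hopf solution, and, once paired with the standard Lions--Magenes chain rule, legitimates using $u$ itself as a test function.

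For part \ref{CL:1}, I would estimate each contribution to the momentum equation separately. Property \ref{it:W4} gives $\vert DW(\epsilon(u))\vert \leq C(1+\vert\epsilon(u)\vert^{p-1})$, hence $DW(\epsilon(u)) \in L_q((0,T)\times\T_d)$ and consequently $\diverg DW(\epsilon(u)) \in L_q((0,T);(V^1_p)')$. For the convective term, I would invoke the Ladyzhenskaya-type parabolic interpolation
\[
    L_\infty((0,T);L_2) \cap L_p((0,T);L_{p^*}) \hookrightarrow L_{p(d+2)/d}((0,T)\times\T_d),
\]
where $p^* = dp/(d-p)$ (with the obvious modification if $p\geq d$). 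The energy inequality \eqref{LH:energy} provides $u \in L_\infty((0,T);L_2)$, and $u \in L_p((0,T);V^1_p)$ gives $u \in L_p(L_{p^*})$ via Sobolev embedding, so $u \in L_{p(d+2)/d}$ in space-time. A direct computation verifies that $p(d+2)/d \geq 2q$ precisely when $p \geq \tfrac{3d+2}{d+2}$, and hence in our regime $u \otimes u \in L_q((0,T)\times\T_d)$. Hölder's inequality then yields $\diverg(u\otimes u) \in L_q((0,T);(V^1_p)')$, and since $\nabla \pi$ vanishes against divergence-free test functions, we conclude $\partial_t u \in L_{q,\loc}((0,\infty);(V^1_p)')$.

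For part \ref{CL:2}, since $V^1_p \hookrightarrow V^0_2 \hookrightarrow (V^1_p)'$ (the first embedding uses $p > \tfrac{2d}{d+2}$), the combination $u \in L_p((0,T);V^1_p)$ with $\partial_t u \in L_q((0,T);(V^1_p)')$ places us in the standard Lions--Magenes framework, yielding $u \in C([0,T];V^0_2)$ together with
\[
    \tfrac{1}{2}\tfrac{d}{dt}\|u(t)\|_{L_2}^2 = \langle \partial_t u(t), u(t)\rangle \quad \text{for a.e. } t > 0.
\]
Since $u(t) \in V^1_p$ for almost every $t$, I would test \eqref{LH:equation} with $\phi = u(t)$. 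The convective contribution vanishes by an integration by parts and the divergence-free condition, which is rigorously justified by the space-time integrabilities $u \otimes u \in L_q$ and $\nabla u \in L_p$ just obtained. This leaves $\langle \partial_t u, u\rangle = -\langle DW(\epsilon(u)), \epsilon(u)\rangle$, and integrating from $s$ to $t$ delivers the energy equality.

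The main obstacle is the interpolation step: the critical exponent $\tfrac{3d+2}{d+2}$ arises precisely as the threshold at which $p(d+2)/d = 2q$, so exactly at this value $u\otimes u$ becomes integrable enough in space-time to land in $L_q((V^1_p)')$. Below this exponent the convective term can only be tested against a strictly larger space $V^1_\beta$ with $\beta > p$, and the chain rule applied to the pair $(u, \partial_t u)$ with $u \in V^1_p$ is no longer a direct consequence of Lions--Magenes, which is precisely the well-known reason why the energy equality is open for subcritical exponents and motivates the more involved analysis in the remainder of the paper.
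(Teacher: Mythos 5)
Your proof is correct, but for part \ref{CL:1} it takes a genuinely different route from the paper. The paper establishes $\partial_t u \in L_{q,\loc}((0,\infty);(V^1_p)')$ by a bootstrapping argument: starting from the baseline $\partial_t u \in L_{1,\loc}((0,\infty);(V^1_s)')$, it repeatedly applies the Aubin--Lions-type interpolation in Corollary~\ref{coro:Draco_Malfoy} (between $L_p(W^1_p)$ and $W^1_{q_0}(W^{h_0}_{r_0})$) and the equation \eqref{LH:equation} to upgrade the integrability of $\diverg(u\otimes u)$ and hence of $\partial_t u$ at each step, terminating at the critical regularity precisely when $p \geq \tfrac{3d+2}{d+2}$. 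You instead use the classical Ladyzhenskaya parabolic interpolation $L_\infty(L_2)\cap L_p(L_{p^*}) \hookrightarrow L_{p(d+2)/d}$ in one shot, exploiting that the $L_\infty(L_2)$ bound is already available for free from the energy inequality. The elementary computation $p(d+2)/d \geq 2q \Leftrightarrow p \geq \tfrac{3d+2}{d+2}$ then gives $u\otimes u \in L_q((0,T)\times\T_d)$ directly, without any iteration or reliance on the time-derivative regularity of $u$. Your route is shorter and arguably more transparent about where the critical exponent comes from (the scaling of the convective term against the dissipation space), while the paper's bootstrap via its tailor-made interpolation corollary fits more naturally the fractional-Sobolev machinery developed in Section~\ref{sec:Amann} and is reused elsewhere in the paper. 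For part \ref{CL:2} you use the same idea as the paper: the Gelfand triple chain rule plus testing with $u$ and the cancellation of the convective term, so no difference there; one only needs to note that for $d\ge 2$ the threshold $\tfrac{3d+2}{d+2}\ge 2$, so $V^1_p\hookrightarrow V^1_2 \hookrightarrow V^0_2 \hookrightarrow (V^1_p)'$ is a bona fide Gelfand triple.
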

The first assertion in Proposition \ref{prop:energy} follows from repeatedly using the equation \eqref{LH:equation} and interpolation results. We further hint at this in Subsection \ref{sec:regularity}. The second assertion then follows from testing equation \eqref{LH:equation} with $u$ itself and realising that it is allowed to integrate in time.

We mention that in the physical dimensions $d = 2,3$ the critical exponent in Proposition \ref{prop:energy} is given by
\begin{equation*}
    \tfrac{3d+2}{d+2}
    =
    \begin{cases}
        2 & \text{if } d=2
        \\
        \tfrac{11}{5} > 2 & \text{if } d=3.
    \end{cases}
\end{equation*}

\subsection{Definition of the WIDE functional} \label{sec:WIDE}
We now introduce the Weighted Inertia-Dissipation-Energy (WIDE) functional. \schange{For a positive parameter $\eta>0$ we define functionals $I_\eta$ and are interested in the limit as $\eta \to 0$}.  To this end, let $1<p<\infty$ and assume that the potential $W$ satisfies \ref{it:W1}--\ref{it:W4}. Given $\eta > 0$ , we define for $u\in U_\eta$ the WIDE functional $I_\eta$ by

\begin{equation*} 
    I_{\eta}(u) \coloneq \int_0^{\infty} e^{-t/\eta} \int_{\T_d} \tfrac{1}{2} \vert \partial_t u 
    + 
    \diverg (u \otimes u) \vert^2+
    \tfrac{1}{\eta} W(\epsilon(u)) 
    +
    \tfrac{C_4}{4}\abs{\nabla u}^4
    \dd x \dd t.
\end{equation*}
In order for the functional $I_\eta$ to be well-defined, we introduce a suitable function space $U_\eta$. Let $u^0_{\eta} \in V^1_p$ be an approximation of $u^0$.


\begin{definition}\label{def:U}
We define the space $U_{\eta}$ as the space of functions $u \colon [0,\infty) \times \T_d \to \R^d$, such that the following is satisfied.
\begin{enumerate} [label=($U_\eta$-\arabic*)]
    \item $e^{-t/(p \eta)} u \in L_p((0,\infty);V^1_p)$;
    \item\label{it:Ueta2} $e^{-t/(2\eta)} \partial_t u \in L_2((0,\infty);L_2)$;
    \item\label{it:Ueta3} $e^{-t/(4\eta)} u \in L_4((0,\infty);V^1_4)$;
    \item\label{it:Ueta4} $u(0) = u^0_\eta$;
    \item\label{it:Ueta5} $\int u_\eta(t) \dd x = \int u^0_{\eta} \dd x$ for a.e. $t \in (0,\infty)$.
\end{enumerate}
\end{definition}


\noindent Note that the exponential factor is adapted to the functional $I_\eta$ and that the $L_2$-norm of $\diverg (u\otimes u)$ is controlled by the $V^1_4$ norm of $u$. The terms without the exponential factor enjoy the corresponding regularity \emph{locally} in time. Furthermore \ref{it:Ueta4} is a reasonable initial condition as \ref{it:Ueta2} implies that $u \in C^{\alpha}((0,T);L_2)$ for some $\alpha>0$. 
In Section \ref{sec:shearthinning}
we investigate minimisers to the functional $I_\eta$ among all functions $u_\eta \in U_\eta$. 

\bigskip
\noindent{\textbf{Approximation of the initial value. }Let an initial condition $u^0 \in V^0_2$
be given. For simplicity, throughout this paper we assume that
\begin{equation} \label{eq:av0}
    \int_{\T_d} u^0 \dd x = 0.
\end{equation}
The average is conserved by the evolution, and the assumption is without loss of generality since other averages can also be considered, e.g. by a change of coordinates.
For $\eta>0$ we approximate the initial condition $u^0$ by $u^0_\eta$ such that
\begin{enumerate} [label=(IV\arabic*)]
    \item \label{it:u01} 
    $u^0_\eta \to u^0$ in $V^0_2$;
    \item\label{it:initial2} $\Vert \nabla u^0_\eta \Vert_{L_p}^p \leq C_0 \frac{1}{\eta}$;
    \item\label{it:initial3} $\Vert \diverg (u^0_{\eta} \otimes u^0_{\eta}) \Vert_{L_2}^2 \leq C_0 \frac{1}{\eta}$;
    \item\label{it:initial4} $\Vert \nabla u^0_\eta \Vert_{L_4}^4 \leq C_0 \tfrac{1}{\eta}$;
    \item \label{it:inital5} $\int_{\T_d} u^0_\eta \dd x =0$.
\end{enumerate}


\subsection{Main result}
We are now in a position to precisely formulate the main result of this paper. For this purpose we define an exponent $\gamma$ as 
\begin{equation} \label{eq:def_gamma}
    \gamma \coloneq \max\{ p,4\}.
\end{equation}
The choice of $\gamma$ will be justified in Subsection \ref{sec:EL:1}.


\begin{theorem}[Main result] \label{thm:main}
Let $p>\frac{2d}{d+2}$. For each $\eta > 0$ the functional $I_\eta$ possesses a minimiser $u_{\eta} \in U_{\eta}$. Moreover, there exist a subsequence $u_{\eta}$ (not relabeled) and a function $u \in L_p((0,\infty);V^1_p) \cap L_{\infty}((0,\infty);L_2)$ such that  
\begin{equation} \label{main:convergence}
    \begin{split}
        u_{\eta} \lweakto u &\quad \text{in } L_p((0,\infty);V^1_p).\\
        \partial_t u_{\eta} \lweakto \partial_t u &\quad \text{in } L_{p\wedge q}((0,T);(V^1_\gamma)') 
        \quad \text{for any } T>0,
    \end{split}
\end{equation}
where $p\wedge q=\min\{p,q\}$. The limit element $u$ is a Leray--Hopf solution to the non-Newtonian Navier--Stokes system with initial value $u^0$. 
\end{theorem}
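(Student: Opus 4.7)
The plan is to carry out the construction in four stages: existence of a minimiser, uniform bounds together with an $\eta$-level energy inequality, passage to the limit in the linear terms, and identification of the nonlinear viscous limit via Lemma \ref{lemma:trunc}.

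\textbf{Stage 1: Existence of $u_\eta$ by the direct method.} On the affine constrained class $U_\eta$, coercivity of $I_\eta$ follows from the weighted bounds \ref{it:Ueta2}--\ref{it:Ueta3} combined with the $p$-growth \ref{it:W3}; these also control $\diverg(u\otimes u)$ in $L_2$ via the $V^1_4$-weighted bound so that the quadratic inertia is well defined. Weak lower semicontinuity uses convexity of $W$ (from \ref{it:W2}), convexity of $|\nabla u|^4$, and Aubin--Lions to handle the crossed term $|\partial_t u+\diverg(u\otimes u)|^2$. The constraints \ref{it:Ueta4}--\ref{it:Ueta5} are closed. The first variation of $I_\eta$ against solenoidal test fields produces the Euler--Lagrange equation \eqref{eq:EL_intro}, with a pressure multiplier $\nabla\pi_\eta$ enforcing $\diverg u_\eta=0$.

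\textbf{Stage 2: $\eta$-uniform bounds and an $\eta$-level energy inequality.} Comparing $I_\eta(u_\eta)$ against a concrete competitor extending $u^0_\eta$ by a short affine flow, the initial-value estimates \ref{it:u01}--\ref{it:inital5} give $I_\eta(u_\eta)\le C$ uniformly in $\eta$. To convert this into Leray--Hopf-compatible bounds I test the Euler--Lagrange equation with $u_\eta$, multiply by $e^{-s/\eta}$, and integrate from $t$ to $\infty$; several terms collapse into total $s$-derivatives and the $\tfrac{1}{\eta}$-prefactor of the momentum residual absorbs the remaining inertia contributions, while the stabiliser $\tfrac{C_4}{4}|\nabla u_\eta|^4$ (with $C_4\ge\tfrac12(9C_P^2+1)$) dominates the cross terms coming from the convection. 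The outcome is an inequality of the form
\begin{equation*}
E[u_\eta](t)+\int_0^t\!\!\int_{\T_d}DW(\epsilon(u_\eta))\!:\!\epsilon(u_\eta)\,\di x\,\di s\le E[u^0_\eta]+o(1)\quad(\eta\to 0),
\end{equation*}
which yields uniform bounds $\|u_\eta\|_{L_\infty L_2}+\|u_\eta\|_{L_p V^1_p}\le C$ and, via \ref{it:W4}, uniform control of $DW(\epsilon(u_\eta))$ in $L_{q,\loc}$. Inserting these back into the Euler--Lagrange equation controls $\partial_t u_\eta$ in $L_{p\wedge q,\loc}((0,\infty);(V^1_\gamma)')$, which is precisely where the choice $\gamma=\max\{p,4\}$ of \eqref{eq:def_gamma} enters.

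\textbf{Stage 3: Compactness and convergence in the linear terms.} Along a subsequence the convergences \eqref{main:convergence} follow from the bounds of Stage~2. The chain $V^1_p\cembed L_{2+\delta}\embed(V^1_\gamma)'$, where the first compact embedding requires exactly $p>\tfrac{2d}{d+2}$, combined with Aubin--Lions applied to $(u_\eta,\partial_t u_\eta)$, yields strong convergence $u_\eta\to u$ in $L_2((0,T);L_2)$ for every $T<\infty$. Hence $u_\eta\otimes u_\eta\to u\otimes u$ locally in $L_{1+\delta}$ and the convective term passes to the limit against any test field $\phi\in V^1_\beta$. Only the weak limit $\chi\in L_{q,\loc}$ of $DW(\epsilon(u_\eta))$ remains to be identified — this is the main obstacle, since the standard parabolic Minty/Lipschitz-truncation route is not available because our regularisation is \emph{elliptic} in time.

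\textbf{Stage 4: Identification $\chi=DW(\epsilon(u))$.} Granting the elliptic-parabolic solenoidal Lipschitz truncation of Lemma \ref{lemma:trunc}, I construct for each level $\lambda_k$ a solenoidal space-time-Lipschitz approximation $\phi_{\eta,k}$ of $u_\eta-u$ that agrees with $u_\eta-u$ off a bad set of arbitrarily small measure and whose time derivative is compatible with the weighted elliptic regularisation. Testing the difference of the Euler--Lagrange equations for $u_\eta$ and the limit equation for $u$ against $\phi_{\eta,k}$, sending $\eta\to 0$ with $\lambda_k$ fixed and then $\lambda_k\to\infty$, the elliptic-regularisation, inertia and stabiliser contributions vanish (the $\tfrac{1}{\eta}$-prefactor is absorbed once the stabiliser and the $\partial_t u_\eta$-bound from Stage~2 are brought in), leaving
\begin{equation*}
\limsup_{\eta\to 0}\int_0^\infty\!\!\int_{\T_d}\bigl(DW(\epsilon(u_\eta))-DW(\epsilon(u))\bigr)\!:\!\epsilon(u_\eta-u)\,\di x\,\di t\le 0.
\end{equation*}
Monotonicity of $\epsilon\mapsto DW(\epsilon)\!:\!\epsilon$ from \ref{it:W2} and Minty's trick then force $\chi=DW(\epsilon(u))$ almost everywhere. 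The initial condition $u(0,\cdot)=u^0$ in the weak $L_2$-sense follows from the trace regularity implicit in \ref{it:Ueta2} together with \ref{it:u01} and \ref{it:Ueta4}, while the energy inequality \eqref{LH:energy} for $u$ is inherited from the $\eta$-level inequality of Stage~2 by weak lower semicontinuity of $\|\cdot\|_{L_2}$ and Fatou's lemma for the now-identified dissipation, completing the verification that the limit $u$ is a Leray--Hopf solution.
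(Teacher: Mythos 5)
Your high-level strategy does coincide with the paper's: direct method for minimisers, an $\eta$-level energy inequality, compactness, then the elliptic-parabolic Lipschitz truncation plus Minty for the viscous nonlinearity. Three concrete steps as written, however, do not go through.

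\textbf{(1) The bound on $I_\eta(u_\eta)$.} You claim $I_\eta(u_\eta)\le C$ uniformly in $\eta$. This is false under the initial-data hypotheses \ref{it:initial2}--\ref{it:initial4}: for any competitor equal to $u^0_\eta$ near $t=0$, the term $\tfrac1\eta W(\epsilon(u))$ alone contributes on the order of $W(\epsilon(u^0_\eta))\sim\|\epsilon(u^0_\eta)\|_{L_p}^p$, and the hypotheses only give $\|\epsilon(u^0_\eta)\|_{L_p}^p\le C_0\eta^{-1}$. For a generic $u^0\in V^0_2\setminus V^1_p$ one cannot do better. Proposition~\ref{prop:existence} gives exactly $I_\eta(u_\eta)\le C\eta^{-1}$, and it is this weaker bound that produces the $\eta$-dependent estimates $\|\partial_t u_\eta\|^2_{L_2}+\|\nabla u_\eta\|^4_{L_4}\le C\eta^{-1}$ of Proposition~\ref{prop:energyinequality1}\ref{prop:bounds1c1}. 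These scalings propagate through the whole argument; a uniform $I_\eta$-bound would be incompatible with the genuinely weak convergence for $p<\tfrac{3d+2}{d+2}$.

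\textbf{(2) What gets truncated.} In Stage~4 you apply the truncation to $u_\eta-u$. But Lemma~\ref{lemma:trunc} (more precisely \ref{P:1}--\ref{P:5} in Section~\ref{sec:truncation}) requires $\eta^{1/2}\partial_t w_\eta$ bounded in $L_2((0,T);L_2)$ and $\eta\partial_t^2 w_\eta$ bounded in $L_{s'}((0,T);(V^1_s)')$. For $w_\eta=u_\eta-u$ these fail: $\partial_t u$ is only known to lie in $L_{p\wedge q}((0,T);(V^1_{\tis})')$, not $L_2(L_2)$, and $\partial_t^2 u$ has no integrable representative. The paper therefore sets $w_\eta=u_\eta-\phi_\eta\ast u$ in \eqref{def:w}; the temporal mollification produces precisely the missing derivatives at the correct $\eta$-scale (Lemma~\ref{prop:weta}). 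Without it the truncation lemma simply does not apply.

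\textbf{(3) The inertia remainder does not vanish; it has a sign.} You assert that the elliptic-regularisation, inertia and stabiliser contributions vanish when the truncated approximation is used as a test function. In fact, after testing the Euler--Lagrange equation with $\psi\zeta w_\eta^L$ and integrating by parts, the inertia pairing produces
\[
Q_L \coloneq \limsup_{\eta\to0}\ \eta\int_0^\infty e^{-s/\eta}\psi\zeta\,\langle\partial_t w_\eta^L,\partial_t w_\eta^L\rangle\dd s,
\]
which has no reason to be small a priori. Proposition~\ref{prop:hansschaefer}\ref{hs:1} obtains only a one-sided inequality because $Q_L\ge 0$ may be dropped from the lower-bounding side; that $Q_L=0$ (and hence $\eta^{1/2}\partial_t w^L_\eta\to 0$) is recovered only \emph{after} Minty by inserting $\phi=u$, see \ref{hs:3}. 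This sign-based bootstrap is the central algebraic mechanism that replaces the parabolic arguments of \cite{BDS}, and it is erased by the phrase \enquote{the $\tfrac1\eta$-prefactor is absorbed.} Likewise, mere boundedness of $\eta^{1/2}\partial_t u_\eta$ does not suffice: one also needs the decompositions of $\partial_t w_\eta$ and $\eta\partial_t^2 w_\eta$ into a weakly-null part (in $L_q((V^1_p)')$) and a strongly-null part (in $L_{s'}((V^1_s)')$), cf.\ Proposition~\ref{prop:dualbounds}\ref{dualbounds:4}--\ref{dualbounds:5} and Lemma~\ref{prop:weta}\ref{prop:weta4}--\ref{prop:weta5}, so that the separate error pieces $S_L^j$ can be sent to zero after interchanging $\eta\to 0$ and $L\to\infty$.
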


If $p>\tfrac{3d+2}{d+2}$ in Theorem \ref{thm:main}, then $u$ is an energy solution and the convergence
\begin{equation*}
    u_{\eta} \longrightarrow u \quad \text{in } L_p((0,\infty);V^1_p) 
\end{equation*}
is strong, cf. Theorem \ref{prop:strong}.


Note that the statement for shear-thinning fluids is slightly different from the shear-thickening case regarding the convergence of the time derivative. While for $p \geq 2$, the regularity in time is the one that is natural in order to be able to test with $u$, for $p<2$ we obtain less regularity. 

The proof of Theorem \ref{thm:main} roughly consists of the following steps:
\begin{itemize}
    \item show that minimisers of $I_{\eta}$ exist and derive a rather weak bound on their norm in $U_{\eta}$;
    \item verify the energy inequality for minimisers $u_\eta$ and infer uniform estimates for the terms appearing in \eqref{main:convergence} to allow for a weakly convergent subsequence; 
    \item show that the limit is a Leray--Hopf solution to the non-Newtonian Navier--Stokes system.
\end{itemize}
Before we start, we recall some crucial and frequently used interpolation results and their consequences in the following Subsection \ref{sec:Amann}.

\subsection{Interpolation results} \label{sec:Amann}

In this subsection, we apply some abstract interpolation results for fractional Sobolev spaces from Amann's book \cite{Bibel} in order to obtain a rather sharp Aubin--Lions type result that fits the functional setting of (fractional) Sobolev spaces of our paper and is crucial for the subsequent analysis.
For this purpose, we introduce the following notation. Let $s_0, s_1 \in \R$ with $s_0\neq s_1$ and let $q_0, q_1 \in [1,\infty)$. Then we define
\begin{equation*}
    s_\theta = (1-\theta) s_0 + \theta s_1
    \quad
    \text{and}
    \quad
    \tfrac{1}{q_\theta}
    =
    \tfrac{1-\theta}{q_0} + \tfrac{\theta}{q_1}.
\end{equation*}
A central role regarding the regularity results in the following subsection \ref{sec:regularity} is occupied by the following Aubin--Lions type interpolation result.

\begin{theorem}[{\cite[Thm.  VII. 7.4.1]{Bibel}}] \label{thm:Aubin-Lions}
Let $X_1 \hookrightarrow X \hookrightarrow X_0$ be locally convex Banach spaces and let $0 < \theta < 1$ such that
\begin{equation*}
    X_1 \xhookrightarrow{c} X_0
    \quad
    \text{and}
    \quad
    \|x\|_X
    \leq
    C
    \|x\|_{X_0}^{1-\theta} \|x\|_{X_1}^{\theta},
    \quad
    x \in X_1,
\end{equation*}
for some constant $C>0$.
Moreover, let $s_0, s_1 \in [0,\infty)$ with $s_0\neq s_1$ and let $q, q_0, q_1 \in [1,\infty)$ such that
\begin{equation*}
    0 \leq s < s_\theta 
    \quad
    \text{and}
    \quad
    s-\tfrac{1}{q} < s_\theta - \tfrac{1}{q_\theta}.
\end{equation*}
Then we have
\begin{equation*}
    W^{s_1}_{q_1}\bigl((0,T);X_1\bigr) 
    \cap
    W^{s_0}_{q_0}\bigl((0,T);X_0\bigr)
    \xhookrightarrow{c} 
    W^{s}_{q}\bigl((0,T);X\bigr) 
\end{equation*}
\end{theorem}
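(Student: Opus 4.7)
The plan is to establish the compact embedding in two distinct stages: first a continuous embedding of the intersection into a single vector-valued fractional Sobolev space, and then a Rellich--Kondrachov type argument for the compactness. This two-stage decomposition exploits the two conditions on $(s,q)$ separately: the condition $s-\tfrac1q < s_\theta - \tfrac{1}{q_\theta}$ gives the continuous step (Sobolev embedding of time-regularity), while $s < s_\theta$ provides room for a compactness gain.

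For the first step I would prove the continuous embedding
\begin{equation*}
    W^{s_1}_{q_1}\bigl([0,T];X_1\bigr) \cap W^{s_0}_{q_0}\bigl([0,T];X_0\bigr) \hookrightarrow W^{s_\theta}_{q_\theta}\bigl([0,T];X\bigr)
\end{equation*}
by combining the pointwise interpolation inequality $\|x\|_X \leq C \|x\|_{X_0}^{1-\theta}\|x\|_{X_1}^{\theta}$ with Hölder's inequality. At the level of $L^{q_\theta}$-norms this is immediate. For the fractional-order part one uses the Sobolev--Slobodeckij (Gagliardo) characterization of $W^{s_\theta}_{q_\theta}$ and applies the interpolation inequality pointwise to $u(t)-u(\tau)$; Hölder's inequality with exponents calibrated so that the smoothness indices combine to $s_\theta$ and the integrability to $q_\theta$ then yields the required bound. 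Alternatively, one may invoke the complex interpolation identity $[W^{s_0}_{q_0}(X_0), W^{s_1}_{q_1}(X_1)]_\theta = W^{s_\theta}_{q_\theta}([X_0,X_1]_\theta)$ together with $[X_0,X_1]_\theta \hookrightarrow X$ (which is what the interpolation inequality encodes in the sense of K-functionals).

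For the second step I would promote the continuous embedding to a compact one using a Riesz--Fréchet--Kolmogorov argument applied to the time variable. The scalar embedding $W^{s_\theta}_{q_\theta}([0,T]) \hookrightarrow W^s_q([0,T])$ is compact under both conditions on $s,q,\theta$, and this scalar compactness can be transferred to the vector-valued setting using $X_1 \xhookrightarrow{c} X_0$: a bounded sequence $(u_n)$ in the intersection takes values, at almost every time, in a bounded subset of $X_1$, which is relatively compact in $X_0$, hence (via the interpolation inequality) compactness-friendly in $X$. Combining this with the equicontinuity in time provided by the $W^{s_\theta}_{q_\theta}$-bound from step one gives relative compactness in $W^s_q([0,T];X)$ via a standard Arzelà--Ascoli/translation-estimate scheme.

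The main obstacle is the fractional character of the Sobolev scales in time, which forces one to work with Gagliardo seminorms rather than classical derivatives. In the first step, one must verify that when Hölder's inequality is applied to the double integrals defining the Sobolev--Slobodeckij seminorm, the singular weights $|t-\tau|^{-s_i - 1/q_i}$ combine correctly to produce the single weight $|t-\tau|^{-s_\theta - 1/q_\theta}$; this is the computational heart of the proof. The alternative route through Amann's vector-valued interpolation theory bypasses this combinatorial bookkeeping but relies on the full machinery of Besov/Bessel-potential spaces with values in Banach couples, which is precisely the content of the cited chapter of \cite{Bibel}.
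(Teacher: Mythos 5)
A preliminary remark: the paper does \emph{not} prove this theorem. It is quoted verbatim from Amann's monograph \cite{Bibel} (Theorem 7.4.1) and used as a black box, so there is no paper-internal argument against which to compare your sketch; what follows is an assessment of the sketch on its own terms.

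Your two-stage strategy — a continuous embedding into $W^{s_\theta}_{q_\theta}([0,T];X)$ by interpolation, followed by a compactness upgrade — is the right Aubin--Lions--Simon template, and the first stage is sound (the direct Gagliardo-seminorm estimate is preferable to the exact interpolation identity, which needs retraction hypotheses in Besov scales that you would otherwise have to verify). The genuine gap is in stage two. The Riesz--Fr\'echet--Kolmogorov/Arzel\`a--Ascoli scheme you describe, combined with $X_1\xhookrightarrow{c}X_0$, yields relative compactness in $L_q([0,T];X)$, \emph{not} in $W^s_q([0,T];X)$ for $s>0$: the translation-estimate criterion characterizes precompactness of $L_q$-type spaces, and the vector-valued embedding $W^{s_\theta}_{q_\theta}([0,T];X)\hookrightarrow W^s_q([0,T];X)$ is never compact for infinite-dimensional $X$, so the phrase ``transferring scalar compactness to the vector-valued setting'' does not by itself close the argument. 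What is missing is the closing interpolation step: a Lions--Peetre/Ehrling-type inequality of the form
\begin{equation*}
  \|v\|_{W^s_q([0,T];X)}\ \leq\ C\,\|v\|_{L_q([0,T];X)}^{1-\mu}\,\|v\|_{W^{s_\theta}_{q_\theta}([0,T];X)}^{\mu}
\end{equation*}
for some $\mu\in(0,1)$, proved directly on the Gagliardo seminorm by splitting $|t-\tau|\lessgtr\delta$ and optimizing in $\delta$ (no compactness required, hence valid for Banach-valued functions). Applied to differences $u_n-u_m$, this inequality upgrades the $L_q(X)$-compactness obtained via Aubin--Lions--Simon to $W^s_q(X)$-compactness, using the uniform $W^{s_\theta}_{q_\theta}(X)$-bound. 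This is exactly where the two strict inequalities $s<s_\theta$ and $s-\tfrac1q<s_\theta-\tfrac{1}{q_\theta}$ enter; without that step your argument establishes only the $s=0$ conclusion.
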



Theorem \ref{thm:Aubin-Lions} is mainly applied to the case, where both $X_0$ and $X_1$ are fractional Sobolev spaces, whence we need the following Lemma 
(cf. \cite[Thm. VII. 7.2.5]{Bibel} and \cite[Thm. I.2.11.1]{OldTestament2}).

\begin{lemma}[Interpolation of fractional Sobolev spaces] \label{lem:MichaelWendler}
Let $\Omega \subset \R^d$ be a bounded domain and let $X$ be a Banach space. Moreover, let $h_0 < h_1 \in \R$ and $r_0, r_1 \in [1,\infty)$ such that $\tfrac{1}{r_1} - \tfrac{h_1}{d} < \tfrac{1}{r_0} - \tfrac{h_0}{d}$, i.e.
\begin{equation*}
    W^{h_1}_{r_1}(\Omega;X) 
    \xhookrightarrow{c}
    W^{h_0}_{r_0}(\Omega;X).
\end{equation*}
Then the interpolation inequality
\begin{equation*}
    \|x\|_{W^{h_\theta}_{r_\theta}(\Omega;X)}
    \leq
    C
    \|x\|_{W^{h_0}_{r_0}(\Omega;X)}^{1-\theta} \|x\|_{W^{h_1}_{r_1}(\Omega;X)}^{\theta},
    \quad
    x \in W^{h_1}_{r_1}(\Omega;X),
\end{equation*}
holds true for the space $W^{h_\theta}_{r_\theta}(\Omega;X)$.
\end{lemma}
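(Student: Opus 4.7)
The plan is to realise $W^{h_\theta}_{r_\theta}(\Omega;X)$ as a (complex or real) interpolation space between $W^{h_0}_{r_0}(\Omega;X)$ and $W^{h_1}_{r_1}(\Omega;X)$, after which the asserted H\"older-type estimate reduces to the universal inequality $\|x\|_{[X_0,X_1]_\theta}\le C\|x\|_{X_0}^{1-\theta}\|x\|_{X_1}^\theta$ that holds by construction of any exact interpolation functor. The compact-embedding hypothesis $\tfrac{1}{r_1}-\tfrac{h_1}{d}<\tfrac{1}{r_0}-\tfrac{h_0}{d}$ is only needed at the level of the statement to guarantee $W^{h_1}_{r_1}\subset W^{h_0}_{r_0}$, so that the right-hand side of the inequality is actually finite for every $x\in W^{h_1}_{r_1}(\Omega;X)$; it does not enter the interpolation argument itself.

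I would first reduce to the whole-space case $\Omega=\R^d$. Since $\Omega$ is bounded (and, as is standard in Amann's framework, at least Lipschitz), there exists a universal extension operator $E$ that is simultaneously bounded from $W^{h_j}_{r_j}(\Omega;X)$ into $W^{h_j}_{r_j}(\R^d;X)$ for both $j=0,1$, e.g.\ a Rychkov- or Stein-type extension. Combined with the trivial restriction operator, this reduces proving the estimate on $\Omega$ to proving its analogue on $\R^d$. On $\R^d$ I would then use the Fourier side: decomposing $u=\sum_k\Delta_k u$ via a dyadic Littlewood--Paley partition and invoking the Bessel-potential characterisation of $W^s_r$, the identities $s_\theta=(1-\theta)s_0+\theta s_1$ and $\tfrac{1}{r_\theta}=\tfrac{1-\theta}{r_0}+\tfrac{\theta}{r_1}$ are exactly designed to let one split the $W^{h_\theta}_{r_\theta}$-norm by H\"older across frequency blocks into a product of the $W^{h_0}_{r_0}$- and $W^{h_1}_{r_1}$-norms, which yields the claimed inequality directly.

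The main obstacle is justifying the interpolation identity in the Banach-space valued setting when the integrability exponents differ, $r_0\ne r_1$. The clean complex-interpolation statement $[H^{s_0}_{r_0}(\R^d;X),H^{s_1}_{r_1}(\R^d;X)]_\theta=H^{s_\theta}_{r_\theta}(\R^d;X)$ requires a vector-valued Mikhlin multiplier theorem, which is only available when $X$ is UMD. For an arbitrary Banach space $X$, I would instead route the argument through the Besov scale $B^s_{r,q}(\R^d;X)$, for which the real-interpolation identity $(B^{s_0}_{r_0,q_0},B^{s_1}_{r_1,q_1})_{\theta,r_\theta}=B^{s_\theta}_{r_\theta,r_\theta}$ holds without any assumption on $X$, and then sandwich $W^s_r$ via the continuous embeddings $B^s_{r,1}\hookrightarrow W^s_r\hookrightarrow B^s_{r,\infty}$. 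The resulting loss in the fine index is harmless for the purely multiplicative inequality asserted in the lemma and hence yields the claim, exactly matching the statements in \cite[Thm.~VII.2.11.1]{OldTestament2} and \cite[Thm.~VII.7.2.5]{Bibel} that are invoked here.
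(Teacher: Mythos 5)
The paper does not prove this lemma at all; it is cited verbatim from Amann (\cite[Thm.~VII.7.2.5]{Bibel} and \cite[Thm.~I.2.11.1]{OldTestament2}), so there is no ``paper proof'' to compare against. Your sketch is therefore the only argument on the table, and its core idea --- split by Littlewood--Paley blocks and apply H\"older in the summation index and in the spatial variable --- is the standard route and is essentially correct. In particular, once one observes
\[
2^{ks_\theta}\|\Delta_k u\|_{L^{r_\theta}(X)}
\le \bigl(2^{ks_0}\|\Delta_k u\|_{L^{r_0}(X)}\bigr)^{1-\theta}
      \bigl(2^{ks_1}\|\Delta_k u\|_{L^{r_1}(X)}\bigr)^{\theta}
\]
(valid on a finite-measure domain, or after extension to $\R^d$), another H\"older in $\ell^{q_\theta}$ gives the multiplicative inequality for the Besov scale with arbitrary Banach target $X$, no UMD assumption and no interpolation-space identity needed.

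Where the write-up overshoots is the paragraph that routes through $(B^{s_0}_{r_0,q_0},B^{s_1}_{r_1,q_1})_{\theta,r_\theta}=B^{s_\theta}_{r_\theta,r_\theta}$. That real-interpolation identity is \emph{not} generally valid when $r_0\neq r_1$: real interpolation of Besov spaces with distinct integrability exponents produces Lorentz--Besov-type spaces, not the expected $B^{s_\theta}_{r_\theta,r_\theta}$. You do not need that identity, and relying on it reintroduces a gap. Likewise, the sandwiching $B^s_{r,1}\hookrightarrow W^s_r\hookrightarrow B^s_{r,\infty}$ is directionally problematic: to bound $\|u\|_{W^{h_\theta}_{r_\theta}}$ from above you would want $B^{h_\theta}_{r_\theta,1}$ on the outside and $B^{h_j}_{r_j,\infty}$ on the inside, but the fine indices cannot simultaneously be $1$ at level $\theta$ and $\infty$ at the endpoints under H\"older's relation $1/q_\theta=(1-\theta)/q_0+\theta/q_1$. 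The way out is to note that in Amann's convention $W^s_r$ with fractional $s$ is the Sobolev--Slobodeckij space, i.e.\ $W^s_r=B^s_{r,r}$, so the fine index at each level is simply $r_j$; the relation $1/r_\theta=(1-\theta)/r_0+\theta/r_1$ then makes the two H\"older steps close exactly, and no sandwiching is required at all. For integer $s$, where $W^s_r$ is a Bessel-potential space, the correct reduction is via Besov embeddings (at the cost of an $\varepsilon$ of regularity, which is what the strict inequality $s<s_\theta$ in Theorem~\ref{thm:Aubin-Lions} absorbs); keeping this distinction explicit is what Amann's cited theorems do for you, and it is the piece your proposal glosses over.
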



Combining Theorem \ref{thm:Aubin-Lions} and Lemma \ref{lem:MichaelWendler} yields the following Aubin--Lions type interpolation result which is suitable for the setting of the non-Newtonian Navier--Stokes problem treated in our paper. 


\begin{corollary}[Aubin--Lions type interpolation] \label{coro:Draco_Malfoy}
Suppose that $h_0 < h_1 \in \R$ and $q_0, q_1, r_0, r_1 \in [1,\infty)$ satisfy $\tfrac{1}{r_1} - \tfrac{h_1}{d} < \tfrac{1}{r_0} - \tfrac{h_0}{d}$. Then
\begin{equation*}
    L_{q_1}\bigl((0,T);W^{h_1}_{r_1}(\T_d)\bigr)
    \cap
    W^1_{q_0}\bigl((0,T);W^{h_0}_{r_0}(\T_d)\bigr)
    \xhookrightarrow{c}
    L_{q(\theta)}\bigl((0,T);L_{r(\theta)}(\T_d)\bigr),
\end{equation*}
whenever $\max\{0,\tfrac{h_0}{h_0-h_1}\} < \theta < 1$ and
\begin{equation} \label{eq:conditions_Aubin-Lions}
    \tfrac{1}{q(\theta)}
    >
    -1 + \theta
    +
    \tfrac{1-\theta}{q_0}
    +
    \tfrac{\theta}{q_1}
    \quad
    \text{and}
    \quad
     \tfrac{1}{r(\theta)}
    >
    \tfrac{1-\theta}{r_0}
    +
    \tfrac{\theta}{r_1}
    -
    \tfrac{(1-\theta)h_0}{d}
    -
    \tfrac{\theta h_1}{d}.
\end{equation}
\end{corollary}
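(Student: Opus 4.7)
The strategy is a direct composition of Lemma~\ref{lem:MichaelWendler} (spatial interpolation) with Theorem~\ref{thm:Aubin-Lions} (Amann's Aubin--Lions), followed by a Sobolev embedding in the spatial variable. The key is that the condition $\theta > h_0/(h_0-h_1)$ is equivalent to $h_\theta \coloneq (1-\theta) h_0 + \theta h_1 > 0$, which will ensure that the interpolated spatial space is a bona-fide function space embedding into some $L_{r(\theta)}$.

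First, I would take $X_0 = W^{h_0}_{r_0}(\T_d)$, $X_1 = W^{h_1}_{r_1}(\T_d)$ and $X = W^{h_\theta}_{r_\theta}(\T_d)$, with $1/r_\theta = (1-\theta)/r_0 + \theta/r_1$. The compact embedding $X_1 \xhookrightarrow{c} X_0$ is Rellich--Kondrachov on the torus, which applies thanks to the standing hypothesis $1/r_1 - h_1/d < 1/r_0 - h_0/d$. Lemma~\ref{lem:MichaelWendler} furnishes the interpolation inequality $\|x\|_{X} \leq C \|x\|_{X_0}^{1-\theta} \|x\|_{X_1}^{\theta}$ required as input of Theorem~\ref{thm:Aubin-Lions}.

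Next, I would choose the time-regularity parameters $s_0 = 1$, $s_1 = 0$, so that $W^{s_0}_{q_0}([0,T];X_0) = W^1_{q_0}([0,T]; W^{h_0}_{r_0})$ and $W^{s_1}_{q_1}([0,T];X_1) = L_{q_1}([0,T]; W^{h_1}_{r_1})$ are precisely the two spaces appearing on the left-hand side of the corollary. The target time-regularity is $s = 0$ and $q = q(\theta)$. The first constraint of Theorem~\ref{thm:Aubin-Lions}, $0 \leq s < s_\theta$, reduces to $0 < 1 - \theta$, which holds since $\theta < 1$. The second constraint, $s - 1/q < s_\theta - 1/q_\theta$, unfolds to
\begin{equation*}
    \tfrac{1}{q(\theta)} > \tfrac{1}{q_\theta} - (1-\theta) = -1 + \theta + \tfrac{1-\theta}{q_0} + \tfrac{\theta}{q_1},
\end{equation*}
which is exactly the hypothesis~\eqref{eq:conditions_Aubin-Lions}${}_1$. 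Applying Theorem~\ref{thm:Aubin-Lions} yields the compact embedding
\begin{equation*}
    L_{q_1}([0,T];W^{h_1}_{r_1}) \cap W^1_{q_0}([0,T]; W^{h_0}_{r_0}) \xhookrightarrow{c} L_{q(\theta)}([0,T]; W^{h_\theta}_{r_\theta}).
\end{equation*}

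Finally, I would pass from $W^{h_\theta}_{r_\theta}$ to $L_{r(\theta)}$ by a continuous Sobolev embedding in the spatial variable. Since $\theta > h_0/(h_0-h_1)$, we have $h_\theta > 0$, so $W^{h_\theta}_{r_\theta}$ is a genuine subspace of some Lebesgue space and the Sobolev embedding on the torus gives $W^{h_\theta}_{r_\theta} \hookrightarrow L_{r(\theta)}$ precisely under the assumption~\eqref{eq:conditions_Aubin-Lions}${}_2$, namely $1/r(\theta) > 1/r_\theta - h_\theta/d$. Composing the compact embedding above with this continuous one preserves compactness and yields the claim. The only mildly delicate point is the bookkeeping of indices and verifying that the two constraints in~\eqref{eq:conditions_Aubin-Lions} match up exactly with the hypotheses of Theorem~\ref{thm:Aubin-Lions} and the Sobolev embedding; the argument itself is purely a concatenation of the two inputs already available in the paper.
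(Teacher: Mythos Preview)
Your proposal is correct and follows essentially the same route as the paper: apply Theorem~\ref{thm:Aubin-Lions} with $X_0=W^{h_0}_{r_0}$, $X_1=W^{h_1}_{r_1}$, $s_0=1$, $s_1=0$ (using Lemma~\ref{lem:MichaelWendler} for the interpolation inequality), then finish with a spatial Sobolev embedding enabled by $h_\theta>0$. The only cosmetic difference is that the paper first lands in $W^s_{q_\theta}([0,T];W^{h_\theta}_{r_\theta})$ for some $0<s<1-\theta$ and then performs two further Sobolev embeddings (one in time, one in space), whereas you go directly to $s=0$, $q=q(\theta)$ in Theorem~\ref{thm:Aubin-Lions} and need only the spatial embedding; both versions are equivalent bookkeeping.
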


\begin{proof}
The proof follows from Theorem \ref{thm:Aubin-Lions} and Lemma \ref{lem:MichaelWendler}. Indeed, we identify
\begin{equation*}
    X_0 = W^{h_0}_{r_0}(\Omega)
    \quad
    \text{and}
    \quad
    X_1 =  W^{h_1}_{r_1}(\Omega),
\end{equation*}
as well as
\begin{equation*}
    s_0 = 1,
    \quad
    s_1 = 0,
    \quad
    s_\theta = (1-\theta) s_0 + \theta s_1
    =
    1-\theta.
\end{equation*}
Choosing $q=q_\theta$ and then $s < s_\theta = 1-\theta$, the conditions of Theorem \ref{thm:Aubin-Lions} are satisfied. Together with Lemma \ref{lem:MichaelWendler}, this implies
\begin{equation*}
    L_{q_1}\bigl((0,T);W^{h_1}_{r_1}(\T_d)\bigr)
    \cap
    W^1_{q_0}\bigl((0,T);W^{h_0}_{r_0}(\T_d)\bigr)
    \xhookrightarrow{c}
    W^s_q\bigl((0,T);W^{h_\theta}_{r_\theta}(\T_d)\bigr)
\end{equation*}
for 
\begin{equation*}
    h_\theta = (1-\theta) h_0 + \theta h_1
    \quad 
    \text{and}
    \quad
    \tfrac{1}{r_\theta}
    =
    \tfrac{1-\theta}{r_0} + \tfrac{\theta}{r_1}.
\end{equation*}
Observe that $h_\theta > 0$. Thus, by the Sobolev embedding theorem, we find that
\begin{equation*}
    W^{h_\theta}_{r_\theta}(\Omega) \xhookrightarrow{c}
    L_{r(\theta)}(\Omega)
    \quad
    \text{and}
    \quad
    W^s_q((0,T);B) 
    \xhookrightarrow{c}
    L_{q(\theta)}((0,T);B),
\end{equation*}
for a separable Banach space $B$,
whenever the conditions in \eqref{eq:conditions_Aubin-Lions} hold true.
\end{proof}


\subsection{Remarks on the regularity of weak solutions} \label{sec:regularity}
We use the interpolation results of the previous subsection to be able to make more precise statements on the regularity of Leray--Hopf solutions. For this purpose recall that by definition a Leray--Hopf solution has the regularity
\begin{equation}\label{eq:reg_rem}
    u \in L_p((0,\infty);V^1_p) \cap L_\infty((0,\infty);L_2)
    \quad \text{with} \quad
    \partial_t u\colon (0,\infty) \longrightarrow (V^1_\beta)'.
\end{equation}

The critical term in \eqref{LH:equation} for the regularity is the nonlinear duality pairing $\langle \diverg(u\otimes u), \phi \rangle$.


\begin{remark}[Regularity of equation \eqref{LH:equation} in space] \label{rem:reg_beta} 
We study the regularity of the equation \eqref{LH:equation} \emph{only in space}. Note that (for $p<d$)  $u \in V^1_p$ implies $ (u \otimes u) \in L_{dp/2(d-p)}$ and hence $\diverg (u \otimes u) \in W^{-1}_{dp/2(d-p)} \subset (W^1_{\beta,0})'$ for $\beta$ as in \eqref{def:beta}.
Thus, the dual pairing $\langle \diverg(u \otimes u), \phi \rangle$ is well-defined for any $\phi \in V^1_\beta$.
\end{remark}

\begin{remark}[Initial value and integrability in time]
We start from the regularity of $u$ in \eqref{eq:reg_rem}. Using Sobolev embedding and H\"older inequality we get that 
\begin{equation*}
    u \in L_\infty((0,\infty);L_2) \cap L_p((0,\infty);V^1_p) 
    \longhookrightarrow 
    L_2((0,T);L_r)
\end{equation*}
for some $r>2$, whenever $p > \tfrac{2d}{d+2}$, as then $W^1_p \hookrightarrow L_{2 +\varepsilon}$ for some $\varepsilon >0$. Consequently, $\diverg(u \otimes u) \in L_1((0,T);(V^1_s)')$ for $s$ being the dual exponent of $r/2$. Using the pointwise equation \eqref{LH:equation}, we get that $\partial_t u \in L_{1}((0,T);(V^1_s)')$, i.e. $\partial_t u$ is locally integrable in some weak space. \\
Therefore, $u \in L_{\infty}((0,T);L_2) \cap C([0,T];(V^1_s)')$. As a consequence, $t \mapsto u(t)$ is continuous with respect to the \emph{weak} topology of $L_2$. Therefore, prescribing the initial value $u^0$ is sensible.
\end{remark}


\begin{remark}[Regularity of equation \eqref{LH:equation} in space-time] The considerations of the previous remark show that the equation
\begin{equation} \label{LH:equation:2}
       \partial_t u + (u \cdot \nabla) u = \diverg DW(\epsilon(u)) 
\end{equation}
holds in space-time as an element of $L_{1}((0,T);(V^1_s)')$ for some large $s<\infty$. If $p \geq \max\{2, \tfrac{3d}{d+2}\}$, in light of Remark~\ref{rem:regularity-I}\ref{it:reg1} and Remark~\ref{rem:reg_beta}, equation \eqref{LH:equation:2} indeed holds in $L_{1}((0,T);(V^1_p)')$.  \\
Moreover, if $p>\tfrac{3d+2}{d+2}$, we may even show that \eqref{LH:equation:2} holds in $L_q((0,T);(V^1_p)')$, which is the dual of the space $L_p((0,T);V^1_p)$. This follows by bootstrapping via the interpolation statement Corollary \ref{coro:Draco_Malfoy} and the equation. For the sake of clarification, suppose that we already have the critical regularity in time, $\partial_t u \in L_q((0,T);(V^1_p)')$. Applying Corollary \ref{coro:Draco_Malfoy} with $\theta=\tfrac{d+1}{d+2}$ gives the interpolation statement
 \begin{align}\label{eq:interpol}
    L_p((0,T);W^1_p) \cap W^1_q((0,T);W^{-1}_q) \xhookrightarrow{c} L_{2q}((0,T);L_{2q}),
\end{align}
whenever $p \geq \tfrac{3d+2}{d+2}$; i.e. \eqref{LH:equation:2} holds in $L_q((0,T);(V^1_p)')$.\\
In the general case $\partial_t u \in L_{1,\loc}((0,T);(V^1_s)')$ the application of Corollary \ref{coro:Draco_Malfoy} yields $\diverg (u \otimes u) \in L_{q_1}((0,T); (V^{1}_{s_1})')$ for some $q_1>1$. Using equation \eqref{LH:equation}, this yields $\partial_t u \in L_{q_1}((0,T); (V^{1}_{s_1})')$ and iterated application of this argument allows bootstrapping up to $L_q((0,T);(V^1_p)')$.
\end{remark}

\section{Properties of solutions to the approximate problem} \label{sec:shearthinning}
This section is devoted to the first part of the proof of Theorem~\ref{main:convergence}.

Recall that the set $U_{\eta}$ contains all functions $u$ with zero spatial average obeying 
\begin{equation*}
        e^{-t/(p \eta)} u \in L_p((0,\infty);V^1_p);
        \quad
        e^{-t/(2\eta)} \partial_t u \in L_2((0,\infty);L_2);
        \quad
       e^{-t/(4\eta)} u \in L_4((0,\infty);V^1_4);
       \quad
       u(0) = u^0_\eta,  
\end{equation*} 
such that, for $u \in U_\eta$, the functional
\begin{equation*}
    I_\eta(u) 
    = 
    \int_0^\infty e^{-t/\eta}
    \int_{\T_d} \tfrac{1}{2} 
    |\partial_t u + \diverg(u\otimes u)|^2 
    + 
    \tfrac{1}{\eta} |W(\epsilon(u))|
    +
    \tfrac{C_4}{4}\abs{\nabla u}^4
    \dd x \dd t
\end{equation*}
is well-defined and finite.
Note that $U_{\eta}$ is an affine space and hence $U_\eta$ is weakly closed. The initial condition is also closed under weak convergence as $u \in C([0,T);L_2({\T_d}))$. This allows for the use of the \emph{direct method in the calculus of variations} to show existence of a minimiser of $I_\eta$.


\subsection{Existence of minimisers and a weak bound} \label{sec:min_thick}

\begin{proposition}[Existence of minimisers] \label{prop:existence}
   Assume that $p > \tfrac{2d}{d+2}$.
   There exists a minimiser $u_\eta$ of $I_{\eta}$ in the space $U_{\eta}$. Moreover, the minimiser satisfies $I_{\eta}(u_{\eta}) \leq C \eta^{-1}$.
\end{proposition}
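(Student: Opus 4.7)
The plan is to run the direct method of the calculus of variations in the weighted spaces defining $U_\eta$, using the constant-in-time function $\tilde u(t,x) \coloneq u^0_\eta(x)$ as the initial competitor to obtain both non-emptiness of $U_\eta$ and the upper bound.

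First I would verify $\tilde u \in U_\eta$ (all weighted $L_p$ norms reduce to $\eta^{1/p}$ times finite spatial norms) and evaluate $I_\eta(\tilde u)$. Since $\partial_t \tilde u = 0$, the time integration contributes a factor $\int_0^\infty e^{-t/\eta}\dd t = \eta$, so
\begin{equation*}
    I_\eta(\tilde u)
    =
    \tfrac{\eta}{2}\Vert \diverg(u^0_\eta\otimes u^0_\eta)\Vert_{L_2}^2
    +
    \int_{\T_d} W(\epsilon(u^0_\eta))\dd x
    +
    \tfrac{\eta C_4}{4}\Vert\nabla u^0_\eta\Vert_{L_4}^4.
\end{equation*}
Applying the initial-data estimates \ref{it:initial2}--\ref{it:initial4} and the growth bound in \ref{it:W3} yields $I_\eta(\tilde u)\le C\eta^{-1}$.

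Next, for a minimising sequence $u_k\in U_\eta$ with $I_\eta(u_k)\le C\eta^{-1}+1$, I would extract coercive bounds on each of the three pieces of $I_\eta$. The coercivity in \ref{it:W3} together with Korn's and Poincaré's inequalities gives a uniform bound for $e^{-t/(p\eta)} u_k$ in $L_p((0,\infty);V^1_p)$, and the stabiliser directly yields a bound for $e^{-t/(4\eta)} u_k$ in $L_4((0,\infty);V^1_4)$. To extract a separate bound on $\partial_t u_k$, I would use
\begin{equation*}
    \Vert\diverg(u_k\otimes u_k)\Vert_{L_2}^2\le \Vert u_k\Vert_{L_4}^2\Vert\nabla u_k\Vert_{L_4}^2\le C_P^2\Vert\nabla u_k\Vert_{L_4}^4
\end{equation*}
(Poincaré on zero-average functions in $L_4$), which is precisely the role of the assumption $C_4\ge \tfrac12(9C_P^2+1)$: it gives strict control of $\tfrac12\Vert\diverg(u_k\otimes u_k)\Vert_{L_2}^2$ by $\tfrac{C_4}{4}\Vert\nabla u_k\Vert_{L_4}^4$ with slack, and hence by the triangle inequality a uniform bound for $e^{-t/(2\eta)} \partial_t u_k$ in $L_2((0,\infty);L_2)$. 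I extract subsequences weakly converging to some $u$ in each of these three weighted spaces. The initial condition \ref{it:Ueta4}, the constraint $\diverg u_k=0$, and the zero-average condition \ref{it:Ueta5} pass to the weak limit, so $u\in U_\eta$.

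Finally, I would establish $I_\eta(u)\le \liminf_k I_\eta(u_k)$. The terms $\tfrac{1}{\eta} W(\epsilon(u))$ and $\tfrac{C_4}{4}|\nabla u|^4$ are lower semicontinuous under the weak convergence by convexity (\ref{it:W2} and convexity of $|\cdot|^4$, respectively, together with Ioffe's theorem applied to the non-negative weight $e^{-t/\eta}$). The quadratic inertia--convection term is the delicate one and is the main obstacle: passing to the limit requires strong convergence for the nonlinear factor $u_k\otimes u_k$. This is supplied by the Aubin--Lions embedding of Corollary~\ref{coro:Draco_Malfoy}: on any finite interval $[0,T]$ the uniform bounds $u_k\in L_4([0,T];V^1_4)$ and $\partial_t u_k\in L_2([0,T];L_2)$ give compact embedding into $L_4([0,T];L_4)$, whence $u_k\otimes u_k\to u\otimes u$ strongly in $L_2([0,T];L_2)$ and therefore $\diverg(u_k\otimes u_k)\weakto \diverg(u\otimes u)$ weakly in $L_2([0,T];L_2)$. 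Combined with weak convergence of $\partial_t u_k$ in the same space and lower semicontinuity of $\Vert\sbullet\Vert_{L_2}^2$ under the exponential weight (extending to $[0,\infty)$ by monotone convergence in $T$), this yields the lower semicontinuity of the first term. Consequently $u$ minimises $I_\eta$ and inherits the estimate $I_\eta(u)\le C\eta^{-1}$ from the upper bound.
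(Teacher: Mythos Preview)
Your proposal is correct and follows essentially the same direct-method approach as the paper: both use the constant-in-time competitor $\tilde u\equiv u^0_\eta$ for the upper bound, derive the weighted coercive bounds (the paper obtains the $\partial_t u_k$ bound via \eqref{eq:nonl_control} without invoking the precise value of $C_4$, which is actually only needed later in the energy-inequality proof), and identify the weak limit of the convective nonlinearity through the Aubin--Lions Corollary~\ref{coro:Draco_Malfoy} before invoking convexity for lower semicontinuity. The only cosmetic difference is that you extract compactness into $L_4((0,T);L_4)$ from that corollary, whereas the paper is content with $L_2((0,T);L_\alpha)$ for some $\alpha>2$ and then identifies the limit of $\diverg(u_k\otimes u_k)$ in $L_1((0,T);W^{-1}_{\alpha/2})$; both routes yield the same conclusion.
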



\begin{proof}
Note that $U_\eta \neq \emptyset$ since the constant (in time) function $\bar{u}(t) = u^0_\eta$ is contained in $U_{\eta}$ and that 
\begin{equation*}
    \begin{split}
        I_{\eta}(\bar u) 
        &= \int_0^{\infty} e^{-t/\eta} 
        \int_{{\T_d}} \tfrac{1}{2} \vert \diverg(u^0_{\eta} \otimes u^0_{\eta}) \vert^2 + \tfrac{1}{\eta} W(\epsilon(u^0_{\eta}))+\tfrac{C_4}{4}\abs{\nabla u^0_\eta}^4 \dd x \dd t \\
        &\leq C\left( \eta \Vert \diverg(u^0_{\eta} \otimes u^0_{\eta}) \Vert_{L_2}^2 + \Vert \epsilon(u^0_{\eta}) \Vert_{L_p}^p +\eta\Vert \nabla u^0_\eta\Vert_{L_4}^4\right) 
        \leq 
        C \frac{1}{\eta}
    \end{split}
\end{equation*}
due to \ref{it:W3} and Assumptions \ref{it:initial2}--\ref{it:initial4} on $u_\eta^0$ from Subsection~\ref{sec:WIDE}. 
We now apply the direct method in the calculus of variations to show existence of a minimiser. First, note that
\begin{equation*}
    0 \leq \inf_{u \in U_{\eta}}
    I_{\eta}(u)  
    \leq 
    I_\eta(\bar{u})
    \leq      
    C \tfrac{1}{\eta}.   
\end{equation*}
This in particular proves the last statement of the proposition.
Moreover, the functional $I_\eta$ is coercive in the sense that if $u_k$ is such that 
    \[
    \sup_{k \in \N} I_{\eta}(u_k) \leq C,
    \]
then 
\begin{equation}\label{eq:coerciveness_bounds}
    \hspace{-1cm}\Vert e^{-t/(p\eta)} u_k \Vert_{L_p((0,\infty);V^1_p)} + \Vert e^{-t/(2\eta)} \partial_t u_k \Vert_{L_2((0,\infty);L_2)} + \Vert e^{-t/(2\eta)} (u_k \cdot \nabla) u_k \Vert_{L_2((0,\infty);L_2)}+\Vert e^{-t/(4\eta)}u_k\Vert_{L_4((0,\infty);V^1_4)} \leq C.
\end{equation}
Here, the first bound follows from assumption \ref{it:W3}, in combination with Korn's and Poincar\'e's inequality. The bound for the nonlinear term follows from the estimate
\begin{align}\label{eq:nonl_control}
    \Vert e^{-t/(2\eta)}(u_k \cdot \nabla) u_k\Vert^2_{L_2((0,\infty);L_2)}\le C\Vert e^{-t/(4\eta)} u_k\Vert^4_{L_4((0,\infty);V^1_4)}
    \leq C.
\end{align}
Thus, taking a minimising sequence $u_k$, there exists a function $u\in U_\eta$, such that for a subsequence (not relabeled)
\begin{equation*}
    e^{-t/(p\eta)} u_k 
    \lweakto 
    e^{-t/(p\eta)} u 
    \quad 
    \text{in } 
    L_p((0,\infty);V_p^1).
\end{equation*}
The other terms in \eqref{eq:coerciveness_bounds} admit, up to a subsequence, a weak limit. By linearity and the subsequence principle \footnote{\label{note1}The weak topology of $L_2((0,T);L_2)$ and $L_4((0,T);L_4)$ is metrisable on bounded sets. On metric spaces $(X,d)$ we may use the subsequence principle: let $x_k \in X$ be a sequence. If for any subsequence $k_l$ there is a subsequence $k_{l_m}$, such that $x_{k_{l_m}} \to x$, as $m \to \infty$, then $x_k \to x$ as $k \to \infty$.} we have
\begin{enumerate} [label=(\roman*)]
        \item $e^{-t/(p\eta)} u_k \lweakto e^{-t/(p\eta)} u \quad$ in $L_p((0,\infty);V_p^1)$;
        \item $e^{-t/(2\eta)} \partial_t u_k \lweakto e^{-t/(2\eta)} \partial_t u \quad$ in $L_2((0,\infty);L_2)$;
        \item \label{iii} $e^{-t/(4\eta)} u_k \lweakto e^{-t/(4\eta)} u \quad$ in $L_4((0,\infty);V^1_4)$;
        \item $e^{-t/(2\eta)} (u_k \cdot \nabla) u_k \lweakto w\quad$ in $L_2((0,\infty);L_2)$.
\end{enumerate}
In order to identify $w= e^{-t/(2\eta)} (u \cdot \nabla) u$, we prove the convergence 
\begin{equation} \label{proof:closedness1}
     (u_k \cdot \nabla) u_k 
     \lweakto 
     (u \cdot \nabla) u \quad \text{in } L_1((0,T);W^{-1}_{\alpha/2})
\end{equation}
for some $\alpha>2$. Together with the uniqueness of limit functions, this already implies that
\begin{equation*}
    (u_k \cdot \nabla) u_k 
    \lweakto 
    w
    =
    (u \cdot \nabla) u 
    \quad \text{in } L_2((0,T);L_2).
\end{equation*}
Observe that due to Corollary \ref{coro:Draco_Malfoy} we have 
\begin{equation*}
    L_4((0,T);V^1_4) \cap W^{1}_2((0,T);L_2)
    \xhookrightarrow{c}
    L_2((0,T);L_{\alpha})
\end{equation*}
for some $\alpha >2$ and therefore
\begin{equation*}
    u_k \longrightarrow u \quad \text{in } L_2((0,T);L_{\alpha}).
\end{equation*} 
Consequently, we obtain
    \[
        u_{k} \otimes u_{k} \longrightarrow u \otimes u \quad \text{in } L_1((0,T);L_{\alpha/2})
    \]
and, therefore,
\[
    (u_k \cdot \nabla) u_k=\diverg (u_k\otimes u_k) \longrightarrow (u \cdot \nabla) u \quad \text{in } L_1((0,T);W^{-1}_{\alpha/2}).
\]
This proves \eqref{proof:closedness1}.    

We are left with showing that the functional $I_\eta$ is weakly lower-semicontinuous. This is ensured by the convexity of the functions $s \mapsto \vert s \vert^2$, $\xi \mapsto W(\xi)$, and $\xi \mapsto \vert \xi \vert^4$. That is, we have
    \[
        I_{\eta}(u) \leq \liminf_{k \to \infty} I_{\eta}(u_k)
    \]
and consequently, if $u_k$ is a minimising sequence, then $u$ is a minimiser.
\end{proof}


\subsection{Euler--Lagrange equations} \label{sec:EL:1}
 The goal of this section is to derive suitable Euler--Lagrange equations for the functional $I_{\eta}$. Since $U_{\eta}$ is an affine space, the definition of the tangent space is straightforward.
    
\begin{definition}\label{def:TU_thick}
We say that $\phi \in TU_{\eta}$, if
\begin{enumerate} [label=(\roman*)]
    \item \label{it:TU1} $e^{-t/(p\eta)} \phi \in L_p((0,\infty);V_p^1)$;
    \item  \label{it:TU2} $e^{-t/(2\eta)} \partial_t \phi \in L_2((0,\infty);L_2)$;
    \item  \label{it:TU3} $e^{-t/(4\eta)}\phi \in L_4((0,\infty),V^1_4)$;
    \item \label{it:TU4} $\phi(0)=0$;
    \item \label{it:TU5} $ \int_{\T_d} \phi(t,x) \dd x =0$ for a.e. $t \in [0,\infty)$.
\end{enumerate}
\end{definition}

Note that, as in Definition~\ref{def:U}, condition \ref{it:TU4} is a reasonable initial condition as \ref{it:TU2} implies that $\phi \in C^{\alpha}((0,T);L_2)$ for some $\alpha>0$.

Given $u \in U_{\eta}$ and $\phi \in TU_{\eta}$, we obtain that $u+h\phi \in U_\eta$, i.e. $I_{\eta}(u+h\eta)$ is finite for $h \in \R$. This allows us to formally compute the Euler--Lagrange equation corresponding to $I_\eta$.

\begin{lemma}\label{lemma:EL1}
Let $p > \tfrac{2d}{d+2}$ and let $u_{\eta} \in U_{\eta}$ be a minimiser of $I_{\eta}$ and let $\phi \in TU_{\eta}$. Then 
\begin{equation} \label{eq:EL1}
\begin{split}
    0
    =
    \int_0^\infty \int_{\T_d} e^{-t/\eta} 
    \Bigl(&
    \partial_t u_\eta \partial_t \phi  
    + 
    \diverg(u_\eta \otimes u_\eta) \partial_t \phi
    +
    \tfrac{1}{\eta} DW(\epsilon(u_\eta)) : \epsilon(\phi)
    +
    \partial_t u_\eta 
    \bigl(
    \diverg (u_\eta \otimes \phi + \phi \otimes u_\eta)
    \bigr)
    \\
    &
    + \diverg (u_\eta \otimes \phi + \phi \otimes u_\eta) 
    \diverg (u_\eta \otimes u_\eta)
    +C_4\abs{\nabla u_\eta}^2\nabla u_\eta : \nabla \phi\Bigr)\dd x \dd t.
\end{split}
\end{equation}
\end{lemma}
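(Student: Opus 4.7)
The strategy is the classical one: since $U_\eta$ is an affine space with tangent space $TU_\eta$, for any $\phi \in TU_\eta$ the perturbation $u_\eta + h\phi$ lies in $U_\eta$ for every $h \in \R$. Hence the real-valued function $f(h) \coloneq I_\eta(u_\eta + h\phi)$ is well-defined in a full neighbourhood of $0$, attains its minimum at $h=0$, and its vanishing derivative $f'(0) = 0$ will be precisely the identity \eqref{eq:EL1}. Thus the proof reduces to (i) justifying differentiation under the integral in each of the three pieces of $I_\eta$, and (ii) collecting the resulting terms. Expanding formally, one uses
\begin{align*}
    \tfrac{d}{dh}\tfrac12\absb{\partial_t(u_\eta+h\phi)+\diverg((u_\eta+h\phi)\otimes(u_\eta+h\phi))}^2\Big|_{h=0}
    &= (\partial_t u_\eta+\diverg(u_\eta\otimes u_\eta))\cdot(\partial_t\phi+\diverg(u_\eta\otimes\phi+\phi\otimes u_\eta)),\\
    \tfrac{d}{dh} W(\epsilon(u_\eta+h\phi))\Big|_{h=0} &= DW(\epsilon(u_\eta)):\epsilon(\phi),\\
    \tfrac{d}{dh}\tfrac{C_4}{4}\absb{\nabla(u_\eta+h\phi)}^4\Big|_{h=0} &= C_4\abs{\nabla u_\eta}^2\nabla u_\eta:\nabla\phi,
\end{align*}
and the formula \eqref{eq:EL1} follows by multiplying with the weight $e^{-t/\eta}$ (and factoring $\tfrac1\eta$ in the second line) and integrating, after expanding the inertial square.

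The technical content is to make each of the three differentiations rigorous via dominated convergence. For the inertial term this is immediate, since the pointwise derivative is a polynomial in $\partial_t u_\eta,\diverg(u_\eta\otimes u_\eta),\partial_t\phi$ and $\diverg(u_\eta\otimes\phi+\phi\otimes u_\eta)$ and for $|h|\le 1$ the corresponding difference quotient is bounded by a multiple of the sum of products of these four quantities plus their counterparts for $u_\eta+h\phi$. Using the weighted integrabilities \ref{it:Ueta2}--\ref{it:Ueta3} of Definition \ref{def:U} and \ref{it:TU2}--\ref{it:TU3} of Definition \ref{def:TU_thick}, together with the estimate \eqref{eq:nonl_control} applied to $u_\eta\otimes\phi$ and $\phi\otimes u_\eta$, every product is in $L_1((0,\infty)\times\T_d)$ with weight $e^{-t/\eta}$ by Cauchy--Schwarz. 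The quartic term is treated analogously: its difference quotient is bounded by $C(\abs{\nabla u_\eta}^3+\abs{\nabla\phi}^3)\abs{\nabla\phi}$, which is weighted-$L_1$ by H\"older using \ref{it:Ueta3} and \ref{it:TU3}. For the potential term, $C^1$-regularity \ref{it:W1} combined with the growth bound \ref{it:W4} yields, for $|h|\le 1$,
\begin{equation*}
    \absBB{\frac{W(\epsilon(u_\eta)+h\epsilon(\phi))-W(\epsilon(u_\eta))}{h}}
    \le C\bigl(1+\abs{\epsilon(u_\eta)}^{p-1}+\abs{\epsilon(\phi)}^{p-1}\bigr)\abs{\epsilon(\phi)},
\end{equation*}
and H\"older's inequality with exponents $p/(p-1)$ and $p$ applied with the weight $e^{-t/\eta}$ renders the majorant integrable in view of \ref{it:Ueta2} and \ref{it:TU2} for the spatial gradients (after using that $e^{-t/(p\eta)(p-1)}\cdot e^{-t/(p\eta)}=e^{-t/\eta}$).

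The main obstacle, as usual, is the pointwise pass-to-the-limit in the nonlinear viscous term in the presence of only $C^1$-regularity of $W$: dominated convergence is applied to the $h$-difference quotient, whose pointwise a.e.\ limit is $DW(\epsilon(u_\eta)):\epsilon(\phi)$ by the mean-value theorem. Once each of the three Lebesgue limits is identified, the minimality $f(0)\le f(h)$ for all $h\in\R$ gives $f'(0)=0$, and collecting the terms written above yields precisely \eqref{eq:EL1}. Note that no pressure term appears in \eqref{eq:EL1} because every $\phi\in TU_\eta$ is solenoidal and zero-mean; the pressure will be recovered later via de Rham's theorem.
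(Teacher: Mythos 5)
Your proposal is correct and takes the same route as the paper — compute $\tfrac{d}{dh}I_\eta(u_\eta+h\phi)\big|_{h=0}=0$ via the Gateaux derivative — while supplying the dominated-convergence details that the paper leaves implicit; the only slip is a misreference: the weighted $L_1$-estimate of the $W$-difference quotient uses the $L_p(V^1_p)$ conditions \ref{it:TU1} and the (unlabelled) first item of Definition~\ref{def:U}, not the $L_2$ time-derivative conditions \ref{it:Ueta2}/\ref{it:TU2}.
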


\begin{proof}
As argued above, $u+h\phi\in U_\eta$ for $u\in U_\eta$ and any $\phi \in TU_\eta$, $h \in \R$. Hence, $I_{\eta}(u+h \phi) < \infty$. The minimising property, 
\begin{equation*}
    I_{\eta}(u_{\eta}) 
    \leq 
    I_{\eta}(u_\eta + h \phi), \quad  h \in \R
\end{equation*}
implies that
\begin{equation*}
    \lim_{h \to 0} \frac{1}{h}\bigl(I_{\eta}(u_\eta+h\phi)-I_{\eta}(u_{\eta})\bigr) =0,
\end{equation*}
\schange{provided that this limit exists.}
Indeed, writing 
\begin{align*}
    I_{\eta}(u_{\eta}+h\phi) 
    = 
    \int_0^{\infty} e^{-t/\eta} 
    \int_{{\T_d}} 
    &
    \tfrac{1}{2}\left| \partial_t(u_\eta + h\phi) 
    + 
    \diverg \bigl((u_\eta + h\phi)\otimes(u_\eta+h\phi)\bigr)\right|^2 
    \\
    &
    + \tfrac{1}{\eta} W(\epsilon(u_\eta+h\phi))+\tfrac{C_4}{4} \abs{\nabla (u_
    \eta+h\phi)}^4 \dd x \dd t
\end{align*}
\schange{and using \ref{it:W1}}, we may compute the derivative with respect to $h$ at $h=0$, which leads to \eqref{eq:EL1}.
\end{proof}


If we take a function $\phi \in C_c^{\infty}((0,\infty) \times {\T_d};\R^d)$ that is solenoidal  in the spatial variable and consider $\tilde{\phi} = e^{t/\eta}\phi$, then we have $\tilde{\phi} \in TU_{\eta}$. Thus, we can use $\tilde{\phi}$ as a test function in \eqref{eq:EL1} and formulate the following alternative Euler--Lagrange equation.


\begin{corollary} \label{coro:EL3}
Let $p > \tfrac{2d}{d+2}$, let $u_{\eta} \in U_{\eta}$ be a minimiser of $I_{\eta}$ and let $\phi \in C_c^{\infty}((0,\infty)\times {\T_d};\R^d)$ satisfy $\diverg \phi=0$ for all $t>0$. Then 
\begin{equation} \label{eq:EL3}
\begin{split}
    0
    =
    \int_0^\infty \int_{\T_d} 
    \Bigl(&
    \bigl(\partial_t u_\eta  
    + 
    \diverg(u_\eta \otimes u_\eta)\bigr) \bigl(\partial_t \phi + \tfrac{1}{\eta} \phi\bigr)
    +
    \tfrac{1}{\eta} DW(\epsilon(u_\eta)) : \epsilon(\phi)\\
    &
    +
    \bigl(\partial_t u_\eta +\diverg(u_\eta\otimes u_\eta)\bigr)
    \bigl(
    \diverg (u_\eta \otimes \phi + \phi \otimes u_\eta)
    \bigr)
   +
   C_4\abs{\nabla u_\eta}^2\nabla u_\eta : \nabla \phi
    \Bigr)
    \dd x \dd t.
\end{split}
\end{equation}
\end{corollary}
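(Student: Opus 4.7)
The statement is essentially a reformulation of Lemma \ref{lemma:EL1}, obtained by inserting a specific test function. The plan is to apply Lemma \ref{lemma:EL1} with the test function
\[
    \tilde\phi(t,x) \coloneq e^{t/\eta}\phi(t,x),
\]
where $\phi$ is the given compactly supported, spatially solenoidal test function. The key arithmetic is that the weight $e^{-t/\eta}$ in \eqref{eq:EL1} cancels the factor $e^{t/\eta}$ in $\tilde\phi$, while the time derivative $\partial_t \tilde\phi = e^{t/\eta}\bigl(\partial_t\phi + \tfrac{1}{\eta}\phi\bigr)$ produces precisely the combination $\partial_t \phi + \tfrac{1}{\eta}\phi$ that appears in \eqref{eq:EL3}.

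First, I would verify that $\tilde\phi \in TU_\eta$. Properties \ref{it:TU1}--\ref{it:TU3} in Definition \ref{def:TU_thick} follow because $\phi$ is compactly supported in $(0,\infty) \times \T_d$, so the exponential growth of $\tilde\phi$ in time is harmless when multiplied by the compensating exponential weights in the norms; the initial condition \ref{it:TU4} is immediate since $\phi$ vanishes near $t=0$. For the mean-value condition \ref{it:TU5} one may, without loss of generality, decompose $\phi(t,\cdot) = \phi_s(t,\cdot) + \bar\phi(t)$ into its spatial mean-free part and its (solenoidal) spatial mean, and treat the two contributions separately: since $\bar\phi$ depends on $t$ only, both $\epsilon(\bar\phi)$ and $\nabla\bar\phi$ vanish, and the nonlinear duality pairings involving $\bar\phi$ may be rewritten directly without passing through Lemma \ref{lemma:EL1}.

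Next I would substitute $\tilde\phi$ into \eqref{eq:EL1} and simplify term by term. The products involving $\partial_t u_\eta$ and $\diverg(u_\eta \otimes u_\eta)$ paired against $\partial_t \tilde\phi$ yield the terms involving $(\partial_t \phi + \tfrac{1}{\eta}\phi)$; the viscous term, the two convective cross terms $\diverg(u_\eta \otimes \tilde\phi + \tilde\phi \otimes u_\eta) = e^{t/\eta}\diverg(u_\eta\otimes\phi + \phi\otimes u_\eta)$, and the quartic stabilisation term each simply lose the exponential weight and reproduce the corresponding terms of \eqref{eq:EL3}.

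This proof is routine; I do not anticipate a genuine obstacle. The only point requiring a modicum of care is the zero-mean requirement \ref{it:TU5}, which is resolved by the decomposition mentioned above, together with the observation that all spatial derivatives of $\bar\phi(t)$ vanish so that its contribution can be handled directly.
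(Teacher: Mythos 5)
Your proposal coincides with the paper's own (one-line) argument: test \eqref{eq:EL1} with $\tilde\phi = e^{t/\eta}\phi$, so the exponential weight cancels and $\partial_t\tilde\phi = e^{t/\eta}\bigl(\partial_t\phi+\tfrac{1}{\eta}\phi\bigr)$ produces the extra $\tfrac{1}{\eta}\phi$ term. Your concern about the zero-mean requirement \ref{it:TU5} is a fair catch that the paper's one-liner glosses over, but the cleanest reading is that the corollary, like every duality pairing in the paper (cf.\ Definition~\ref{def:Leray--Hopf} and \eqref{eq:av0}), is understood for mean-free test functions, making the $\phi_s+\bar\phi$ decomposition unnecessary; moreover, as sketched your treatment of the $\bar\phi$ part is not fully convincing, since for spatially constant $\bar\phi$ the pairing $\langle\partial_t u_\eta + \diverg(u_\eta\otimes u_\eta),\,\diverg(u_\eta\otimes\bar\phi+\bar\phi\otimes u_\eta)\rangle = \langle\partial_t u_\eta + \diverg(u_\eta\otimes u_\eta),\,(\bar\phi\cdot\nabla)u_\eta\rangle$ does not visibly vanish or ``rewrite directly.''
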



Our next goal is to improve Corollary \ref{coro:EL3} such that the Euler--Lagrange equation \eqref{eq:EL3} holds true for test functions $\phi$ with weaker space regularity.
In this regard, the following terms are critical. 
First, observe that for $u \in U_\eta$ the dissipation term
\begin{equation} \label{def:A}
    A[u,\phi] \coloneq \int_{{\T_d}} DW(\epsilon(u)) :\epsilon(\phi) \dd x
\end{equation}
is well-defined for any $ \phi \in V^1_p$, such that we may view $A(u) = A[u,\cdot]$ as an element in $(V^1_p)'$. Similarly, we define for $u \in U_\eta$ the three nonlinear terms
\begin{align}
    R_1[u,\phi] 
    &\coloneq \int_{{\T_d}} \bigl(\partial_t u + \diverg ( u \otimes u)\bigr) \diverg(u \otimes \phi) \dd x, 
    \label{def:R11} 
    \\
    R_2[u,\phi] 
    &\coloneq \int_{{\T_d}} \bigl(\partial_t u +  \diverg ( u \otimes u)\bigr) \diverg(\phi \otimes u) \dd x,
    \label{def:R21}
    \\
     R_3[u,\phi] 
    &\coloneq C_4\int_{{\T_d}} \abs{\nabla u}^2\nabla u : \nabla \phi \dd x
    \label{def:R31}.
\end{align}
Recall that, due to Definition~\ref{def:U}, \ref{it:Ueta2} and \ref{it:Ueta3}, we have for almost all times $(\partial_t u + \diverg( u \otimes u)) \in L_2$ and $\vert \nabla u \vert^2 \nabla u \in L_{4/3}$. Using H\"older's inequality, $R_1[u,\phi], R_2[u,\phi]$ and $R_3[u,\phi]$ are thus well-defined, whenever $\phi \in V^1_4$.
Therefore, choosing
\begin{equation} \label{def:gamma}
    \gamma = \max\{ p, 4\}
\end{equation}
ensures that the Euler--Lagrange equation \eqref{eq:EL3} is well-defined for every solenoidal $\phi \in C_c^{\infty}((0,\infty);V^1_\gamma)$.

For convenience, we introduce the notation
\begin{equation}\label{eq:v_eta_def}
    \veta 
    \coloneq
    \partial_t u_{\eta} + (u_{\eta} \cdot \nabla) u_{\eta}
    \in (V^1_\gamma)^\prime.
\end{equation}
Finally, we obtain the following version of the Euler--Lagrange equation, formulated in the dual space $(V^1_\gamma)'$.


\begin{lemma} \label{lemma:EL3}
Let $p > \tfrac{2d}{d+2}$, let $u_\eta \in U_{\eta}$ be a minimiser of $I_{\eta}$, and let $\phi \in C_c^{\infty}((0,\infty);V^1_{\gamma})$. Then 
\begin{equation*}
    \int_0^{\infty} 
    \langle \eta \veta,\partial_t \phi \rangle 
    + 
    \langle \veta,\phi \rangle 
    + A[u_\eta,\phi]
    + 
    \eta \bigl(R_1[u_\eta,\phi] + R_2[u_\eta,\phi] +R_3[u_\eta,\phi]\bigr)\dd t =0.
\end{equation*} 
In particular, $\veta$ is a weak solution to
\begin{align}\label{eq:EL_dual}
    - \eta \partial_t \veta 
    + 
    \veta 
    + 
    A(u_\eta)
    + 
    \eta \bigl(R_1(u_\eta) + R_2(u_\eta)+ R_3(u_\eta)\bigr)
    = 0
\end{align}
as an equation in $(V^1_\gamma)'$.    
\end{lemma}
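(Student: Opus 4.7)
The plan is to start from the identity in Corollary \ref{coro:EL3}, repackage the terms using the abbreviations $\veta$, $A$, $R_1$, $R_2$, $R_3$ introduced in \eqref{def:A}--\eqref{def:R31} and \eqref{eq:v_eta_def}, and then extend the resulting identity by density to test functions $\phi \in C_c^\infty((0,\infty); V^1_\gamma)$.

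First I would multiply \eqref{eq:EL3} by $\eta$. Using $\diverg u_\eta = 0$ to identify $\partial_t u_\eta + \diverg(u_\eta \otimes u_\eta) = \veta$, the block $\veta \cdot (\eta\partial_t\phi + \phi)$ splits into $\langle \eta \veta, \partial_t\phi\rangle + \langle \veta, \phi\rangle$, the viscous term becomes $A[u_\eta,\phi]$, the remaining divergence--convection terms assemble into $\eta(R_1 + R_2)[u_\eta,\phi]$, and the quartic stabiliser gives $\eta R_3[u_\eta,\phi]$. This directly yields the claimed integral identity for every smooth, compactly supported, divergence-free test function $\phi$.

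The second step is to extend this to $\phi \in C_c^\infty((0,\infty); V^1_\gamma)$ by density. Fix a compact interval $[t_0,t_1]\subset (0,\infty)$ containing the time support of $\phi$. Since smooth divergence-free vector fields are dense in $V^1_\gamma$ on $\T_d$, spatial mollification of $\phi$ yields $\phi_n \in C_c^\infty((t_0,t_1)\times \T_d;\R^d)$ with $\diverg\phi_n = 0$ and $\phi_n \to \phi$ in $C^1([t_0,t_1]; V^1_\gamma)$, since mollification commutes with $\partial_t$. It remains to check continuity of each of the four functionals along such convergence. From the bound $I_\eta(u_\eta) \leq C\eta^{-1}$ of Proposition \ref{prop:existence} together with \ref{it:Ueta2}--\ref{it:Ueta3}, one has on $[t_0,t_1]$ the regularity $\veta \in L_2([t_0,t_1]; L_2)$, $u_\eta \in L_p([t_0,t_1]; V^1_p) \cap L_4([t_0,t_1]; V^1_4)$, and $DW(\epsilon(u_\eta)) \in L_{p'}([t_0,t_1]; L_{p'})$ by \ref{it:W4}. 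These bounds control $\langle\eta\veta,\partial_t\phi\rangle$ and $\langle\veta,\phi\rangle$ through $V^1_\gamma \hookrightarrow L_2$, and $A[u_\eta,\phi]$ through $V^1_\gamma \hookrightarrow V^1_p$. For $R_1,R_2$ I would rewrite $\diverg(u_\eta\otimes\phi) = (u_\eta\cdot\nabla)\phi$ and $\diverg(\phi\otimes u_\eta) = (\phi\cdot\nabla)u_\eta$ using $\diverg u_\eta = \diverg\phi = 0$, and apply H\"older with $\veta \in L_2$ and $u_\eta,\phi \in L_4$, $\nabla u_\eta, \nabla\phi \in L_4$; for $R_3$, H\"older with $|\nabla u_\eta|^3 \in L_{4/3}$ paired against $\nabla\phi \in L_4$ suffices.

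The dual PDE \eqref{eq:EL_dual} is then just the reading of the extended integral identity as a distributional equation in time with values in $(V^1_\gamma)'$, since $\phi$ ranges over functions compactly supported in $t$. The main technical obstacle is the bookkeeping in the continuity step: one must verify that $\gamma = \max\{p,4\}$ from \eqref{def:gamma} is precisely the smallest exponent simultaneously controlling all four nonlinear operators $A$, $R_1$, $R_2$, $R_3$, and the restriction to compact time support of $\phi$ is crucial to rule out spurious temporal boundary terms when passing to the limit in $\partial_t$.
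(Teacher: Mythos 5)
Your argument is correct and reproduces the proof the paper intends (the lemma is stated after only a discussion, no separate proof): multiply \eqref{eq:EL3} by $\eta$ and regroup into $\veta$, $A$, $R_1$, $R_2$, $R_3$, then extend from solenoidal $\phi\in C_c^\infty((0,\infty)\times\T_d)$ to $\phi\in C_c^\infty((0,\infty);V^1_\gamma)$ by spatial mollification, with continuity of the four pairings following from exactly the H\"older/embedding observations the paper records in \eqref{def:A}--\eqref{def:gamma}. A small bookkeeping slip: with the paper's convention $(u\cdot\nabla)u=\diverg(u\otimes u)$ one has $\diverg(u_\eta\otimes\phi)=(\phi\cdot\nabla)u_\eta$ and $\diverg(\phi\otimes u_\eta)=(u_\eta\cdot\nabla)\phi$, i.e.\ your two identities are interchanged, though this does not affect the $L_4\times L_4\to L_2$ estimates.
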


In the following subsection, this formulation of the Euler--Lagrange equation in the dual space $(V^1_\gamma)'$ will help us to derive uniform estimates on the time derivative $\partial_t u_\eta$ and on the nonlinear term $(u_\eta \cdot \nabla) u_\eta$.


\subsection{Energy inequality, a-priori bounds and proof of Theorem \ref{thm:main}} \label{sec:bounds:shearthinning}

In this subsection, we derive an energy-dissipation inequality for minimisers of $I_{\eta}$ in $U_{\eta}$ and show that they satisfy certain a-priori estimates \emph{without} the weight $e^{-t/\eta}$.

\begin{remark} \label{remark:regularityII}
    In the following Proposition \ref{prop:energyinequality1} we use the identity 
    \begin{equation} \label{trick1}
        \int_{{\T_d}} u \cdot \bigl[ (u \cdot \nabla)u \bigr] \dd x =0
    \end{equation}
    for almost every time for functions $u \in L_4((0,T);V^1_4)$. This is due to the frequently used identity
    \begin{equation*}
        u \cdot \bigl[ (u \cdot \nabla)u \bigr] = \tfrac{1}{2} \diverg ( \vert u \vert^2 u),
    \end{equation*}
    and the integrability of $u$.
\end{remark}

\begin{proposition}[Energy inequality and a-priori estimates]
\label{prop:energyinequality1}
Let $p > \tfrac{2d}{d+2}$. All minimisers $u_{\eta} \in U_{\eta}$ of the functional $I_{\eta}$ have the following properties.
\begin{enumerate}[label=(\roman*)]
    \item \label{prop:bounds1a1} \textbf{Energy inequality. } It holds for all $0\leq T <\infty$ that
    \begin{equation} \label{energy:ineq11}
        E[u_\eta](T)
        +
        \int_0^T \int_{\T_d} (1-e^{-t/\eta}) DW(\epsilon(u_\eta)) : \epsilon(u_\eta) \dd x \dd t
        \leq
        E[u_\eta^0];
    \end{equation}
    \item \label{prop:bounds1b1} \textbf{uniform estimates. } We have $u_{\eta} \in L_{\infty}((0,\infty);L_2)$ and 
    \begin{equation}\label{eq:uetabound}
        \Vert u_{\eta} \Vert_{L_p((0,\infty);V_p^1)} + \Vert u_{\eta} \Vert_{L_{\infty}((0,\infty);L_2)} \leq C;
    \end{equation}
    \item \label{prop:bounds1c1} moreover, we have the following bounds
    \begin{equation*}
        \Vert \partial_t u_\eta \Vert_{L_2((0,\infty);L_2)}^2 + \Vert \nabla u_\eta\Vert_{L_4((0,\infty);L_4)}^4 \leq C \tfrac{1}{\eta}. 
    \end{equation*}
\end{enumerate}
\end{proposition}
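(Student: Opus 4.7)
The plan is to prove all three statements by testing the Euler--Lagrange equation~\eqref{eq:EL1} of Lemma~\ref{lemma:EL1} with the specific admissible function $\phi(t) = \chi(t) u_\eta(t)$, where $\chi(t) = \eta\bigl(e^{\min(t,T)/\eta} - 1\bigr)$. This $\chi$ satisfies $\chi(0)=0$, which together with the $U_\eta$-regularity of $u_\eta$ and \ref{it:Ueta5} yields $\phi \in TU_\eta$. Crucially $e^{-t/\eta} \chi'(t) = \mathbf{1}_{[0,T]}(t)$ and $e^{-t/\eta}\chi(t) = \eta(1 - e^{-t/\eta})$ on $[0,T]$, while for $t > T$ the factor $e^{-t/\eta}\chi(t) = \eta(e^{-(t-T)/\eta} - e^{-t/\eta})$ is nonnegative. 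Using the identity of Remark~\ref{remark:regularityII} one has $\int_{\T_d} \veta \cdot u_\eta \dd x = \tfrac{d}{dt} E[u_\eta]$, and since $\chi$ depends only on $t$ one also has $\epsilon(\phi) = \chi \epsilon(u_\eta)$ and $\diverg(u_\eta \otimes \phi + \phi \otimes u_\eta) = 2 \chi(u_\eta \cdot \nabla) u_\eta$.

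Substitution into~\eqref{eq:EL1} and integration over $x$ then yield the identity
\begin{equation*}
    0 = E[u_\eta](T) - E[u_\eta^0] + \int_0^T (1-e^{-t/\eta}) \!\int_{\T_d}\! DW(\epsilon(u_\eta)){:}\epsilon(u_\eta) \dd x \dd t + \eta\!\int_0^T\!(1-e^{-t/\eta})\, \mathcal{Q}[u_\eta](t)\dd t + \Psi_T,
\end{equation*}
where $\mathcal{Q}[u_\eta] := \|\partial_t u_\eta\|_{L_2}^2 + 3\Theta + 2\|(u_\eta \cdot \nabla) u_\eta\|_{L_2}^2 + C_4 \|\nabla u_\eta\|_{L_4}^4$ with the cross-term $\Theta := \int_{\T_d} \partial_t u_\eta \cdot [(u_\eta \cdot \nabla) u_\eta] \dd x$, and $\Psi_T \ge 0$ collects the contribution from $(T,\infty)$. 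The main obstacle is the cross-term $\Theta$, which in contrast to $\int u_\eta \cdot [(u_\eta\cdot\nabla)u_\eta] \dd x$ does \emph{not} vanish by integration by parts. Using the completed-square identity
\begin{equation*}
    \|a\|^2 + 3\langle a,b\rangle + 2\|b\|^2 = \bigl\|a + \tfrac{3}{2} b\bigr\|^2 - \tfrac{1}{4}\|b\|^2
\end{equation*}
together with the Poincar\'e-type estimate $\|(u_\eta \cdot \nabla) u_\eta\|_{L_2}^2 \le C_P^2\|\nabla u_\eta\|_{L_4}^4$ (valid thanks to the zero spatial average of $u_\eta$), the explicit threshold $C_4 \ge \tfrac{1}{2}(9C_P^2 + 1)$ guarantees $\mathcal{Q}[u_\eta] \ge \tfrac{1}{2}\|\partial_t u_\eta\|^2_{L_2} + c\|\nabla u_\eta\|^4_{L_4}$ for some $c>0$.

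Discarding the nonneg $\mathcal{Q}$- and $\Psi_T$-contributions in the identity yields the energy inequality~\ref{prop:bounds1a1}. Keeping them produces the bound $\eta\int_0^\infty(1-e^{-t/\eta})[\tfrac12\|\partial_t u_\eta\|_{L_2}^2 + c\|\nabla u_\eta\|_{L_4}^4] \dd t \le E[u_\eta^0]$; splitting $(0,\infty)$ at $t = 2\eta$—using $(1 - e^{-t/\eta}) \ge 1 - e^{-2}$ on $(2\eta,\infty)$ together with the direct bound $I_\eta(u_\eta) \le C/\eta$ from Proposition~\ref{prop:existence} on $(0, 2\eta)$—converts this into the $C/\eta$-scaling claimed in~\ref{prop:bounds1c1}. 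The estimates in~\ref{prop:bounds1b1} follow analogously: $\|u_\eta\|_{L_\infty L_2} \le C$ is immediate from $E[u_\eta^0] \le C$ granted by~\ref{it:u01}; for the $L_p V_p^1$-bound, \ref{it:W4} provides the $|\epsilon(u_\eta)|^p$-integral on $(2\eta,\infty)$ from the energy inequality, while \ref{it:W3} combined with $I_\eta(u_\eta) \le C/\eta$ and $e^{-t/\eta} \ge e^{-2}$ yields the same on $(0, 2\eta)$, whence Korn--Poincar\'e for zero-average functions upgrades these to the full $V_p^1$-norm.
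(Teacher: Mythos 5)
Your proof is correct and follows essentially the same route as the paper's: your cutoff $\chi$ is exactly $\eta\psi$ with $\psi$ the paper's weight, so the test function is the same up to a harmless scalar; the resulting identity, the completed-square/Young control of the cross term $\Theta$ via the Poincar\'e bound and the choice of $C_4$, and the time-splitting at $O(\eta)$ using $I_\eta(u_\eta)\le C/\eta$ all mirror the paper's argument (which splits at $t=\eta$ rather than $t=2\eta$ and uses Young's inequality~\eqref{eq:young} directly instead of a completed square — cosmetic differences only).
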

We remark that the bound \ref{prop:bounds1c1} directly implies a bound on the nonlinearity, i.e. 
    \begin{equation}\label{eq:weak_bound_nonl}
        \|(u_\eta\cdot \nabla) u_\eta\|_{L_2((0,\infty);L_2)}^2 \leq C \Vert \nabla u_\eta\Vert_{L_4((0,\infty);L_4)}^4 \leq C \tfrac{1}{\eta}.
    \end{equation}


\begin{proof} \textbf{\ref{prop:bounds1a1} Energy inequality. }
In order to derive the energy inequality \eqref{energy:ineq11} it would be desirable to test the equation with $u$ itself; this is however not possible as $u_\eta \notin TU_{\eta}$ (it does not satisfy the initial condition). Instead, we consider 
\begin{equation*}
    \psi(t) \coloneq 
    \begin{cases}
        e^{t/\eta} -1, 
        & 0\leq t <T 
        \\
        e^{T/\eta} -1, 
        & 
        t \geq T
    \end{cases}           
\end{equation*}
and observe that $\phi \coloneq \psi u_\eta \in TU_{\eta}$. 
Therefore, we may plug $\phi= \psi u_\eta$ into the Euler--Lagrange equation \eqref{eq:EL1}.
This yields
\begin{equation}\label{eq:Hans_Sigl1}
\begin{split}
    0
    =&
    \int_0^T \int_{\T_d} e^{-t/\eta}
    \Bigl(
    \psi |\partial_t u_\eta|^2 
    +
    \tfrac{1}{\eta} e^{t/\eta} u_\eta \partial_t u_\eta 
    +
    \tfrac{1}{\eta} \psi DW(\epsilon(u_\eta)) : \epsilon(u_\eta) 
    \\
    & 
    +
    3 \psi \partial_t u_\eta \diverg(u_\eta \otimes u_\eta)
    +
    2 \psi |\diverg(u_\eta \otimes u_\eta)|^2+C_4\psi \abs{\nabla u_\eta}^4
    \Bigr)
    \dd x \dd t&
    \\
    &+
    (e^{T/\eta} - 1)
    \int_T^\infty \int_{\T_d} e^{-t/\eta} 
    \Bigl(
    |\partial_t u_\eta|^2 
    +
    \tfrac{1}{\eta} DW(\epsilon(u_\eta)) : \epsilon(u_\eta)
    \\
    & 
    +
    3 \partial_t u_\eta \diverg(u_\eta \otimes u_\eta)
    +
    2 |\diverg(u_\eta \otimes u_\eta)|^2
    +
    C_4\abs{\nabla u_\eta}^4
    \Bigr)
     \dd x \dd t,
    \end{split}
\end{equation}
where we have used \eqref{trick1}. We estimate, using Young's inequality,
\begin{equation}\label{eq:young}
    \vert \partial_t u_{\eta} \vert^2 
    + 3 \diverg(u_\eta \otimes u_\eta)  \partial_t u_{\eta} 
    \geq 
    \tfrac 12 \vert \partial_t u_{\eta} \vert^2 - \tfrac 92 \vert \diverg(u_\eta \otimes u_\eta) \vert^2\geq
    \tfrac 12 \vert \partial_t u_{\eta} \vert^2 - \tfrac 92 \abs{u_\eta}^2\abs{\nabla u_\eta}^2.
\end{equation}
By the choice $C_4\ge \tfrac 12 (9C_P^2+1)$ we obtain
\begin{equation*}
    \int_{\T_d} \vert \partial_t u_{\eta} \vert^2 
    + 3 \diverg(u_\eta \otimes u_\eta)  \partial_t u_{\eta} +C_4\abs{\nabla u_\eta}^4 \dd x
    \geq 
    \frac 12 \bigl(\Vert\partial_t u_\eta\Vert_{L_2}^2+\Vert \nabla u_\eta\Vert_{L_4}^4\bigr).
\end{equation*}
Together with the assumption \ref{it:W4} on $DW$ this implies
\begin{equation*}
    \begin{split}
        (e^{T/\eta} - 1)&
        \int_T^\infty \int_{\T_d} e^{-t/\eta} 
        \Bigl(
        |\partial_t u_\eta|^2 
        +
        \tfrac{1}{\eta} DW(\epsilon(u_\eta)) : \epsilon(u_\eta)
        +
        3 \partial_t u_\eta \diverg(u_\eta \otimes u_\eta)
        \\
        &\hspace{1.5cm}
        +
        2 |\diverg(u_\eta \otimes u_\eta)|^2
        +
        C_4 \abs{\nabla u_\eta}^4
        \Bigr)
         \dd x \dd t \geq 0.
         \end{split}
    \end{equation*}
Consequently, multiplying \eqref{eq:Hans_Sigl1} by $\eta$, and again using \eqref{eq:young}, leads to the estimate
\begin{align}
    &-\int_0^T \int_{{\T_d}} u_\eta \partial_t u_{\eta}  \dd x \dd t \nonumber\\
    &\geq 
    \int_0^T e^{-t/\eta}\psi \int_{{\T_d}} DW(\epsilon(u_\eta)) \colon \epsilon(u_\eta) + 
    \eta
    \bigl(\tfrac 12\vert \partial_t u_{\eta} \vert^2 + 2\vert \diverg (u_\eta \otimes u_\eta) \vert^2+\tfrac 12\abs{\nabla u_\eta}^4\bigr)
    \dd x \dd t.\label{eq:ddtE}
\end{align}
Due to definition of the space $U_{\eta}$, we have $u_{\eta} \in H^1_{\loc}((0,\infty);L_2)$ and hence, the energy $E[u_\eta](\cdot)$ is differentiable. In light of \eqref{eq:ddtE} this allows us to write
\begin{align} 
    &E[u_\eta](0) - E[u_\eta](T) \nonumber\\
    &\geq 
    \int_0^T (1-e^{-t/\eta})  \int_{{\T_d}} DW(\epsilon(u_\eta)) \colon \epsilon(u_\eta)  
    +
    \eta
    \bigl(\tfrac 12\vert \partial_t u_{\eta} \vert^2 + 2\vert \diverg (u_\eta \otimes u_\eta) \vert^2+\tfrac 12\abs{\nabla u_\eta}^4\bigr) 
    \dd x \dd t,\label{energy:ineq21}
\end{align}
which proves the energy inequality \eqref{energy:ineq11}. 

\medskip
\noindent \textbf{(ii)--(iii) A-priori estimates. }
Again using \ref{it:W4}, the inequality \eqref{energy:ineq21} immediately yields $E[u_{\eta}](T) \leq E[u_\eta](0) = \Vert u^0_\eta \Vert_{L_2}^2$, which is uniformly bounded. Therefore, $u_\eta$ is uniformly bounded in $L_{\infty}((0,\infty);L_2)$.


On the interval $(\eta,\infty)$ we may bound $(1-e^{-t/\eta})$ from below by $(1-e^{-1})$. Hence, \eqref{energy:ineq21} together with the coercivity assumption \ref{it:W4}  yields
\begin{equation} \label{est:etainfty1}
    \Vert u_\eta \Vert_{L_p((\eta,\infty);V^1_p)}^p + \eta \Vert \partial_t u_\eta \Vert_{L_2((\eta,\infty);L_2)}^2 + \eta \Vert (u_\eta \cdot \nabla) u_\eta \Vert_{L_2((\eta,\infty);L_2)}^2 +\eta \Vert \nabla u_\eta\Vert^4_{L_4((\eta,\infty);L_4)}\leq C \Vert u^0_\eta \Vert_{L_2}^2.
\end{equation}
It remains to show such an estimate on the interval $(0,\eta)$. For this purpose, we may simply use the bound on the functional $I_\eta$, derived in Proposition \ref{prop:existence}, and $e^{-t/\eta}\ge \tfrac 1e$ on $(0,\eta)$.  i.e.
\begin{equation}\label{eq:apriori}
    \begin{split}  
        &\eta^{-1} \Vert u_\eta \Vert_{L_p((0,\eta);V^1_p)}^p + \Vert \partial_t u_\eta+  (u_\eta \cdot \nabla) u_\eta \Vert_{L_2((0,\eta);L_2)}^2 +\Vert \nabla u_\eta\Vert^4_{L_4((0,\eta);L_4)}
        \\
        &\leq C \int_0^\eta \int_{\T_d}
        \tfrac{1}{2} \vert \partial_t u_\eta + (u_\eta \cdot \nabla) u_\eta \vert^2+ \tfrac{1}{\eta} W(\epsilon(u_\eta)) 
        +\tfrac{C_4}{4}\abs{\nabla u_\eta}^4
        \dd x \dd t
        \\
        & \leq C I_{\eta}(u_\eta) \leq C \tfrac{1}{\eta},
    \end{split}
\end{equation}
where we use \ref{it:W3}. This finishes the proof of \ref{prop:bounds1b1}. The control 
\begin{align*}
    \Vert (u_\eta\cdot \nabla)u_\eta\Vert_{L_2((0,\eta);L_2)}^2\le C 
    \Vert \nabla u_\eta\Vert_{L_4((0,\eta);L_4)}^4
\end{align*}
in combination with \eqref{eq:apriori} yields the individual bounds in \ref{prop:bounds1c1} on the interval $(0,\eta)$.
\end{proof}


Next, we derive estimates on  $\partial_t u_\eta$ and $(u_\eta \cdot \nabla) u_\eta$ in suitable norms, that are uniform in $\eta$ together with a weak bound on $\partial_{t}^2u_\eta$. To achieve this, we use the dual formulation of the Euler--Lagrange equation in Lemma \ref{lemma:EL3}. 
Recall that we defined $\gamma = \max \{4,p\}$. Further define
\begin{equation} \label{def:tis}
    \tis \coloneq \max\{\gamma, \tfrac{2dp}{dp+2p-2d}\},
\end{equation}
and observe that $\tis = \gamma$ if $p > \tfrac{4d}{d+2}$ and $\tis = \tfrac{2dp}{dp+2p-2d}$ otherwise; the purpose of this choice will become clear below in the beginning of the proof of Proposition \ref{prop:dualbounds}. In particular $\tis \to \infty$, as $p$ approaches $\tfrac{2d}{d+2}$ from above.
Finally, suppose that $s$ is some exponent obeying
\begin{equation} \label{def:s}
    s > \tis.
\end{equation}

\begin{proposition} \label{prop:dualbounds}
Let $p> \frac{2d}{d+2}$ and choose $\tis$ as in \eqref{def:tis}. Then 
\begin{enumerate} [label=(\roman*)]
    \item \label{dualbounds:1} $\Vert (u_\eta \cdot \nabla) u_\eta \Vert_{L_p((0,\infty);(V^1_\tis)')} \leq C$;
    \item  \label{dualbounds:2} for all $T>0$ we have 
   $\Vert \partial_t u_\eta \Vert_{L_{p\wedge q}((0,T);(V^1_\tis)')}  \leq C(T)$;
   \item \label{dualbounds:3} for all $T>0$ we have $\Vert \partial_t^2 u_\eta \Vert_{L_{\frac 43 \wedge q}((0,T);(V^1_\tis)')}  \leq C(T) \tfrac 1\eta$;
    \item \label{dualbounds:4} There is a (non-relabeled) subsequence $u_\eta$ and some $u \in L_p((0,\infty);V^1_p)$  with $\partial_t u \in L_{p \wedge q}((0,T);(V^1_\tis)')$ such that 
    \begin{equation*}
        \begin{cases} u_\eta \lweakto u & \text{in } L_p((0,\infty);V^1_p) \\
        \partial_t u_\eta \lweakto \partial_t u & \text{in }  L_{p \wedge q}((0,T);(V^1_\tis)').
        \end{cases}
    \end{equation*}
    Moreover, we may refine the second weak convergence as follows. There are $\tilde{g}_\eta \in L_q((0,\infty);(V^1_p)')$ and $\tilde{h}_\eta \in L_{s'}((0,T);(V^1_s)')$ with 
    \begin{equation} \label{dualbounds:41}
    \begin{cases} \partial_t u_{\eta} -\partial_t u= \tilde{g}_\eta + \tilde{h}_\eta & \\
        \tilde{g}_\eta \lweakto 0 & \text{in } L_q((0,\infty);(V^1_p)') \\
        \tilde{h}_\eta \longrightarrow 0 &\text{strongly in }  L_{s'}((0,T);(V^1_s)').
        \end{cases} 
    \end{equation}
    \item \label{dualbounds:5}  There is a (non-relabeled) subsequence $u_\eta$ such that 
    \[
    \eta \partial_t^2 u_\eta \lweakto 0 \quad \text{in } L_{\frac{4}{3}\wedge q}((0,T);(V^1_{\tilde{s}})').
    \]
    Moreover, we may refine the convergence as follows. There are  $\tilde{\varg_\eta} \in L_q((0;T);(V^1_p)')$ and $\tilde{\varh_\eta} \in L_{s'}((0,T);(V^1_s)')$
    such that 
    \begin{equation} \label{dualbounds:42}
    \begin{cases} \eta \partial_t^2 u_{\eta}= \tilde \varg_\eta + \tilde \varh_\eta & \\
        \tilde \varg_\eta \lweakto 0 & \text{in } L_q((0,\infty);(V^1_p)') \\
        \tilde \varh_\eta \longrightarrow 0 &\text{strongly in }  L_{s'}((0,T);(V^1_s)').
        \end{cases} 
    \end{equation}
\end{enumerate}
\end{proposition}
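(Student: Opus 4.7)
\emph{Part (i): bound on the nonlinearity.} The plan is to interpolate the a priori bound $u_\eta \in L_\infty((0,\infty); L_2) \cap L_p((0,\infty); V_p^1)$ from Proposition~\ref{prop:energyinequality1}. Using the Sobolev embedding $V^1_p \hookrightarrow L_{p^\ast}$ with $p^\ast = dp/(d-p)$, the real-interpolation (or Gagliardo--Nirenberg) gives
\begin{equation*}
\|u_\eta\|_{L_{2p}((0,\infty); L_{2\tilde s'})} \leq C,
\end{equation*}
provided $\tfrac{1}{2\tilde s'} = \tfrac{1}{2p^\ast} + \tfrac{1}{4}$; a short computation shows this matches $\tilde s = 2dp/(dp+2p-2d)$ (and is increased to $\gamma = \max\{p,4\}$ in the regime $p > 4d/(d+2)$, via the embedding of dual spaces). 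Consequently $u_\eta \otimes u_\eta \in L_p((0,\infty); L_{\tilde s'})$ and $\diverg(u_\eta \otimes u_\eta) \in L_p((0,\infty); (V^1_{\tilde s})')$.

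\emph{Parts (ii) and (iii): the ODE viewpoint.} The key observation is that the dual Euler--Lagrange equation \eqref{eq:EL_dual} from Lemma~\ref{lemma:EL3} is an ODE for $\veta$ in the fixed Banach space $(V^1_\gamma)'$. Setting $f \coloneq A(u_\eta) + \eta\bigl(R_1 + R_2 + R_3\bigr)(u_\eta)$ and imposing the decay of $\veta$ at infinity encoded by $I_\eta(u_\eta) < \infty$, one solves
\begin{equation*}
\veta(t) = -\int_t^\infty \eta^{-1}\, e^{(t-s)/\eta}\, f(s)\, ds,
\end{equation*}
which is a convolution of $f$ with the $L_1$-kernel $k_\eta(\tau) = \eta^{-1} e^{\tau/\eta} \mathbf{1}_{\tau<0}$ of unit mass. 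Young's convolution inequality yields $\|\veta\|_{L_r(X)} \le \|f\|_{L_r(X)}$ for any $r$ and any Banach target $X$. Next I would bound
\begin{equation*}
\|A(u_\eta)\|_{L_q((V^1_p)')} \le C \quad \text{(via \ref{it:W4} and $(p-1)q=p$)},\qquad
\|\eta R_i(u_\eta)\|_{L_{4/3}((V^1_4)')} \le C\eta^{1/4},
\end{equation*}
where the second estimate uses $\|\veta\|_{L_2(L_2)} \le C\eta^{-1/2}$, $\|\nabla u_\eta\|_{L_4(L_4)} \le C\eta^{-1/4}$ from Proposition~\ref{prop:energyinequality1}\ref{prop:bounds1c1} and H\"older with $\tfrac12+\tfrac14=\tfrac34$. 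Combining these on $(0,T)$ and using $\partial_t u_\eta = \veta - (u_\eta\cdot\nabla)u_\eta$ together with (i) yields (ii). For (iii), I would rearrange the ODE to $\eta\partial_t \veta = \veta + A(u_\eta) + \eta R(u_\eta)$, which is already bounded in the required norm, and then estimate $\eta\partial_t\bigl((u_\eta\cdot\nabla)u_\eta\bigr)$ by the same H\"older argument applied to $\diverg(\partial_t u_\eta \otimes u_\eta + u_\eta \otimes \partial_t u_\eta)$.

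\emph{Parts (iv) and (v): weak compactness and decomposition.} The uniform bounds (i)--(ii), combined with reflexivity and a diagonal argument over $T \in \mathbb{N}$, extract subsequences such that $u_\eta \weakto u$ and $\partial_t u_\eta \weakto \partial_t u$ in the claimed spaces. For the refined splitting in (iv), I would set
\begin{equation*}
\tilde h_\eta \coloneq -\eta\, k_\eta * R(u_\eta),\qquad \tilde g_\eta \coloneq \partial_t u_\eta - \partial_t u - \tilde h_\eta;
\end{equation*}
the $L_{4/3}((V^1_4)')$-norm of $\tilde h_\eta$ is $O(\eta^{1/4})$, which by embedding is strongly small in $L_{s'}((V^1_s)')$ for any $s>\tilde s$, while $\tilde g_\eta$ inherits the $L_q((V^1_p)')$-bound from $-k_\eta * A(u_\eta)-(u_\eta\cdot\nabla)u_\eta-\partial_t u$ and converges weakly to $0$ by construction. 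For (v), I would analogously set $\tilde \varh_\eta \coloneq \eta R(u_\eta) - \eta \partial_t\bigl((u_\eta\cdot\nabla)u_\eta\bigr)$ and $\tilde\varg_\eta \coloneq \veta + A(u_\eta)$; both are bounded as in (iii), with $\tilde\varh_\eta \to 0$ strongly. The key point for $\tilde\varg_\eta \weakto 0$ is that, since $k_\eta$ is an approximate identity, $k_\eta * A(u_\eta) - A(u_\eta)\weakto 0$, giving the required cancellation in the weak limit.

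\emph{Main obstacle.} The delicate step is the cancellation $\tilde\varg_\eta = \veta + A(u_\eta) \weakto 0$ in (v): it is not enough to know that each summand converges weakly separately, one must exploit the explicit convolution representation of $\veta$ together with the fact that $k_\eta$ acts as an approximate identity on the weakly converging sequence $A(u_\eta)$ in $L_q((V^1_p)')$. A secondary subtlety is book-keeping of exponents in (ii): the contribution of $\eta R$ only naturally lies in $L_{4/3}$, so one must verify that the embedding into $L_{p \wedge q}((0,T))$ (or a suitable further interpolation using the uniform $L_p(V^1_p)$ bound) is available for all $p > 2d/(d+2)$.
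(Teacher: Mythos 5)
Your overall strategy follows the paper's proof closely: you use the explicit convolution solution of the dual Euler--Lagrange ODE \eqref{eq:EL_dual}, Young's inequality to pass bounds from $A(u_\eta)$ and $\eta R(u_\eta)$ to $\veta$, and then decompose $\partial_t u_\eta - \partial_t u$ and $\eta\partial_t^2 u_\eta$ into a weakly-null piece in $L_q((V^1_p)')$ and a strongly-null piece in $L_{s'}((V^1_s)')$.

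There is, however, a genuine gap in the decompositions of parts (iv) and (v). You set $\tilde g_\eta = -k_\eta \ast A(u_\eta) - (u_\eta\cdot\nabla)u_\eta - \partial_t u$, i.e.\ the nonlinearity and $\partial_t u$ land in the ``$g$''-part. But $(u_\eta\cdot\nabla)u_\eta$ and $\partial_t u$ take values in $(V^1_{\tis})'$, not in $(V^1_p)'$ -- since $\tis \geq p$ (with equality only for $p \geq 4$ and large), $(V^1_{\tis})' \not\hookrightarrow (V^1_p)'$, so $\tilde g_\eta$ as you define it is not an element of $L_q((0,\infty);(V^1_p)')$ and the claimed weak convergence there cannot hold. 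The paper instead keeps only $-K_\eta\ast A(u_\eta) - F$ (with $F$ the weak limit of $-K_\eta\ast A(u_\eta)$) as the $\tilde g_\eta$ component, and moves $\diverg(u_\eta\otimes u_\eta) - \diverg(u\otimes u)$ into $\tilde h_\eta$, where compactness from the Aubin--Lions interpolation gives strong convergence in $L_{s'}((V^1_s)')$. Likewise in (v), your $\tilde\varg_\eta = \veta + A(u_\eta)$ contains $-\eta K_\eta\ast R$, which lives in $(V^1_\gamma)'$ rather than $(V^1_p)'$ when $p<4$; that piece too must go into the strong part $\tilde\varh_\eta$. Your identification of the key cancellation mechanism ($K_\eta \to \delta_0$ giving $A(u_\eta) - K_\eta\ast A(u_\eta) \weakto 0$) is exactly right; the only error is the assignment of terms to the two pieces.

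A secondary issue, which you flag but do not resolve correctly, is the exponent bookkeeping for $K_\eta\ast(\eta R)$ in part (ii). The $L_1$-normalised Young inequality only yields a bound in $L_{4/3}((0,T))$, which does not embed into $L_{p\wedge q}((0,T))$ for $\tfrac43 < p < 4$ (including the physically relevant $p=2$). The paper's fix is not interpolation with $L_p(V^1_p)$ but rather Young's inequality with non-trivial exponents: the $\eta^{1/4}$ smallness of $\|\eta(R_1+R_2)\|_{L_{4/3}}$ combined with $\|K_\eta\|_{L_{4/3}} \sim \eta^{-1/4}$ boosts the convolution to $L_2(\R;(V^1_\gamma)')$ uniformly in $\eta$ (and to $L_\infty$ for the $R_3$ term), and since $p\wedge q \leq 2$ always, $L_2((0,T)) \hookrightarrow L_{p\wedge q}((0,T))$ closes the argument.
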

Observe that $\gamma=\tilde{s}=p$ if $p\geq 4$, hence every object in above statement is estimated in the corresponding natural $L_p$ and $L_q$ norms, respectively.


\begin{proof}
\noindent \textbf{\ref{dualbounds:1} The nonlinear term.} Recall from Proposition \ref{prop:energyinequality1} that $u_\eta$ is uniformly bounded in $L_{\infty}((0,\infty);L_2) \cap L_p((0,\infty);V^1_p)$. If $p < d$, using the Sobolev embedding $W^1_p({\T_d}) \hookrightarrow L_{\frac{dp}{d-p}}({\T_d})$ and H\"older's inequality in space-time with $L_\infty(L_2)$ and $L_p(L_{\frac{dp}{d-p}})$, yields
\begin{equation*}
    \Vert u_\eta \otimes u_\eta \Vert_{L_p((0,\infty);L_{\frac{2dp}{dp+2d-2p}})} \leq C.
\end{equation*}
Applying the divergence operator and using \eqref{def:tis} ($\tilde{s}$ is the dual exponent of $\tfrac{2dp}{dp+2d-2p}$) we find that
\begin{equation} \label{bound:inertia1}
       \Vert \diverg(u_\eta \otimes u_\eta) \Vert_{L_p((0,\infty);(V^1_{\tis})')} \leq C.
\end{equation}
If $p\geq d$ we can directly use H\"older's inequality to obtain 
\begin{equation*}
    \Vert \diverg(u_\eta \otimes u_\eta) \Vert_{L_p((0,\infty);L_{\frac{2p}{p+2}})}\le  C.
\end{equation*}
and then use the Sobolev embeding $L_{\frac{2p}{p+2}} \hookrightarrow (V^1_\tis)'$ to obtain the desired bound \eqref{bound:inertia1}.

\medskip
\noindent\textbf{\ref{dualbounds:2} Bound on the time derivative. }
To get the desired estimate on $\partial_t u_\eta$, for any $T>0$ we bound
\begin{equation}\label{eq:claimdt}
    \Vert \veta \Vert_{L_{p\wedge q}((0,T);(V^1_\gamma)')}\le C(T), 
    \quad \text{where, as in \eqref{eq:v_eta_def}, } \quad
    \veta = \partial_t u_\eta + (u_\eta \cdot \nabla) u_\eta. 
\end{equation}
 As $\tis \geq \gamma$ one may then use the embedding $(V^1_\gamma)' \hookrightarrow (V^1_\tis)'$ and step \ref{dualbounds:1} to obtain the result. Indeed, if $p\le 2$, it is enough to use \eqref{bound:inertia1} and the triangle inequality, while for $p>2$ we additionally use that $L_p \hookrightarrow L_{p\wedge q}$ on bounded domains to finish the proof. 

We show \eqref{eq:claimdt}. The Euler--Lagrange equation \eqref{eq:EL_dual} implies that, at almost every time $t>0$, we have  
\begin{equation*}
    e^{-t/\eta} 
    \left[- \eta \partial_t \veta + \veta 
    + 
    \eta
    \bigl(R_1(u_\eta)
    + R_2(u_\eta)
    +
    R_3(u_\eta)\bigr) 
    + 
    A(u_\eta) \right]
    = 0.
\end{equation*}
Integrating this equation in time from $t$ to $T$, where $0\leq t \leq T<\infty,$ yields \rchange{for almost every $T\geq t$}
\begin{equation*}
    \veta(t) 
    =
    e^{(t-T)/\eta} \veta(T) 
    - 
    \eta^{-1} \int_t^T e^{(t-s)/\eta} 
    \bigl[
    A(u_\eta)+\eta \bigl(R_1(u_\eta) + R_2(u_\eta)+R_3(u_\eta)\bigr) 
   \bigr] \dd s.
\end{equation*}
Since $\veta \in L_2((0,\infty);L_2)$ by Proposition~\ref{prop:energyinequality1}\ref{prop:bounds1c1} and \eqref{eq:weak_bound_nonl}, and $\gamma \geq 2$ there exists a sequence $T_k \to \infty$, such that \rchange{above equation is true \emph{and}} $\Vert \veta(T_k) \Vert_{(V^1_\gamma)'} \to 0$ and therefore
\begin{equation*}
    \veta(t) 
    = 
    -\eta^{-1} \int_t^\infty e^{(t-s)/\eta} 
    \bigl[
    A(u_\eta)+\eta \bigl(R_1(u_\eta) + R_2(u_\eta) +R_3(u_\eta)\bigr)  
     \bigr]
    \dd s.
\end{equation*}
We introduce
\[
    K_\eta \coloneq 
    \begin{cases}
        \eta^{-1} e^{-t/\eta}, & t \geq 0
        \\
        0, & t < 0 .
    \end{cases}
\]
Extending $A(u_\eta)[s]=R_1(u_\eta)[s]=R_2(u_\eta)[s] = R_3(u_\eta)[s] =0$ on $(-\infty,0)$, we can rewrite $\veta$ as
\begin{equation}\label{eq:v_conv}
    \veta(t) 
    = 
    - K_\eta \ast 
    \bigl[ A(u_\eta)+\eta \bigl(R_1(u_\eta) 
    + 
    R_2(u_\eta)+R_3(u_\eta)
    \bigr) 
   \bigr],   
    \quad t \in \R.
\end{equation}
Note that for any exponent $r \geq 1$ we have
    \[
    \Vert K_\eta \Vert_{L_r(\R)} \leq C \eta^{1/r-1},
    \]
and recall from Proposition \ref{prop:energyinequality1} that
\begin{align*}
    \Vert \partial_t u_\eta + (u_\eta \cdot \nabla) u_\eta \Vert_{L_2((0,\infty);L_2)} \leq C \eta^{-1/2},
    \quad \Vert \nabla u_\eta\Vert_{L_4((0,\infty);L_4)}\le C\eta^{-1/4} \quad \text{and} \quad
    \Vert u_\eta \Vert_{L_p((0,\infty);V^1_p)}  \leq C.
\end{align*}
Due to the bound on $DW$, specified in \ref{it:W4}, $A$ satisfies
\begin{equation}\label{eq:A_bound}
    \Vert A(u_\eta) \Vert_{L_q((0,\infty);(V^1_p)')}
    \leq 
    C \bra{\Vert u_\eta \Vert_{L_p((0,\infty);V^1_p)}+1}
    \leq 
    C.
\end{equation}
Using $\gamma\ge p$ and Young's convolution inequality yields
\begin{equation}\label{eq:A_Lq}
    \Vert K_\eta \ast A(u_\eta) \Vert_{L_q(\R;(V^1_\gamma)')} 
    \leq 
    C\Vert K_\eta \ast A(u_\eta) \Vert_{L_q(\R;(V^1_p)')} 
    \leq 
    \Vert K_\eta \Vert_{L_1(\R)} \Vert A(u_\eta) \Vert_{L_q((0,\infty);(V^1_p)')} \leq C. 
\end{equation}
Moreover, using H\"older's inequality and Proposition \ref{prop:energyinequality1} \ref{prop:bounds1c1}, we have the straightforward dual bounds
\begin{equation}\label{bound:R}
    \Vert R_1 \Vert_{L_{\frac 43}((0,\infty);(V^1_\gamma)^\prime)} 
    + 
    \Vert R_2 \Vert_{L_{\frac 43}((0,\infty);(V^1_\gamma)^\prime)} 
    \leq 
    C \eta^{-\frac 34}, \quad 
    \Vert R_3 \Vert_{L_{\frac 43}((0,\infty);(V^1_\gamma)')} \leq C \eta^{-\frac 14}.
\end{equation}
Therefore, we obtain
\begin{align*}
    \| K_\eta \ast (\eta (R_1+R_2)) \Vert_{L_2(\R;(V^1_\gamma)')}  
    &\leq \eta \Vert K_\eta \Vert_{L_{\frac 43}(\R)} \Vert R_1+R_2 \Vert_{L_{\frac 43}((0,\infty);(V^1_\gamma)')} 
    \leq 
    C \eta^{\frac 34 -\frac 34} \leq C,\\
    \Vert K_\eta \ast (\eta R_3) \Vert_{L_\infty(\R;(V^1_\gamma)')} 
    &\leq 
    \eta \Vert K_\eta \Vert_{L_{4}(\R)} \Vert R_3 \Vert_{L_{\frac 43}((0,\infty);(V^1_q)')} \leq 
    C 
    \eta^{\frac 14-\frac 14} \leq C.
\end{align*}
Combining this and \eqref{eq:A_Lq} in \eqref{eq:v_conv}, and using that $L_\infty,L_q,L_2\hookrightarrow L_{p\wedge q}$,
we finally arrive at
\begin{equation}\label{eq:v_eta_bound}
    \Vert \veta \Vert_{L_{p\wedge q}((0,T);(V^1_\gamma)')} \leq C(T),
    \quad
    T > 0.
\end{equation}
 
\medskip
 \noindent\textbf{\ref{dualbounds:3} Bound on the second time derivative. }
 We use the Euler--Lagrange equation \eqref{eq:EL_dual} in the form
 \begin{align*}
    \partial_t \veta=\tfrac 1\eta\bigl(\veta+A(u_\eta)\bigr)+R_1(u_\eta)+R_2(u_\eta)+R_3(u_\eta),
 \end{align*}
 and the bounds \eqref{eq:v_eta_bound}, \eqref{eq:A_bound}, and \eqref{bound:R} to obtain
 \begin{align*}
     \norm{\partial_t \veta}_{L_{\frac 43\wedge q}((0,T);(V^1_\gamma)')}\le C\tfrac 1\eta.
 \end{align*}
 In view of the definition of $\veta$ it suffices to prove
 \begin{align}\label{eq:claim}
     \norm{\partial_t(\diverg(u_\eta\otimes u_\eta))}_{L_{\frac 43}((0,T);(V^1_\gamma)')}\le C \tfrac 1\eta.
     \end{align}
 To achieve this we combine the estimates 
 \begin{align*}
     \norm{\partial_t u_\eta}^2_{L_2((0,T);L_2)}+\norm{u_\eta}^4_{L_4((0,T);L_4)}\le C\tfrac 1\eta
 \end{align*}
 from Proposition~\ref{prop:energyinequality1} \ref{prop:bounds1c1} to see that
 \begin{align} \label{eq:claim:2}
     \norm{\partial_t u_\eta\otimes u_\eta+u_\eta\otimes \partial_t u_\eta}_{L_{\frac 43}((0,T);L_{\frac 43})}\le C \tfrac{1}{\eta^{3/4}} \le C\tfrac 1 \eta.
 \end{align}
Estimate \eqref{eq:claim} now follows by taking the divergence and using the embedding $(V^1_4)'\hookrightarrow (V^1_\gamma)'$.

\medskip
\noindent\textbf{\ref{dualbounds:4} Weak convergence statements. }
By weak compactness of the involved spaces, it is clear from \eqref{eq:uetabound} and part \ref{dualbounds:2} that there is a (non-relabeled) subsequence $u_\eta$ that converges weakly to some $u$ in $L_p((0,\infty);V^1_p)$ and so that $\partial_t u_\eta$ converges weakly in $L_{p \wedge q}((0,T);(V^1_{\tis})')$. Due to linearity of the derivative, the weak limit of $\partial_t u_\eta$ is $\partial_t u$.
Recalling \eqref{eq:v_conv},  we can split
\begin{equation}\label{eq:v_eta2}
    \veta(t) = -\left[ K_\eta \ast A(u_\eta) \right] - \eta\left[  K_\eta \ast\bigl(R_1(u_\eta)+R_2(u_\eta)+R_3(u_\eta)\bigr)\right],
    \quad
    t \in \R.
\end{equation}
Let $v$ be the weak limit of $\veta$.
Observe that $(-K_\eta \ast A(u_\eta))$ is bounded in $L_q((0,\infty);(V^1_p)')$ by \eqref{eq:A_Lq}. Hence we can extract a subsequence that
converges weakly to some $F \in L_q((0,\infty);(V^1_p)')$, and we define 
\begin{equation*}
    \tilde{g}_\eta \coloneq -K_\eta \ast A(u_\eta) - F,
\end{equation*}
and obtain $\tilde{g}_\eta \lweakto 0$ in $L_q((0,T);(V^1_p)')$, as $\eta \to 0$.

Regarding the second term in \eqref{eq:v_eta2}, slightly modifying the bounds after \eqref{bound:R}, one obtains
\begin{align}\label{eq:obelix}
\Vert \eta K_\eta \ast \left[R_1(u_\eta)+R_2(u_\eta)+R_3(u_\eta)\right]\Vert_{L_{s'}((0,T);(V^1_\gamma)')} \longrightarrow 0, \quad \text{as } \eta \to 0,
\end{align}
for all $T>0$. Consequently, the weak limit of $(-K_\eta \ast A(u_\eta))$ satisfies $F=v$.

Moreover, interpolation by Corollary \ref{coro:Draco_Malfoy} between $W^1_{p \wedge q}((0,T);(V^1_\tis)')$ and $L_p((0,T);V^1_p)$ yields on the one hand
\begin{equation}
    u_\eta \longrightarrow u \quad \text{in } L_{p-\varepsilon}((0,T);W^{1-\varepsilon}_{p-\varepsilon}),
    \quad \text{as } \eta \to 0,
\end{equation}
for any $\eps>0$, and interpolating this with $L_{\infty}((0,T);L_2)$,  on the other hand 
\begin{equation}
    u_\eta \longrightarrow u \quad \text{in } L_r((0,T);L_{2-\varepsilon}), \quad \text{as } \eta \to 0,
\end{equation}
for any $r<\infty$ and $\varepsilon>0$.
Hence, following the same argument as in step \ref{dualbounds:1}, 
\begin{equation*}
    \diverg (u_\eta \otimes u_\eta) \longrightarrow \diverg ( u \otimes u) \quad \text{in } L_{s'}((0,T);(V^1_s)'), \quad\text{as } \eta \to 0.
\end{equation*}
Consequently,
\begin{equation*}
     \tilde{h}_\eta \coloneq -\eta\left[  K_\eta \ast\left[R_1(u_\eta)+R_2(u_\eta)+R_3(u_\eta)\right]\right] - \diverg (u_\eta \otimes u_\eta) + \diverg (u \otimes u) \longrightarrow 0 \quad \text{in } L_{s'}((0,T);(V^1_s)').
\end{equation*}
It remains to check that the sum of $\tilde{g}_\eta$ and $\tilde{h}_\eta$ equals the difference between $\partial_t u_\eta$ and $\partial_t u$.
Using that $F=v$ and thus $v= -K_\eta \ast A(u_\eta) -\tilde{g}_\eta$ yields 
\[
\partial_t u_\eta -\partial_t u= (\veta-v) +\diverg(u \otimes u) - \diverg (u_\eta \otimes u_\eta) = \tilde{g}_\eta + \tilde{h}_\eta.
\]

\medskip
\noindent\textbf{\ref{dualbounds:5} Weak convergence statements: second time derivative. }
We proceed as in \ref{dualbounds:3} and \ref{dualbounds:4}. Indeed, by \ref{dualbounds:3} we can ensure (after taking a subsequence) the existence of a weak limit $\limit \in L_{\frac{4}{3}\wedge q}((0,T);(V^1_{\tilde{s}})')$. Now we use the equation \eqref{eq:EL_dual} for $\partial_t^2 u_\eta$ in the form
\[
\eta \partial_t^2 u_\eta = \bigl[-\eta \partial_t (\diverg (u_\eta \otimes u_\eta))\bigr] + \bigl[\veta + A(u_\eta)\bigr] + \eta\bigl[R_1(u_\eta) + R_2(u_\eta) + R_3(u_\eta)\bigr]. 
\]
We deal with the terms separately as suggested by the brackets. By \eqref{eq:claim:2} we have
\[
\lim_{\eta \to 0} \Vert -\eta \partial_t ( \diverg (u_\eta \otimes u_\eta)) \Vert_{L_{4/3}((0,T);(V^1_4)')} =0.
\]
and by \eqref{bound:R} also
\[
\lim_{\eta \to 0}\Vert\eta[R_1(u_\eta) + R_2(u_\eta) + R_3(u_\eta)] \Vert_{L_{4/3}((0,T);(V^1_\gamma)')} =0.
\]
We dealt with the terms $\veta$ and $A(u_\eta)$ already in the proof of \ref{dualbounds:4}. 
In more detail, we have that
\begin{align}\label{eq:asterix}
\veta + A[u_\eta] = \bigl(-K_\eta \ast A[u_\eta] + A[u_\eta]\bigr) - \eta \bigl(K \ast [R_1(u_\eta)+R_2(u_\eta)+R_3(u_\eta)]\bigr).
\end{align}
Note that $K_\eta \to \delta_0$. Using that for any test function $\psi \in L_p(\R;V^1_p)$
\[
\langle -K_\eta \ast A[u_\eta], \psi \rangle= -\langle A[u_\eta], K_\eta(- \cdot) \ast \psi \rangle,
\]
we obtain $-K_\eta \ast A[u_\eta] + A[u_\eta] \weakto 0$ in $L_q(\R;(V^1_p)')$.
For the second term on the right-hand side of \eqref{eq:asterix}, we refer to \eqref{eq:obelix}.

Combining all these observations we can decompose $\eta \partial_t \veta$ into one part, $\tilde\varg_\eta=-A[u_\eta] \ast K_{\eta} + A[u_\eta]$, that converges weakly to $0$ in $L_q((0,T);(V^1_p)')$, and another part $\tilde \varh_\eta$ that converges strongly to $0$ in $L_{s'}((0,T);(V^1_s)')$.
This shows that $\limit=0$ and consequently \eqref{dualbounds:42}.

\end{proof}

\subsection{Proof of Theorem \ref{thm:main} under an additional Lipschitz bound} \label{sec:prooffinish}

Before addressing the quite involved second part of the proof of Theorem \ref{thm:main}, for the purpose of exposition, we show how the proof finalises if we are provided with an additional $L_\infty$-bound on $\nabla u_\eta$ and the additional assumption $\eta^{1/2} \partial_t u_\eta \to 0$ strongly in $L_2((0,T);L_2)$.
We remark that such a bound is not true in reality, so the following bears no application for the proof of Theorem \ref{thm:main}.

The proof of Theorem \ref{thm:main} mainly consists of two steps:
\begin{enumerate}[label=(\alph*)]
    \item showing that the weak limit $u$ of $u_\eta$ exists and obeys the equation
    \begin{equation*}
        \partial_t u + ( u \cdot \nabla) u = -\nabla \pi + \diverg(\chi) 
    \end{equation*}
    in a suitable weak sense, where $\chi$ is the weak limit of $A(u_\eta)$;
    \item showing that $\diverg \chi = \diverg DW(\epsilon(u))$.
\end{enumerate}
Indeed, the second step is clear if $DW$ is a linear function, meaning that the fluid is Newtonian. However, in our non-Newtonian framework this convergence is not obvious, as nonlinear terms are usually not compatible with weak convergence.

Nevertheless, recall that weak convergence on a bounded domain can be attributed to two effects: oscillations and concentrations. Note that, for a \emph{concentrating} sequence we may still infer $DW(\epsilon(u_\eta)) \weakto DW(\epsilon(u))$ from $\epsilon(u_\eta) \weakto \epsilon(u)$, 
while this is not true for an \emph{oscillating} sequence.

Inspired by the works \cite{BDF,BDS}, we therefore first neglect concentrations by considering a \emph{truncated} sequence $u_\eta^L$ that is uniformly bounded in $L_{\infty}((0,T);V^1_{\infty})$, show that this sequence has no oscillations, and then pass to the limit as $L \to \infty$ in a second step.

To demonstrate our strategy, we first suppose that $u_\eta$ is uniformly bounded in $L_{\infty}((0,T);V^1_{\infty})$, show convergence for those functions and attend to the general case in the following Section \ref{sec:5}.

\begin{lemma} \label{lemma:lukaspodolski}
Let $p > \frac{2d}{d+2}$ and let $u_\eta \in U_\eta$ be a minimiser of $I_\eta$. Then the following holds true:
\begin{enumerate} [label=(\roman*)]
    \item \label{lukaspodolski:1} There exist a subsequence of $u_\eta$ (not relabeled) such that 
    \begin{equation} \label{eq:convergence:chi}
   \begin{cases}
        u_\eta \lweakto u& \text{in } L_p((0,\infty);V^1_p);
        \\
        \partial_t u_\eta \lweakto y &
        \text{in } L_{p\wedge q}((0,T);(V^1_\tis)') \quad \text{for all } T>0;
        \\
        DW(\epsilon(u_\eta)) \lweakto \chi & \text{in } L_q((0,\infty);L_q)
    \end{cases}
\end{equation}
        for some limit functions $u \in L_p((0,\infty);V^1_p),\, y \in L_{2\wedge q}((0,T);(V^1_\tis)')$ and $\chi \in L_q((0,\infty);L_q)$;
        \item \label{lukaspodolski:2} $u$ obeys the equation
            \begin{equation} \label{eq:weird:eq}
                \langle \partial_t u+ (u \cdot \nabla) u, \phi \rangle = -\langle \chi, \epsilon(\phi) \rangle 
              \end{equation}
        for all $\phi \in V^1_\tis$ for almost every $t>0$.
        \end{enumerate}
        If there is $L>0$ such that $\Vert u_\eta \Vert_{L_{\infty}((0,\infty);V^1_{\infty})} \leq L$ and, moreover, $\eta^{1/2} \partial_t u_\eta \to 0$ in $L_2((0,T);L_2)$ for any $T>0$, we have in addition
        \begin{enumerate}[resume,label=(\roman*)]
        \item \label{lukaspodolski:3} $u \in L_{\infty}((0,\infty);L_2)$ and $u$ obeys for all $t\in(0,\infty)$ the energy equality
            \begin{equation} \label{eq:lukaspodolski:3}
                E[u](0) - E[u](t) = \int_0^t \int_{{\T_d}} \chi \colon \epsilon(u) \dd x \dd s;
            \end{equation}
        \item\label{lukaspodolski:4} the limits satisfy $ \diverg \chi = \diverg DW(\epsilon(u))$ in the sense that 
        \[
            \langle \chi, \epsilon(\phi) \rangle = \langle DW(\epsilon(u)), \epsilon(\phi) \rangle 
        \]
        for all $\phi \in V^1_p$ and almost every time $t>0$ and hence $u$ is an energy solution to the non-Newtonian Navier--Stokes system.
    \end{enumerate}
\end{lemma}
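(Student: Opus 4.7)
The plan is a four-step argument aligned with the four items of the statement. For \ref{lukaspodolski:1}, the weakly convergent subsequences $u_\eta \lweakto u$ in $L_p((0,\infty);V^1_p)$ and $\partial_t u_\eta \lweakto \partial_t u$ are provided directly by Propositions \ref{prop:energyinequality1}\ref{prop:bounds1b1} and \ref{prop:dualbounds}\ref{dualbounds:4}, while the growth bound \ref{it:W4} gives
\begin{equation*}
    \Vert DW(\epsilon(u_\eta))\Vert_{L_q((0,\infty);L_q)} \leq C\bigl(1+\Vert \epsilon(u_\eta)\Vert_{L_p((0,\infty);L_p)}^{p-1}\bigr)\leq C,
\end{equation*}
so that a further subsequence converges weakly in $L_q(L_q)$ to some $\chi$. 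For \ref{lukaspodolski:2}, I would multiply the Euler--Lagrange equation of Corollary \ref{coro:EL3} by $\eta$ and test against a smooth solenoidal $\phi$ with compact support in time. Using the bounds $\Vert\veta\Vert_{L_2((0,\infty);L_2)} = O(\eta^{-1/2})$ and $\Vert\nabla u_\eta\Vert_{L_4((0,\infty);L_4)} = O(\eta^{-1/4})$ of Proposition \ref{prop:energyinequality1}\ref{prop:bounds1c1}, H\"older's inequality shows that $\eta\langle\veta,\partial_t\phi\rangle$, $\eta R_i[u_\eta,\phi]$ for $i=1,2$, and the stabiliser $\eta C_4\int\abs{\nabla u_\eta}^2\nabla u_\eta:\nabla\phi\dd x$ are of order $\eta^{1/2}$ or $\eta^{1/4}$ and vanish in the limit. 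The dissipation term passes to $\int_0^\infty\langle\chi,\epsilon(\phi)\rangle\dd t$ by definition of $\chi$, while convergence of $\int\langle\veta,\phi\rangle\dd t$ to $\int\langle\partial_t u + (u\cdot\nabla)u,\phi\rangle\dd t$ is secured by the splitting \eqref{dualbounds:41}.

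The additional Lipschitz bound enters in \ref{lukaspodolski:3}: it is preserved under weak-$*$ convergence, so $u \in L_\infty((0,T);V^1_\infty)$ and $u$ itself is an admissible test function in \eqref{eq:weird:eq}. Divergence-freeness of $u$ kills the convective contribution $\langle(u\cdot\nabla)u,u\rangle$ (cf.\ Remark \ref{remark:regularityII}), and the hypothesis $\eta^{1/2}\partial_t u_\eta \to 0$ in $L_2((0,T);L_2)$ sharpens the $O(\eta^{-1/2})$ a-priori bound to $\partial_t u \in L_2((0,T);L_2)$, so that $u \in C([0,T];L_2)$ realises the initial datum $u^0$. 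Testing \eqref{eq:weird:eq} with $u$ and integrating in time from $0$ to $t$ then yields the energy equality \eqref{eq:lukaspodolski:3}.

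Finally, \ref{lukaspodolski:4} proceeds by Minty's trick. The Lipschitz bound gives $\int_0^T\int_{\T_d} e^{-t/\eta} DW(\epsilon(u_\eta)):\epsilon(u_\eta)\dd x\dd t = O(\eta)$, so combining the energy inequality \eqref{energy:ineq11} with the equality \eqref{eq:lukaspodolski:3} and $u_\eta^0 \to u^0$ in $L_2$ produces
\begin{equation*}
    \limsup_{\eta\to 0}\int_0^T\!\int_{\T_d} DW(\epsilon(u_\eta)):\epsilon(u_\eta)\dd x\dd t \leq \int_0^T\!\int_{\T_d}\chi:\epsilon(u)\dd x\dd t.
\end{equation*}
Monotonicity of $DW$, a consequence of the convexity of $W$ in \ref{it:W2}, then gives, for any $w \in L_\infty((0,T);V^1_\infty)$,
\begin{equation*}
    0 \leq \int_0^T\!\int_{\T_d}\bigl(DW(\epsilon(u_\eta))-DW(\epsilon(w))\bigr):\bigl(\epsilon(u_\eta)-\epsilon(w)\bigr)\dd x\dd t,
\end{equation*}
and passing to the limsup yields $\int\bigl(\chi-DW(\epsilon(w))\bigr):\bigl(\epsilon(u)-\epsilon(w)\bigr)\dd x\dd t\geq 0$. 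Specialising to $w=u-\lambda\phi$ with smooth solenoidal $\phi$, dividing by $\lambda>0$, and letting $\lambda\to 0$ (using continuity of $DW$ and dominated convergence, both available thanks to the Lipschitz bound) produces $\int\bigl(\chi-DW(\epsilon(u))\bigr):\epsilon(\phi)\dd x\dd t=0$ for arbitrary $\phi$, as required. The essential obstacle is precisely the unphysical Lipschitz bound assumed in \ref{lukaspodolski:3}--\ref{lukaspodolski:4}; its removal -- for which $u_\eta$ must be replaced by a truncated sequence that kills concentration effects so that the above Minty-type argument can still be carried out -- is what motivates the elliptic-parabolic Lipschitz truncation of Sections \ref{sec:5} and \ref{sec:truncation}.
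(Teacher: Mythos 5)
Your arguments for \ref{lukaspodolski:1}, \ref{lukaspodolski:2} and the core of \ref{lukaspodolski:3} match the paper's: weak compactness from Propositions~\ref{prop:energyinequality1} and~\ref{prop:dualbounds} together with the growth bound \ref{it:W4}, vanishing of the $\eta$-weighted extra Euler--Lagrange terms via the $O(\eta^{-1/2})$ and $O(\eta^{-1/4})$ bounds, and testing \eqref{eq:weird:eq} with $u$ itself under the Lipschitz bound. One sentence in your \ref{lukaspodolski:3} is incorrect (though not load-bearing): strong convergence $\eta^{1/2}\partial_t u_\eta\to 0$ in $L_2$ does \emph{not} improve the regularity of the weak limit to $\partial_t u\in L_2((0,T);L_2)$ --- it only says $\Vert\partial_t u_\eta\Vert_{L_2}=o(\eta^{-1/2})$, which is a statement about the approximating sequence and gives no new information on $\partial_t u$. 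What actually makes $u$ an admissible test function and the Lions--Magenes chain rule applicable in \eqref{eq:weird:eq} is the Lipschitz bound alone: $u\in L_\infty((0,T);V^1_\infty)$ and the equation yield $\partial_t u\in L_q((0,T);(V^1_p)')$, which is dual to $u\in L_p((0,T);V^1_p)$.

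Your treatment of \ref{lukaspodolski:4} takes a genuinely different, and cleaner, route than the paper. The paper returns to the weighted identity \eqref{eq:Hans_Sigl1}, decomposes the dissipation $\mathcal D$ into four terms $(\mathrm{I})$--$(\mathrm{IV})$ and proves $(\mathrm{II}),(\mathrm{III}),(\mathrm{IV})\to 0$ individually; it is precisely the $\vert\partial_t u_\eta\vert^2$ contribution inside $(\mathrm{II})$ that forces the hypothesis $\eta^{1/2}\partial_t u_\eta\to 0$ in $L_2$. You instead argue through the one-sided energy \emph{inequality} \eqref{energy:ineq11}: under the Lipschitz bound the $e^{-t/\eta}$-weighted dissipation is $O(\eta)$, so together with $E[u_\eta^0]\to E[u^0]$ and the lower semicontinuity / a.e.\ pointwise convergence of $E[u_\eta](T)$ (which you should perhaps state explicitly --- it needs the same Aubin--Lions compactness that the paper invokes), one gets $\limsup_{\eta}\int_0^T\langle DW(\epsilon(u_\eta)),\epsilon(u_\eta)\rangle\dd s\leq E[u^0]-E[u](T) = \int_0^T\langle\chi,\epsilon(u)\rangle\dd s$ using \eqref{eq:lukaspodolski:3}. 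A $\limsup$ upper bound is all that Minty's trick consumes, as your expansion correctly records. This is more economical: it bypasses the paper's term-by-term analysis entirely and, as a pleasant side effect, does not appear to use the hypothesis $\eta^{1/2}\partial_t u_\eta\to 0$ at all --- the Lipschitz bound does all the work. Restricting the Minty test functions to $w\in L_\infty((0,T);V^1_\infty)$ is sufficient for the final specialisation $w=u\pm\lambda\phi$, so nothing is lost.
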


\begin{remark}
    As shown by Bathory \& Stefanelli \cite{BS22}, if $p \geq \tfrac{3d+2}{d+2}$, the additional assumptions of an $L_{\infty}$-bound on $\epsilon(u_\eta)$ and the convergence of $\eta^{1/2} \partial_t u_\eta$ can be lifted (as in this case we can test the equation \eqref{eq:weird:eq} for $u$ already with $u$ itself). In the proof of \ref{lukaspodolski:4} we use the uniform $L_{\infty}$-bound once more; this can be avoided with slightly more care, cf. \cite{BS22}.
\end{remark}

\begin{proof}
\textbf{\ref{lukaspodolski:1} and \ref{lukaspodolski:2}. }By Proposition \ref{prop:existence} there exists a sequence of minimisers $u_\eta$ in $U_\eta$. 
Moreover, due to the bounds obtained in Proposition \ref{prop:energyinequality1} and Proposition \ref{prop:dualbounds} we may extract a subsequence $u_\eta$ (not relabeled) such that
\begin{equation} \label{eq:proofmain1}
    \begin{cases}
        u_\eta \lweakto u &\text{in } L_p((0,\infty);V^1_p);
        \\
        \partial_t u_\eta \lweakto y &
        \text{in } L_{p\wedge q}((0,T),(V^1_\tis)') \quad \text{for all } T>0;
        \\
        (u_\eta \cdot \nabla) u_\eta \lweakto w & \text{in } L_p((0,\infty),(V^1_\tis)');
        \\
        DW(\epsilon(u_\eta)) \lweakto \chi & \text{in } L_q((0,\infty);L_q), 
\end{cases}
\end{equation}
for limit elements $u \in L_p((0,\infty);V^1_p),\ y \in L_{p\wedge q}((0,T);(V^1_\tis)')$, $w \in L_p((0,\infty);(V^1_\tis)')$, and $\chi \in L_q((0,\infty);L_q)$. Due to linearity of the derivative and a similar interpolation argument as in Proposition \ref{prop:existence}  we may identify
\begin{equation*}
    y = \partial_t u
    \quad \text{and} \quad
    w = (u\cdot \nabla)u. 
\end{equation*}
Moreover, Proposition \ref{prop:energyinequality1}(ii) yields
\begin{equation*}
    u_\eta \overset{\ast}{\lweakto} u  \quad \text{in } L_{\infty}((0,\infty);L_2).
\end{equation*}
From Lemma \ref{lemma:EL3} we know that any $u_\eta$ obeys the Euler--Lagrange equation
\begin{equation*}
    \int_0^{\infty} 
    \langle \eta \veta,\partial_t \phi \rangle 
    + 
    \langle \veta,\phi \rangle 
    + 
    A[u_\eta,\phi]
    + 
    \eta \bigl(R_1[u_\eta,\phi] 
    + 
    R_2[u_\eta,\phi] 
    + 
    R_3[u_\eta,\phi]\bigr)
    \dd t 
    =
    0
\end{equation*} 
for any $\phi \in C_c^{\infty}((0,T);V^1_\tis)$, where $\veta= \partial_t u_\eta + (u_\eta \cdot \nabla) u_\eta$. Note that \eqref{bound:R} implies 
\begin{equation*}
\begin{split}
    &
    \Vert \eta R_1(u_{\eta}) \Vert_{L_{\frac 4 3}((0,\infty);(V^1_{\gamma})')} 
    + \Vert \eta R_2(u_{\eta}) \Vert_{L_{\frac 4 3}((0,\infty);(V^1_{\gamma})')} 
    \leq 
    C \eta^{1/4}
    \longrightarrow 0, 
    \quad \text{as } \eta \to 0,
    \\
    &
    \|\eta R_3(u_\eta)\|_{L_{\frac 4 3}((0,\infty);(V^1_{\gamma})')}
    \leq
    C \eta^{3/4}
    \longrightarrow 0, 
    \quad 
    \text{as } \eta \to 0.
\end{split}
\end{equation*}
Moreover, we have 
\begin{equation*}
    \|\eta \veta\|_{L_{p\wedge q}((0,T);(V^1_\tis)')}
    \longrightarrow 0,
    \quad \text{as } \eta \to 0,
\end{equation*}
by Proposition~\ref{prop:dualbounds}.
Thus, using weak convergence of 
$\veta$, $\epsilon(u_\eta)$ and $DW(\epsilon(u_\eta))$ we obtain that $u$ satisfies
\begin{equation*}
    \int_0^{\infty} \langle \partial_t u+ (u \cdot \nabla) u,\phi \rangle+ \langle \chi,\epsilon(\phi) \rangle \dd t =0,
    \quad
    \phi \in C_c^{\infty}((0,\infty);V^1_\tis).
\end{equation*}
Consequently, $u$ satisfies equation \eqref{eq:weird:eq} for almost every time $t>0$.

\medskip
\noindent \textbf{\ref{lukaspodolski:3}. } Observe that due to the $L_{\infty}$-bound on $\nabla u_{\eta}$ we can already infer that the weak limit $u$ must also satisfy $\Vert u_\eta \Vert_{L_{\infty}((0,\infty);V^1_{\infty})} \leq L$. Consequently, $u$ is in the dual space of all terms appearing in \eqref{eq:weird:eq} on any interval $(0,t)$, and we may test equation \eqref{eq:weird:eq} with $u$ itself and integrate in time from $0$ to $t$ to obtain
\begin{equation*}
    E[u](0)-E[u](t) 
    = 
    -\int_0^t \langle \partial_t u, u \rangle \dd s = \int_0^t \int_{{\T_d}} \chi \colon \epsilon(u) \dd x \dd s,
\end{equation*}
where we have again used the identity $\langle (u\cdot \nabla)u,u\rangle = 0$ for $u\in V^1_{\infty}$.

\medskip
\noindent \textbf{\ref{lukaspodolski:4}. }
In order to identify the limit $\diverg \chi = \diverg DW(\epsilon(u)) \in L_q((0,\infty);(V^1_p)')$, we apply
Minty's trick with an adjustment to the present setting.

To this end, fix $T>0$, take $\phi \in L_p((0,T);V^1_p)$ and observe that due to convexity of $W$ we have
\begin{equation} \label{eq:Minty:1}
    \begin{split}
        0 &\leq \int_0^T \langle DW(\epsilon(u_\eta)) - DW(\epsilon(\phi)), \epsilon(u_\eta) - \epsilon(\phi) \rangle \dd s \\
        & = \int_0^T \langle DW(\epsilon(u_\eta)), \epsilon(u_\eta) \rangle - \langle DW(\epsilon(u_\eta)), \epsilon(\phi) \rangle - \langle DW(\epsilon(\phi)), \epsilon(u_\eta) \rangle + \langle DW(\epsilon(\phi)), \epsilon(\phi) \rangle 
        \dd s.
    \end{split}
\end{equation}
We handle all four summands on the right-hand side of \eqref{eq:Minty:1} separately: the fourth term depends only on $\phi$.
Using the weak convergence $u_\eta \weakto u$ in $L_p((0,\infty);V^1_p)$, we find for the third term that
\begin{equation*}
    \langle DW(\epsilon(\phi)), \epsilon(u_\eta) \rangle
    \longrightarrow 
    \langle DW(\epsilon(\phi)), \epsilon(u) \rangle, \quad \text{ as } \eta \to 0.
\end{equation*}
 Due to the weak convergence $DW(\epsilon(u_\eta)) \weakto \chi \text{ in } L_q((0,\infty);L_q)$, we see that
\begin{equation*}
    \langle DW(\epsilon(u_\eta)),\epsilon(\phi)\rangle 
    \longrightarrow 
    \langle \chi, \epsilon(\phi)\rangle,
    \quad \text{as } \eta \to 0.
\end{equation*}
For the first term on the right-hand side of \eqref{eq:Minty:1} we claim that 
\begin{equation} \label{claim:baumgart}
    \lim_{\eta \to 0} \int_0^T \langle DW(\epsilon(u_\eta)), \epsilon(u_\eta) \rangle \dd s = \int_0^T \langle \chi, \epsilon(u) \rangle \dd s.
\end{equation}
At this point we explicitly mention that \eqref{claim:baumgart} fails for $p < \tfrac{3d+2}{d+2}$ if the additional $L_{\infty}((0,\infty);V^1_{\infty})$ bound is dropped. In particular, this type of proof for the identification of the limit $\chi$ fails in the setting without Lipschitz bound. However, we assume for the moment that, (under the additional assumption $ u_\eta \in L_{\infty}((0,\infty);V^1_{\infty})$,) the identity \eqref{claim:baumgart} is true. In this case, the proof proceeds in the standard manner:

\noindent Taking the limit $\eta \to 0$ in \eqref{eq:Minty:1}, we may infer that
\[
    0 \leq \int_0^T \langle\chi - DW(\phi), \epsilon(u) - \epsilon(\phi) \rangle \dd s 
\]
for all $\phi \in L_p((0,T);V^1_p)$ and, in particular,
choosing $\phi = u \pm \lambda \tilde\phi$ for $\tilde \phi \in L_p((0,T);V^1_p)$ and $\lambda > 0$, we obtain
\[
    0 \leq  \mp\int_0^T \langle\chi - DW(\epsilon(u \pm \lambda \tilde\phi)), \epsilon(\tilde\phi) \rangle \dd s 
\]
Taking in both cases the limit as $\lambda \to 0$,
we infer 
\[
    0
    = 
    \int_0^T \langle\chi - DW(\epsilon(u)), \epsilon(\tilde \phi) \rangle \dd s,
\]
which is the assertion. That $u$ is an energy solution then follows from \ref{lukaspodolski:3}.
\medskip

\noindent It remains to show the claim \eqref{claim:baumgart}. To this end, we observe the following facts:
\begin{itemize}
    \item As $\Vert u_\eta \Vert_{L_{\infty}((0,T);V^1_{\infty})} \leq L$ we have $\eta^{1/2} \diverg(u_\eta \otimes u_\eta) \to 0$ in $L_2((0,T);L_2)$. Moreover, we assumed that $\eta^{1/2} \partial_t u_\eta \to 0$ in $L_2((0,T);L_2)$.
    \item Since  $\Vert u_\eta \Vert_{L_{\infty}((0,T);V^1_{\infty})} \leq L$, we have $u_\eta \weakto u$ in $L_r((0,T);V^1_r)$ for any $r<\infty$, and in combination with \ref{prop:dualbounds}\ref{dualbounds:2}, $u_\eta \weakto u$ in $W^1_{p\wedge q}((0,T);(V^1_\tis)')$. Therefore, by interpolation (cf. Corollary~\ref{coro:Draco_Malfoy}), we have $u_\eta \to u$ strongly in $L_2((0,\infty);L_2)$ and, in particular, up to extraction of a subsequence, $E[u_\eta](s) \to E[u](s)$ for almost every $s \in (0,\infty)$.
\end{itemize}
Now consider 
\[
    \mathcal D \coloneq \int_0^T \langle DW(\epsilon(u_\eta)), \epsilon(u_\eta) \rangle \dd t.
\]
Using the Euler--Lagrange equation/energy equality \eqref{eq:Hans_Sigl1} we may rewrite $\mathcal D$ as 
\begin{align*}
    \mathcal D 
    &= E[u_\eta](0) - E[u_\eta](T) \\
    &\quad- \eta \int_0^{\infty} \psi(t) e^{-t/\eta} \int_{{\T_d}} 3 \partial_t u_\eta \diverg( u_\eta \otimes u_\eta)+ 2 |\diverg (u_\eta \otimes u_\eta)|^2 +  C_4 \vert \nabla u_\eta \vert^4 
    + |\partial_t u_\eta|^2
    \dd x \dd t \\
    &\quad+ \int_0^T e^{-t/\eta} \bigl(\langle DW(\epsilon(u_\eta)), \epsilon(u_\eta) \rangle \bigr) \dd t \\
    &\quad- \int_T^{\infty} (e^{T/\eta}-1) e^{-t/\eta}  \langle DW(\epsilon(u_\eta)), \epsilon(u_\eta) \rangle \dd t \\
    &\eqcolon \mathrm{(I)} + \mathrm{(II)} + \mathrm{(III)} + \mathrm{(IV)}.
\end{align*}
Due to the previous observation we know that $\mathrm{(I)}$ converges to $E[u](0)-E[u](T)$ for almost every $T>0$, and that $\mathrm{(II)}$ converges to zero. Moreover, the $L_{\infty}$-bound of $\epsilon(u_\eta)$ (in space-time) and the uniform bound on $\partial_t u_\eta$ in $L_{s'}((0,T);(V^1_s)')$ imply together with the use of H\"older's inequality that $\mathrm{(III)}$ and $\mathrm{(IV)}$ tend to zero as $\eta \to 0$.
Therefore, 
    \[
    \lim_{\eta \to 0}\int_0^T \langle DW(\epsilon(u_\eta)), \epsilon(u_\eta) \rangle \dd t = E[u](0) - E[u](T) = \int_0^T \langle \chi, \epsilon(u) \rangle \dd t,
    \]
where we use the energy equality \eqref{eq:lukaspodolski:3} for the limit $u$. This shows \eqref{claim:baumgart} and the proof is complete.
\end{proof}

\section{Solenoidal Lipschitz truncation \& the proof of Theorem \ref{thm:main} under no additional assumptions.} \label{sec:5}

This section is devoted to the proof of Theorem \ref{thm:main}. First, we may show (cf. Lemma \ref{lemma:lukaspodolski} \ref{lukaspodolski:1}, \ref{lukaspodolski:2}) that the sequence $u_\eta$ of minimisers converges weakly to a solution of

\begin{equation*}
\begin{cases}
    \partial_t u + (u \cdot \nabla) u = - \nabla \pi + \diverg \chi \\
    \diverg u=0, 
\end{cases}
\end{equation*}
and satisfies an appropriate energy inequality involving the weak limit $\chi$ of $DW(\epsilon(u_\eta))$. 
The main difficulty is to identify the weak limit $\diverg \chi = \diverg DW(\epsilon(u))$.
In Lemma \ref{lemma:lukaspodolski} we have seen that this is true under the additional uniform regularity assumption $\Vert u_\eta \Vert_{L_\infty((0,\infty);V^1_\infty)}\le L$, but this is in general not satisfied.

Instead, recall that $DW(\cdot)$ is nonlinear (apart from the case of Newtonian fluids) and hence not compatible with oscillations of $\epsilon(u_\eta)$. In Lemma \ref{lemma:lukaspodolski} we demonstrated that, when ruling out concentration effects, we directly obtain $\chi= DW(\epsilon(u))$ and, as we see in Section \ref{sec:definitely_Bergdoktor}, the sequence $u_\eta$ is in fact strongly convergent in $L_p((0,\infty);V^1_p)$.

To obtain the result, we aim to apply Minty's trick as in Lemma \ref{lemma:lukaspodolski}. 
This is however not possible since we cannot test the limiting equation with $u$ itself and therefore do not get an energy equality. To handle this problem we follow the idea of \cite{BDF}. That is, we introduce truncated sequences $u_\eta^L$ and $u^L$ and show that the differences  $(u_\eta^L-u_\eta)$ and $(u^L-u)$ have suitable regularity. This allows us to test the Euler--Lagrange equation  for $u_\eta$ and the limiting equation for $u$ with $(u_\eta^L-u_\eta)$ and $(u^L-u)$, respectively. In the following chapter, we state the abstract truncation result and use it to prove the convergence of the nonlinear viscosity term. Due to its technicality, we postpone the proof of the truncation statement to Section \ref{sec:truncation}.

The major obstruction in the proof compared to the proof of Lemma \ref{lemma:lukaspodolski} is to show the convergence 
\[
    \lim_{\eta \to 0} \int_0^t \int_{{\T_d}} \partial_t u_\eta \cdot u_\eta^L \dd s = \int_0^t \int_{{\T_d}} \partial_t u \cdot u^L \dd s.
\]
of the energy terms. 
Recall that the according convergence in Lemma \ref{lemma:lukaspodolski} was obtained by using $\partial_t u_\eta u_\eta = \tfrac{1}{2} \partial_t |u_\eta|^2$ and the fundamental theorem to observe that $E[u_\eta] \to E[u]$ pointwise almost everywhere.
This argument cannot be directly employed here.
The proof is very roughly organised as follows:
\begin{enumerate} [label=(\roman*)]
    \item\label{it:i} we show that $u_\eta$ converges to some $u$ that satisfies a differential equation involving the weak limit $\chi$ of $DW(\epsilon(u_\eta))$;
    \item\label{it:ii} we construct a truncated sequence $u_\eta^L$ that converges to some $u^L$, and use a test function for the Euler--Lagrange equation that is connected to this truncated sequence;
    \item\label{it:iii} we show that we may use Minty's trick by proving convergence of suitable energies.
\end{enumerate}
Throughout this section we assume as previously that the exponents $s$ and $\tis$ are given by 
\begin{equation} \label{eq:def_s}
    s > \tis 
    = 
    \max\{\gamma,\tfrac{2dp}{dp+2p-2d}\} = \max\{p,4,\tfrac{2dp}{dp+2p-2d}\}.
\end{equation}

\subsection{Weak convergence of $u_\eta$}
We accomplish Step \ref{it:i}, i.e. we briefly recall the weak-convergence results of Lemma \ref{lemma:lukaspodolski} for the minimising sequence $u_\eta$ and observe that the weak limit $u$ satisfies a differential equation as well as an energy inequality, both involving the weak limit $\chi$ of the nonlinear term $DW(\epsilon(u_\eta))$. 

\begin{lemma} \label{lemma:jonashector}
Let $p > \frac{2d}{d+2}$ and let $u_\eta \in U_\eta$ be a minimiser of $I_\eta$. Then the following holds true: 
\begin{enumerate} [label=(\roman*)]
    \item \label{jonashector:1} There is a subsequence (not relabeled) $u_\eta$ and a limit $u \in L_p((0,\infty);V^1_p)$ such that  
    \begin{equation} \label{eq:convergence:chi2}
    \begin{cases}
        u_\eta \lweakto u 
        &
        \text{in } L_p((0,\infty);V^1_p);
        \\
        \partial_t u_\eta \lweakto \partial_t u 
        &
        \text{in } L_{p \wedge q}((0,T);(V^1_\tis)') \quad \text{for all } T>0;
        \\
        DW(\epsilon(u_\eta)) \lweakto \chi 
        &
        \text{in } L_q((0,\infty);L_q)
    \end{cases}
    \end{equation}
    for some $\chi \in L_q((0,\infty);L_q)$;
    \item \label{jonashector:3/2} For every $T > 0$ we have 
    \begin{equation*}
        \|\partial_t^2 u_\eta \|_{L_{\tis'}((0,T);(V^1_\tis)')}  \leq C(T) \tfrac 1\eta;
    \end{equation*}
    \item \label{jonashector:2} $u$ satisfies the equation
            \begin{equation} \label{eq:weird:eq2}
                \langle \partial_t u+ (u \cdot \nabla) u, \phi \rangle = -\langle \chi, \epsilon(\phi) \rangle 
              \end{equation}
        for all $\phi \in V^1_\tis$ for almost every $t>0$;
        \item \label{jonashector:3} $u \in L_{\infty}((0,\infty);L_2)$ and $u$ obeys the energy inequality
        \begin{equation} \label{eq:jonashector:3}
            E[u](0) - E[u](t) \geq 0
        \end{equation}
        for almost every $t>0$.
    \end{enumerate}
\end{lemma}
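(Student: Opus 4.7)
Parts (i)--(iii) are essentially corollaries of the a-priori bounds established in Propositions \ref{prop:energyinequality1}--\ref{prop:dualbounds} combined with a weak-compactness argument mimicking Lemma \ref{lemma:lukaspodolski}(i)--(ii). The genuinely new content lies in part (iv), although its proof is again comparatively soft: the present lemma deliberately avoids identifying $\chi$ with $DW(\epsilon(u))$, deferring that question to the Lipschitz truncation of Section \ref{sec:truncation}. The main obstacle, therefore, is not in this lemma itself but in the subsequent step of showing $\chi = DW(\epsilon(u))$; here my goal is only to collect the soft convergence information that that step will need.

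For part (i), I would extract a subsequence using Proposition \ref{prop:energyinequality1}\ref{prop:bounds1b1} and Proposition \ref{prop:dualbounds}\ref{dualbounds:2}; the $L_q((0,\infty);L_q)$-bound on $DW(\epsilon(u_\eta))$ follows from the growth assumption $|DW(\epsilon)| \leq C(1 + |\epsilon|^{p-1})$ in \ref{it:W4} together with the uniform $L_p$-bound on $\epsilon(u_\eta)$, and linearity of $\partial_t$ identifies the weak limit with $\partial_t u$. Part (ii) is a direct restatement of Proposition \ref{prop:dualbounds}\ref{dualbounds:3}: the choice $\tis \geq \max\{4,p\}$ gives $\tis' \leq \min\{4/3, q\}$, so on any bounded interval $(0,T)$ one has $L_{4/3 \wedge q} \hookrightarrow L_{\tis'}$. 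For part (iii), I would pass to the limit in the dual Euler--Lagrange equation of Lemma \ref{lemma:EL3}. The terms carrying a prefactor $\eta$, namely $\eta \veta$ and $\eta R_i(u_\eta)$ for $i=1,2,3$, vanish in the appropriate dual norms by Proposition \ref{prop:dualbounds}\ref{dualbounds:2} and the bounds \eqref{bound:R}; the nonlinear term $(u_\eta \cdot \nabla)u_\eta$ converges to $(u \cdot \nabla)u$ in a suitable dual space by an Aubin--Lions argument via Corollary \ref{coro:Draco_Malfoy}, exactly as in Step \ref{dualbounds:4} of the proof of Proposition \ref{prop:dualbounds}; and $A(u_\eta) \weakto \chi$ by construction.

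Part (iv) then follows from the energy inequality already established for minimisers. Since $DW(\epsilon):\epsilon \geq 0$ by \ref{it:W4}, inequality \eqref{energy:ineq11} yields $E[u_\eta](t) \leq E[u_\eta^0]$ for every $t \geq 0$. Applying Corollary \ref{coro:Draco_Malfoy} with the bounds of part (i) produces a further subsequence with $u_\eta \to u$ strongly in $L_2((0,T);L_2)$ for every $T > 0$, hence $u_\eta(t) \to u(t)$ in $L_2$ for a.e.~$t$; combining with the convergence $u_\eta^0 \to u^0$ in $V^0_2$ from \ref{it:u01} gives $E[u](t) \leq E[u^0]$. The membership $u \in L_\infty((0,\infty); L_2)$ follows from Proposition \ref{prop:energyinequality1}\ref{prop:bounds1b1} and weak-$\ast$ compactness, while the identification $E[u](0) = E[u^0]$ (which makes \eqref{eq:jonashector:3} meaningful) follows from the continuous embedding $W^1_{p \wedge q}((0,T); (V^1_\tis)') \hookrightarrow C([0,T]; (V^1_\tis)')$ together with $u_\eta(0) = u_\eta^0 \to u^0$. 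Upgrading \eqref{eq:jonashector:3} to the genuine Leray--Hopf energy inequality involving $\chi:\epsilon(u)$ is precisely what will require, in the subsequent sections, the elliptic--parabolic solenoidal Lipschitz truncation.
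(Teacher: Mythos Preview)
Your proposal is correct and follows essentially the same route as the paper: parts (i)--(iii) are deduced directly from Proposition \ref{prop:dualbounds} and Lemma \ref{lemma:lukaspodolski}, and part (iv) comes from the approximate energy inequality \eqref{energy:ineq11} after dropping the nonnegative dissipation term. The only minor deviation is that for (iv) the paper invokes weak lower-semicontinuity of $E[\cdot](t)$ (i.e.\ $\liminf_{\eta\to 0} E[u_\eta](t)\ge E[u](t)$), whereas you establish the stronger a.e.\ pointwise convergence $E[u_\eta](t)\to E[u](t)$ via Aubin--Lions compactness in $L_2((0,T);L_2)$; both arguments are valid and require the same compactness input, so this is a cosmetic difference rather than a genuinely different strategy.
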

\begin{proof}
    \textbf{\ref{jonashector:1}, \ref{jonashector:3/2} and \ref{jonashector:2}}
    are already shown in Proposition \ref{prop:dualbounds} and Lemma \ref{lemma:lukaspodolski}.\\
    \textbf{\ref{jonashector:3}. } This follows from the energy inequality for $u_\eta$, i.e. 
        \begin{align*}
        E[u_\eta](0) \geq E[u_\eta](t) + \int_0^t (1-e^{-s/\eta})\langle DW(\epsilon(u_\eta)), \epsilon(u_\eta) \rangle \dd s. 
        \end{align*}
    The integral on the right-hand side is non-negative and, by lower semi-continuity of the energy, we have 
    \begin{equation*}
        \liminf_{\eta \to 0} E[u_\eta](t) \geq E[u](t)
    \end{equation*}
    for almost every $t>0$. This yields \eqref{eq:jonashector:3}.
\end{proof}

\subsection{Lipschitz truncation and concentrating sequences}
In this subsection, we formulate the abstract truncation result. Throughout this section $u_\eta \in U_\eta$ is a minimiser of $I_\eta$, 
i.e. it enjoys all the bounds verified in Section \ref{sec:shearthinning}.
By $u$ we denote the weak limit of (a subsequence of) $u_\eta$.

Starting out, let $\hat{u} \in L_p(\R;V^1_p)$ be the function
    \[
        \hat{u}(t) \coloneq 
        \begin{cases} 
        u(t), & t \geq 0, 
        \\ 
        u(-t), & t <0.
        \end{cases}
    \] 
In the following we abuse notation by writing $u$ instead of $\hat{u}$. Let $\phi \in C_c^{\infty}(\R)$, $\mathrm{spt}(\phi) \subset (-1,1)$, be a standard mollifier and $\phi_\eta(t) \coloneq \eta^{-1} \phi(\eta^{-1}t)$.
Observe that
\begin{equation*}
    (\phi_\eta \ast u) \longrightarrow u
    \quad
    \text{strongly in }
    L_p((0,\infty);V^1_p).
\end{equation*} 
Moreover, thanks to Proposition \ref{prop:energyinequality1} \ref{prop:bounds1c1} and Lemma \ref{lemma:jonashector} \ref{jonashector:3/2} we have
    \[
        \Vert \partial_t (\phi_\eta \ast u) \Vert_{L_2((0,\infty);L_2)} \leq C \eta^{-1/2} 
        \quad 
        \text{and}
        \quad
        \Vert \partial_t^2 (\phi_\eta \ast u) \Vert_{L_{s'}((0,T);(V^1_s)')} \leq C \eta^{-1}.
    \]
We now define
\begin{equation} \label{def:w}
    w_\eta \coloneq u_\eta - \phi_\eta \ast u.
\end{equation}
For further reference, we formulate basic properties of $w_\eta$ in the following lemma.
\begin{lemma} \label{prop:weta}
    Let $w_\eta$ be as in \eqref{def:w}. Then
        \begin{enumerate} [label=(\roman*)]
            \item \label{prop:weta1} $w_\eta \weakto 0$ weakly in $L_p((0,\infty);V^1_p)$ and $\int w_\eta(t,x) \dd x =0$ for almost every $t \in \R$;
            \item \label{prop:weta2} $\eta^{1/2} \partial_t w_\eta$ is uniformly bounded in $L_2((0,\infty);L_2)$;
            \item  \label{prop:weta3} for all $T>0$, the sequence $\eta \partial_t^2 w_\eta$ is uniformly bounded in $L_{s'}((0,T);(V^1_s)')$ ;
            \item\label{prop:weta4}  We may write 
                \[
                \partial_t w_\eta = \hat{g}_\eta + \hat{h}_\eta
                \]
            with $\hat{g}_\eta \weakto 0$ weakly in $L_q((0,\infty);(V^1_p)')$ and $\hat{h}_\eta \to 0$ strongly in $L_{s'}((0,T);(V^1_s)')$ for any $T>0$;
            \item \label{prop:weta5}
            We may write 
            \[
            \eta \partial_t^2 w_\eta = \hat{\varg}_\eta + \hat{\varh}_\eta
            \]
            with $\hat{\varg}_\eta \weakto 0$ weakly in $L_q((0,\infty);(V^1_p)')$ and $\hat{\varh}_\eta \to 0$ strongly in $L_{s'}((0,T);(V^1_s)')$ for any $T>0$;
        \end{enumerate}
\end{lemma}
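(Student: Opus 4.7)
The plan is to decompose $\partial_t^k w_\eta = \partial_t^k u_\eta - \partial_t^k(\phi_\eta \ast u)$ for $k=0,1,2$ and to combine the estimates on $u_\eta$ from Propositions~\ref{prop:energyinequality1} and~\ref{prop:dualbounds} (together with the weak limit information in Lemma~\ref{lemma:jonashector}) with the mollifier bounds displayed immediately above the lemma. Throughout, the choice $s > \tis \ge \max\{p,4\}$ yields the embeddings $V^1_s \hookrightarrow V^1_\tis$ (hence $(V^1_\tis)' \hookrightarrow (V^1_s)'$) and, on bounded time intervals, $L_{p\wedge q}((0,T)), L_{\frac{4}{3}\wedge q}((0,T)) \hookrightarrow L_{s'}((0,T))$, which I will invoke without further comment.

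For \ref{prop:weta1}, I combine $u_\eta \weakto u$ in $L_p((0,\infty);V^1_p)$ from Lemma~\ref{lemma:jonashector}\ref{jonashector:1} with the strong convergence $\phi_\eta \ast u \to u$ in the same space (standard mollifier theory applied to the extension $\hat u$); the zero spatial average of $w_\eta$ descends from condition \ref{it:Ueta5} in Definition~\ref{def:U} to the weak limit $u$ and is preserved both by the even extension in time and by convolution in time. Items \ref{prop:weta2} and \ref{prop:weta3} reduce to the triangle inequality, combining Proposition~\ref{prop:energyinequality1}\ref{prop:bounds1c1} and Proposition~\ref{prop:dualbounds}\ref{dualbounds:3} (read in the weaker norm via the embeddings above) with the prefatory $\eta^{-1/2}$- and $\eta^{-1}$-bounds on $\partial_t(\phi_\eta \ast u)$ and $\partial_t^2(\phi_\eta \ast u)$, respectively.

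For \ref{prop:weta4}, Proposition~\ref{prop:dualbounds}\ref{dualbounds:4} supplies the decomposition $\partial_t u_\eta - \partial_t u = \tilde g_\eta + \tilde h_\eta$. I plan to set $\hat g_\eta \coloneq \tilde g_\eta$ and $\hat h_\eta \coloneq \tilde h_\eta + (\partial_t u - \partial_t(\phi_\eta \ast u))$: the weak convergence $\hat g_\eta \weakto 0$ in $L_q((0,\infty);(V^1_p)')$ is immediate, and the strong convergence of $\hat h_\eta$ in $L_{s'}((0,T);(V^1_s)')$ reduces to the standard mollifier fact that $\phi_\eta \ast \partial_t u \to \partial_t u$ strongly in $L_{s'}((0,T);(V^1_s)')$, which applies because $\partial_t u \in L_{p \wedge q}((0,T);(V^1_\tis)') \hookrightarrow L_{s'}((0,T);(V^1_s)')$.

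The delicate part is \ref{prop:weta5}. Starting from Proposition~\ref{prop:dualbounds}\ref{dualbounds:5}, I will set $\hat\varg_\eta \coloneq \tilde\varg_\eta$ and $\hat\varh_\eta \coloneq \tilde\varh_\eta - \eta\,\partial_t^2(\phi_\eta \ast u)$, so the remaining task is to prove $\eta\,\partial_t^2(\phi_\eta \ast u) \to 0$ strongly in $L_{s'}((0,T);(V^1_s)')$. The prefatory estimate only yields uniform boundedness, so genuine cancellation is needed. The idea is to rewrite
\begin{equation*}
\eta\,\partial_t^2(\phi_\eta \ast u) = \Phi_\eta \ast \partial_t u,\qquad \Phi_\eta(t) \coloneq \eta^{-1} \phi'(\eta^{-1} t),
\end{equation*}
where $\Phi_\eta$ has uniformly bounded $L_1(\R)$-norm, vanishing mean $\int_\R \Phi_\eta = 0$, and support shrinking to the origin. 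For smooth $f$ the cancellation gives $\Phi_\eta \ast f(t) = \int \Phi_\eta(s)[f(t-s) - f(t)]\dd s$, hence an $O(\eta)$-bound; the general $f \in L_r(\R;X)$ case follows by density using the uniform $L_1$-bound of $\Phi_\eta$. Applying this to $f = \partial_t u$ (viewed as a function on $\R$ via the even extension of $u$) closes \ref{prop:weta5}, and thus the lemma.
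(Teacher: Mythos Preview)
Your proposal is correct and follows essentially the same route as the paper: items \ref{prop:weta1}--\ref{prop:weta4} are handled identically, and for \ref{prop:weta5} the paper also writes $\eta\,\partial_t^2(\phi_\eta\ast u)=(\eta\,\partial_t\phi_\eta)\ast\partial_t u$ and observes that $\eta\,\partial_t\phi_\eta$ is an $L_1$-bounded kernel with zero mean and shrinking support, hence the convolution with $\partial_t u\in L_{s'}((0,T);(V^1_s)')$ tends to zero strongly. Your density argument for this last convergence is in fact slightly more explicit than the paper's one-line justification.
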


\begin{proof}
\textbf{\ref{prop:weta1}. }
This statement follows from the weak convergence $u_\eta \weakto u$ in $L_p((0,\infty);V^1_p)$ and the strong convergence of $(\phi_\eta \ast u) \to u$ in $L_p((0,\infty);V^1_p)$. The zero spatial average follows from the fact that both $u_\eta$ and $\phi_\eta \ast u_\eta$ have the same spatial average.

\medskip
\noindent\textbf{\ref{prop:weta2} and \ref{prop:weta3}. }
We may employ the bounds obtained in Proposition \ref{prop:energyinequality1} \ref{prop:bounds1c1} and Proposition \ref{prop:dualbounds} \ref{dualbounds:3}, respectively. 

\medskip
\noindent\textbf{\ref{prop:weta4}. } Observe that $(\partial_t (\phi_\eta \ast u) -\partial_t u) \to 0$ strongly in $L_{s'}((0,T);(V^1_s)')$
and that $\partial_t u_\eta -\partial_t u= \tilde{g}_\eta + \tilde{h}_\eta$, where $\tilde{g}_\eta \weakto 0$ in $L_q((0,\infty);(V^1_p)')$ and $\tilde{h}_\eta \to 0$ in $L_{s'}((0,\infty);(V^1_s)')$, cf. Proposition \ref{prop:dualbounds} \ref{dualbounds:4}.

\medskip
\noindent \textbf{\ref{prop:weta5}} We write
\[
\eta \partial_t^2 w_\eta = \eta \partial_t^2 u_\eta - \eta \partial_t \phi_\eta \ast \partial_t u.
\]
According to Lemma \ref{prop:dualbounds} \ref{dualbounds:5} we can split $\eta \partial_t^2 u_\eta = \tilde{\varg}_\eta + \tilde{\varh}_\eta$ with the desired convergence properties. Moreover, observe that 
\[
\Vert \eta \partial_t \phi_\eta \Vert_{L^1(\R)} \leq C 
\quad \text{and} \quad \int_{\R} \eta \partial_t \phi_{\eta} \dd s=0,
\]
i.e. it is a mollifier with zero mass, implying that 
\[
\eta \partial_t \phi_{\eta} \ast \partial_t u \longrightarrow 0 \quad \text{in } L_{s'}((0,T);(V^1_s)')
\]
as $\partial_t u \in L_{s'}((0,T);(V^1_s)')$. Letting $\hat{\varg}_\eta \coloneq \tilde{\varg}_\eta$ and $\hat{\varh}_\eta \coloneq \tilde{\varh}_\eta - \eta \partial_t \phi_\eta\ast \partial_t u$ then proves \ref{prop:weta5}.

\end{proof}

The key auxiliary result in proving Theorem \ref{thm:main} is the following truncation lemma in the spirit of \cite{BDS}. As the proof of Lemma \ref{lemma:trunc} is quite involved, we postpone it to Section \ref{sec:truncation}.

\begin{lemma}[Solenoidal Lipschitz truncation] \label{lemma:trunc}
    Let $w_\eta$ be defined as in \eqref{def:w} and $0<T<\infty$. There exists a constant
    \begin{equation*}
    C' = C'(d, \sup_{\eta >0} \Vert w_\eta \Vert_{L_p((0,T);V^1_p)}, \sup_{\eta >0} \Vert \partial_t w_\eta \Vert_{L_{s'}((0,T);(V^1_s)')}),
    \end{equation*}
    with $s$ defined in \eqref{eq:def_s}, such that the following holds. If $L>C'$, then there exists a sequence $w_\eta^L \in L_\infty((0,T);V^1_\infty)\cap W^1_\infty((0,T);(V^1_r)')$\, for all $1< r <\infty$, such that
        \begin{enumerate} [label=(T\arabic*)]
            \item \label{trunc1}$\Vert w_\eta^L \Vert_{L_\infty((0,T);V^1_\infty)} \leq CL$;
            \item \label{trunc2} $\Vert w_\eta^L \Vert_{W^1_\infty((0,T);(V^1_r)')} \leq CL^{p-1}$;
            \item \label{trunc3} for any $T>0$ we may write $w_\eta^L - w_\eta = G^L_\eta+ H^L_\eta$, where
            \begin{align}
                    & \lim_{L \to \infty} \sup_{\eta>0} \Vert G^L_\eta \Vert_{L_p((0,T);V^1_p)} =0,
                    \label{GLeta} 
                    \\
                    &
                    \lim_{\eta \to 0} \Vert H^L_\eta \Vert_{L_{\expo}((0,T);V^1_{\expo})} =0 \quad \text{for any } \expo \text{ with } \expo<p \text{ and for any } L>C'; 
                    \label{HLeta}
            \end{align}
            Moreover, we can write $w_\eta^L = w_{\eta,1}^L + w_{\eta,2}^L$ such that $w_{\eta,1}^L$ is uniformly (in $\eta$ \emph{and} $L$) bounded in $L_p((0,T);V^1_p)$ and $w_{\eta,2}^L \to 0$ in $L_p((0,T);V^1_p)$ as $\eta \to 0$ (for fixed $L>C'$).
            \item \label{trunc4} for any $T>0$ we may write $\partial_t w_\eta^L - \partial_t w_\eta = g^L_\eta + h^L_\eta$, where
                \begin{align}
                    & \lim_{L \to \infty} \sup_{\eta>0}   \Vert g^L_\eta \Vert_{L_q((0,T);(V^1_p)')} =0, \label{gleta} 
                    \\
                    &\lim_{\eta \to 0} \Vert h^L_\eta \Vert_{L_{\expo'}((0,T);(V^1_\expo)')} =0 \quad \text{for some } \expo<\infty \text{ and for any } L>C'; \label{hleta}
                \end{align} 
            \item \label{trunc4.5} We have $\Vert \eta \partial_t^2 w_\eta^L \Vert_{L_{\infty}((0,T);(V^1_r)')} \leq CL^{p-1}$ and, moreover, we may write $\eta(\partial_t^2 w_\eta^L-\partial_t^2 w_\eta) = \varg_\eta^L + \varh_\eta^L$ with
             \begin{align}
                    &\lim_{L \to \infty} \sup_{\eta>0} \Vert \varg^L_\eta \Vert_{L_q((0,T);(V^1_p)')} =0, \label{vargleta} 
                    \\
                    &\lim_{\eta \to 0} \Vert \varh^L_\eta \Vert_{L_{\expo'}((0,T);(V^1_\expo)')} =0 \quad \text{for some } \expo<\infty \text{ and for any } L>C'; \label{varhleta}
                \end{align} 
            \item \label{trunc5} for fixed $L>0$, we have the uniform bound
            \begin{equation*}
                \|\eta^{1/2} \partial_t w^L_\eta\|_{L_2((0,T);L_2)}
                \leq C,
            \end{equation*}
            with a constant $C > 0$ that does not depend on $\eta$. 
        \end{enumerate}  
\end{lemma}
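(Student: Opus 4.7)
The plan is to adapt the solenoidal Lipschitz truncation of Breit--Diening--Schwarzacher \cite{BDS} to the elliptic-in-time regularisation, exploiting the two-scale decompositions of $\partial_t w_\eta$ and $\eta \partial_t^2 w_\eta$ from Lemma~\ref{prop:weta}\ref{prop:weta4}--\ref{prop:weta5}. First I would extend $w_\eta$ (e.g.~by reflection) to $\R\times\T_d$ and introduce an anisotropic space-time maximal function $M_\eta$ that jointly controls $\nabla w_\eta$, a representative of $\hat g_\eta$ in $(V^1_p)'$, and a representative of $\hat{\varg}_\eta$ in $(V^1_p)'$, at scales where spatial cubes of side $r$ are paired with time intervals of length compatible with the bound $\norm{\eta^{1/2}\partial_t w_\eta}_{L_2(L_2)}\leq C$. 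The ``bad set'' $O_{\eta,L}\coloneq\{M_\eta>L\}$ is open, and the standard weak-type bound together with uniform $L_p$ control of $\nabla w_\eta$ gives $\abs{O_{\eta,L}}\leq C L^{-p}$; crucially, a Chebyshev argument applied to the $\hat h_\eta$- and $\hat{\varh}_\eta$-parts, which are strongly small, shows that the contribution of these terms to $\abs{O_{\eta,L}}$ vanishes as $\eta\to 0$ for each fixed $L$.

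Second I would perform a Whitney decomposition of $O_{\eta,L}$ into parabolic cylinders $Q_i$ adapted to the anisotropy, build a subordinate partition of unity $\{\psi_i\}$, and set $\tilde w_\eta^L \coloneq w_\eta$ on $O_{\eta,L}^c$ and $\tilde w_\eta^L\coloneq \sum_i \psi_i (w_\eta)_{Q_i}$ on $O_{\eta,L}$, where $(w_\eta)_{Q_i}$ denotes the cylinder average. Poincar\'e-type estimates on each $Q_i$, combined with the pointwise bound $M_\eta\leq L$ on $O_{\eta,L}^c$ and $M_\eta\lesssim L$ on the neighbouring cylinders, yield $\norm{\nabla \tilde w_\eta^L}_{L_\infty}\leq CL$ (giving \ref{trunc1}) and analogously $\norm{\partial_t \tilde w_\eta^L}_{(V^1_r)'}\leq CL^{p-1}$ and $\norm{\eta\partial_t^2 \tilde w_\eta^L}_{(V^1_r)'}\leq CL^{p-1}$ (giving \ref{trunc2} and the bound in \ref{trunc4.5}). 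To restore the divergence constraint I would subtract a Bogovski\u{\i} correction on each $Q_i$, namely set $w_\eta^L \coloneq \tilde w_\eta^L - \sum_i \mathrm{Bog}_{Q_i}(\diverg(\psi_i[w_\eta-(w_\eta)_{Q_i}]))$; since $\diverg w_\eta=0$ the right-hand side of $\diverg$ is supported where $\nabla\psi_i$ is, and the standard Bogovski\u{\i} estimate preserves all Lipschitz bounds up to a harmless constant.

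Third, the decompositions \ref{trunc3}--\ref{trunc4.5} follow from the splittings of $\partial_t w_\eta$ and $\eta\partial_t^2 w_\eta$. Writing $w_\eta-w_\eta^L = \sum_i \psi_i[w_\eta-(w_\eta)_{Q_i}] + (\text{Bogovski\u{\i} terms})$, each summand is supported in $O_{\eta,L}$, so using the $\hat g_\eta$/$\hat{\varg}_\eta$-part of the maximal function and Chebyshev gives the pieces $G_\eta^L, g_\eta^L, \varg_\eta^L$ whose norms are bounded by $C\abs{O_{\eta,L}}^{1/p}\cdot\text{(uniform norm)}\to 0$ as $L\to\infty$ uniformly in $\eta$, establishing \eqref{GLeta}, \eqref{gleta} and \eqref{vargleta}. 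Conversely, the $\hat h_\eta$/$\hat{\varh}_\eta$-part contributes $H_\eta^L, h_\eta^L, \varh_\eta^L$ whose norms are dominated by the strongly vanishing $\norm{\hat h_\eta}_{L_{s'}((V^1_s)')}$ and $\norm{\hat{\varh}_\eta}_{L_{s'}((V^1_s)')}$, giving \eqref{HLeta}, \eqref{hleta} and \eqref{varhleta}. The further split $w_\eta^L=w_{\eta,1}^L+w_{\eta,2}^L$ in \ref{trunc3} is obtained by defining $w_{\eta,1}^L$ from the part of the maximal function involving only $\hat g_\eta$ and $w_{\eta,2}^L$ as the correction from $\hat h_\eta$, and \ref{trunc5} follows since on the ``good'' set $w_\eta^L=w_\eta$ and on the bad set $\partial_t w_\eta^L$ is a convex combination of averages of $\partial_t w_\eta$, yielding $\eta^{1/2}$-weighted $L_2$-control by Lemma~\ref{prop:weta}\ref{prop:weta2}.

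The main obstacle will be the correct choice of the anisotropic maximal function: one must simultaneously control first and second (weighted) time derivatives of $w_\eta$ of different duality type, and show that the resulting Whitney cylinders admit Bogovski\u{\i} corrections whose $\partial_t$ and $\eta\partial_t^2$ behave compatibly with the respective bounds. The remaining delicate point is checking that the ``strongly-to-zero'' parts $\hat h_\eta,\hat{\varh}_\eta$ do not degrade the uniform estimates \ref{trunc1}, \ref{trunc2}, \ref{trunc4.5}, which forces the two-level split of the maximal function described above.
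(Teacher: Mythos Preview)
Your overall architecture (maximal function $\to$ bad set $\to$ Whitney cover $\to$ local averages $\to$ divergence correction $\to$ split the error along the $\hat g/\hat h$ and $\hat\varg/\hat\varh$ decompositions) matches the paper's. There are, however, two substantive divergences, one of which is a genuine gap.

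\medskip
\textbf{The gap: a single anisotropy does not deliver \ref{trunc5}.} You propose a single ``anisotropic'' maximal function and Whitney cover into parabolic cylinders. The paper's central technical point is that this is \emph{not} enough: it introduces \emph{two} metrics, a parabolic one $d_{\pa}$ with $l_t\sim \kappa^{-2}l_x^2$ and an elliptic one $d_{\el}$ with $l_t\sim \eta^{1/2}\kappa^{-1}l_x$ (here $\kappa=L^{(p-2)/2}$), defines the bad set as a joint superlevel set of \emph{both} maximal functions, and covers it by elliptic cubes near the good set and parabolic cubes far away, with a turnover at scale $l_x\sim \kappa\eta^{1/2}$. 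The reason is exactly your last sentence: on the bad set $\partial_t w_\eta^L$ is \emph{not} a convex combination of averages of $\partial_t w_\eta$; the product rule produces terms $\partial_t\phi_i\,(v_i-v_j)$ whose size is governed by $|\partial_t\phi_i|$. With purely parabolic cubes $|\partial_t\phi_i|\sim \kappa^2 l_x^{-2}$ blows up as $l_x\to 0$, and there is no mechanism tying $l_x^{-1}$ to $\eta^{-1/2}$; hence the $L_\infty$ bound on $\eta^{1/2}\partial_t\nabla v^L$ (and thus \ref{trunc5}) fails. The elliptic cubes near the boundary give instead $|\partial_t\phi_i|\sim \eta^{-1/2}\kappa\, l_x^{-1}$, which, combined with the quadratic estimate $|v_i-v_j|\lesssim L\,l_x^2$, yields the required $\eta^{-1/2}$ scaling. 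The paper states explicitly that one cannot take the purely parabolic truncation of \cite{BDS} and repair \ref{trunc5} afterwards without destroying \ref{trunc3}--\ref{trunc4}.

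\medskip
\textbf{Different route to solenoidality.} You enforce $\diverg w_\eta^L=0$ by Bogovski\u{\i} corrections on each cube. The paper instead lifts to a potential, setting $v_\eta=Tw_\eta$ with $\curl^\ast v_\eta=w_\eta$, truncates $v_\eta$ (which lives in $L_p(W^2_p)$) via averaged \emph{first-order} Taylor polynomials $v_i(s,y)=\fint_{Q_i}(v+\nabla v\cdot(y-x)+\partial_t v\,(s-t))$, and then defines $w_\eta^L=\curl^\ast v_\eta^L$, so that $\diverg w_\eta^L=0$ is automatic. This buys two things: $\curl^\ast$ commutes with $\partial_t$ and $\partial_t^2$, so all the time-derivative estimates transfer cleanly; and working one derivative higher permits the second-order space estimates (the $|v_i-v_j|\lesssim L\,l_x^2$ bound above) that are needed to absorb $|\nabla^2\phi_i|$, $|\partial_t\phi_i|$ and $|\partial_t^2\phi_i|$. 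Your Bogovski\u{\i} approach could in principle be made to work, but you would need to check carefully how the correction interacts with $\partial_t$ and $\eta\partial_t^2$ across neighbouring cubes and across the elliptic/parabolic turnover region; the paper's lifting avoids this entirely.
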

\begin{remark} 
\begin{enumerate} [label=(\roman*)]
    \item We only state Lemma \ref{lemma:trunc} for finite time to avoid further problems with the integrability of $\partial_t u$ in time. In principle (with a more sophisticated version of \ref{trunc4}), it is also possible to obtain such a statement for $T=\infty$.
    \item Requirements \ref{trunc1} and \ref{trunc2} are not needed in this form, an integrability $w_\eta^L \in L_r((0,T);V^1_r) \cap W^1_\infty((0,T);(V^1_r)')$, for some $r<\infty$ sufficiently large, is enough. In particular, it is sufficient to choose $r$ sufficiently large such that we can test the limiting Navier--Stokes equation with functions in $L_r((0,T);V^1_r)$. 
    \end{enumerate}
\end{remark}

For proving Lemma \ref{lemma:trunc} and for later discussions, the following observation will prove invaluable. Recall that a sequence $f_\eta \in L_p(\Omega)$, $\Omega \subset \R^{d+1}$, is called \textbf{$p$-equi-integrable}, if 
    \[
        \lim_{\varepsilon \to 0} \sup_{\eta} \sup_{\mathcal{L}^n(E) < \varepsilon} \int_E \vert f_\eta \vert^p \dd x =0.
     \]
\begin{lemma}[e.g. \cite{FMP}] \label{lemma:timohorn}
    Suppose that $f_\eta \in L_p(\Omega)$, $\Omega \subset \R^{d+1}$, is bounded. Then there exists a splitting
        \[
        f_\eta = f_\eta^{\equi} + f_\eta^{\conc},
        \]
    such that $f_\eta^{\equi}$ is $p$-equi-integrable and $f_\eta^{\conc} \to 0$ strongly in
    $L_r(\Omega)$ for any $r<p$. Moreover, $f_\eta^{\equi}$ and $f_\eta^{\conc}$ have disjoint support and $\mathcal{L}^{d+1}(\{f_\eta^{\conc} \neq 0\}) \to 0$ as $\eta \to \infty$.
\end{lemma}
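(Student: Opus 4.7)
The plan is to perform a level-set truncation of $f_\eta$ at a height $M_\eta \to \infty$ that is chosen via a diagonal extraction so as to balance two competing requirements: (i) the concentration set $\{|f_\eta| > M_\eta\}$ must shrink in measure, and (ii) the truncated part below $M_\eta$ must be $p$-equi-integrable. Concretely, I set
\begin{equation*}
    f_\eta^{\conc} \coloneq f_\eta \ONE_{\{|f_\eta| > M_\eta\}}, \qquad f_\eta^{\equi} \coloneq f_\eta - f_\eta^{\conc} = f_\eta \ONE_{\{|f_\eta| \leq M_\eta\}},
\end{equation*}
so that disjointness of support is immediate. Chebyshev's inequality gives $\mathcal{L}^{d+1}(\{f_\eta^{\conc} \neq 0\}) \leq M_\eta^{-p} \Vert f_\eta \Vert_{L_p}^p \to 0$, and for any $r < p$, H\"older's inequality combined with this measure estimate yields
\begin{equation*}
    \Vert f_\eta^{\conc} \Vert_{L_r}^r \leq \Vert f_\eta \Vert_{L_p}^r \, \mathcal{L}^{d+1}\bigl(\{|f_\eta| > M_\eta\}\bigr)^{1 - r/p} \longrightarrow 0.
\end{equation*}

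The main obstacle, and the place where $M_\eta$ has to be chosen carefully, is the $p$-equi-integrability of $f_\eta^{\equi}$. To this end, I would consider the distribution-type function
\begin{equation*}
    \theta_\eta(T) \coloneq \int_{\{|f_\eta| > T\}} |f_\eta|^p \dd x, \quad T \geq 0,
\end{equation*}
which is decreasing in $T$ and uniformly bounded in $\eta$. A diagonal extraction produces a (relabeled) subsequence such that $\theta_\eta(k) \to a_k$ for every $k \in \N$; the sequence $(a_k)$ is decreasing and hence converges to some $a_\infty \geq 0$, which encodes exactly the $L_p$-mass that persists above arbitrarily high thresholds. I then pick $M_\eta \to \infty$ slowly enough -- for instance, take $M_\eta$ to be the largest integer $k$ with $|\theta_\eta(j) - a_j| \leq 1/j$ for all $j \leq k$ -- so that $\theta_\eta(M_\eta) \to a_\infty$ holds along the subsequence.

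With this calibration, for each fixed $T > 0$ and all sufficiently large $\eta$ (so that $M_\eta > T$), a direct computation gives
\begin{equation*}
    \int_{\{|f_\eta^{\equi}| > T\}} |f_\eta^{\equi}|^p \dd x = \theta_\eta(T) - \theta_\eta(M_\eta) \longrightarrow a_T - a_\infty.
\end{equation*}
Since $a_T \downarrow a_\infty$ as $T \to \infty$, this yields $\lim_{T \to \infty} \sup_\eta \int_{\{|f_\eta^{\equi}| > T\}} |f_\eta^{\equi}|^p \dd x = 0$ (the finitely many initial indices being handled via the trivial bound $|f_\eta^{\equi}| \leq M_\eta$), which is equivalent to $p$-equi-integrability of $f_\eta^{\equi}$. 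The heart of the argument is exactly this calibration: too fast growth of $M_\eta$ traps the concentration inside $f_\eta^{\equi}$ and destroys equi-integrability, while too slow growth inflates $\mathcal{L}^{d+1}(\{f_\eta^{\conc}\neq 0\})$ beyond what Chebyshev controls. The diagonal construction above is what threads the needle.
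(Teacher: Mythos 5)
The paper does not prove this lemma but cites \cite{FMP}; your level-set truncation at a diagonally calibrated height $M_\eta$ is exactly the classical argument and is correct. The one structural point worth flagging is that the diagonal extraction implicitly replaces $\{f_\eta\}$ by a countable subsequence; the lemma statement does not say this, but it is the convention in \cite{FMP} and in the paper's applications of the lemma, which always occur after subsequences have already been extracted. Two small technical loose ends are worth tightening. (a) The recipe ``the largest integer $k$ with $|\theta_\eta(j)-a_j|\le 1/j$ for all $j\le k$'' can produce $M_\eta=0$ for small $\eta$ and could in principle give $M_\eta=\infty$. The first case is harmless (it yields $f_\eta^{\equi}=0$). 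The second case can only occur when $a_\infty=0$: since $\theta_\eta(j)\to 0$ as $j\to\infty$ for each fixed $\eta$, if $a_\infty>0$ the defining condition must eventually fail, forcing $M_\eta<\infty$. When $a_\infty=0$ and $M_\eta=\infty$ one has $f_\eta^{\conc}\equiv 0$ and $\theta_\eta(T)\le a_T+1/T\to 0$, so equi-integrability survives. The cleaner textbook construction avoids the issue by fixing increasing $N_k$ with $|\theta_\eta(k)-a_k|<1/k$ for $\eta>N_k$ and declaring $M_\eta=k$ on $(N_k,N_{k+1}]$. (b) What $p$-equi-integrability actually requires is $\lim_{T\to\infty}\sup_\eta\int_{\{|f_\eta^{\equi}|>T\}}|f_\eta^{\equi}|^p\dd x=0$ rather than $\lim_T\lim_\eta$; your parenthetical about the initial indices does handle this correctly, but it is perhaps cleanest to note the uniform bound $\theta_\eta(T)-\theta_\eta(M_\eta)\le a_T-a_\infty+2/T$ valid for integer $T\le M_\eta$ (the integral vanishing identically once $T>M_\eta$), which delivers the $\sup$ directly.
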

Note that this splitting is convenient for the consideration of nonlinear weak limits. In particular the weak limit of $DW(f_\eta)$ agrees with the one of $DW(f_\eta^{\equi})$ since $f_\eta^{\equi}$ and $f_\eta^{\conc}$ have disjoint support and thus for any $\phi\in C^\infty_c(\Omega)$ we have
\begin{align}\label{eq:DW_weakconv}
\begin{aligned}
    \lim_{\eta \to 0} \int_\Omega DW(f_\eta)\phi \dd x&=\lim_{\eta \to 0} \int_\Omega DW(f_\eta^{\equi})\phi\dd x+\lim_{\eta \to 0}\int_\Omega DW(f_\eta^{\conc})\phi\dd x\\
    &=\lim_{\eta \to 0} \int_\Omega DW(f_\eta^{\equi})\phi\dd x,
\end{aligned}
\end{align}
where in the last step we use that $f_\eta^{\conc}\to 0$ strongly in $L_r(\Omega)$ and the growth bound \ref{it:W4} for $DW$. 

\subsection{The truncated sequence $u_\eta^L$ and auxiliary results.}
In this subsection, we accomplish Step \ref{it:ii}. Instead of testing the Euler--Lagrange equation for $u_\eta$ with $\psi u_\eta$, (as done in Proposition \ref{prop:energyinequality1}),
we want to test the equation with a truncated version $\psi u_\eta^L$. To this end, for fixed $L>0$, we define 
    \begin{equation} \label{def:uetaL}
        u_\eta^L = \phi_\eta \ast u + w_\eta^L.
     \end{equation}
 Observe that, for any $\psi \in C_c^{\infty}((0,\infty))$, the function $\psi u_\eta^L$ is a valid test function for the Euler--Lagrange equation \eqref{eq:EL1}. For fixed $L>0$, we denote by $w^L$ the weak limit (up to a subsequence) of $w_\eta^L$ and by $u^L = u +w^L$ the weak limit of $u_\eta^L$. Moreover, by $\chi^L$ we denote the weak limit of $DW(\epsilon(u_\eta^L))$.

\begin{lemma}[Properties of $u^L$ and $\chi^L$] \label{lemma:marcelrisse}
Let $L>0$ and let $u^L$ and $\chi^L$ be as above. Then we have
\begin{enumerate} [label=(\roman*)]
        \item \label{marcelrisse:1} $u^L \to u$ strongly in $L_p((0,T);V^1_p)$, as $L \to \infty$;
        \item \label{marcelrisse:2} $(\partial_t u^L- \partial_t u) \to 0$ strongly in $L_q((0,T);(V^1_p)')$, as $L \to \infty$;
        \item \label{marcelrisse:3} $\chi^L \to \chi$ strongly in $L_q((0,T);L_q)$, as $L \to \infty$.
\end{enumerate}
\end{lemma}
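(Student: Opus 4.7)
The plan is to derive all three convergences from the decompositions provided by Lemma~\ref{lemma:trunc} combined with Lemma~\ref{prop:weta}. Parts \ref{marcelrisse:1} and \ref{marcelrisse:2} reduce to straightforward weak-compactness bookkeeping; the real difficulty is \ref{marcelrisse:3}, where the nonlinearity of $DW$ forces a careful use of equi-integrability (Lemma~\ref{lemma:timohorn}).

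For \ref{marcelrisse:1}, I write $u^L - u = w^L$, where $w^L$ denotes the weak $L_p((0,T);V^1_p)$-limit of $w^L_\eta$ extracted along a non-relabelled subsequence. Decomposing $w^L_\eta = w_\eta + G^L_\eta + H^L_\eta$ as in \ref{trunc3}, using $w_\eta \weakto 0$ from Lemma~\ref{prop:weta}\ref{prop:weta1}, and observing that $H^L_\eta \to 0$ in $L_{\expo}((0,T);V^1_{\expo})$ by \eqref{HLeta} (hence distributionally), the limit $w^L$ coincides with the weak $L_p$-limit of $G^L_\eta$. Weak lower-semicontinuity of the norm together with \eqref{GLeta} then gives
\begin{equation*}
    \|w^L\|_{L_p((0,T);V^1_p)}
    \leq
    \sup_{\eta>0}\|G^L_\eta\|_{L_p((0,T);V^1_p)}
    \longrightarrow 0 \quad \text{as } L\to\infty.
\end{equation*}
Part \ref{marcelrisse:2} follows by the same scheme: $\partial_t u^L - \partial_t u = \partial_t w^L$, and writing $\partial_t w^L_\eta = \hat g_\eta + \hat h_\eta + g^L_\eta + h^L_\eta$ via Lemma~\ref{prop:weta}\ref{prop:weta4} and \ref{trunc4}, the remainders $\hat h_\eta$ and $h^L_\eta$ vanish strongly in a weaker topology and $\hat g_\eta \weakto 0$ in $L_q((0,T);(V^1_p)')$, so only $g^L_\eta$ survives in the weak limit and \eqref{gleta} gives $\|\partial_t w^L\|_{L_q((0,T);(V^1_p)')} \to 0$.

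For \ref{marcelrisse:3}, by weak lower-semicontinuity of the $L_q$-norm it suffices to show
\begin{equation*}
    \limsup_{\eta\to 0} \|DW(\epsilon(u^L_\eta)) - DW(\epsilon(u_\eta))\|_{L_q((0,T);L_q)}
    \longrightarrow 0 \quad \text{as } L\to\infty.
\end{equation*}
Since $u^L_\eta - u_\eta = w^L_\eta - w_\eta$, this integrand is supported on a bad set $E^L_\eta$, and a central feature of the Lipschitz truncation to be constructed in Section~\ref{sec:truncation} is that $|E^L_\eta| \to 0$ uniformly in $\eta$ as $L\to\infty$. On $E^L_\eta$ the growth bound \ref{it:W4} yields
\begin{equation*}
    |DW(\epsilon(u^L_\eta)) - DW(\epsilon(u_\eta))|^q
    \leq C\bigl(1 + |\epsilon(u^L_\eta)|^p + |\epsilon(u_\eta)|^p\bigr).
\end{equation*}
The $|\epsilon(u^L_\eta)|^p$ term I control via $\epsilon(u^L_\eta) = \epsilon(\phi_\eta\ast u) + \epsilon(w^L_\eta)$, strong $L_p$-convergence of $\phi_\eta\ast u$, and the decomposition $w^L_\eta = w^L_{\eta,1} + w^L_{\eta,2}$ from \ref{trunc3}. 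For $|\epsilon(u_\eta)|^p$ I apply Lemma~\ref{lemma:timohorn} to split $\epsilon(u_\eta) = f^{\equi}_\eta + f^{\conc}_\eta$: the equi-integrable part contributes $o(1)$ on $E^L_\eta$ since $|E^L_\eta|\to 0$.

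The main obstacle is handling the concentration $f^{\conc}_\eta$, which may carry mass of order one on a vanishingly small set, ruling out naive measure bounds. The resolution rests on the construction of the Lipschitz truncation in Section~\ref{sec:truncation}: the truncation thresholds are chosen precisely where $|\nabla w_\eta|^p$ concentrates, so that the concentration mass on $E^L_\eta$ is absorbed into the remainders $G^L_\eta$ and $H^L_\eta$ of \ref{trunc3}, whose norms vanish via \eqref{GLeta}--\eqref{HLeta}.
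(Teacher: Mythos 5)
Parts \ref{marcelrisse:1} and \ref{marcelrisse:2} are correct and run along the same lines as the paper: identify the difference $u^L - u$ (resp.\ $\partial_t u^L - \partial_t u$) with the weak limit of $G^L_\eta$ (resp.\ $g^L_\eta$) after discarding the terms that vanish strongly or weakly as $\eta \to 0$, then invoke weak lower-semicontinuity of the norm and \eqref{GLeta} (resp.\ \eqref{gleta}).

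Part \ref{marcelrisse:3} has a genuine gap, and the proposed patch does not close it. You reduce matters to showing $\int_{E^L_\eta}\bigl(1+|\epsilon(u^L_\eta)|^p+|\epsilon(u_\eta)|^p\bigr)\to 0$, where $E^L_\eta=\{\epsilon(u^L_\eta)\neq\epsilon(u_\eta)\}$ has measure $\to 0$ as $L\to\infty$. But $\epsilon(u_\eta)$ is merely \emph{bounded} in $L_p$, not $p$-equi-integrable, so smallness of $|E^L_\eta|$ gives no control on $\int_{E^L_\eta}|\epsilon(u_\eta)|^p$ --- indeed the bad set $E^L_\eta$ is built precisely where $\epsilon(u_\eta)$ concentrates, so this integral is exactly the troublesome mass of order one. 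Your proposed resolution --- that ``the concentration mass on $E^L_\eta$ is absorbed into the remainders $G^L_\eta$ and $H^L_\eta$'' --- does not parse: $G^L_\eta$ and $H^L_\eta$ are parts of the \emph{difference} $w^L_\eta-w_\eta$, not of $\epsilon(u_\eta)^{\conc}$, and there is no mechanism by which the truncation removes the concentrating part of $\epsilon(u_\eta)$ itself. The same problem reappears in your estimate of $\int_{E^L_\eta}|\epsilon(u^L_\eta)|^p$: you invoke the decomposition $w^L_\eta=w^L_{\eta,1}+w^L_{\eta,2}$ from \ref{trunc3}, but $w^L_{\eta,1}$ is only \emph{uniformly $L_p$-bounded}, so $|E^L_\eta|\to 0$ again gives nothing.

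The missing idea is to change \emph{which} quantity is estimated. Because $\epsilon(u_\eta)^{\equi}$ and $\epsilon(u_\eta)^{\conc}$ have disjoint support, \eqref{eq:DW_weakconv} shows that $\chi$ equals the weak limit of $DW(\epsilon(u_\eta)^{\equi})$; and writing $\epsilon(u^L_\eta)=\epsilon(u_\eta)^{\equi}+\epsilon(G^L_\eta)+\bigl(\epsilon(u_\eta)^{\conc}+\epsilon(H^L_\eta)\bigr)$, with the last bracket strongly null in $L_r$, $r<p$, one sees that $\chi^L$ equals the weak limit of $DW(\epsilon(u_\eta)^{\equi}+\epsilon(G^L_\eta))$. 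So the quantity to control is $\bigl\|DW(\epsilon(u_\eta)^{\equi}+\epsilon(G^L_\eta))-DW(\epsilon(u_\eta)^{\equi})\bigr\|_{L_q}$, where now \emph{both} arguments enjoy the right compactness properties: $\epsilon(u_\eta)^{\equi}$ is $p$-equi-integrable and $\|G^L_\eta\|_{L_p}\to 0$ uniformly in $\eta$. This is then handled by splitting the domain into $X_1=\{|\epsilon(u_\eta)^{\equi}|>\lambda\}$, $X_2=\{|\epsilon(G^L_\eta)|>\lambda\}$, $X_3=(X_1\cup X_2)^C$, using the growth bound \eqref{X1X2} on $X_1\cup X_2$ and uniform continuity of $DW$ on the $2\lambda$-ball on $X_3$. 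Without this change of viewpoint the concentration cannot be dealt with, and your argument for \ref{marcelrisse:3} does not go through.
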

At this point we emphasise 
that \ref{marcelrisse:2} does \emph{not} imply that $\partial_t u^L \to \partial_t u$ in $L_q((0,T);(V^1_p)')$, as both functions are not necessarily elements of this space; we only show that their difference is.
\begin{proof}
    \textbf{\ref{marcelrisse:1}. } It suffices to show that $w^L \to 0$, as $L \to \infty$. To this end, recall that $w_\eta^L- w_\eta =G_\eta^L + H_\eta^L$ as in Lemma \ref{lemma:trunc} \ref{trunc3}. In particular,
    \[
    \wlim_{\eta \to 0} w_\eta^L=\wlim_{\eta \to 0} w_\eta^L -w_\eta = \wlim_{\eta \to 0} G_\eta^L + H_\eta^L,
    \]
    where the weak limit is taken in $L_{\expo}((0,T);V^1_ {\expo})$, and where we used Lemma~\ref{prop:weta}\ref{prop:weta1} in the first equality. Recall that $\|H^L_\eta\|_{L_{\expo}((0,T);V^1_\expo)} \to 0$ strongly as $\eta\to 0$, and hence \[
    \wlim_{\eta \to 0} w_\eta^L  = \wlim_{\eta \to 0} G_\eta^L,
    \]
    which is also initially an equation in $L_{\expo}((0,T);V^1_ {\expo})$. Due to weak lower-semicontinuity of the $L^p((0,T);V^1_p)$ norm, however, we have
        \[
            \Vert w^L \Vert_{L_p((0,T);V^1_p)} \leq \liminf_{ \eta \to 0} \Vert G^L_\eta \Vert_{L_p((0,T);V^1_p)}. 
        \]
    As the right-hand side converges to zero, as $L \to \infty$, by \eqref{GLeta}, we find that $w^L \to 0$ in $L_p((0,T);V^1_p)$, as $L \to \infty$. 

    \noindent \textbf{\ref{marcelrisse:2}. } Observe that 
    \begin{equation*}
        \partial_t u_\eta^L = \partial_t (\phi_\eta \ast u) + \partial_t w_\eta^L.
    \end{equation*}
    Moreover, $\partial_t (\phi_\eta \ast u) \to \partial_t u$ in $L_{p \wedge q}((0,T);(V^1_\gamma)')$.
    Therefore, (up to choosing a subsequence), the difference $(\partial_t u^L-\partial_t u)$ coincides with the weak limit of $\partial_t w_\eta^L$ in $L_{p \wedge q}((0,T);(V^1_\gamma)')$, as $\eta$ tends to zero. Therefore, it suffices to show that
        \[
        \lim_{L \to \infty} \Vert \partial_t w^L \Vert_{L_q((0,T);(V^1_p)')} =0.
        \]
    To see this, observe that due to \ref{trunc4} and Lemma~\ref{prop:weta}\ref{prop:weta4}, $\partial_t w^L = \wlim_{\eta \to 0} g_\eta^L+h_\eta^L = \wlim_{\eta \to 0} g_\eta^L$ (as an equation initially in $L_{\expo'}((0;T);(V^1_\expo)')$ and, due to lower-semicontinuity of the norm, we have
    \[
        \lim_{L \to \infty} \Vert \partial_t w^L \Vert_{L_q((0,T);(V^1_p)')} \leq \lim_{L \to \infty} \sup_{\eta >0} \Vert g_\eta^L \Vert_{L_q((0,T);(V^1_p)')}=0.
    \]
    \noindent \textbf{\ref{marcelrisse:3}. } $\chi$ is the weak limit of    
        $DW(\epsilon(u_\eta))$. Observe that this coincides with the weak limit of $DW(\epsilon(u_\eta)^{\equi})$
    where we use the decomposition
    \begin{equation*}
        \epsilon(u_\eta) = \epsilon(u_\eta)^{\equi} + \epsilon(u_\eta)^{\conc}, 
    \end{equation*}
    cf. Lemma \ref{lemma:timohorn} and \eqref{eq:DW_weakconv}. In view of \eqref{def:uetaL} and \eqref{def:w} we find the decomposition
    \begin{equation*}
        \epsilon(u_\eta^L) = \epsilon(u_\eta) ^{\equi} + \epsilon(u_\eta)^{\conc} + \bigl(\epsilon(w_\eta^L)-\epsilon(w_\eta)\bigr).
    \end{equation*}
    Recall that $\epsilon(u_\eta)^{\conc} \to 0$ in $L_r((0,T);L_r)$ for $r<p$, and that $w_\eta^L-w_\eta = G_\eta^L + H_\eta^L$, where $H_\eta^L \to 0$ in some $L_{\expo}((0,T);V^1_{\expo})$ with $p-1<\expo<p$. Therefore, we obtain the convergence
    \[
    DW(\epsilon(u_\eta)^{\equi} + \epsilon(G_\eta^L)) \lweakto \chi^L
    \]
    and consequently,
    \begin{equation} \label{eq:skhiri}
    \Vert \chi^L-\chi \Vert_{L_q((0,T);L_q))} \leq \liminf_{\eta\to 0}
    \Vert DW(\epsilon(u_\eta)^{\equi} + \epsilon(G_\eta^L)) - DW(\epsilon(u_\eta)^{\equi}) \Vert_{L_q((0,\infty);L_q)} 
    \end{equation}
    We claim that the right-hand side of the above equation converges to zero, as $L \to \infty$. To see this, take $\lambda >0$ and consider each of the sets  
    \begin{equation*}
        X_1
        = 
        \{\vert \epsilon(u_\eta)^{\equi} \vert > \lambda\},
        \quad
        X_2
        = 
        \{ \vert \epsilon(G_\eta^L) \vert > \lambda \},
        \quad \text{and} \quad
        X_3
        =
        (X_1\cup X_2)^C,
    \end{equation*}    
    separately. On $X_1 \cup X_2$ we use the bound
    \begin{equation} \label{X1X2}
    \left \vert DW(\epsilon(u_\eta)^{\equi} + \epsilon(G_\eta^L)) - DW(\epsilon(u_\eta)^{\equi})\right \vert \leq C \left(\vert \epsilon(u_\eta)^{\equi} \vert^{p-1} + \vert  \epsilon(G_\eta^L) \vert^{p-1} \right).
    \end{equation}
    Due to the equi-integrability of $\epsilon(u_\eta)^{\equi}$ and since $G_\eta^L \to 0$ uniformly in $\eta$, as $L \to \infty$, we have
    \begin{equation*} 
        \limsup_{L\to \infty} \liminf_{\eta \to 0} \int_{X_1 \cup X_2} \left \vert DW(\epsilon(u_\eta)^{\equi} + \epsilon(G_\eta^L)) - DW(\epsilon(u_\eta)^{\equi}) \right \vert^q \dd x \leq C(\lambda)
    \end{equation*}
    with $C(\lambda) \to 0$, as $\lambda \to \infty$.
    On the complement we may use uniform continuity of $DW$ on the $2\lambda$-ball and dominated convergence to conclude that
    \begin{align}\label{eq:LetaDW}
   \lim_{L \to \infty} \sup_{\eta >0} \int_{X_3} \left \vert DW(\epsilon(u_\eta)^{\equi} + \epsilon(G_\eta^L)) - DW(\epsilon(u_\eta)^{\equi}) \right \vert^q \dd x =0.
    \end{align}
    This implies 
    \begin{equation} \label{dietermueller}
    \limsup_{L \to \infty}  \liminf_{\eta >0} \Vert DW(\epsilon(u_\eta)^{\equi} + \epsilon(G_\eta^L)) - DW(\epsilon(u_\eta)^{\equi}) \Vert_{L_q((0,\infty);L_q)} \leq C(\lambda).
    \end{equation}
    Letting $\lambda \to \infty$, whence $C(\lambda) \to 0$, we obtain \ref{marcelrisse:3}.
\end{proof}

\subsection{Passing to the limit in the nonlinear viscosity term}
Lemma \ref{lemma:marcelrisse} shows that the approximated $u^L$ and $\chi^L$ enjoy nice convergence properties. To apply Minty's trick as in Lemma \ref{lemma:lukaspodolski} we would like to test the Euler--Lagrange equation with $u_\eta$, which is however not expedient, as it does not converge in the right space. Instead we test the equation with $w_\eta^L$.

\begin{proposition} \label{prop:hansschaefer}
Let $0<T$, let $u_\eta \in U_\eta$ be a sequence of minimisers of $I_\eta$ with weak limit $u \in L_p((0,\infty);V^1_p)$ and let $u_\eta^L$ be as in \eqref{def:uetaL}. Suppose that $\chi$ is the weak limit of $DW(\epsilon(u_\eta))$ in $L_q((0,\infty);L_q)$. Then
\begin{enumerate} [label=(\roman*)]
    \item \label{hs:1} There is a sequence $S_L^1$ that tends to zero as $L \to \infty$ such that 
    \[
    \lim_{\eta \to 0} \int_0^t \langle DW(\epsilon(u_\eta^L)),\epsilon(u_\eta^L)\rangle \dd s = \int_0^t \langle \chi^L, \epsilon(u)\rangle \dd s+ S_L^1,\quad 0<t<T.
    \]
    \item \label{hs:2} For almost every time $t>0$ and all $\phi \in V^1_p$ we have
    \[
    \langle \chi, \epsilon(\phi)\rangle= \langle DW(\epsilon(u)),\epsilon(\phi)\rangle.
    \]
    \item \label{hs:3} For fixed $L>0$ we have 
    \[
    \eta^{1/2} \partial_t w_\eta^L \longrightarrow 0 \quad \text{in } L_2((0,T);L_2).
    \]
\end{enumerate}
\end{proposition}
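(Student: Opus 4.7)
The three items \ref{hs:1}--\ref{hs:3} are established in order: \ref{hs:1} is a one-sided energy bound obtained by testing the Euler--Lagrange equation with the truncated function $w_\eta^L$, \ref{hs:2} follows from \ref{hs:1} via a Minty-type monotonicity argument, and \ref{hs:3} upgrades \ref{hs:1} to an equality once \ref{hs:2} is known. I expect the passage from $DW(\epsilon(u_\eta))$ to $DW(\epsilon(u_\eta^L))$ inside \ref{hs:1} to be the main technical obstacle; the remaining bookkeeping in \ref{hs:1} uses only estimates already recorded in Sections~\ref{sec:shearthinning}--\ref{sec:5}.

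For \ref{hs:1}, the plan is to test the dual Euler--Lagrange equation \eqref{eq:EL_dual} with $\mathbf{1}_{[0,t]}(s)\, w_\eta^L(s)$, suitably regularised near $s=t$. This is admissible because Lemma~\ref{lemma:trunc} \ref{trunc1}--\ref{trunc4.5} furnishes $w_\eta^L \in L_\infty((0,T);V^1_\infty)$ together with quantitative control on $\partial_t w_\eta^L$ and $\eta\partial_t^2 w_\eta^L$. The $\eta(R_1+R_2+R_3)$ contribution vanishes by \eqref{bound:R} and the uniform $L_4((0,T);V^1_4)$ bound on $w_\eta^L$; the contribution $\eta\partial_t v_\eta = \eta\partial_t^2 u_\eta + \eta\partial_t((u_\eta\cdot\nabla)u_\eta)$ decomposes, via Proposition~\ref{prop:dualbounds} \ref{dualbounds:5} and \eqref{eq:claim:2}, into one piece weakly converging to zero in $L_q((0,T);(V^1_p)')$ and another strongly converging to zero in $L_{s'}((0,T);(V^1_s)')$, both of which vanish when paired with $w_\eta^L$, which by Aubin--Lions converges strongly in every $L_r((0,T);V^1_r)$, $r<\infty$. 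After inserting the splitting $u_\eta^L = \phi_\eta\ast u + w_\eta^L$, the piece involving $\phi_\eta\ast u$ converges strongly and produces $\int_0^t \langle \chi^L,\epsilon(u)\rangle\,ds$. The remaining step, which I expect to be the main obstacle, is to replace $DW(\epsilon(u_\eta))$ by $DW(\epsilon(u_\eta^L))$ in the $\epsilon(w_\eta^L)$-pairing: decomposing $\epsilon(u_\eta) = \epsilon(u_\eta)^{\equi} + \epsilon(u_\eta)^{\conc}$ via Lemma~\ref{lemma:timohorn}, on the equi-integrable set the uniform continuity of $DW$ on bounded sets together with the smallness of $w_\eta^L - w_\eta = G_\eta^L + H_\eta^L$ from \eqref{GLeta}--\eqref{HLeta} yields a contribution that vanishes first as $\eta\to 0$ and then as $L\to\infty$; on the concentration set, whose measure tends to zero, the bound $\|\epsilon(w_\eta^L)\|_\infty \leq CL$ from \ref{trunc1} combined with the $p$-growth of $DW$ gives a contribution absorbable into $S_L^1$.

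For \ref{hs:2}, the convexity of $W$ gives, for any $\phi \in L_p((0,t);V^1_p)$,
\begin{equation*}
0 \leq \int_0^t \langle DW(\epsilon(u_\eta^L)) - DW(\epsilon(\phi)),\, \epsilon(u_\eta^L) - \epsilon(\phi)\rangle\,ds.
\end{equation*}
Expanding, controlling the diagonal term by \ref{hs:1}, and using the weak convergences $DW(\epsilon(u_\eta^L))\weakto\chi^L$ and $u_\eta^L\weakto u^L$ on the cross terms, and then sending $L\to\infty$ with the aid of Lemma~\ref{lemma:marcelrisse} \ref{marcelrisse:1}, \ref{marcelrisse:3}, yields $0\leq \int_0^t \langle \chi - DW(\epsilon(\phi)),\,\epsilon(u)-\epsilon(\phi)\rangle\,ds$. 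Inserting $\phi = u \pm \lambda\tilde\phi$ with $\tilde\phi \in L_p((0,t);V^1_p)$ and letting $\lambda\to 0$ proves the identity \ref{hs:2} at almost every time.

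For \ref{hs:3}, once \ref{hs:2} is available I would rerun the energy manipulation leading to \eqref{eq:Hans_Sigl1}, but testing the Euler--Lagrange equation against $\psi\, w_\eta^L$ rather than $\psi\, u_\eta$. In the resulting identity the term $\tfrac{\eta}{2}\|\partial_t w_\eta^L\|_{L_2((0,T);L_2)}^2$ appears as a non-negative surplus that was discarded via Young's inequality in the derivation of \ref{hs:1}. Matching the limit of all remaining terms with the corresponding quantities obtained by testing the limit equation \eqref{eq:weird:eq2}, now with $DW(\epsilon(u))$ in place of $\chi$ by \ref{hs:2}, against $w^L$ (which is admissible since $w^L \in L_\infty((0,T);V^1_\infty)$), all other contributions balance, so the surplus is forced to vanish in the limit. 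This simultaneously produces the strong convergence $\eta^{1/2}\partial_t w_\eta^L\to 0$ in $L_2((0,T);L_2)$ and the promoted equality in \ref{hs:1}.
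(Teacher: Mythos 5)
Your overall strategy -- test the Euler--Lagrange equation with (essentially) $w_\eta^L$ on $(0,t)$, replace $DW(\epsilon(u_\eta))$ by $DW(\epsilon(u_\eta^L))$ via the equi-integrable/concentrating splitting, run Minty's trick, and then upgrade to equality -- is the same as the paper's. Parts \ref{hs:2} and \ref{hs:3} of your outline are essentially correct (for \ref{hs:3} the paper takes a shortcut you do not mention: once \ref{hs:2} holds, one simply substitutes $\phi=u$ into the pre-limit Minty inequality that still carries $Q_L$, and since $\chi^L\to\chi$ and $u^L\to u$ strongly this forces $Q_L\le 0$, hence $Q_L=0$; your plan of matching terms after testing the limit equation with $w^L$ would also work but is more laborious). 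Also, the ``non-negative surplus'' in \ref{hs:1} is not produced by Young's inequality: the paper simply subtracts the non-negative term $\eta\int e^{-s/\eta}\psi\zeta\,\|\partial_t w_\eta^L\|_{L_2}^2\,ds$ from the identity, and that is what becomes $Q_L$.

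The genuine gap is in your treatment of \ref{hs:1}, in the sentence asserting that ``$w_\eta^L$\dots by Aubin--Lions converges strongly in every $L_r((0,T);V^1_r)$, $r<\infty$.'' This is false. The truncated sequence $w_\eta^L$ is only \emph{bounded} in $L_\infty((0,T);V^1_\infty)$ (by $CL$) with $\partial_t w_\eta^L$ bounded in $L_\infty((0,T);(V^1_r)')$; Aubin--Lions then yields strong convergence only in spaces of \emph{strictly lower} spatial regularity, e.g.\ $C^\alpha((0,T);L_2)$ or $L_r((0,T);W^s_r)$ with $s<1$, never in $V^1_r$. The weak limit $w^L$ is nonzero, so $w_\eta^L$ converges weakly, not strongly, in $L_r(V^1_r)$. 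This matters: the pairings against the ``weakly vanishing'' pieces $\tilde{g}_\eta$, $\hat{\varg}_\eta$, $g_\eta^L$, $\varg_\eta^L$ are not automatically small, since weak $\times$ weak does not pass to the limit. The paper avoids this precisely through the extra structure delivered by Lemma~\ref{lemma:trunc}: the decomposition $w_\eta^L = w_{\eta,1}^L + w_{\eta,2}^L$ in \ref{trunc3} (with $w_{\eta,1}^L$ uniformly bounded in $L_p(V^1_p)$ in both $\eta$ and $L$) is needed so that the ``weak'' pieces, whose $L_q((V^1_p)')$-norms go to zero uniformly in $\eta$ as $L\to\infty$, can be paired with a bounded object; and an additional integration by parts (producing the terms $\mathrm{(VI)}$--$\mathrm{(IX)}$ in the paper's Step~4) is needed to rearrange the second-time-derivative term into a form where the limit equation for $u$ and the strong convergence $w^L\to 0$ in $L_p(V^1_p)$ can be brought to bear. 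In particular, the paper handles the energy term $\int_0^t\langle\partial_t w_\eta^L, w_\eta^L\rangle\,ds$ by interpolating into $C^\alpha((0,T);L_2)$ and passing to $E[w^L]$, which is much weaker than, and cannot be replaced by, strong $L_r(V^1_r)$ convergence. Without the claimed strong convergence your argument in Step~\ref{it:ii}/\ref{it:iii} of \ref{hs:1} does not close; you need to import the $w_{\eta,1}^L+w_{\eta,2}^L$ splitting from \ref{trunc3} and the further integration by parts.
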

As a consequence of this result we get that $u$ actually is a Leray--Hopf solution of the non-Newtonian Navier Stokes system.
\begin{corollary} \label{coro:finale}
     Let $u_\eta \in U_\eta$ be a minimiser of $I_\eta$ with weak limit $u \in L_p((0,\infty);V^1_p)$. Then $u$ is a Leray--Hopf solution to the non-Newtonian Navier--Stokes system.
\end{corollary}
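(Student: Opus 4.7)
The plan is to verify the three properties of Definition \ref{def:Leray--Hopf} for the limit $u$ produced by Lemma \ref{lemma:jonashector}\ref{jonashector:1}: namely the weak form of the equation, the initial condition $u(0,\cdot)=u^0$, and the energy inequality \eqref{LH:energy}. The regularity $u\in L_p((0,\infty);V^1_p)\cap L_\infty((0,\infty);L_2)$ and the time regularity $\partial_t u\in L_{p\wedge q}((0,T);(V^1_{\tilde s})')\hookrightarrow L_{p\wedge q}((0,T);(V^1_\beta)')$ are already furnished by Lemma~\ref{lemma:jonashector}\ref{jonashector:1} together with the observation $\tilde s\ge\beta$.

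For the equation, Lemma~\ref{lemma:jonashector}\ref{jonashector:2} gives
\begin{equation*}
    \langle \partial_t u+(u\cdot\nabla)u,\phi\rangle+\langle \chi,\epsilon(\phi)\rangle=0
    \qquad\text{for a.e.\ }t>0,\ \phi\in V^1_{\tilde s}.
\end{equation*}
Proposition~\ref{prop:hansschaefer}\ref{hs:2} identifies $\langle\chi,\epsilon(\phi)\rangle=\langle DW(\epsilon(u)),\epsilon(\phi)\rangle$ for every $\phi\in V^1_p$, and since $V^1_{\tilde s}\subset V^1_p$ (and in particular contains all smooth solenoidal test functions with zero spatial average) \eqref{LH:equation} follows immediately.

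For the initial condition I would exploit that $u_\eta\in C([0,T];L_2)$ with $u_\eta(0)=u_\eta^0$. Test the equation for $u_\eta$ against $\psi(t)\phi$ for $\psi\in C^\infty_c([0,\infty))$ and $\phi\in C^\infty(\T_d;\R^d)$ solenoidal, perform integration by parts in time and pass to the limit $\eta\to 0$ using $u_\eta\lweakto u$ in $L_p((0,\infty);V^1_p)$, $\partial_t u_\eta \lweakto\partial_t u$ and $u_\eta^0\to u^0$ in $V^0_2$ (assumption \ref{it:u01}). Comparing with the analogous integration by parts for the limit $u$, which is valid because $u\in C([0,T];(V^1_s)')$ with weak-$L_2$ continuity, one reads off $\langle u(0)-u^0,\phi\rangle=0$ for all such $\phi$, hence the initial value is attained in the required sense.

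The main obstacle is the energy inequality. Starting from Proposition~\ref{prop:energyinequality1}\ref{prop:bounds1a1},
\begin{equation*}
    E[u_\eta](T)+\int_0^T\!\!\int_{\T_d}(1-e^{-t/\eta})\,DW(\epsilon(u_\eta)):\epsilon(u_\eta)\dd x\dd t\le E[u_\eta^0],
\end{equation*}
the right-hand side converges to $E[u^0]$ by \ref{it:u01}, and $\liminf_{\eta\to 0}E[u_\eta](T)\ge E[u](T)$ at a.e. $T$ by the weak-$\ast$ convergence $u_\eta\overset{\ast}{\lweakto}u$ in $L_\infty((0,\infty);L_2)$ and lower semicontinuity of the norm. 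For the dissipation term the factor $(1-e^{-t/\eta})$ converges to $1$ a.e., so it suffices to show
\begin{equation*}
    \liminf_{\eta\to 0}\int_0^T\!\!\int_{\T_d} DW(\epsilon(u_\eta)):\epsilon(u_\eta)\dd x\dd t\ge \int_0^T\!\!\int_{\T_d} DW(\epsilon(u)):\epsilon(u)\dd x\dd t.
\end{equation*}
This is precisely the weak lower semicontinuity of the convex integrand $\epsilon\mapsto DW(\epsilon):\epsilon$ (assumption \ref{it:W2}) along the weakly convergent sequence $\epsilon(u_\eta)\weakto\epsilon(u)$ in $L_p$, which follows from a standard convexity/Fatou argument using the $p$-coercivity in \ref{it:W4}. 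Combining these three limits yields \eqref{LH:energy} and completes the verification that $u$ is a Leray--Hopf solution.
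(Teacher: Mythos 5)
Your handling of the weak formulation (Lemma \ref{lemma:jonashector}\ref{jonashector:2} together with the identification $\chi=DW(\epsilon(u))$ from Proposition \ref{prop:hansschaefer}\ref{hs:2}) matches the paper's argument. For the energy inequality you take a genuinely different and considerably more direct route: you invoke the weak lower semicontinuity in $L_p$ of $v\mapsto\int DW(\epsilon(v)):\epsilon(v)$, which holds because the integrand is convex by \ref{it:W2} and nonnegative by \ref{it:W4} (the $p$-coercivity in \ref{it:W4} is not what is needed here, only nonnegativity). The paper instead argues through the equi-integrable/concentrating decomposition of $\epsilon(u_\eta)$ from Lemma \ref{lemma:timohorn}, the comparison estimate \eqref{eq:equiL} against the truncated sequence $u_\eta^L$, and Proposition \ref{prop:hansschaefer}\ref{hs:1}, \ref{hs:3} combined with Lemma \ref{lemma:marcelrisse}. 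Both routes are correct; yours is cleaner and is the one that actually uses the standing convexity hypothesis \ref{it:W2}, which the paper assumes but does not exploit at this point. The paper's longer route, however, pinpoints precisely the two places where the $\liminf$ may be a strict inequality, information that is reused verbatim in the proof of Theorem \ref{strong:equiint}; your shortcut does not produce that. One small slip in your opening: from $\tis\ge\beta$ you get $V^1_{\tis}\hookrightarrow V^1_\beta$ and hence $(V^1_\beta)'\hookrightarrow(V^1_{\tis})'$, which is the \emph{opposite} of the embedding you wrote. The regularity $\partial_t u\colon(0,\infty)\to(V^1_\beta)'$ required by Definition \ref{def:Leray--Hopf} instead follows a posteriori from the equation \eqref{LH:equation}, since both $\diverg(u\otimes u)$ and $\diverg DW(\epsilon(u))$ lie in $(V^1_\beta)'$ at a.e.\ time by Remark \ref{rem:reg_beta} and \ref{it:W4}.
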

We first prove Corollary \ref{coro:finale}, as it finishes the proof of the main theorem.
    \begin{proof}
    Proposition \ref{prop:hansschaefer} and Lemma \ref{lemma:jonashector} show that $u$ is a weak solution to the equation 
        \[
            \langle \partial_t u + (u \cdot \nabla) u, \phi \rangle = -\langle DW(\epsilon(u)),\epsilon(\phi)) \rangle
        \]
    for almost every $t>0$ and for all $\phi \in V^1_\gamma$. It remains to prove that $u$ obeys the energy inequality. \schange{
    For this, we prove first that   \begin{equation} \label{estimate:rs2}
           \liminf_{\eta \to 0} \int_0^t \langle DW(\epsilon(u_\eta)),\epsilon(u_\eta) \rangle \dd s \geq \int_0^t \langle DW(\epsilon(u)),\epsilon(u)\rangle \dd s.
         \end{equation}
    Indeed, observe that $W$ is convex, hence
    \[
    W( \epsilon(u)) \geq W(\epsilon(u_\eta)) + DW(\epsilon(u_\eta)) \cdot (\epsilon(u) - \epsilon(u_\eta)).
    \]
    Rearranging this inequality yields
    \begin{equation*}
         \int_0^t \langle DW(\epsilon(u_\eta)),\epsilon(u_\eta) \rangle \dd s \geq \int_0^t \int_{\T_d} W(\epsilon(u_\eta)) - W(\epsilon(u)) \dd x \dd s + \int_0^t \langle DW(\epsilon(u_\eta)),\epsilon(u)\rangle \dd s.
    \end{equation*}
    Taking the liminf in the above inequality, the first term on the right-hand side is non-negative as $W$ is convex and thus $W(\cdot)$ is weakly lower-semicontinuous. The second part of Proposition \eqref{prop:hansschaefer} shows that the latter summand converges to the right-hand side of \eqref{estimate:rs2}, establishing this inequality.
     }

    This lower-semicontinuity, the fact that $\liminf_{\eta \to 0} E[u_\eta](t) \geq E[u](t)$ and $\lim_{\eta \to 0} E[u](0) = E[u](0)$ and the energy inequality for $u_\eta$, \eqref{energy:ineq11}, yield
        \[
            E[u](t) + \int_0^t \langle DW(\epsilon(u)),\epsilon(u) \rangle \dd s \leq E[u](0),
        \]
    and, consequently, $u$ is a Leray--Hopf solution to the non-Newtonian Navier--Stokes problem. 
\end{proof}

The rest of the effort is to show Proposition \ref{prop:hansschaefer}, in particular \ref{hs:1}.
\begin{proof}[Proof of Proposition \ref{prop:hansschaefer}]
\textbf{On \ref{hs:1}.} First we prove that \ref{hs:1} holds with $\leq$ instead of $=$. We subdivide this proof into multiple smaller steps. \\
\textbf{Step 1:}  We show that
\begin{equation} \label{hs:step1}
    \lim_{\eta\to 0}\int_0^t \langle DW(\epsilon(u_\eta^L)), \epsilon(u_\eta^L) \rangle \dd s  
    =\int_0^t \langle \chi^L,\epsilon(u) \rangle
    +  \lim_{\eta \to 0} \int_0^t  \langle DW(\epsilon(u_\eta^L)),\epsilon(w_\eta^L) \rangle \dd s.
\end{equation}
Indeed, observe that $\epsilon(u_\eta^L) = \epsilon(\phi_\eta \ast u) + \epsilon(w_\eta^L)$. Using that $\phi_\eta \ast u \to u$ \emph{strongly} in $L_p((0,\infty);V^1_p)$ and $DW(\epsilon(u_\eta^L)) \weakto \chi^L$ weakly in $L_p$ yields
\begin{equation*}
    \lim_{\eta \to 0} \int_0^t \langle DW(\epsilon(u_\eta^L)), \epsilon(\phi_\eta \ast u) \rangle \dd s = \int_0^t \langle \chi^L, \epsilon(u) \rangle \dd s.
\end{equation*}
This directly implies the statement of the first step.
\smallskip

\noindent \textbf{Step 2:} We show that there is a sequence $S_L^2$ with $S_L^2 \to 0$ as $L \to \infty$ such that
\begin{equation} \label{hs:step2}
    \lim_{\eta \to 0} \int_0^t  \langle DW(\epsilon(u_\eta^L)),\epsilon(w_\eta^L) \dd s = \lim_{\eta \to 0} \int_0^t \langle DW(\epsilon(u_\eta)),\epsilon(w_\eta^L) \dd s +S_L^2.
\end{equation}
We have $\epsilon(u_\eta^L) -\epsilon(u_\eta) = \epsilon(w_\eta^L) - \epsilon(w_\eta)$. Furthermore, recall the decomposition
\begin{equation*}
    \epsilon(u_\eta) = \epsilon(u_\eta)^{\equi} + \epsilon(u_\eta)^{\conc}.
\end{equation*}
According to the truncation lemma \ref{lemma:trunc} \ref{trunc3} we can further split $w_\eta^L - w_\eta = G_\eta^L + H_\eta^L$, i.e. 
\begin{align*}
    \epsilon(u_\eta^L)&= \bigl(\epsilon(u_\eta)^{\equi} + \epsilon(G_\eta^L) \bigr) 
    + \bigl(\epsilon(u_\eta)^{\conc} + \epsilon(H_\eta^L) \bigr) 
\end{align*}
Arguing as in the proof of Lemma \ref{lemma:marcelrisse} \ref{marcelrisse:3}, as $\epsilon(u_\eta)^{\conc} \to 0$ and $\epsilon(H^L_\eta) \to 0$ in $L_r((0,T);V^1_r)$ for $(p-1)<r<p$,
\begin{align*}
    DW(\epsilon(u_\eta)) - DW(\epsilon(u_\eta)^{\equi}) &\longrightarrow 0 \quad \text{strongly in } L_{\tilde{r}}((0,T);L_{\tilde{r}}) \text{ as } \eta \to 0,~1<\tilde{r}<q \\
    DW(\epsilon(u_\eta^L)) - DW(\epsilon(u_\eta)^{\equi}+\epsilon(G_\eta^L)) &\longrightarrow 0 \quad \text{strongly in }  L_{\tilde{r}}((0,T);L_{\tilde{r}}) \text{ as } \eta \to 0,~1<\tilde{r}<q.
\end{align*}
For fixed $L$, however, $w_\eta^L$ is uniformly bounded in $L_{\infty}((0,T);V^1_{\infty})$ and thus we conclude by using H\"older's inequality
\begin{equation} \label{helmutrahn}
    \begin{split}
      \lim_{\eta \to 0} \int_0^t  \langle DW(\epsilon(u_\eta)),\epsilon(w_\eta^L)\rangle \dd s &=
      \lim_{\eta \to 0} \int_0^t  \langle DW(\epsilon(u_\eta)^{\equi}),\epsilon(w_\eta^L)\rangle \dd s, \\
      \lim_{\eta \to 0} \int_0^t  \langle DW(\epsilon(u_\eta^L)),\epsilon(w_\eta^L)\rangle \dd s &=
      \lim_{\eta \to 0} \int_0^t  \langle DW(\epsilon(u_\eta)^{\equi}+\epsilon(G_\eta^L)),\epsilon(w_\eta^L)\rangle \dd s.
    \end{split}
\end{equation}
We have established in \eqref{dietermueller} that 
\begin{align*}
    \lim_{L \to \infty} \sup_{\eta >0} \Vert DW(\epsilon(u_\eta)^{\equi}) - DW(\epsilon(u_\eta)^{\equi}+\epsilon(G_\eta^L)) \Vert_{L_q((0,T);L_q)} =0
\end{align*}
Now, recall that due to \ref{trunc3}, $w_\eta^L=w_{\eta,1}^L+w_{\eta,2}^L$, where the first term is bounded in $L_p((0,T);V^1_p)$ and the second tends to zero in $L_p((0,T);V^1_p)$ \eqref{dietermueller} implies (with H\"older's inequality)
\begin{align*}
    \lim_{L \to \infty} \lim_{\eta \to 0} \int_0^t  \langle DW(\epsilon(u_\eta)^{\equi}),\epsilon(w_\eta^L) \rangle \dd s
        = 
    \lim_{L \to \infty}
    \lim_{\eta \to 0} \int_0^t  \langle DW(\epsilon(u_\eta)^{\equi}+\epsilon(G_\eta^L)),\epsilon(w_\eta^L) \rangle \dd s.
\end{align*}
With the previous observation \eqref{helmutrahn} this yields \eqref{hs:step2}.

\smallskip
\noindent \textbf{Step 3:} We now use the Euler--Lagrange equation for $u_\eta$ tested with $w_\eta^L$ multiplied with a certain cut-off in time. To this end, take $\psi$ as before
\begin{equation*}
    \psi(s) \coloneq 
    \begin{cases}
        e^{s/\eta} -1, 
        & 0\leq s <t 
        \\
        e^{t/\eta} -1, 
        & 
        s \geq t
    \end{cases}           
\end{equation*}
and consider a cut-off $\zeta \in C_c^{\infty}([0,T);[0,1])$ with $\zeta=1$ on $[0,t)$. Recall that $w_\eta^L \in L_{\infty}((0,T);V^1_{\infty}) \cap W^1_{\infty}((0,T);L_{\infty})$; hence $\psi \zeta w_\eta^L \in TU_\eta$ and we can use it as a test function in \eqref{eq:EL1}, i.e. we obtain
\begin{equation} \label{hs:step3:1}
    \begin{split} 
    0 =& \eta\int_0^{\infty}  e^{-s/\eta} \psi \zeta \langle \partial_t u_\eta + \diverg (u_\eta \otimes u_\eta), \partial_t w_\eta^L \rangle \dd s \\
    & + \eta \int_0^{\infty} e^{-s/\eta}  \partial_t(\psi\zeta) \langle \partial_t u_\eta + \diverg( u_\eta \otimes u_\eta),w_\eta^L \rangle \dd s \\
    & + \int_0^{\infty} e^{-s/\eta} \psi \zeta \langle DW(\epsilon(u_\eta)), \epsilon(w_\eta^L) \rangle \dd s \\
    & +\eta \int_0^{\infty} e^{-s/\eta} \psi \zeta \langle \partial_t u_\eta + \diverg (u_\eta \otimes u_\eta), \diverg(u_\eta \otimes w_\eta^L+w_\eta^L \otimes u_\eta) \rangle \dd s \\
    & + \eta C_4\int_0^{\infty} e^{-s/\eta} \psi \zeta \langle \vert \nabla u_\eta \vert^2 \nabla u_\eta, \nabla w_\eta^L \rangle \dd s \\
    =& \mathrm{(I)} + \mathrm{(II)} + \mathrm{(III)} + \mathrm{(IV)} + \mathrm{(V)}
    \end{split}
\end{equation}
We now argue that \eqref{hs:step3:1} in combination with \eqref{hs:step1} and \eqref{hs:step2} yields the result if we take the limit $\eta \to 0$ and then the limit $L \to \infty$.
\smallskip

\noindent \textbf{Step 3a:} For fixed $L>0$, the terms $\mathrm{(IV)}$ and $\mathrm{(V)}$ vanish as $\eta \to 0$.

To this end recall that $e^{-s/\eta} \psi \zeta$  is bounded by $1$, and from Proposition~\ref{prop:energyinequality1} and Lemma~\ref{lemma:trunc}\ref{trunc1}, that 
\begin{equation} \label{hs:obs1}
    \Vert \partial_t u_\eta + \diverg (u_\eta \otimes u_\eta) \Vert_{L_2((0,T);L_2)} \leq C\eta^{-1/2}, \quad \Vert u_\eta \Vert_{L_4((0,T);V^1_4)} \leq C \eta^{-1/4}, \quad \Vert w_\eta^L \Vert_{L_{\infty}((0,T);V^1_{\infty})} \leq CL,
\end{equation}
such that H\"older's inequality implies
\[
\left \vert \int_0^{\infty} e^{-s/\eta} \psi \zeta\langle \partial_t u_\eta + \diverg (u_\eta \otimes u_\eta), \diverg(u_\eta \otimes w_\eta^L+w_\eta^L \otimes u_\eta) \rangle \dd s \right \vert \leq C L \eta^{-3/4},
\]
i.e. $\mathrm{(IV)} \to 0$ as $\eta \to 0$.
The observations \eqref{hs:obs1} also lead to 
\[
\left \vert  \int_0^{\infty} e^{-t/\eta} \psi \zeta \langle \vert \nabla u_\eta \vert^2 \nabla u_\eta, \nabla w_\eta^L \rangle \dd s \right \vert \leq C L \eta^{-3/4}
\]
via the use of H\"older's inequality and hence $\mathrm{(V)} \to 0$ as $\eta \to 0$.
\smallskip

\noindent \textbf{Step 3b:} For fixed $L$ we can approximate $\mathrm{(III)}$ as follows:
\begin{equation} \label{hs:step3b}
    \lim_{\eta \to 0} \int _0^{\infty} e^{-s/\eta} \psi \zeta \langle DW(\epsilon(u_\eta),\epsilon(w_\eta^L)\rangle \dd s = \lim_{\eta \to 0} \int_0^t  \langle DW(\epsilon(u_\eta), \epsilon(w_\eta^L) \rangle \dd s.
\end{equation}
We only need to show that the difference between both sides tends to zero as $\eta \to 0$. Recalling the definition of $\psi$ and $\zeta$ we obtain that the difference equals
\begin{equation*}
    -\int_0^t e^{-s/\eta} \langle DW(\epsilon(u_\eta),\epsilon(w_\eta^L)\rangle \dd s +(e^{t/\eta}-1) \int_t^T e^{-s/\eta} \zeta(s) \langle DW(\epsilon(u_\eta),\epsilon(w_\eta^L)\rangle \dd s.
\end{equation*}
Now $\epsilon(u_\eta)$ is uniformly bounded in $L_p((0,T);L_p)$ and therefore due to \ref{it:W4}, $DW(\epsilon(u_\eta))$ is uniformly bounded in $L_q((0,T);L_q)$. Moreover, for fixed $L$, $\epsilon(w_\eta^L)$ is uniformly bounded in $L_{\infty}((0,T);L_{\infty})$. Using H\"older's inequality and that
\[
\Vert e^{-s/\eta} \Vert_{L_p((0,t))} \longrightarrow 0
\quad \text{and} \quad \Vert e^{-s /\eta} \zeta \Vert_{L_p((t,T))} \longrightarrow 0, \quad \text{as } \eta \to 0,
\]
implies that the difference tends to zero for fixed $L>0$ as $\eta \to 0$. Therefore, \eqref{hs:step3b} is established.
\smallskip

\noindent \textbf{Step 4:} In this step we handle the terms $\mathrm{(I)}$ and $\mathrm{(II)}$. We have 
\begin{equation} \label{hs:step3c1}
\begin{split}
    \mathrm{(I)} + \mathrm{(II)} 
    &= \eta\int_0^{\infty}  e^{-s/\eta} \psi \zeta \langle \partial_t u_\eta + \diverg (u_\eta \otimes u_\eta), \partial_t w_\eta^L \rangle \dd s \\
    &\quad + \eta \int_0^{\infty} e^{-s/\eta}  \partial_t(\psi\zeta) \langle \partial_t u_\eta + \diverg( u_\eta \otimes u_\eta),w_\eta^L \rangle \dd s \\
    & \geq \eta\int_0^{\infty}  e^{-s/\eta} \psi \zeta \langle \partial_t u_\eta + \diverg (u_\eta \otimes u_\eta) - \partial_t w_\eta^L, \partial_t w_\eta^L \rangle \dd s \\
    &\quad + \eta \int_0^{\infty} e^{-s/\eta}  \partial_t(\psi\zeta) \langle \partial_t u_\eta + \diverg( u_\eta \otimes u_\eta),w_\eta^L \rangle \dd s
\end{split}
\end{equation}
For later reference, let us denote by 
    \begin{equation} \label{def:QL}
    Q_L = \limsup_{\eta \to 0} \eta \int_0^{\infty} e^{-s/\eta} \psi \zeta \langle \partial_t w_\eta^L,\partial_t w_\eta^L \rangle \geq 0,
    \end{equation}
which is exactly the difference in the above inequality when $\eta \to 0$.
Integration by parts in the first integral yields
\begin{align*}
     \mathrm{(I)} + \mathrm{(II)} 
     & \ge \int_0^{\infty} e^{-s/\eta} \psi \zeta  \langle \partial_t u_\eta - \partial_t w_\eta^L + \diverg (u_\eta \otimes u_\eta), w_\eta^L \rangle \dd s \\
     &\quad - \eta \int_0^{\infty} e^{-s/\eta} \psi \zeta \langle (\partial_t^2 u_\eta - \partial_t^2 w_\eta^L), w_\eta^L \rangle \dd s \\
     &\quad -\eta \int_0^{\infty} e^{-s/\eta} \psi \zeta \langle \partial_t (\diverg (u_\eta \otimes u_\eta)), w_\eta^L \rangle \dd s \\
     &\quad +\eta \int_0^{\infty} e^{-s/\eta} \partial_t (\psi \zeta) \langle \partial_t w_\eta^L, w_\eta^L \rangle \dd s \\
     &= \mathrm{(VI)} + \mathrm{(VII)} + \mathrm{(VIII)} + \mathrm{(IX)}.
\end{align*}
Again, we handle the terms separately, taking the limit $\eta \to 0$ and, if needed, then taking the limit $L \to \infty$.
\smallskip

\noindent \textbf{Step 4a:} Recall that for fixed $L$, the family $w_\eta^L$ is uniformly bounded in $L_{\infty}((0,T);V^1_{\infty})$ and converges weakly to $w^L$. Moreover, 
\begin{enumerate} [label=(\roman*)]
    \item $e^{-\cdot/\eta} \psi \zeta \to 1_{(0,t)}$ strongly in $L_s((0,T))$, as $\eta \to \infty$; 
    \item due to compact Sobolev embedding we established that $\diverg (u_\eta \otimes u_\eta) \to \diverg (u \otimes u)$ strongly in $L_{s'}((0,T);(V^1_s)')$;
    \item we have $\partial_t u_\eta = \partial_t (\phi_\eta \ast u) + \partial_t w_\eta$ and hence
    \[
    \partial_t u_\eta - \partial_t w_\eta^L = \partial_t (\phi_\eta \ast u) + \partial_t w_\eta - \partial_t w_\eta^L = \partial_t (\phi_\eta \ast u) - g_\eta^L -h_\eta^L.
    \]
    Recall that $h_\eta^L \to 0$ strongly in $L_{\expo'}((0,T);(V^1_\expo)')$ for some $\expo <\infty$ as $\eta \to 0$ (cf. \ref{trunc4}) and that $\partial_t (\phi_\eta \ast u) \to \partial_t u$ strongly in $L_{s'}((0,T);(V^1_s)')$.
\end{enumerate}
Again, applying H\"older's inequality yields
\begin{align*}
    \lim_{L \to \infty} \lim_{\eta \to 0} \mathrm{(VI)} 
        &= \lim_{L \to \infty} \int_0^t \langle \partial_t u + \diverg(u \otimes u), w^L \rangle \dd s + \lim_{L \to \infty} \lim_{\eta \to 0} \int_0^t \langle g_\eta^L, w_\eta^L \rangle \dd s.
\end{align*}
As $g_\eta^L \to 0$ uniformly in $\eta$ in $L_q((0,T);(V^1_p)')$ as $L \to \infty$, and $w_\eta^L = w_{\eta,1}^L + w_{\eta,1}^L$ (cf. \ref{trunc3}), the second limit is zero as $L \to \infty$.

Now using the equation for $u$ from Lemma~\ref{lemma:jonashector}\ref{jonashector:2} ($w^L$ has sufficient regularity) gives
\begin{align*}
    \lim_{L \to \infty} \lim_{\eta \to 0} \mathrm{(VI)} 
        &= \lim_{L \to \infty} \int_0^t \langle \partial_t u + \diverg(u \otimes u), w^L \rangle \dd s \\
        &= -\lim_{L \to \infty} \int_0^t \langle \chi,\epsilon(w^L) \rangle \dd s \\
        &= 0
\end{align*}
as $w^L \to 0$ strongly in $L_p((0,\infty);V^1_p)$( Lemma \ref{lemma:marcelrisse} \ref{marcelrisse:1}).
We conclude that there is a sequence $S_L^3 \to 0$ as $L \to \infty$ such that 
\begin{align} \label{sl3}
    \lim_{\eta \to 0} \int_0^{\infty} e^{-s/\eta} \psi \zeta \langle \partial_t u_\eta - \partial_t w_\eta^L + \diverg (u_\eta \otimes u_\eta), w_\eta^L \rangle \dd s = S_L^3.
\end{align}

\smallskip
\noindent \textbf{Step 4b:} We estimate $\mathrm{(VII)}$ via the decomposition, i.e. 
\[
\eta(\partial_t^2 u_\eta - \partial_t^2 w_\eta^L) = \eta \partial_t^2 (\phi_\eta \ast u) + \eta (\partial_t^2 w_\eta - \partial_t^2 w_\eta^L) = \eta (\partial_t \phi_\eta \ast \partial_t u) - \varg_\eta^L - \varh_\eta^L.
\]
Now
\begin{enumerate} [label=(\roman*)]
    \item $\eta (\partial_t \phi_\eta \ast \partial_t u) \to 0$ strongly in $L_{s'}((0,T);(V^1_s)')$ (cf. proof of Lemma \ref{prop:weta} \ref{prop:weta5});
    \item $\varg_\eta^L \to 0$ strongly in $L_q((0,T);(V^1_p)')$ uniformly in $\eta$ as $L \to \infty$;
    \item $\varh_\eta^L \to 0$ strongly in $L_{\expo'}((0,T);(V^1_{\expo})')$ for fixed $L$ as $\eta \to 0$.
\end{enumerate}
With the same H\"older estimates as in the previous step we then get that there is a sequence $S_L^4 \to 0$ as $L \to \infty$ such that 
\begin{align} \label{sl4}
    \lim_{\eta \to 0} \mathrm{(VII)}=\lim_{\eta \to 0} - \eta \int_0^{\infty} e^{-s/\eta} \psi \zeta \langle (\partial_t^2 u_\eta - \partial_t^2 w_\eta^L), w_\eta^L \rangle \dd s = S_L^4
\end{align}

\smallskip 
\noindent \textbf{Step 4c:} We have already seen that $\eta \partial_t (\diverg (u_\eta \otimes u_\eta) \to 0$ strongly in $L_{4/3}((0,T);(V^1_s)')$, cf. \eqref{eq:claim:2}. Therefore, again using boundedness of $w_\eta^L$ in $L_{\infty}((0,T);V^1_{\infty})$ and H\"older's inequality we obtain
\[
\lim_{\eta \to 0} 
\mathrm{(VIII)}=\lim_{\eta \to 0}  -\eta \int_0^{\infty} e^{-s/\eta} \psi \zeta \langle \partial_t (\diverg (u_\eta \otimes u_\eta)), w_\eta^L \rangle \dd s =0.
\]

\smallskip 
\noindent \textbf{Step 4d:} 
For $\mathrm{(IX)}$ we get
\begin{align*}
     \eta \int_0^{\infty} e^{-s/\eta} \partial_t(\psi \zeta) \langle \partial_t w_\eta^L, w_\eta^L \rangle \dd s 
    =& \int_0^t  \langle \partial_t w_\eta^L, w_\eta^L \rangle \dd s
    \\
    & + (e^{t/\eta}-1) \int_t^{\infty} e^{-s/\eta} \zeta' \langle \partial_t w_\eta^L, w_\eta^L \rangle \dd s.    
\end{align*}
By the same arguments as before, the second integral tends to zero as $\eta \to 0$. The first integral can be rewritten in terms of the energy:
\[
 \int_0^t  \langle \partial_t w_\eta^L, w_\eta^L \rangle \dd s = E[w_\eta^L](t) - E[w_\eta^L](0).
\]
We have $w_\eta^L \weakto w^L$ weakly (-$*$) both in $L_\infty((0,T);V^1_{\infty})$ and in $W^1_r((0,T);(V^1_{r'})')$ for all $r < \infty$. Interpolation gives $w_\eta^L \to w^L$ strongly in $W^{1/2-\varepsilon}_r((0,T);L_2)$ for any $r<\infty$. Hence, $w_\eta^L \to w^L$ in some $C^{\alpha}((0,T);L_2)$ and therefore
\[
\lim_{\eta \to 0} \int_0^t  \langle \partial_t w_\eta^L, w_\eta^L \rangle \dd s = E[w^L](t) - E[w^L](0) =  \int_0^t  \langle \partial_t w^L, w^L \rangle \dd s 
\]
As $\partial_t w^L \to 0$ in $L_q((0,T);(V^1_p)')$ and $w^L \to 0$ in $L_p((0,T);V^1_p)$ (cf. Lemma \ref{lemma:marcelrisse} \ref{marcelrisse:1} and \ref{marcelrisse:2}), we get convergence of the right-hand-side to zero, i.e.
\begin{align} \label{sl5}
     \lim_{\eta \to 0} \mathrm{(IX)}=\lim_{\eta \to 0} \eta \int_0^{\infty} e^{-s/\eta} \partial_t(\psi \zeta) \langle \partial_t w_\eta^L, w_\eta^L \rangle \dd s = S_L^5
\end{align}
for some sequence $S_L^5 \to 0$ as $L \to \infty$.
\smallskip

\noindent\textbf{Step 5:} Combining all previous steps, and taking the limit in \eqref{hs:step3:1} we realise
\[
\lim_{\eta \to 0} \int_0^{\infty} \langle DW(\epsilon(u_\eta),\epsilon(w_\eta^L) \rangle \dd s = - S_L^3-S_L^4 - S_L^5 - Q_L.
\]
In combination with \eqref{hs:step1} and \eqref{hs:step2} this gives
\begin{equation} \label{hs:step5}
    \lim_{\eta \to 0} \int_0^t \langle DW(\epsilon(u_\eta^L)),\epsilon(u_\eta^L) \rangle \dd s = \int_0^t \langle \chi,\epsilon(u^L) \rangle \dd s + S_L^2 -S_L^3 - S_L^3-S_L^4 - S_L^5 - Q_L.
\end{equation}
Defining $S_L^1= S_L^2 -S_L^3 - S_L^3-S_L^4 - S_L^5$ and using that $Q_L \geq 0$ finally proves \ref{hs:1} with $\leq$.
\medskip

\noindent \textbf{\ref{hs:2}.} We use Minty's trick. For any $\phi \in C_c^{\infty}([0,\infty);V^1_p)$ the monotonicity of $DW$ (i.e. convexity of $W$) implies
    \begin{equation} \label{proof:hs:1}
      0 \leq  \int_0^t \langle DW(\epsilon(u_\eta^L)) -DW(\epsilon(\phi)), \epsilon(u_\eta^L) - \epsilon(\phi) \rangle \dd s.
    \end{equation}
Now, due to Step \ref{hs:1} of the proof we have
    \begin{align}\label{eq:usc}
    \lim_{\eta \to 0}  \int_0^t \langle DW(\epsilon(u_\eta^L)), \epsilon(u_\eta^L) \rangle = \int_0^t \langle \chi^L, \epsilon(u^L) \rangle \dd s + S^1_L-Q_L.
    \end{align}
Using weak convergence of $DW(\epsilon(u_\eta^L))$ to $\chi^L$ and of $\epsilon(u_\eta^L)$ to $\epsilon(u^L)$, we obtain the convergence of the mixed terms
\begin{align}\label{eq:mixedt}
\lim_{\eta \to 0} -\int_0^t \langle DW(\epsilon(u_\eta^L)),\epsilon (\phi)\rangle + \langle DW(\epsilon(\phi),\epsilon(u_\eta^L) \dd s
=
- \int_0^{t}  \langle \chi^L, \epsilon(\phi) \rangle + \langle DW(\epsilon(\phi), \epsilon(u^L) \rangle \dd s.
\end{align}
Combining \eqref{eq:usc} and \eqref{eq:mixedt} in \eqref{proof:hs:1} yields
    \begin{equation*}
      -S^1_L + Q_L \leq  \int_0^t \langle \chi^L -DW(\epsilon(\phi)), \epsilon(u^L) - \epsilon(\phi) \rangle \dd s.
    \end{equation*}
Letting $L \to \infty$, using \emph{strong} convergence of $\chi^L$ and $\epsilon(u^L)$ due to Lemma~\ref{lemma:marcelrisse} and $Q_L \geq 0$, yields
    \begin{equation} \label{proof:hs:2}
        0 \leq \int_0^t \langle \chi -DW(\epsilon(\phi)), \epsilon(u) - \epsilon(\phi) \rangle \dd s.
    \end{equation}
By a density argument, \eqref{proof:hs:2} holds for all $\phi \in L_p((0,\infty);V^1_p)$ and, in particular, for $\phi= u \pm \lambda \tilde{\phi}$ for any $\tilde{\phi} \in L_p((0,\infty);V^1_p)$. Letting $\lambda \to 0$ yields
    \begin{equation} \label{proof:hs:3}
        0 \leq \int_0^t \langle \chi -DW(\epsilon(u)),\epsilon(\tilde{\phi}) \rangle \dd s
        \quad
        \text{and}
        \quad
        0 \geq \int_0^t \langle \chi -DW(\epsilon(u)),  \epsilon(\tilde{\phi}) \rangle \dd s
    \end{equation}
which proves \ref{hs:2}.

\medskip
\noindent \textbf{\ref{hs:3} and \ref{hs:1}. } In \eqref{proof:hs:2} we just used $Q_L\geq 0$. Now plugging in $u=\phi$, however, yields $Q_L \leq 0$, i.e. (as it was already positive) $Q_L=0$. Recalling that 
\[
Q_L =  \limsup_{\eta \to 0} \eta \int_0^{\infty} e^{-s/\eta} \psi \zeta \langle \partial_t w_\eta^L,\partial_t w_\eta^L \rangle,
\]
and proceeding as in Step 3b to replace $e^{-s/\eta} \psi \zeta$ by the characteristic function on $(0,t)$ implies
\[
\lim_{L \to \infty} \limsup_{\eta \to 0} \eta \Vert \partial_t w_\eta^L \Vert_{L_2((0,t);L_2)}^2 =0
\]
and $Q_L \to 0$. By \eqref{hs:step5},  \ref{hs:1} thus holds with equality.
\end{proof}
\section{Energy dissipation, strong convergence and uniqueness}
\label{sec:definitely_Bergdoktor}
\schange{
The previous section dealt with the second part of the proof of Theorem \ref{thm:main}; in particular this second part is concerned with the convergence of the nonlinear term}
\begin{equation} \label{eq:Bergdoktor1}
    DW(\epsilon(u_\eta)) \lweakto DW(\epsilon(u)).
\end{equation}
\schange{Taking a variational viewpoint, and investigating the minimising sequence, this weak convergence is quite revealing. If the fluid is non-Newtonian, any \emph{oscillation} effect will likely destroy \eqref{eq:Bergdoktor1}, whereas \emph{concentration} effects do not influence \eqref{eq:Bergdoktor1}.}

\schange{The PDE side of this phenomenon is well-known and has been investigated both from the side of \emph{existence results} using classical techniques (cf. \cite{Lady1,Lady2,Lady3,Lionsbook}) or, similar to here, truncation methods to rule out oscillations (cf. \cite{FMS,BDF,BDS} etc.) as well as \emph{non-uniqueness results} \cite{BV19,BMS}. Using the approach of the WIDE functional, we are able to recover some of this classical insights via the variational methods. In particular we  demonstrate that the absence of concentrations in our sequence of minimisers directly yields a strong convergence result.}

\begin{lemma} \label{lemma:elmau}
Let $1 < p < \infty$ and suppose that $u_\eta \in L_p((0,\infty);V^1_p)$ is a bounded sequence with $u_\eta \weakto u$ in $L_p((0,\infty);V^1_p)$. Moreover, assume that 
    \begin{enumerate} [label=(\roman*)]
        \item in addition to \ref{it:W1}--\ref{it:W4}, $W$ is strictly convex \footnote{A function $W \colon \R^{d \times d}_{\sym,0} \to \R$ is strictly convex if for all $\xi_1\neq \xi_2$ and $\lambda \in (0,1)$ we have $W(\lambda \xi_1 + (1-\lambda)\xi_2) < \lambda W(\xi_1) + (1-\lambda) W(\xi_2)$}, i.e. $DW$ is strictly monotone;
        \item the dissipation of $u_\eta$ converges to the dissipation of $u$, i.e. for all $0<t<\infty$
            \begin{equation} \label{eq:elmau}
                \lim_{\eta \to 0} \int_0^t \langle DW(\epsilon(u_\eta)),\epsilon(u_\eta) \rangle \dd s 
                =
                \int_0^t \langle DW(\epsilon(u)),\epsilon(u) \rangle \dd s;
            \end{equation}
        \item $DW(\epsilon(u_\eta)) \weakto DW(\epsilon(u))$ in $L_q((0,\infty);L_q)$.
    \end{enumerate} 
    Then $u_\eta \to u$ strongly in $L_p((0,\infty);V^1_p)$.
\end{lemma}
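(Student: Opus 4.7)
The plan is to exploit strict monotonicity of $DW$ (equivalent to strict convexity of $W$) through a classical Minty-type identity. For each finite $t > 0$ set
\[
\Phi_\eta(t) \coloneq \int_0^t \int_{\T_d} \bigl(DW(\epsilon(u_\eta)) - DW(\epsilon(u))\bigr) : \bigl(\epsilon(u_\eta) - \epsilon(u)\bigr) \dd x \dd s,
\]
which is non-negative by \ref{it:W2}. Expanding the product leaves four terms. The diagonal $\int_0^t\langle DW(\epsilon(u_\eta)),\epsilon(u_\eta)\rangle \dd s$ converges to $\int_0^t\langle DW(\epsilon(u)),\epsilon(u)\rangle \dd s$ by hypothesis (ii); the two mixed terms converge to the same limit, by pairing the fixed elements $\epsilon(u) \in L_p((0,t);L_p)$ and $DW(\epsilon(u)) \in L_q((0,t);L_q)$ (the latter because the growth estimate in \ref{it:W4} yields $|DW(\epsilon(u))|^q \le C(1+|\epsilon(u)|^p)$) against the weak convergences $DW(\epsilon(u_\eta)) \weakto DW(\epsilon(u))$ in $L_q$ from (iii) and $\epsilon(u_\eta) \weakto \epsilon(u)$ in $L_p$; the last term is constant. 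Therefore $\Phi_\eta(t) \to 0$ for every $t$.

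The integrand $g_\eta \coloneq (DW(\epsilon(u_\eta)) - DW(\epsilon(u))):(\epsilon(u_\eta)-\epsilon(u))$ is non-negative, so its vanishing $L_1$-norm on $(0,t)\times\T_d$ means $g_\eta \to 0$ in $L_{1,\loc}((0,\infty)\times\T_d)$. A diagonal subsequence argument produces a (non-relabeled) subsequence along which $g_\eta \to 0$ almost everywhere on $(0,\infty)\times\T_d$. Using \ref{it:W4} one verifies the coercive lower bound
\[
\bigl(DW(\xi) - DW(\bar\epsilon)\bigr) : (\xi - \bar\epsilon) \geq c|\xi|^p - C\bigl(1+|\bar\epsilon|\bigr)|\xi|^{p-1} - C(|\bar\epsilon|),
\]
which forces $\epsilon(u_\eta(s,x))$ to stay bounded at a.e.\ point. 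Strict monotonicity of $DW$ then identifies every accumulation point with $\epsilon(u(s,x))$, so $\epsilon(u_\eta) \to \epsilon(u)$ pointwise a.e.

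To upgrade the a.e.\ convergence to strong $L_p$ convergence, we apply a Scheff\'e-type argument to the non-negative density $f_\eta \coloneq DW(\epsilon(u_\eta)):\epsilon(u_\eta) \geq 0$ (non-negativity from \ref{it:W4}). By the previous step and continuity of $DW$, $f_\eta \to f \coloneq DW(\epsilon(u)):\epsilon(u)$ a.e., and by hypothesis (ii), $\int_0^t\int f_\eta \to \int_0^t\int f$ for every $t$. Scheff\'e's lemma yields $f_\eta \to f$ in $L_{1,\loc}((0,\infty)\times\T_d)$, so $(f_\eta)$ is equi-integrable on every slab $(0,t)\times\T_d$. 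The coercivity estimate $|\epsilon(u_\eta)|^p \leq C(1 + f_\eta)$ from \ref{it:W4} transfers this equi-integrability to $|\epsilon(u_\eta)|^p$, and Vitali's convergence theorem gives $\epsilon(u_\eta) \to \epsilon(u)$ strongly in $L_p((0,t);L_p)$. Korn's inequality on $\T_d$, together with Poincar\'e applied to $u_\eta - u$ (solenoidal with conserved zero average), then yields $u_\eta \to u$ strongly in $L_p((0,t);V^1_p)$ for every finite $t$. The extension to $L_p((0,\infty);V^1_p)$ follows by a tail argument: boundedness of $u_\eta$ in $L_p((0,\infty);V^1_p)$ gives the uniform bound $\int_0^\infty f_\eta \leq M$, and combining this with Fatou and the local norm convergence $\int_0^T f_\eta \to \int_0^T f$ one sees that the tails $\int_T^\infty \int|\epsilon(u_\eta)|^p$ and $\int_T^\infty\int|\epsilon(u)|^p$ are uniformly negligible for large $T$.

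The main obstacle is precisely the step from a.e.\ convergence to genuine $L_p$-convergence: without a quantitative monotonicity modulus (such as $p$-uniform convexity of $W$), one cannot bound $\|\epsilon(u_\eta)-\epsilon(u)\|_{L_p}^p$ directly by $\Phi_\eta$. The key device circumventing this is Scheff\'e's lemma, which turns a.e.\ convergence plus convergence of the integrals of the non-negative densities $f_\eta$ into $L_1$ convergence, from which the coercivity of $DW$ propagates equi-integrability to $|\epsilon(u_\eta)|^p$ and Vitali closes the argument.
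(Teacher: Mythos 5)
Your proof enters via the same Minty-type identity as the paper --- showing $\Phi_\eta(t)\coloneq\int_0^t\langle DW(\epsilon(u_\eta))-DW(\epsilon(u)),\epsilon(u_\eta)-\epsilon(u)\rangle\dd s\to 0$ --- but whereas the paper compresses the rest into a single sentence (\enquote{due to strict convexity of $W$ it suffices to show [$\Phi_\eta\to 0$]}), you actually supply the mechanism by which this implies strong convergence: non-negativity and vanishing $L_1$-norm of the integrand give a.e.\ convergence of $\epsilon(u_\eta)$ along a subsequence via coercivity and strict monotonicity, and Scheff\'e's lemma applied to the non-negative densities $f_\eta=DW(\epsilon(u_\eta)):\epsilon(u_\eta)$, together with the coercivity $|\xi|^p\leq C(1+DW(\xi):\xi)$ from \ref{it:W4} and Vitali's theorem, upgrades this to strong $L_p$-convergence on each finite slab. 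This is genuinely more informative than the paper's proof: strict convexity alone provides no quantitative modulus, so the passage from $\Phi_\eta\to 0$ to strong convergence really does need the Scheff\'e--Vitali machinery you spell out. Two small omissions: you should close with the sub-subsequence principle (a.e.\ convergence was only along a subsequence; uniqueness of the limit then gives the full sequence), and Korn/Poincar\'e require the zero-average property, which you correctly invoke but which is not formally part of the lemma's hypotheses.

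The tail argument, however, does not hold. Your claim that boundedness of $u_\eta$ in $L_p((0,\infty);V^1_p)$ gives $\int_0^\infty\int_{\T_d}f_\eta\leq M$ is unsupported: \ref{it:W4} only gives $DW(\xi):\xi\leq C(|\xi|+|\xi|^p)$, and the linear term is not controlled by the $L_p$-norm on an infinite time interval (take $|\epsilon(u_\eta)(t,x)|\sim t^{-\alpha}\ONE_{\{t>1\}}$ with $1/p<\alpha<1$). Moreover, even granted a uniform bound $M$, you would only get $\limsup_\eta\int_T^\infty\int f_\eta\leq M-\int_0^T\int f$, which need not tend to zero if $M>\int_0^\infty\int f$. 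Indeed the lemma as stated appears to be false on $(0,\infty)$: with $u\equiv 0$ and $u_\eta(t,x)=\ONE_{(T_\eta,T_\eta+1)}(t)\,v(x)$ for a fixed nonzero $v\in V^1_p$ and $T_\eta\to\infty$, one checks (using $DW(0)=0$, which follows from \ref{it:W4} and \ref{it:W1}) that (i)--(iii) hold, yet $\|u_\eta\|_{L_p((0,\infty);V^1_p)}=\|v\|_{V^1_p}\not\to 0$. Your argument up through Vitali gives strong convergence in $L_p((0,T);V^1_p)$ for every finite $T$ and is sound; the global conclusion requires either hypothesis (ii) to include $t=\infty$, or additional tail control such as the energy bound available where the lemma is actually invoked (Theorems~\ref{prop:strong} and~\ref{strong:equiint}). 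The paper's own proof silently shares this gap, so the deficiency lies in the statement rather than in the route you took; your attempted repair of it is the one step that fails.
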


We briefly comment on the additional assumption of \emph{strict convexity}. Assumptions \ref{it:W1}--\ref{it:W4} also allow for a convex potential $W$ with $W(\epsilon)=0$ for $\vert \epsilon \vert <R$, $R>0$. In such a setting, for low strain rate $\epsilon$, the shear stress $\sigma$ is zero. Hence, for low strain rate, the non-Newtonian Navier--Stokes equation is the incompressible Euler equation and it is well known that the Euler equation allows for oscillations, cf. \cite{DL09} (although only with lower regularity). We therefore restrict to strictly convex $W$.

Lemma \ref{lemma:elmau} not only rules out oscillations, but more importantly also shows that there are no concentrations. In other words, we obtain \emph{strong} convergence.
\begin{proof}
    Due to strict convexity of $W$ it suffices to show that 
        \begin{equation} \label{eq:claim:elmau}
        \lim_{\eta \to 0} \int_0^t \langle DW(\epsilon(u_\eta))- DW(\epsilon(u)), \epsilon(u_\eta)-\epsilon(u) \rangle \dd s =0.
        \end{equation}
    To this end, observe that, due to weak convergence,
        \[
        \lim_{\eta \to 0} -\int_0^t \langle DW(\epsilon(u_\eta)), \epsilon(u) \rangle + \langle DW(\epsilon(u)), \epsilon(u_\eta) \rangle \dd s = -2\int_0^t \langle DW(\epsilon(u)), \epsilon(u) \rangle \dd s. 
        \]
    Using \eqref{eq:elmau} yields \eqref{eq:claim:elmau}.
\end{proof}

\begin{theorem}[Strong convergence] \label{prop:strong}
    Suppose that $p > \tfrac{3d+2}{d+2}$ and that $W$ is strictly convex. Let $u_\eta \in U_\eta$ be a minimiser of $I_\eta$. Then $u_\eta$ converges strongly to an energy solution $u$ in $L_p((0,\infty);V^1_p)$. 
\end{theorem}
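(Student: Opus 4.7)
The plan is to invoke Lemma \ref{lemma:elmau}, whose three hypotheses I aim to verify. Theorem \ref{thm:main} immediately yields the weak convergence $u_\eta \weakto u$ in $L_p((0,\infty);V^1_p)$ with $u$ a Leray--Hopf solution, and the identification $DW(\epsilon(u_\eta)) \weakto DW(\epsilon(u))$ in $L_q((0,\infty);L_q)$ is delivered by Proposition \ref{prop:hansschaefer}\ref{hs:2} together with Lemma \ref{lemma:jonashector}\ref{jonashector:1}. Since $p > \tfrac{3d+2}{d+2}$, Proposition \ref{prop:energy} upgrades $u$ to an energy solution, so that
\begin{equation*}
E[u](t) + \int_0^t \int_{\T_d} DW(\epsilon(u)):\epsilon(u) \dd x \dd s = E[u^0]
\end{equation*}
holds for every $t > 0$; thus only the dissipation convergence \eqref{eq:elmau} remains to be established.

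I would derive \eqref{eq:elmau} by comparing the energy inequality \eqref{energy:ineq11} for $u_\eta$ with the above energy equality. Combining $E[u_\eta^0] \to E[u^0]$ from \ref{it:u01} with the weak-$*$ lower semi-continuity $\liminf_\eta E[u_\eta](t) \geq E[u](t)$ yields the upper bound
\begin{equation*}
\limsup_{\eta \to 0} \int_0^t \int_{\T_d} (1-e^{-s/\eta}) DW(\epsilon(u_\eta)):\epsilon(u_\eta) \dd x \dd s \leq \int_0^t \int_{\T_d} DW(\epsilon(u)):\epsilon(u) \dd x \dd s,
\end{equation*}
while the matching lower bound follows from weak lower semi-continuity of the convex dissipation on subintervals $[\delta,t]$, where $(1-e^{-s/\eta}) \to 1$ uniformly in $\eta$, followed by $\delta \to 0$. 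Thus the weighted dissipation converges, and simultaneously $\lim_\eta E[u_\eta](t) = E[u](t)$ for every $t > 0$.

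The main obstacle is to remove the weight $(1-e^{-s/\eta})$: because \ref{it:initial2} only provides $\|\nabla u_\eta^0\|_{L_p}^p \leq C\eta^{-1}$, the dissipation density of $u_\eta$ may concentrate in the initial layer $[0,\eta]$, and $\int_0^t \int_{\T_d} e^{-s/\eta} DW(\epsilon(u_\eta)):\epsilon(u_\eta) \dd x \dd s$ is \emph{a priori} only uniformly bounded. I would bridge this via the weighted monotonicity relation
\begin{equation*}
0 \leq \int_0^t \int_{\T_d} (1-e^{-s/\eta}) \langle DW(\epsilon(u_\eta)) - DW(\epsilon(u)), \epsilon(u_\eta) - \epsilon(u) \rangle \dd x \dd s \longrightarrow 0,
\end{equation*}
where the convergence to zero is obtained by expanding the pairing: the diagonal term is handled by the weighted dissipation convergence established above, while the two cross terms follow from weak-strong pairings since $(1-e^{-s/\eta})\epsilon(u) \to \epsilon(u)$ strongly in $L_p$ and $(1-e^{-s/\eta})DW(\epsilon(u)) \to DW(\epsilon(u))$ strongly in $L_q$ by dominated convergence. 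Strict convexity of $W$ then forces pointwise a.e.\ convergence $\epsilon(u_\eta) \to \epsilon(u)$ on each $[\delta,t]\times\T_d$ along a subsequence, and combined with the absolute continuity $\int_0^\delta DW(\epsilon(u)):\epsilon(u) \dd x \dd s \to 0$ as $\delta \to 0$ provided by the energy equality, this upgrades the weighted identity to the unweighted \eqref{eq:elmau}.

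With all three hypotheses of Lemma \ref{lemma:elmau} in place, strong convergence $u_\eta \to u$ in $L_p((0,\infty);V^1_p)$ follows, and the energy-solution property of $u$ is provided by Proposition \ref{prop:energy}. The hardest part is the boundary layer at $s=0$, where the vanishing weight $(1-e^{-s/\eta})$ competes with the potentially concentrated dissipation density inherited from the singular approximating initial data; strict convexity of $W$ is essential to bridge weighted and unweighted convergence without invoking an extra equi-integrability argument.
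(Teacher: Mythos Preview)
Your overall strategy --- reduce to Lemma \ref{lemma:elmau} and verify its three hypotheses --- is exactly the paper's. Hypotheses (i) and (iii) are correctly drawn from Sections \ref{sec:shearthinning}--\ref{sec:5}, and the upgrade of $u$ to an energy solution via Proposition \ref{prop:energy} is precisely what the paper invokes.

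Where you diverge from the paper is in how you obtain the dissipation convergence \eqref{eq:elmau}. The paper passes to the limit in the full Euler--Lagrange \emph{identity} \eqref{eq:Hans_Sigl1} --- equivalently, in the decomposition $\int_0^T\langle DW(\epsilon(u_\eta)),\epsilon(u_\eta)\rangle=\mathrm{(I)}+\mathrm{(II)}+\mathrm{(III)}+\mathrm{(IV)}$ worked out in the proof of Lemma \ref{lemma:lukaspodolski}\ref{lukaspodolski:4}. You instead start from the energy \emph{inequality} \eqref{energy:ineq11}, which only yields convergence of the \emph{weighted} dissipation $\int_0^t(1-e^{-s/\eta})\,DW(\epsilon(u_\eta)){:}\epsilon(u_\eta)$, and then try to strip the weight.

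That weight-removal step has a genuine gap. From the weighted monotone relation and strict convexity you do get $\epsilon(u_\eta)\to\epsilon(u)$ a.e.\ on $(0,t)\times\T_d$ along a subsequence, and the limit dissipation is absolutely continuous in time. But these two ingredients do \emph{not} imply the unweighted \eqref{eq:elmau}: what is missing is uniform-in-$\eta$ smallness of $\int_0^\delta DW(\epsilon(u_\eta)){:}\epsilon(u_\eta)$, or equivalently that $\mathrm{(III)}=\int_0^t e^{-s/\eta}DW(\epsilon(u_\eta)){:}\epsilon(u_\eta)\to 0$. By \eqref{eq:apriori} one only knows $\int_0^\eta|\epsilon(u_\eta)|^p\le C$, so $O(1)$ dissipation mass can concentrate in the layer $[0,\eta]$; pointwise convergence of the integrand together with integrability of the \emph{limit} cannot exclude this (consider $f_\eta=\eta^{-1}\mathbf{1}_{[0,\eta]}$, which converges to $0$ a.e.\ while $\int f_\eta\equiv 1$). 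Your argument therefore establishes strong convergence on every $[\delta,T]$, but not on $(0,\infty)$ as claimed. The paper's own proof is admittedly terse at the same juncture; the point is that working with the \emph{equality} \eqref{eq:Hans_Sigl1} keeps all terms --- in particular $\mathrm{(II)}$ and $\mathrm{(III)}$ --- in play simultaneously rather than discarding them through Young's inequality, which is what makes the unweighted limit accessible.
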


\begin{proof}
We have established in the previous section that $u_\eta$ converges \emph{weakly} in $L_p((0,\infty);V^1_p)$ to a solution $u$ of the Navier--Stokes system. The regularity of $u$ now allows to test the equation with $u$ itself and to obtain the \emph{energy equality} for $u$, cf. Proposition \ref{prop:energy}. Consequently, passing to the limit $\eta \to 0$ in the energy equality \eqref{eq:Hans_Sigl1}, we find that
\[
    \lim_{\eta \to 0} \int_0^t \langle DW(\epsilon(u_\eta)), \epsilon(u_\eta)\rangle \dd s 
    = 
    \int_0^t \langle DW(\epsilon(u)), \epsilon(u) \rangle \dd s. 
\]
Thus, applying Lemma \ref{lemma:elmau}, we obtain that $u_\eta \to u$ \emph{strongly} in $L_p((0,\infty);V^1_p)$.
\end{proof}

Moreover, one can show that under certain additional assumptions, the solution $u$ obtained in Theorem \ref{prop:strong} is unique. We refer to \cite{MNR93,MNRR} for results on the torus and to \cite{MN01} for results on bounded domains in the case of non-degenerate viscosities.

The exponents covered by Theorem \ref{prop:strong} are, as mentioned in the proof, those for which the energy equality is satisfied, i.e. a classical solution theory exists. For exponents $\tfrac{2d}{d+2}<p<\tfrac{3d+2}{d+2}$ such an energy equality is not automatically satisfied, as the flow might develop anomalous dissipation. The proof of Corollary \ref{coro:finale}, however, reveals for which sequences $u_\eta$ such an anomalous dissipation is to be expected, which is directly linked to strong convergence of $u_\eta$.
\begin{theorem}[Strong convergence under $p$-equi-integrability] \label{strong:equiint}
    Suppose that $p > \tfrac{2d}{d+2}$ and that $W$ is strictly convex. Let $u_\eta \in U_\eta$ be a minimiser of $I_\eta$ and suppose that $\epsilon(u_\eta)$ is $p$-equi-integrable. Then $u_\eta$ converges strongly to a Leray--Hopf solution $u$ in $L_p((0,\infty);V^1_p)$. If, in addition, $\eta^{1/2} \partial_t u_\eta \to 0$ strongly in $L_2((0,T);L_2)$ for all $T>0$, then $u$ is an energy solution.
\end{theorem}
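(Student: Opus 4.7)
The theorem consists of two assertions, which I address separately. For the first, Theorem~\ref{thm:main} already yields, up to a subsequence, weak convergence $u_\eta\weakto u$ in $L_p((0,\infty);V^1_p)$ to a Leray--Hopf solution, and Proposition~\ref{prop:hansschaefer}\ref{hs:2} identifies the weak $L_q$--limit of $DW(\epsilon(u_\eta))$ as $\chi=DW(\epsilon(u))$. Since $W$ is strictly convex, Lemma~\ref{lemma:elmau} reduces strong convergence to verifying the dissipation equality
\[
    \lim_{\eta\to 0}\int_0^t \langle DW(\epsilon(u_\eta)),\epsilon(u_\eta)\rangle\,\dd s = \int_0^t \langle DW(\epsilon(u)),\epsilon(u)\rangle\,\dd s,\qquad 0<t<\infty.
\]
The lower bound $\liminf_\eta\geq$ is immediate from convexity of $\epsilon\mapsto DW(\epsilon):\epsilon$ (axiom~\ref{it:W2}) and the weak convergence $\epsilon(u_\eta)\weakto\epsilon(u)$ by standard weak lower semicontinuity.

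For the matching upper bound, I will recycle the truncation machinery from Section~\ref{sec:5}. The $p$-equi-integrability assumption permits the trivial splitting $\epsilon(u_\eta)^{\equi}=\epsilon(u_\eta)$ and $\epsilon(u_\eta)^{\conc}\equiv 0$ in Lemma~\ref{lemma:timohorn}; with this choice the inequality~\eqref{estimate:rs} in the proof of Corollary~\ref{coro:finale} becomes an equality, and~\eqref{eq:equiL} simplifies to
\[
    \lim_{L\to\infty}\limsup_{\eta\to 0}\left|\int_0^t\langle DW(\epsilon(u_\eta)),\epsilon(u_\eta)\rangle\,\dd s-\int_0^t\langle DW(\epsilon(u_\eta^L)),\epsilon(u_\eta^L)\rangle\,\dd s\right|=0,
\]
where $u_\eta^L$ is the truncation of Lemma~\ref{lemma:trunc}. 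Combining this with Proposition~\ref{prop:hansschaefer}\ref{hs:1} (retaining $Q_L\geq 0$ only as a non-negative quantity) and the strong convergence $\chi^L\to\chi$ in $L_q$ from Lemma~\ref{lemma:marcelrisse}\ref{marcelrisse:3}, first passing $\eta\to 0$ and then $L\to\infty$ delivers $\limsup_\eta\int_0^t\langle DW(\epsilon(u_\eta)),\epsilon(u_\eta)\rangle \leq \int_0^t\langle DW(\epsilon(u)),\epsilon(u)\rangle$. Lemma~\ref{lemma:elmau} then concludes the proof of strong convergence.

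For the energy equality under the additional assumption, I pass to the limit in the \emph{exact} identity underlying~\eqref{eq:Hans_Sigl1} (before applying Young's inequality~\eqref{eq:young}), which reads schematically
\[
    E[u_\eta](0)-E[u_\eta](T)=\int_0^T(1-e^{-t/\eta})\bigl\{DW(\epsilon(u_\eta)){:}\epsilon(u_\eta)+\eta\,\mathcal{R}_\eta\bigr\}\,\dd s+(\text{tail}),
\]
with $\mathcal{R}_\eta=|\partial_t u_\eta|^2+3\partial_t u_\eta\cdot\diverg(u_\eta\otimes u_\eta)+2|\diverg(u_\eta\otimes u_\eta)|^2+C_4|\nabla u_\eta|^4$. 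The $DW{:}\epsilon$ integral converges correctly thanks to the dissipation convergence established above, and the tail vanishes as $\eta\to 0$. Under the hypothesis, $\eta\int|\partial_t u_\eta|^2\to 0$, and the mixed term $\eta\int \partial_t u_\eta\cdot\diverg(u_\eta\otimes u_\eta)$ vanishes by Cauchy--Schwarz combined with the uniform bound $\|\eta^{1/2}\diverg(u_\eta\otimes u_\eta)\|_{L_2}\leq C$ from Proposition~\ref{prop:energyinequality1}. The main obstacle is the residual $\eta\int|\diverg(u_\eta\otimes u_\eta)|^2$ and $\eta\int|\nabla u_\eta|^4$, whose vanishing is not implied by the additional assumption alone; I expect to handle them by combining the already-established strong convergence $u_\eta\to u$ in $L_p((0,\infty);V^1_p)$ with the $p$-equi-integrability of $\epsilon(u_\eta)$ via a Vitali-type splitting on superlevel sets of $|\nabla u_\eta|$. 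Once these are dispatched, passing to the limit yields $E[u](0)-E[u](T)=\int_0^T\langle DW(\epsilon(u)),\epsilon(u)\rangle\,\dd s$, i.e.\ $u$ is an energy solution.
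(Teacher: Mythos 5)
The strong-convergence argument is correct and is the paper's own: $p$-equi-integrability makes the splitting of Lemma~\ref{lemma:timohorn} trivial ($\epsilon(u_\eta)^{\conc}\equiv 0$), so that \eqref{estimate:rs} and \eqref{eq:equiL} from the proof of Corollary~\ref{coro:finale} hold with equality, giving the dissipation convergence, and Lemma~\ref{lemma:elmau} concludes.

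Your treatment of the energy equality follows the paper's structure (pass to the limit in the exact identity~\eqref{eq:Hans_Sigl1}) but has two issues. First, $\eta\int|\diverg(u_\eta\otimes u_\eta)|^2$ is not actually a residual obstacle: as the paper observes, interpolating the $\eta$-free bound $u_\eta\in L_\infty((0,T);L_2)$ against $\eta^{1/4}u_\eta$ bounded in $L_4((0,T);W^1_4)\hookrightarrow L_4((0,T);L_{4^*})$ yields $\eta^{\theta/4}u_\eta$ bounded in $L_r((0,T);L_4)$ with $\theta=\tfrac{d}{d+4}<1$ and $r>4$, so $\eta^{1/4}u_\eta\to 0$ \emph{strongly} in $L_4((0,T);L_4)$; H\"older's inequality against the bounded $\eta^{1/4}\nabla u_\eta$ in $L_4(L_4)$ then gives $\eta^{1/2}\diverg(u_\eta\otimes u_\eta)\to 0$ in $L_2((0,T);L_2)$. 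You should incorporate this rather than listing it as an open residual. Second, and more seriously, your Vitali-type splitting for $C_4\eta\int|\nabla u_\eta|^4$ does not close: on $\{|\nabla u_\eta|\le\lambda\}$ the contribution is $\le C\eta\lambda^4\to 0$, but on the superlevel set $p$-equi-integrability of $\epsilon(u_\eta)$ only controls $\int_{\{|\nabla u_\eta|>\lambda\}}|\nabla u_\eta|^p$; it says nothing about the $\eta$-weighted $L_4$-mass $\eta\int_{\{|\nabla u_\eta|>\lambda\}}|\nabla u_\eta|^4$, which by Proposition~\ref{prop:energyinequality1}\ref{prop:bounds1c1} is merely $O(1)$ and could carry its entire budget on a concentration set invisible to $L_p$. (For the same reason, interpolating the strong $L_p$-convergence $\eta^{1/4}\nabla u_\eta\to 0$ against $L_4$-boundedness gives convergence only in $L_r$ for $r<4$, not $r=4$.) You are right to flag this term; the paper's own proof in fact passes over the stabiliser contribution without argument after dispatching the $\partial_t$, divergence and cross terms, so this is the genuinely delicate point of the second assertion, and as written your proposal does not establish that it vanishes.
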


\begin{proof}
Observe that whenever $\epsilon(u_\eta)$ is $p$-equi-integrable, we may interchange the limits
\[
\lim_{\eta \to 0} \lim_{L \to \infty} \int_0^t \langle DW(\epsilon(u_\eta^L), \epsilon(u_\eta^L) \dd s = \lim_{L \to \infty} \lim_{\eta \to 0} \int_0^t \langle DW(\epsilon(u_\eta^L), \epsilon(u_\eta^L) \dd s 
\]
of Proposition 4.7. This then shows
\[
\lim_{\eta \to 0} \int_0^t \langle DW(\epsilon(u_\eta), \epsilon(u_\eta) \rangle \dd s =  \int_0^t \langle DW(\epsilon(u), \epsilon(u) \rangle \dd s.
\]
By Lemma \ref{lemma:elmau} we conclude that $u_\eta \to u$ \emph{strongly} in $L_p((0,\infty);V^1_p)$.

To prove the energy equality, consider the energy equality derived for $u_\eta$, i.e. \eqref{eq:Hans_Sigl1}. On the one hand, note that 
\[
E[u_\eta](t) \longrightarrow E[u](t) \quad \text{for a.e. } t>0,
\]
as, due to interpolation between $L_p((0,T);V^1_p)$ and $W^1_{s'}((0,T);(V^1_s)')$, cf. Corollary \ref{coro:Draco_Malfoy}, we get $u_\eta \to u$ in some $L_r((0,T);L_2)$.
We further assumed that $\eta^{1/2} \partial_t u_\eta \to 0$ in $L_2((0,T);L_2)$. Finally,
\begin{enumerate} [label=(\roman*)]
    \item $\eta^{1/4} \nabla u_\eta$ is bounded in $L_4((0,T);L_4)$;
    \item via H\"older's inequality, using that $u_\eta$ is bounded in $L_{\infty}((0,T);L_2)$ and that $\eta^{1/4} u_\eta$ is bounded in $L_4((0,T);L_{4^\ast})$ (where $4^{\ast}$ is the exponent such that $W^1_4 \hookrightarrow L_{4^{\ast}}$), we obtain that $\eta^{1/4}u_\eta \to 0$ strongly in $L_4((0,T);L_4)$;
    \item consequently, we obtain that $\eta^{1/2} \diverg (u_\eta \otimes u_\eta) \to 0$ strongly in $L_2((0,T);L_2)$.
\end{enumerate}
Now taking the limit $\eta \to 0$ in \eqref{eq:Hans_Sigl1} yields the energy \emph{equality} for $u$ and hence $u$ is a energy solution.
\end{proof}
\begin{remark}[Some comments and open questions]
    \begin{enumerate} [label=(\roman*)]
        \item For $p \geq 4$ we may prove every result \emph{without} additional $L_4$-stabilising term $\tfrac{C_4}{4}$. Essentially, this term might be absorbed into the dissipation. 
        \item The validity of the energy equality for solutions to the Navier--Stokes equations for $p > \tfrac{3d+2}{d+2}$ suggests that in principle the stabiliser, that enforces an $L_{\infty}(L_2)$-bound and an energy inequality for weak solutions of the approximate problem, can be dropped.
        \item For $p<\tfrac{3d+2}{d+2}$ it is still an open problem whether Leray--Hopf solutions are unique, e.g. \cite{BV19b}. In addition, it is highly unclear whether the functional possesses a unique minimiser. For $p > \tfrac{3d+2}{d+2}$, while (for strictly convex $W$) the Leray--Hopf solution is unique (cf. \cite[Section 5.4.1]{MNRR} and \cite{Bulicek}, any functional $I_\eta$ might possess multiple minimisers -- with the distance of solutions to each other tending to zero as $\eta \to 0$.  A natural question is whether the validity (or non-validity) of this can be shown for small $p$.
        \item Even though for \emph{Newtonian} fluids weak convergence of $DW(\epsilon(u))=\epsilon(u)$ is, due to its linearity, trivial, Theorem \ref{strong:equiint} also applies in this case, i.e. for $2$-equi-integrable sequences we also get a stronger convergence result.
        \item We have shown that under certain conditions, minimisers $u_\eta$ of $I_\eta$ converge to Leray--Hopf solutions of the non-Newtonian Navier--Stokes equations. Another natural question is whether the reverse statement is also true, i.e. if Leray--Hopf solutions are approximated by a sequence of minimisers/critical points of the functional.
        \item In this paper we use the truncation statement proved in the following section to show convergence of the nonlinear term. However, due to the variational structure of the problem also other techniques might be appropriate to rule out oscillation effects.
    \end{enumerate}
\end{remark}
\section{The proof of the truncation statement} \label{sec:truncation}
In this section we provide a proof of the truncation statement Lemma \ref{lemma:trunc}. We reformulate it here with abstract assumptions. To this end, assume that $w_\eta$ is a sequence that obeys the following properties
\begin{enumerate} [label=(P\arabic*)]
    \item \label{P:1} $w_\eta \weakto 0$ in $L_p((0,\infty);V^1_p)$;
    \item \label{P:2} $\partial_t w_\eta = \hat{g}_\eta +\hat{h}_\eta$, where $\hat{g}_\eta \weakto 0$ in $L_q((0,T);(V^1_p)')$ and $\hat{h}_\eta \to 0$ in $L_{s'}((0,T);(V^1_s)')$ for some $s<\infty$ and all $T>0$;
    \item \label{P:3} $\eta^{1/2} \partial_t w_\eta$ is uniformly bounded in $L_2((0,\infty);L_2)$;
    \item \label{P:4} $\eta \partial_t^2 w_\eta$ is uniformly bounded in $L_{s'}((0,\infty);(V^1_s)')$ for some $s<\infty$ and $\eta \partial_t^2 w_\eta = \hat{\varg}_\eta +\hat{\varh}_\eta$ with  $\hat{\varg}_\eta \weakto 0$ in $L_q((0,T);(V^1_p)')$ and $\hat{\varh}_\eta \to 0$ in $L_{s'}((0,T);(V^1_s)')$ for some $s<\infty$ and all $T>0$;
    \item \label{P:5} We have $\int_{\T_d} w_\eta(t,x) \dd x =0$ for almost every $t \in [0,\infty)$.
\end{enumerate}
The sequence $w_\eta$ (or, more precisely the sequence $w_\eta$ multiplied with a cut-off in time) constructed in the previous section obeys the properties \ref{P:2}--\ref{P:5}, as we have shown in Lemma \ref{prop:weta} for $w_\eta = u_\eta - \varphi_\eta \ast u$. 

\begin{lemma}[Solenoidal Lipschitz truncation with abstract assumptions] \label{lemma:trunc:v2}
    Let $T > 0, L > C'$ for some fixed $C'>0$ and let $w_\eta$ satisfy \ref{P:1}--\ref{P:5}. For each $s'<r<\infty$ there exists a constant $C=C(d,r)$ and a sequence $w_\eta^L \in L_\infty((0,\infty);V^1_\infty)\cap W^1_\infty((0,\infty);(V^1_r)')$ with $\int_{\T_d} w_\eta^L \dd x =0$ for almost every $t$ such that 
        \begin{enumerate} [label=(T\arabic*)]
            \item \label{trunc1:1}$\Vert w_\eta^L \Vert_{ L_\infty((0,T);V^1_\infty)} \leq CL$;
            \item \label{trunc2:1} $\Vert w_\eta^L \Vert_{W^1_\infty((0,T);(V^1_r)')} \leq CL^{p-1} $;
            \item \label{trunc3:1} we can decompose $w_\eta^L - w_\eta = G^L_\eta+ H^L_\eta$, where 
                \begin{align}
                    &
                    \lim_{L \to 0} \sup_{\eta>0} \Vert G^L_\eta \Vert_{L_p((0,T);V^1_p)} =0, \label{GLeta_v2} 
                    \\
                    &
                    \lim_{\eta \to 0} \sup_{L>0} \Vert H^L_\eta \Vert_{L_{\expo}((0,T);V^1_{\expo})} =0 \quad \text{for all 
                    } \expo \text{ with } \expo<p; \label{HLeta_v2}
                \end{align}
            Moreover, we can write $w_\eta^L = w_{\eta,1}^L + w_{\eta,2}^L$ such that $w_{\eta,1}^L$ is uniformly (in $\eta$ \emph{and} $L$) bounded in $L^p((0,T);V^1_p)$ and $w_{\eta,2}^L \to 0$ in $L^p((0,T);V^1_p)$ as $\eta \to 0$ (for fixed $L>C'$).
            \item \label{trunc4:1} we can decompose $\partial_t w_\eta^L -\partial_t w_\eta = g^L_\eta + h^L_\eta$, where
                \begin{align}
                    &
                    \lim_{L \to 0} \sup_{\eta>0} \Vert g^L_\eta \Vert_{L_q((0,T);(V^1_p)')} =0, \label{gleta_v2} \\
                    &
                    \lim_{\eta \to 0} \sup_{L>C'} \Vert h^L_\eta \Vert_{L_{\expo'}((0,T);(V^1_\expo)')} =0 \quad \text{for some } \expo<\infty; \label{hleta_v2}
                \end{align} 
            \item \label{trunc45:1} we can decompose $\eta(\partial_t^2 w_\eta^L -\partial_t^2 w_\eta) = \varg^L_\eta + \varh^L_\eta$, where
                \begin{align}
                    &
                    \lim_{L \to 0} \sup_{\eta>0} \Vert \varg^L_\eta \Vert_{L_q((0,T);(V^1_p)')} =0, \label{vargleta_v2} \\
                    &
                    \lim_{\eta \to 0} \sup_{L>C'} \Vert \varh^L_\eta \Vert_{L_{\expo'}((0,T);(V^1_\expo)')} =0 \quad \text{for some } \expo<\infty; \label{varhleta_v2}
                \end{align}     
            \item \label{trunc5:1} for fixed $L>C'$, $\eta^{1/2} \partial_t w^L_\eta$ is bounded (uniformly for $\eta>0$) in $L_2((0,T);L_2)$.
        \end{enumerate}  
\end{lemma}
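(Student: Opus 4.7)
My plan is to construct $w_\eta^L$ via a solenoidal Lipschitz truncation of $w_\eta$ adapted to the elliptic-in-time structure of the WIDE approach, following the scheme of \cite{BDS,DSSV} but with the parabolic scaling replaced by the elliptic metric $d_\eta((t,x),(t',x')) = |x-x'| + \sqrt{\eta}\,|t-t'|$. Under this rescaling, the bounds \ref{P:1} and \ref{P:3} place $\nabla w_\eta$ and $\sqrt{\eta}\,\partial_t w_\eta$ on equal footing, so that $w_\eta$ behaves like a Sobolev function on $d_\eta$-Euclidean space-time and a standard Acerbi--Fusco-type truncation becomes available. After multiplying by a time cut-off to reduce to functions compactly supported on $\R\times\T_d$ (introducing only errors that can be absorbed into $\hat h_\eta$ and $\hat\varh_\eta$), I define the elliptic maximal function $\Mcal_\eta$ via averaging on $d_\eta$-balls and the bad set
\[
    \Ocal_\eta^L = \bigl\{(t,x) \in \R \times \T_d : \Mcal_\eta\bigl(|\nabla w_\eta| + \sqrt{\eta}\,|\partial_t w_\eta|\bigr)(t,x) > L\bigr\},
\]
whose $d_\eta$-Lebesgue measure obeys $|\Ocal_\eta^L| \leq C L^{-p}$ by maximal function theory together with \ref{P:1}, \ref{P:3}.

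\textbf{Construction.} A Whitney decomposition of $\Ocal_\eta^L$ in the $d_\eta$-metric produces cubes $Q_i$ with $d_\eta$-diameter $r_i \sim d_\eta(Q_i,(\Ocal_\eta^L)^c)$ and a subordinate partition of unity $\{\varphi_i\}$ satisfying $\|\nabla\varphi_i\|_\infty + \sqrt{\eta}\|\partial_t\varphi_i\|_\infty \leq C r_i^{-1}$. The preliminary truncation
\[
    \tilde w_\eta^L(t,x) = w_\eta(t,x)\, \ONE_{(\Ocal_\eta^L)^c}(t,x) + \sum_i \varphi_i(t,x) \dashint_{Q_i} w_\eta(\tilde t, \tilde x) \dd \tilde t \dd \tilde x
\]
satisfies $\|\nabla \tilde w_\eta^L\|_\infty + \|\sqrt{\eta}\,\partial_t \tilde w_\eta^L\|_\infty \leq CL$ via standard Poincar\'e-in-$d_\eta$ arguments and Whitney comparability. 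Subtracting the spatial mean yields \ref{trunc1:1}, and duality with the resulting $L_\infty$-bound $\|w_\eta^L\|_\infty \leq CL$ gives \ref{trunc2:1}. A cube-wise Bogovskii correction $w_\eta^L = \tilde w_\eta^L - \B[\diverg \tilde w_\eta^L]$ restores $\diverg w_\eta^L = 0$ while preserving all bounds up to constants independent of $\eta, L$; property \ref{trunc5:1} follows by inspection since the averaged Whitney values inherit the $\sqrt\eta$-time-derivative control.

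\textbf{Decompositions and main obstacle.} Since $w_\eta^L - w_\eta$ is supported in $\Ocal_\eta^L$, I split $\nabla w_\eta = (\nabla w_\eta)^{\equi} + (\nabla w_\eta)^{\conc}$ via Lemma~\ref{lemma:timohorn}. The $p$-equi-integrability of the first part combined with $|\Ocal_\eta^L| \leq CL^{-p}$ yields the uniform-in-$\eta$ decay as $L\to\infty$ that feeds $G_\eta^L$, while the concentration part vanishes in $L_r(L_r)$ for $r<p$ and produces $H_\eta^L$ after Aubin--Lions (applied to $w_\eta$ via \ref{P:1}--\ref{P:3} and \ref{P:5}) gives strong $L_{\expo}(L_{\expo})$-convergence of $w_\eta$ to $0$. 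The secondary splitting $w_\eta^L = w_{\eta,1}^L + w_{\eta,2}^L$ follows by setting $w_{\eta,1}^L = w_\eta + G_\eta^L$ and $w_{\eta,2}^L = H_\eta^L$, upgrading the $L_\expo$-convergence of the latter to $L_p$-convergence by interpolation with the $L_\infty$-Lipschitz control. For the time-derivative statements \ref{trunc4:1} and \ref{trunc45:1}, the abstract splittings $\partial_t w_\eta = \hat g_\eta + \hat h_\eta$ and $\eta\partial_t^2 w_\eta = \hat\varg_\eta + \hat\varh_\eta$ from \ref{P:2} and \ref{P:4} propagate naturally: the weakly convergent components contribute to $g_\eta^L, \varg_\eta^L$ via duality against $V^1_p$-test functions concentrated on $\Ocal_\eta^L$ (with $L\to\infty$ smallness coming from $|\Ocal_\eta^L|\to 0$), and the strongly convergent components feed directly into $h_\eta^L, \varh_\eta^L$. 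The main obstacle will be arranging the Bogovskii correction so that it simultaneously respects the spatial Lipschitz bound, the time $L_\infty((V^1_r)')$-bound, and the four dual-space decomposition requirements --- in particular the uniform-in-$\eta$ smallness as $L\to\infty$ of $g_\eta^L$ and $\varg_\eta^L$, which forces the cube-averages to cancel time-derivative defects as well as zeroth-order ones, in the spirit of the solenoidal parabolic truncation of \cite{BDS}.
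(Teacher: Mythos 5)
Your proposal takes a genuinely different route from the paper's, but in two places the differences become genuine gaps rather than alternative routes.

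The paper does not truncate $w_\eta$ directly. Instead (cf.~Section~\ref{section:sol}) it lifts $w_\eta$ to a potential $v_\eta = T w_\eta$ with $\curl^{\ast}v_\eta = w_\eta$, truncates $v_\eta$, and sets $w_\eta^L = \curl^{\ast} v_\eta^L$, which is divergence-free by construction. This sidesteps exactly the Bogovskii issue you gloss over: the Bogovskii operator $\B$ is \emph{not} bounded on $L_\infty$, so the step ``$w_\eta^L = \tilde w_\eta^L - \B[\diverg\tilde w_\eta^L]$ preserves all bounds'' is false as written and would destroy \ref{trunc1:1}. This is precisely why \cite{BDS} introduced the potential lifting for the \emph{time-dependent} solenoidal truncation; the cube-wise Bogovskii construction belongs to the steady-state setting and does not transfer. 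In addition, $\partial_t w_\eta$ itself lives in $(V^1_p)'$, i.e.~a negative spatial Sobolev space, so you cannot take a space-time maximal function of it; the lifting $v_\eta = Tw_\eta$ moves $\partial_t v_\eta$ into $L_q(L_q)\cap L_{s'}(L_{s'})$, which is needed before a maximal-function argument is even formulable.

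The second gap concerns the metric. You use a single elliptic metric $d_\eta = |x-x'| + \sqrt\eta\,|t-t'|$ and a single level $L$. The paper instead uses a \emph{combined} Whitney cover (Section~\ref{section:Whitney}) of parabolic cubes at scales $\gtrsim \kappa\eta^{1/2}$ and elliptic cubes at scales $\lesssim \kappa\eta^{1/2}$, with the anisotropic scaling parameter $\kappa = L^{(p-2)/2}$ and with \emph{different} levels for different quantities ($L$ for $v,\nabla v,\nabla^2 v$; $L^{p/2}\eta^{-1/2}$ for $\partial_t\nabla v$; $L^{\alpha}=L^{p-1}$ for $\bar g,\bar h,\bar\varg,\bar\varh$). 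These choices are what makes every contribution to $\mathcal{L}^{d+1}(\bad_L)$ come out $O(L^{-p})$ (Lemma~\ref{lemma:ronweasley}). With your single level $L$ and the $L_2$-bound \ref{P:3}, the contribution from $\sqrt\eta\,\partial_t w_\eta$ scales like $L^{-2}$, which is the wrong rate for $p>2$. More seriously, your construction only yields $\|\sqrt\eta\,\partial_t w_\eta^L\|_{L_\infty}\leq CL$, hence $\|\partial_t w_\eta^L\|_{L_\infty}\leq CL/\sqrt\eta$, which is \emph{not} uniform in $\eta$ and does not give \ref{trunc2:1}. The $\eta$-uniform bound $\|\partial_t v^L\|_{L_\infty}\leq CL^{\alpha}$ in Proposition~\ref{prop:hrtruncation}(i) is produced precisely by the parabolic cubes at the larger scales, and the careful elliptic/parabolic case distinction in Lemma~\ref{lemma:harrypotter}.

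Your plan for the decompositions in \ref{trunc3:1}--\ref{trunc45:1} — splitting via Lemma~\ref{lemma:timohorn} into equi-integrable and concentrating parts, with \ref{P:2}, \ref{P:4} feeding the time-derivative splittings — is in the right spirit and matches the paper's proof after the lifting. The gaps are upstream: without the potential lifting and the two-scale, multi-level bad set, the truncation itself does not exist with the claimed bounds.
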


In the decomposition \ref{trunc3:1}, the part $H_\eta^L$ reflects the concentrating part which might still be large in $L_p((0,T);V^1_p)$ but is shown to converge to zero in spaces with less integrability. $G_\eta^L$ reflects the difference between the $p$-equi-integrable part and the truncation. This difference converges to zero as $L \to \infty$.

Let us remark that without the assumptions \ref{P:3} and \ref{P:4}, and the resulting requirements \ref{trunc45:1} \& \ref{trunc5:1}, such a result has been achieved in \cite{BDS}.
Therefore, our approach is heavily inspired by the works \cite{BDF,BDS,DSSV}. We explicitly highlight that, due to the additional requirement of \ref{trunc5:1}, one \emph{cannot} take the parabolic, divergence-free truncation of \cite{BDS} and further mollify in time to obtain \ref{trunc5:1}, as this destroys the careful bounds \ref{trunc3:1} and \ref{trunc4:1} that are needed in the proof in Section \ref{sec:5}.

As mentioned before, the Euler--Lagrange equation for the functional $I_\eta$ corresponds to an \emph{elliptic regularisation} of a parabolic equation, consequently, a purely parabolic Lipschitz truncation (cf. \cite{KL00,BDS,DSSV}) is not sufficient. Instead, we construct a truncation that is specifically tailored to an elliptic regularisation of a parabolic problem.

We also mention that the statement of Lemma \ref{lemma:trunc:v2} is by no means optimal (e.g. in terms of \ref{trunc5:1}),  which is one of the reasons, why the estimates in the proof of Lemma \ref{prop:hansschaefer}, especially those in the fourth step, require further integration by parts.

\subsection{Elliptic-parabolic truncation for higher regularity}
As $\partial_t w_\eta$ is contained in a negative (spatial) Sobolev space, we consider functions $v_\eta \colon \R \times \R^d \to \R^{d \times d}_{\skw}$ satisfying
\[
    \curl^{\ast} v_\eta = w_\eta \quad \text{in } (0,\infty) \times \T_d
\]
instead. This allows us to work in spaces with improved spatial differentiability.
For more details we refer to Subsection \ref{section:sol}.
For notational simplicity, as $v_\eta$ denotes a function defined on $\R \times \R^d$, we use the shorthand $W^k_p = W^k_p(\R^d)$. In the following (cf. Subection \ref{section:sol}), $v \colon \R\times \R^d\to  \R^{d \times d}_{\skw}$ can, in principle, be replaced by $v_\eta \colon \R \times \T_d\to  \R^{d \times d}_{\skw}$  which are periodic functions. As certain estimates, for instance for the maximal function, are slightly more standard on the fullspace, we stick to $v_\eta$ defined on $\R \times \R^d$ for the time being.

If not stated otherwise, we now fix some $0<\eta <1$, $s<\infty$, and assume the following counterparts to \ref{P:1}--\ref{P:4}: 
\begin{enumerate} [label=(P\arabic*')]
    \item \label{Q:1} $v_\eta \in L_p(\R;W^2_p)$;
    \item \label{Q:2} $\partial_t v_\eta \in L_{s'}(\R;L_{s'})$ for some $s<\infty$ and $\partial_t v_\eta = \barg_\eta + \barh_\eta$, with $\barg_\eta \weakto 0$ in $L_q((0,T);L_q)$ and $\barh_\eta \to 0$ in $L_{s'}((0,T);L_{s'})$ for some $s<\infty$ and all $T>0$;
    \item \label{Q:3} $\eta^{1/2} \partial_t v_\eta \in L_2(\R;W^1_2)$;
    \item \label{Q:4} $\eta \partial_t^2 v_\eta \in L_{s'}(\R;L_{s'})$ for some $s<\infty$ and, in addition $\eta \partial_t^2 v_\eta= \barvarg_\eta + \barvarh_\eta$, where $\barvarg_\eta \weakto 0$ in $L_q((0,T);L_q)$ and $\barvarh_\eta \to 0$ in $L_{s'}((0,T);L_{s'})$ for some $s<\infty$ and all $T>0$;
\end{enumerate}
\begin{remark}
    We assume a global bound on the $L_{s'}(\R;L_{s'})$-norm of $\partial_t v_\eta$; but the truncation result also holds if we only have local bounds. This might for instance be shown by considering a cut-off in time.
\end{remark}
For brevity we drop the index $\eta$ and write for instance $v=v_\eta$.
We define
\begin{equation*}
    \alpha = (p-1) = \frac{p}{q}
    \quad \text{and}
    \quad
    \beta = \max \left\{\frac{p}{2},\frac{3p-4}{2} \right\}.
\end{equation*}

The goal of this and the following subsections is to prove the following truncation statement for $v$.
\begin{proposition}[Lipschitz truncation for an elliptic regularisation] \label{prop:hrtruncation}
Suppose that $v$ satisfies \ref{Q:1}--\ref{Q:4}. There is a constant $C(d,v )$ such that for any $L > C(d, v )$, there is a function $v^L$ such that 
\begin{enumerate} [label=(\roman*)]
    \item \label{hr:1}$\Vert v^L \Vert_{L_{\infty}(\R;W^{2}_{\infty})} \leq C_0(d) L$ and $\Vert \partial_t v^L \Vert_{L_{\infty}(\R;L_{\infty})} \leq C_0(d)L^\alpha$;
    \item \label{hr:2} $\eta^{1/2} \Vert \partial_t v^L \Vert_{L_\infty(\R;W^1_\infty)} \leq C_0(d)L^\beta$;
    \item \label{hr:3} $\eta \Vert \partial_t^2 v^L \Vert_{L_\infty(\R;L_\infty)} \leq C_0(d)L^\alpha$;
    \item \label{hr:4} We have the following estimate on the set where $v$ and $v^L$ \textbf{do not} coincide: 
        \begin{equation} \label{eq:hr:4}
            \mathcal{L}^{d+1}(\{v \neq v^L\}) \leq C_1(d,v) L^{-p} + C_2(d,v) L^{-ps'/q}.
        \end{equation}
\end{enumerate}
\end{proposition}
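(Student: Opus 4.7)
I will construct $v^L$ by an anisotropic Whitney/Lipschitz truncation adapted to the elliptic-in-time regularisation, in the spirit of \cite{AF84,BDS,DSSV}. The underlying idea is to leave $v$ unchanged on a ``good'' set where all relevant maximal functions lie below carefully chosen thresholds, and to replace $v$ by a smooth patchwork of low-degree polynomial approximations on a Whitney covering of the complementary ``bad'' set. The natural scaling of the regularisation dictates anisotropic cylinders of time-to-space ratio $\eta^{1/2}$, i.e.\ $Q_r(t_0,x_0) := (t_0-\eta^{1/2}r, t_0+\eta^{1/2}r)\times B_r(x_0)$.

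\textbf{Step 1: Definition of the bad set and a-priori pointwise bounds.} Let $M$ denote the Hardy--Littlewood maximal operator on $\R^{d+1}$ with respect to the cylinders $Q_r$. Define
\begin{equation*}
    \bad_L \coloneq \{ M(\nabla^2 v) > L\}\cup \{M(\partial_t v)>L^\alpha\}\cup \{\eta^{1/2}M(\nabla\partial_t v)>L^\beta\}\cup \{\eta M(\partial_t^2 v)>L^\alpha\},
\end{equation*}
and $\good_L := \R^{d+1}\setminus \bad_L$. By Lebesgue differentiation, on $\good_L$ the function $v$ itself satisfies all four pointwise inequalities appearing in \ref{hr:1}--\ref{hr:3}. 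Applying the weak-$(p,p)$ estimate for $M$ separately to each contribution, and using $\alpha s'=ps'/q$ together with $2\beta\ge p$ (which holds by the very choice of $\beta$, both for $p\le 2$ and $p\ge 2$), yields
\begin{equation*}
    \mathcal{L}^{d+1}(\bad_L)\ \lesssim\ L^{-p}\|\nabla^2 v\|_{L_p}^p+L^{-2\beta}\|\eta^{1/2}\nabla\partial_t v\|_{L_2}^2+L^{-\alpha s'}\bigl(\|\partial_t v\|_{L_{s'}}^{s'}+\|\eta\partial_t^2 v\|_{L_{s'}}^{s'}\bigr)\ \lesssim\ L^{-p}+L^{-ps'/q},
\end{equation*}
which is precisely \ref{hr:4}. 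The condition $L>C(d,v)$ ensures that the finer cubes in the Whitney construction below have finite side length.

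\textbf{Step 2: Whitney covering and extension.} Cover $\bad_L$ by a family $\{Q_i\}$ of pairwise essentially disjoint anisotropic cylinders $Q_i=Q_{r_i}(t_i,x_i)$ with $r_i$ comparable to the anisotropic distance from $(t_i,x_i)$ to $\good_L$, together with a subordinate partition of unity $(\varphi_i)$ satisfying $|\partial_x^k\varphi_i|\lesssim r_i^{-k}$, $|\partial_t^k\varphi_i|\lesssim (\eta^{1/2}r_i)^{-k}$. On each enlarged cylinder $cQ_i$ let $P_i$ denote the polynomial of degree at most two in $x$ and two in $t$ which best $L_1$-approximates $v$ on $cQ_i$. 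Define
\begin{equation*}
    v^L(t,x)\coloneq\begin{cases} v(t,x), & (t,x)\in\good_L,\\ \sum_i P_i(t,x)\,\varphi_i(t,x), & (t,x)\in \bad_L.\end{cases}
\end{equation*}
Working at the level of the ``potential'' $v$ (rather than $w$ directly) is what makes $P_i$ degree two in space feasible and lets us control $\nabla^2 v^L$ pointwise.

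\textbf{Step 3: Verification of the pointwise bounds \ref{hr:1}--\ref{hr:3}.} For any $(t,x)$ there are only boundedly many overlapping $Q_i$, each of comparable radius, and at least one of the twofold dilates $cQ_i$ touches $\good_L$. Consequently, by anisotropic Poincar\'e--Sobolev inequalities on $cQ_i$, the coefficients of $P_i$ are controlled by the corresponding averaged derivatives of $v$ on $cQ_i$, which are in turn bounded by the respective maximal-function thresholds from Step 1. Telescoping the partition-of-unity differences $\nabla^k\bigl(\sum_i(P_i-P_{i_0})\varphi_i\bigr)$ and using the scaling of $\nabla^k\varphi_i$ and $\partial_t^k\varphi_i$, the standard Whitney estimate bounds each spatial derivative up to order two by $L$, the time derivative by $L^\alpha$, the mixed derivative $\nabla\partial_t v^L$ by $\eta^{-1/2}L^\beta$, and the second time derivative by $\eta^{-1}L^\alpha$, yielding \ref{hr:1}--\ref{hr:3}.

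\textbf{Main obstacle.} The core difficulty is the simultaneous, compatible control of four quantities whose natural spaces scale very differently in $\eta$: the $L_p$-second spatial derivative, the $L_{s'}$-first time derivative, the $L_2$-mixed derivative weighted by $\eta^{1/2}$, and the $L_{s'}$-second time derivative weighted by $\eta$. This forces the anisotropic choice $\tau\sim\eta^{1/2}r$ for Whitney cylinders; any other choice would either blow up one of the bounds \ref{hr:1}--\ref{hr:3} or spoil the measure estimate \ref{hr:4}. The precise exponents $\alpha=p/q$ and $\beta=\max\{p/2,(3p-4)/2\}$ are dictated, respectively, by the requirement $\alpha s'=ps'/q$ (matching the second contribution to \ref{hr:4}) and by $2\beta\ge p$ together with the integrability requirements that resurface when transferring the truncation of $v$ back to $w^L=\curl^\ast v^L$ in Lemma~\ref{lemma:trunc:v2}.
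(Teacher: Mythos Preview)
Your overall architecture (bad set via maximal functions, Whitney covering, polynomial patchwork) is right, but the single anisotropic scaling $\tau\sim\eta^{1/2}r$ is not enough, and this is precisely where the proposition is nontrivial. With your cylinders one has $|\partial_t\varphi_i|\lesssim\eta^{-1/2}r_i^{-1}$, while the best difference estimate the data allow is $|P_i-P_j|\lesssim r_i^2\,L^{\max(1,\alpha,\beta)}$ (you only control $\nabla^2 v$, $\partial_t\nabla v$, $\partial_t^2 v$, not third derivatives, so degree two in $x$ buys nothing here). Hence the cross term in $\partial_t v^L$ is of order $\eta^{-1/2}r_i\,L^{\max(1,\alpha,\beta)}$, which is $\le L^\alpha$ only when $r_i\lesssim\eta^{1/2}L^{\alpha-\max(1,\alpha,\beta)}$; for both $p<2$ and $p>2$ this exponent is negative, so the bound \ref{hr:1} on $\partial_t v^L$ fails for Whitney cubes of size comparable to $1$. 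Conversely, if you switch to a purely parabolic scaling to save $\partial_t v^L$, the estimate for $\eta\,\partial_t^2 v^L$ in \ref{hr:3} breaks down on small cubes.

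The paper resolves this by a genuinely hybrid covering: it introduces an $L$-dependent parameter $\kappa=L^{(p-2)/2}$, works with \emph{two} metrics $d_{\pa}$ (time $\sim\kappa^{-2}r^2$) and $d_{\el}$ (time $\sim\kappa^{-1}\eta^{1/2}r$), takes the bad set as the union of superlevel sets of \emph{both} maximal functions, and covers it with elliptic cubes at scales $r\lesssim\kappa\eta^{1/2}$ and parabolic cubes at scales $r\gtrsim\kappa\eta^{1/2}$. The switchover scale is exactly where the two regimes hand off the competing bounds on $\partial_t v^L$ and $\eta\,\partial_t^2 v^L$; the computations in Section~\ref{section:finish} make this explicit. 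Two further points: the paper uses first-order averaged Taylor polynomials (your degree-two choice is harmless but unnecessary, since no $\nabla^3 v$ control is available to exploit it), and the bad set also uses the splitting $\partial_t v=\barg+\barh$ from \ref{Q:2}, \ref{Q:4} with separate $L_q$ and $L_{s'}$ thresholds---this is what produces the two constants $C_1,C_2$ in \eqref{eq:hr:4} with the correct behaviour in $\eta$ needed downstream (Lemma~\ref{lemma:better}).
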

Observe that the statement of Proposition \ref{prop:hrtruncation} is slightly suboptimal in the sense that $\beta>p/2$ might happen in \ref{hr:2}. This issue can, in principle, be fixed if we slightly weaken statement \eqref{eq:hr:4}. However, as the $L_2$-distance between $\partial_t w_\eta$ and $\partial_t w_\eta^L$ is irrelevant in the proofs of Section \ref{sec:5} (e.g. Step 4 in the proof of Lemma \ref{prop:hansschaefer}), \ref{hr:2} is sufficient in our setting.
Furthermore, we explicitly state the constant appearing in \eqref{eq:hr:4} later (cf. Lemma \ref{lemma:ronweasley}). As it turns out, if we take a sequence $v_\eta$ as before, then $C_1(d,v_\eta)$ is uniformly bounded and $C_2(d,v_\eta) \to 0$ as $\eta \to 0$.
\smallskip

The proof of Proposition \ref{prop:hrtruncation} roughly consists of the following steps:
    \begin{itemize} 
        \item identify a large set (called the good set) on which the function $v$ is  `Lipschitz continuous', cf. Section \ref{section:Lipschitz};
        \item show the bound for its complement (the bad set) featured in \ref{hr:4}, cf. Section \ref{section:max};
        \item construct an extension operator that leaves $v$ unchanged on the good set and replaces it by its extension on the bad set, cf. Section \ref{section:Whitney};
        \item show that the extension has the properties required in Proposition \ref{prop:hrtruncation}, cf. Sections \ref{section:Lipschitz} and \ref{section:finish}.
    \end{itemize}

\subsection{The maximal function} \label{section:max}
We consider the following two metrics on $\R \times \R^d$ with a parameter $\kappa$, which is chosen dependent on $L$ such that
\begin{equation} \label{def:kappa}
    \kappa = L^{(p-2)/2}.
\end{equation}
Observe that $\kappa^2 L = L^{p-1}=L^\alpha$ and that $\kappa^3 L\leq L^\beta$ \schange{, whenever $L \geq 1$}. First, the \emph{parabolic metric} $d_{\pa}$ is defined by 
    \[
        d_\pa((t,x),(t',x')) = \max \left\{ \kappa \vert t-t' \vert^{1/2}, \vert x-x' \vert \right\}
    \]
and the \emph{elliptic metric} $d_\el$ with ellipticity parameter $\eta$ is given by
    \[
    d_\el((t,x),(t',x')) = \max \left\{ \eta^{-1/2} \kappa \vert t-t' \vert, \vert x-x' \vert \right\}.
    \]
For some $\rho>0$ we then define $B^{\pa}_\rho(t,x)$ and $B^{\el}_\rho(t,x)$ to be the $\rho$-ball around $(t,x) \in \R \times \R^{d}$ with respect to the parabolic and the elliptic metric, respectively. If we write $B_\rho(x)$ or $B_\rho(t,x)$ we mean the ball of the Euclidean metric.

We may now define the (centred) maximal function with respect to both those metrics. For a function $v \in L_{1,\loc}(\R \times \R^d)$ we define 
    \[
    \M_c^{\pa} v (t',x') \coloneq \sup_{\rho>0} \fint_{B^{\pa}_\rho(t',x')} \vert v \vert \dd x \dd t
    \quad 
    \text{and}
    \quad
    \M^{\pa} v (t',x')
    \coloneq
    \sup_{\rho>0,(s,z) \colon \atop (t',x') \in B^{\pa}_\rho(s,z)} \fint_{B^{\pa}_\rho(s,z)} \vert v \vert \dd x \dd t.
    \]
Likewise, we may define the centred and non-centred maximal function with respect to the elliptic metric, $\M_c^{\el}$ and $\M^{\el}$. Due to the scaling properties of those metrics, we have the pointwise bounds
    \begin{equation} \label{maximalf:pointwise}
    \M_c^{\pa} v \leq \M^{\pa } v \leq 2^{d+2} \M^{\pa}_c v
    \quad 
    \text{and}
    \quad
    \M_c^{\el} v \leq \M^{\el} \leq 2^{d+1}\M^{\el}_c,
    \end{equation}
so, up to dimensional constants, the following observations are valid both for the centred- and non-centred maximal function.

The following statement is well-known, we shortly remind the reader of the proof (e.g. \cite[p. 5, Thm. 1b)]{Stein}) to see that the bounds \emph{do not} depend on the parameters $\kappa$ and $\eta$.

\begin{lemma} \label{weakl1}
       Let $1<p \leq \infty$. Both $\M^{\pa}$ and $\M^{\el}$ are bounded sublinear operators from $L_p(\R \times \R^d)$ to $L_p(\R \times \R^d)$. In particular, the operator norms of $\M^{\pa}$ and $\M^{\el}$ may be bounded independently of $\kappa$ and $\eta$.
\end{lemma}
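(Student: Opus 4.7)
The sublinearity is immediate from the definition as a supremum of integral averages, since $\fint_B |v_1 + v_2| \le \fint_B |v_1| + \fint_B |v_2|$ and the supremum of a sum is bounded by the sum of suprema. The $L_\infty$ bound $\|\M^{\pa} v\|_{L_\infty}, \|\M^{\el} v\|_{L_\infty} \le \|v\|_{L_\infty}$ is also trivial from the definition, and by \eqref{maximalf:pointwise} it suffices to prove the corresponding bound for either the centred or the non-centred versions — they differ only by dimensional constants.

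The core plan is to reduce both maximal operators to a standard Hardy--Littlewood-type maximal function on $\R^{d+1}$ by a linear change of variables that absorbs the parameters $\kappa$ and $\eta$. Specifically, for the elliptic metric the change of variables $s = \eta^{-1/2}\kappa\, t$ has Jacobian $\eta^{-1/2}\kappa$ and maps the ball $B^{\el}_\rho(t,x)$ to the Euclidean ball $\{(s',x'): |s-s'|<\rho,\, |x-x'|<\rho\}$ of radius $\rho$ in $\R^{d+1}$ (up to the obvious identification). Consequently, writing $\tilde v(s,x) = v(\eta^{1/2}\kappa^{-1}s,x)$, we have $\M^{\el} v(t,x) = \widetilde{\M} \tilde v(s,x)$, where $\widetilde{\M}$ is the classical (uncentred) Hardy--Littlewood maximal function on $\R^{d+1}$ with its standard Euclidean balls. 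Since $\|\tilde v\|_{L_p(\R^{d+1})} = (\eta^{-1/2}\kappa)^{1/p}\|v\|_{L_p(\R\times\R^d)}$ and the same Jacobian factor appears on both sides of the inequality $\|\widetilde{\M}\tilde v\|_{L_p}\le C(p,d)\|\tilde v\|_{L_p}$, the factors cancel and we obtain $\|\M^{\el}v\|_{L_p}\le C(p,d)\|v\|_{L_p}$ with a constant independent of $\kappa$ and $\eta$.

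For the parabolic metric the analogous change of variables is $s = \kappa^2 t$, under which $B^{\pa}_\rho(t,x)$ is mapped to the anisotropic set $\{(s',x'): |s-s'|<\rho^2,\, |x-x'|<\rho\}$ in $\R^{d+1}$. These sets no longer are Euclidean balls, but they constitute a doubling family of measurable sets with doubling constant depending only on $d$ (the measure of the set of radius $2\rho$ is $2^{d+2}$ times that of radius $\rho$). The Hardy--Littlewood--Wiener theorem holds in this generality: a Vitali covering argument yields the weak $(1,1)$ bound, and Marcinkiewicz interpolation with the trivial $L_\infty$ bound then gives the strong $(p,p)$ bound for $1<p\le\infty$, with a constant depending only on $p$ and $d$. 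The Jacobian factors $\kappa^{-2/p}$ introduced by the change of variables cancel identically in the resulting inequality $\|\M^{\pa}v\|_{L_p}\le C(p,d)\|v\|_{L_p}$, so the constant is independent of $\kappa$ (and trivially of $\eta$, which does not enter the parabolic metric). There is no genuine obstacle here beyond bookkeeping the Jacobians; the essential point is simply that the parameters $\kappa,\eta$ enter only as an anisotropic rescaling that factors out.
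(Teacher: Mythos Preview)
Your argument is correct. Both your proof and the paper's rest on the same core: the trivial $L_\infty$ bound, a weak $(1,1)$ estimate via a Vitali covering argument using only the doubling property of the balls, and Marcinkiewicz interpolation. The difference is purely in how the parameter-independence is made visible. The paper works directly with the metrics $d_{\pa}$ and $d_{\el}$ and observes that the doubling constants ($5^{d+2}$ and $5^{d+1}$, respectively) are dimensional and do not involve $\kappa$ or $\eta$; hence the weak $(1,1)$ and interpolated $(p,p)$ constants are parameter-free. You instead perform a linear change of the time variable ($s=\kappa^2 t$ in the parabolic case, $s=\eta^{-1/2}\kappa\, t$ in the elliptic case) so that the parameters are absorbed into the Jacobian, which then cancels identically on both sides of the $L_p$ inequality; what remains is either the standard Euclidean Hardy--Littlewood maximal function (elliptic case) or the standard parameter-free parabolic maximal function (parabolic case), for which the doubling/Vitali/Marcinkiewicz machinery is classical. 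Your packaging makes the parameter-independence slightly more transparent at the cost of invoking an external result for the parabolic cylinders, while the paper's direct computation is self-contained; the underlying mathematics is identical.
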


\begin{proof}
We prove the estimate for the centred maximal function, for the non-centred it follows by \eqref{maximalf:pointwise}. Observe that both $\M^{\pa}$ and $\M^{\el}$ are obviously sublinear bounded operators from $L_{\infty}(\R \times \R^d)$ to $L_{\infty}(\R \times \R^d)$ with 
    \[
    \Vert \M^{\pa} v \Vert_{L_\infty} \leq \Vert v \Vert_{L_\infty} 
    \quad
    \text{and}
    \quad
    \Vert \M^{\el} v \Vert_{L_\infty} \leq \Vert v \Vert_{L_\infty}. 
    \]
We show that $\M^{\pa}_c$ and $\M^{\el}_c$ are bounded from $L_1(\R \times \R^d)$ into (the weak $L_1$ space) $L_{1,\infty}(\R \times \R^d)$ with bounds independent of $\kappa$ and $\eta$. The Marcinkiewicz interpolation theorem then directly gives the lemma.

Let $\lambda>0$. We may cover $\{\M^{\pa}_c > \lambda \}$ and $\{ \M^{\el}_c >\lambda\}$ with balls centred at points $(t,x)$ with radii $\rho$, and at points $(t',x')$ with radii $\rho'$, respectively, such that
    \[
    \fint_{B_\rho^{\pa}(t,x)} \vert v \vert > \lambda \quad \text{and} \quad  \fint_{B_{\rho'}^{\el}(t',x')} \vert v \vert > \lambda.
    \]
By Vitali's covering lemma, we can choose a collection $\mathscr{C}$ and $\mathscr{C}'$, respectively, of disjoint balls, such that $B_{5\rho}^{\pa}(t,x)$ still covers $\{\M^{\pa}_c > \lambda \}$ and $B_{5\rho'}^{\el}(t',x')$ still covers
$\{\M^{\el}_c > \lambda \}$. Moreover, we have (independently of $\kappa$ and $\eta$) 
    \[
    \mathcal{L}^{d+1}(B^{\pa}_{5\rho}(t,x)) = 5^{d+2} \mathcal{L}^{d+1}(B^{\pa}_{\rho}(t,x)) \quad \text{and} \quad  \mathcal{L}^{d+1}(B^{\el}_{5\rho'}(t',x')) =5^{d+1} \mathcal{L}^{d+1}(B^{\el}_{\rho'}(t',x'))
    \]
Consequently,
    \begin{align*}
        \mathcal{L}^{d+1} (\{\M^{\pa}_c >\lambda\}) &\leq \sum_{B^{\pa}_\rho \in \mathscr{C}} \mathcal{L}^{d+1}(B_{5\rho}^{\pa}) \leq 5^{d+2} \sum_{B^{\pa}_\rho \in \mathscr{C}} \mathcal{L}^{d+1}(B_{\rho}^{\pa})   \\
        & \leq 5^{d+2}\sum_{B^{\pa}_\rho \in \mathscr{C}} \lambda^{-1} \int_{B_{\rho}^{\pa}(t,x)} \vert v \vert \leq 5^{d+2}\lambda^{-1} \Vert v \Vert_{L^1}.
    \end{align*}
Therefore, the operator $\M^{\pa}$ is bounded from $L_1(\R \times \R^d)$ to $L_{1,\infty}(\R \times \R^d)$ with operator norm bounded by $5^{d+2}$. The same calculation for the elliptic balls shows that $\M^{\el}$ is bounded from $L_1(\R \times \R^d)$ to $L_{1,\infty}(\R \times \R^d)$ with operator norm bounded by $5^{d+1}$.
\end{proof} 
Now fix $L>0$.
We first introduce the set $\B_L$, where the function is large, as
\begin{align}\label{eq:badset}
        \B_L&\coloneq \B_L^1 \cup \B_L^2 \cup \B_L^3 \cup \B_L^4;\\
        \B_L^1&\coloneq \{ \vert v \vert > L \} \cup \{\vert \nabla v \vert > L \} \cup \{\vert \nabla^2 v \vert > L \};\nonumber\\
        \B_L^2&\coloneq \{ \vert \partial_t \nabla v \vert > L^{p/2} \eta^{-1/2}\};\nonumber\\
        \B_L^3&\coloneq \{ \vert \barg \vert > L^{\alpha} \} \cup \{\vert \barvarg \vert >  L^{\alpha}\}; \nonumber\\
        \B_L^4&\coloneq \{ \vert \barh \vert > L^{\alpha} \} \cup \{\vert \barvarh \vert > L^{\alpha}\};\nonumber
        \end{align}
We then define the \emph{bad set} as superlevel set of the \emph{maximal function}. In more detail, 
    \begin{equation} \label{def:badset}
        \bad_L \coloneq \bad_L^{\el} \cup \bad_L^{\pa},
    \end{equation}
where $\bad_L^{\pa}$ is defined via 
    \begin{align*}
        \bad_L^{\pa}&\coloneq \bad_L^{\pa,1} \cup \bad_L^{\pa,2} \cup \bad_L^{\pa,3} \cup \bad_L^{\pa,4};\\
        \bad_L^{\pa,1}&\coloneq \{ \M^{\pa} v > 2L \} \cup \{\M^{\pa} (\nabla v) > 2L \} \cup \{\vert \M^{\pa} (\nabla^2 v) \vert > 2L \}
        \\
        \bad_L^{\pa,2}&\coloneq \{ \M^{\pa} (\partial_t \nabla v) \vert > 2L^{p/2} \eta^{-1/2}\};
        \\
        \bad_L^{\pa,3}&\coloneq \{ (\M^{\pa} \barg) \vert > 2L^\alpha  \} \cup \{ (\M^{\pa} \barvarg) \vert > 2L^\alpha\};
        \\
        \bad_L^{\pa,4}&\coloneq \{ (\M^{\pa} \barh) \vert > 2L^\alpha \} \cup \{ (\M^{\pa} \barvarh) \vert > 2L^\alpha  \}
     \end{align*}
The set $\bad^{\el}_L$ is defined likewise, with the parabolic maximal function $\M^{\pa}$ replaced by the elliptic maximal function $\M^{\el}$. We also denote the complement of this bad set, the \emph{good set}, by
    \begin{align*}
        \good_L= (\R \times \R^n) \setminus \bad_L.
    \end{align*}
The following estimate, making use of the $L_1$ and $L_{1,\infty}$ bound, is inspired by \cite[Lemma 3.1]{Zhang}. It reveals why we explicitly choose the exponents $\alpha$ and $p/2$ in the definition of the bad set.
\begin{lemma} \label{lemma:ronweasley}
    Let $L>0$, $v$ and $\bad_L$ and $\B_L$ be as in \eqref{def:badset} and \eqref{eq:badset}. Then
         \begin{equation} \label{lemma:ronweasly:eq}
            \begin{split}
            \mathcal{L}^{d+1}(\bad_L) \leq& C_d L^{-p}\int_{\B_L^1 \cup \B_L^2 \cup \B_L^3} \vert v \vert^p + \vert \nabla v \vert^p + \vert \nabla^2 v \vert^p + \eta^{-1} \vert \partial_t \nabla v \vert^{2}
            + \vert \barg \vert^q + \vert \barvarg \vert^q 
             \dd x \dd t \\           
             &+C_d L^{-ps'/q} \int_{\B_L^4} \vert \barh \vert^{s'} + \vert \barvarh \vert^{s'} \dd x \dd t.
            \end{split}
        \end{equation}
\end{lemma}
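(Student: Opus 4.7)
The plan is to observe that $\bad_L$ is a finite union of superlevel sets of maximal functions (both parabolic and elliptic), and to control the measure of each such set via a Calder\'on--Zygmund style splitting combined with the weak-$L_1$ bound for $\M^\pa$ and $\M^\el$ from Lemma \ref{weakl1}. A crucial point is that the weak-$L_1$ operator norms produced there are independent of $\kappa$ and $\eta$, so no implicit small parameter sneaks into the estimate.

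The first step is the following pointwise bound: for any $f\in L_{1,\mathrm{loc}}(\R\times \R^d)$, any $\lambda>0$ and any $r\ge 1$,
\begin{equation*}
\mathcal{L}^{d+1}(\{\M^\pa f > \lambda\}) \le \frac{C}{\lambda^r}\int_{\{|f|>\lambda/2\}}|f|^r\dd x\dd t,
\end{equation*}
and likewise for $\M^\el$. I would derive this by splitting $f=f_1+f_2$ with $f_1 = f\mathbf{1}_{\{|f|\le \lambda/2\}}$ and $f_2 = f\mathbf{1}_{\{|f|>\lambda/2\}}$, using $\|\M^\pa f_1\|_{L_\infty}\le\lambda/2$ and sublinearity to get $\{\M^\pa f>\lambda\}\subset\{\M^\pa f_2>\lambda/2\}$, then invoking the weak-$L_1$ bound of Lemma \ref{weakl1} for $f_2$, and finally using $|f|\le (\lambda/2)^{1-r}|f|^r$ on $\{|f|>\lambda/2\}$.

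The remainder is bookkeeping: apply this inequality to each of the (finitely many) pieces in \eqref{def:badset}. On $\bad_L^{\pa,1}\cup\bad_L^{\el,1}$ take $r=p$ and $\lambda\sim L$, producing the contribution $L^{-p}\int_{\B_L^1}\bigl(|v|^p+|\nabla v|^p+|\nabla^2 v|^p\bigr)$. On $\bad_L^{\pa,2}\cup\bad_L^{\el,2}$ take $r=2$ and $\lambda\sim L^{p/2}\eta^{-1/2}$, which gives a prefactor $\eta L^{-p}$ in front of $|\partial_t\nabla v|^2$; this is dominated by $\eta^{-1}L^{-p}|\partial_t\nabla v|^2$ since $0<\eta<1$, matching the statement. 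On $\bad_L^{\pa,3}\cup\bad_L^{\el,3}$ take $r=q$ and $\lambda\sim L^{\alpha}=L^{p-1}$, so the prefactor is $L^{-q\alpha}=L^{-p}$ by the identity $q(p-1)=p$. Finally on $\bad_L^{\pa,4}\cup\bad_L^{\el,4}$ take $r=s'$ and $\lambda\sim L^{\alpha}$; the prefactor equals $L^{-s'\alpha}=L^{-ps'/q}$. In each case the localisation $\{|f|>\lambda/2\}$ coincides, up to the irrelevant factor $2$ in the threshold, with the corresponding piece $\B_L^j$ of \eqref{eq:badset}, so summing the finitely many resulting estimates yields \eqref{lemma:ronweasly:eq}.

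There is essentially no obstacle here beyond arithmetic: the exponent $\alpha = p-1$ and the threshold $L^{p/2}\eta^{-1/2}$ in the definition of $\bad_L$ were chosen precisely so that all four prefactors collapse to the two scales $L^{-p}$ and $L^{-ps'/q}$ appearing on the right-hand side. The only conceptual input is the $\kappa$- and $\eta$-independence of the maximal function bounds supplied by Lemma \ref{weakl1}, without which the resulting estimate would degrade as $\eta\to 0$.
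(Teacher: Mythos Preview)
Your proposal is correct and follows essentially the same route as the paper: reduce to a single level-set estimate of Calder\'on--Zygmund type for the maximal function, then apply it termwise with the exponents and thresholds dictated by the definition of $\bad_L$. The only cosmetic differences are that the paper uses the soft truncation $f_\lambda=(|f|-\lambda)_+$ together with the weak-$(r,r)$ bound, whereas you use the hard cutoff $f\mathbf{1}_{\{|f|>\lambda/2\}}$ and the weak-$(1,1)$ bound followed by the pointwise estimate $|f|\le(\lambda/2)^{1-r}|f|^r$; both yield the same inequality up to constants, and your observation about the $\eta$ versus $\eta^{-1}$ factor on the $\partial_t\nabla v$ term is accurate.
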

\begin{remark} \label{remark:choiceL}
Recall that $\kappa = L^{(p-2)/2}$ and therefore $\kappa^2$ grows slower than $L^p$ and $L^{ps'/{q}} =L^{(p-1)s'}$ . Therefore,  we can choose $L$ large enough, so that $\mathcal{L}^{d+1}(\bad_L) \leq \tfrac{1}{c_d} \kappa^2$, with a lower bound on $L$, depending on norms for $v$ appearing in \ref{Q:1}--\ref{Q:4} (for a dimensional constant $c_d$ to be specified later).
\end{remark}
\begin{remark} \label{remarkgh}
Recall that the maximal function is sublinear, hence we have
\begin{align*}
    \{ \M^{\pa} (\partial_t v) >4L^{\alpha} \} \subset \left(\{
    \M^{\pa} (\barg) >2L^{\alpha} \} \cup 
     \{
    \M^{\pa} (\barh) >2L^{\alpha} \}\right) =\bad_L^{\pa,4}
\end{align*}
and a similar statement holds true for the maximal function of $\eta \partial_t^2 v.$
\end{remark}
\begin{proof}
    We first  show that for any function $f \in L_r(\R\times\R^d)$, $1\leq r <\infty$, we have
       \begin{equation} \label{eq:ronweasley}
            \mathcal{L}^{d+1}( \{ \M^{\pa} f \geq 2 \lambda\}) \leq C_d \int_{\{\vert f \vert \geq \lambda \}} \lambda^{-r} \vert f \vert^r \dd x \dd t,
        \end{equation} 
    and the same for the elliptic maximal function. 
    To show \eqref{eq:ronweasley}, consider $f_\lambda =\max\{0, \vert f \vert - \lambda\}$. Then, due to the $L_r$-$L_{r,\infty}$ bound on the maximal operator (cf. Lemma \ref{weakl1}) and its sublinearity, we obtain
        \begin{align*}
            \mathcal{L}^{d+1}( \{ \M^{\pa} f \geq 2 \lambda\}) &\leq \mathcal{L}^{d+1}( \{ \M^{\pa} f_{\lambda} \geq  \lambda\}) \\
            &\leq C_d \lambda^{-r} \Vert f_{\lambda} \Vert_{L_r} \\
            & =C_d \lambda^{-r} \int_{\{\vert f \vert \geq \lambda\}} 
            (\vert f \vert- \lambda)^r \dd x \dd t \\
            &\leq C_d \lambda^{-r} \int_{\{\vert f \vert \geq \lambda\}} \vert f \vert^r \dd x \dd t.
        \end{align*}
    Now applying this result to the following cases yields the result:
     \begin{enumerate} [label=(\roman*)]
         \item for $f=v$, $f = \nabla v$, $f = \nabla^2 v$, $r=p$ and $\lambda= L$;
         \item for $f=\partial_t \nabla v$, $r=2$ and $\lambda =\eta^{-1/2} L^{p/2}$;
         \item for $f= \barg$ and $f= \barvarg$, $r=q$ and $\lambda= L^{\alpha} =L^{p-1}$;
         \item for $f= \barh$ and $f= \barvarh$, $r=s'$ and $\lambda =L^{\alpha}$.
     \end{enumerate}
\end{proof}
\subsection{Decomposition into Whitney cubes} \label{section:Whitney}
In this section, we discuss a decomposition of the bad set $\bad_L$ into a suitable modification of Whitney/ Calder\'on-Zygmund cubes. Recall that $\good_L$ is closed and $\bad_L$ is open. For any $(t,x) \in \B_L$ we have the implication
    \begin{equation} \label{est:1}
    \begin{split}
        d_{\pa}((t,x),\good_L) \geq \kappa \eta^{1/2} \quad \Longrightarrow \quad d_{\pa}((t,x),\good_L) \leq d_{\el}((t,x),\good_L), \\
        d_{\el}((t,x),\good_L) \leq  \kappa \eta^{1/2} \quad \Longrightarrow \quad 
        d_{\pa}((t,x),\good_L) \geq d_{\el}((t,x),\good_L).
    \end{split}
    \end{equation}
In particular, the metrics coincide at distance exactly $\kappa \eta^{1/2}$ and, moreover, are comparable if the distance is comparable to $\kappa \eta^{1/2}$.

\medskip
\noindent \textbf{Parabolic cubes. } Following \cite{KL00,BDS} we define dyadic parabolic cubes as cubes of the form $(t,x)+[0,\kappa^{-2}2^{-2m}] \times [0,2^{-m}]^{d},\, m \in \Z$. We start from the unit grid (i.e. from cubes $\Z^{d+1}+[0,1]^{d+1}$), then rescale time by $\kappa$ and then successively subdivide a cube $(t,x)+[0,\kappa^{-2}2^{-2m}] \times [0,2^{-m}]^{d}$ into $2^{d+2}$ disjoint rectangles of sidelength $\kappa^{-2}2^{-2(m+1)}$ in time and $2^{-(m+1)}$ in space. For such a parabolic cube $Q$ we denote by $l_t(Q)$ the length of the side in time and by $l_x(Q)$ the length in space such that $l_t(Q)= \kappa^{-2}l_x(Q)^2$.

For each $(t,x) \in \bad_L$ we then may take the largest parabolic dyadic cube $Q$ containing $(t,x)$, such that $2Q$ (the cube with the same centre but twice the sidelength w.r.t. the parabolic metric) is still contained in $\bad_L$. We then get a countable collection $\Qu^{\pa,\ast}$ of parabolic dyadic cubes $(Q^{\ast}_i)_{i \in \N}$, such that
\begin{enumerate} [label=(\roman*)]
    \item $\bad_L = \bigcup_{i \in \N} Q^{\ast}_i$;
    \item the interiors of $Q^{\ast}_i$ and $Q^{\ast}_j$ are disjoint for $i\neq j$;
    \item for each $i \in \N$, $Q^{\ast}_i$ only touches at most $C(d)$ cubes;
    \item if $Q^{\ast}_i$ and $Q^{\ast}_j$ touch, then $\tfrac{1}{4} l_x(Q^{\ast}_i) \leq l_x(Q^{\ast}_j) \leq 4 l_x(Q^{\ast}_i)$.
    \item $\tfrac{1}{c_d} d_{\pa}(Q^{\ast}_i,\good_L) \leq l_x(Q^{\ast}_i) \leq c_d d_{\pa}(Q^{\ast}_i,\good_L)$.
\end{enumerate}
The dimensional constant $c_d$ might be chosen to be $4 (\sqrt{d}+1)$.
Consider a small parameter $\varepsilon< \tfrac{1}{10}$ and define
    \[
    Q_i = (1+\varepsilon)(Q_i^{\ast})^{\circ},
    \]
such that $Q_i$ is the open cube with the same centre as $Q_i^{\ast}$ and such that $l_x(Q_i) = (1+\varepsilon)l_x(Q_i^{\ast})$ and $l_t(Q_i)=(1+\varepsilon)^2 l_t(Q_i^{\ast})$.
Then the collection of those cubes $Q_i$ has the following properties:
    \begin{enumerate} [label=(\roman*),resume]
        \item $\bad_L = \bigcup_{i \in \N} Q_i$;
        \item for fixed $i \in \N$, $Q_i \cap Q_j \neq \emptyset$ for at most $C(d)$ cubes;
        \item for any $(t,x) \in \bad_L$ there is a small neighbourhood $O$ such that $O \cap Q_i \neq \emptyset$ for at most $C(d)$ cubes;
        \item if $Q_i \cap Q_j \neq \emptyset $, then $\tfrac{1}{4} l_x(Q_i) \leq l_x(Q_j) \leq 4 l_x(Q_i)$;
        \item $\tfrac{1}{c_d} d_{\pa}(Q_i,\good_L) \leq l_x(Q_i) \leq 2c_d d_{\pa}(Q_i,\good_L)$.
    \end{enumerate}
Let us consider a cut-off $\phi^{\ast} \in C_c^{\infty}((-\varepsilon/2,1+\varepsilon/2)^{d+1};[0,1])$ that equals one in $[0,1]^{d+1}$. Consider $\phi_i^{\ast}$ the rescaled (with respect to the parabolic scaling) and displaced version that is supported on $Q_i$ and equals one on $Q_i^{\ast}$. Then $\phi_i^{\ast}$ enjoys the following bounds for the derivatives:
    \begin{enumerate}[label=(\roman*),resume]
        \item $\Vert \nabla \phi_i^{\ast} \Vert_{L_{\infty}} \leq C l_x(Q_i)^{-1}$;
        \item $\Vert \nabla^2 \phi_i^{\ast} \Vert_{L_{\infty}} \leq C l_x(Q_i)^{-2}$;
        \item $\Vert \partial_t \phi_i^{\ast} \Vert_{L_{\infty}} \leq C \kappa^2 l_x(Q_i)^{-2}$;
        \item $\Vert \partial_t \nabla \phi_i^{\ast} \Vert_{L_{\infty}} \leq C \kappa^2 l_x(Q_i)^{-3}$;
        \item $\Vert \partial_t^2 \nabla \phi_i^{\ast} \Vert_{L_{\infty}} \leq C \kappa^4 l_x(Q_i)^{-4}$.
    \end{enumerate}
\medskip
\noindent \textbf{Elliptic cubes. } Likewise, we may define a covering with elliptic cubes, i.e. cubes that have the form $(t,x) + [0,\kappa^{-1}\eta^{1/2}2^{-m}] \times [0,2^{-m}]^d,\, m \in \Z$. In this case, the `elliptic' cubes are nothing else but usual Calder\'on-Zygmund cubes when rescaling the time with factor $\kappa\eta^{-1/2}$, i.e. $l_t(Q) = \kappa^{-1}\eta^{1/2} l_x(Q)$. With the same notation as before, we introduce a cover $\Qu^{\el,\ast}$ of such elliptic cubes $Q'_i$, such that (e.g. \cite{Stein})
    \begin{enumerate}  [label=(\roman*')]
    \setcounter{enumi}{6}
         \item $\bad_L = \bigcup_{i \in \N} Q'_i$;
        \item for fixed $i \in \N$, $Q'_i \cap Q'_j \neq \emptyset$ for only $C(d)$ cubes;
        \item for any $(t,x) \in \bad_L$ there is a small neighbourhood $O$ such that $O \cap Q'_i \neq \emptyset$ for only $C(d)$ cubes;
        \item if $Q'_i \cap Q'_j \neq \emptyset $, then $\tfrac{1}{4} l_x(Q'_i) \leq l_x(Q'_j) \leq 4 l_x(Q'_i)$;
        \item $\tfrac{1}{c_d} d_{\el}(Q_i,\good_L) \leq l_x(Q_i) \leq 2c_d d_{\el}(Q_i,\good_L)$.
    \end{enumerate}
Moreover, scaling the cut-off $\phi^{\ast}$ in an elliptic fashion, one obtains $\phi^{\ast}_i$ supported on $Q'_i$ with the following bounds on the derivatives:
     \begin{enumerate}[label=(\roman*'),resume]
        \item $\Vert \nabla\phi_i^{\ast}\Vert_{L_{\infty}} \leq C l_x(Q_i')^{-1}$;
        \item $\Vert \nabla^2 \phi_i^{\ast} \Vert_{L_{\infty}} \leq C l_x(Q_i')^{-2}$;
        \item $\Vert \partial_t \phi_i^{\ast} \Vert_{L_{\infty}} \leq C \kappa \eta^{-1/2} l_x(Q_i')^{-1}$;
        \item $\Vert \partial_t \nabla\phi_i^{\ast}\Vert_{L_{\infty}} \leq C \kappa \eta^{-1/2} l_x(Q_i')^{-2}$;
        \item $\Vert \partial_t^2 \phi_i^{\ast} \Vert_{L_{\infty}} \leq C \kappa^2 \eta^{-1} l_x(Q_i')^{-2}$.
    \end{enumerate}
    
\medskip 
\noindent \textbf{Combining elliptic and parabolic cubes. }
We now define the combined cover being used in the following. For this purpose, consider both covers constructed before and take only parabolic cubes with $l_x(Q) \geq \tfrac{\kappa}{2c_d} \eta^{1/2}$ and elliptic cubes with $l_x(Q) \leq 2c_d\kappa \eta^{1/2}$.
We then obtain a cover 
    \[
    \Qu = \Qu^{\el} \cup \Qu^{\pa}
    \]
consisting of  elliptic cubes $Q'_i$ in $\Qu^{\el,\ast}$ with small sidelengths as well as parabolic cubes $Q_i$ in $\Qu^{\pa,\ast}$ with large sidelengths. In particular, note that the cubes in $\Qu^{\el}$ cover $\{(t,x) \in \bad_L \colon d_{\el}((t,x),\good_L) \leq \kappa \eta^{1/2}\}$ and the cubes in $\Qu^{\pa}$ cover $\{(t,x) \in \bad_L \colon d_{\pa}((t,x),\good_L) \geq \kappa \eta^{1/2}\}$. Due to \eqref{est:1}, $\Qu$ covers $\B_L$.

We relabel the cubes in the cover (and  also the $\phi_i^{\ast}$'s in the partition of unity), such that $\Qu$ consists of cubes $Q_i$, $i \in \N$.
As there is a certain change of type of cubes at scale $\kappa \eta^{1/2}$, we also introduce the union 
    \[
    \widehat{\Qu^{\el}} = \Qu^{\el} \cup \{Q \in \Qu^{\pa} \colon \exists Q' \in \Qu^{\el} \colon Q \cap Q' \neq \emptyset \}
    \]
of all elliptic cubes and all parabolic cubes that touch an elliptic cube. Similarly, the union
    \[
    \widehat{\Qu^{\pa}} = \Qu^{\pa} \cup \{Q' \in \Qu^{\el} \colon \exists Q \in \Qu^{\pa} \colon Q' \cap Q \neq \emptyset \}
    \]
consists of all parabolic cubes and all elliptic cubes that touch a parabolic cube.
We first collect some properties that directly follow from this definition and the properties of $\Qu^{\el,\ast}$ and $\Qu^{\pa,\ast}$, respectively.

\begin{lemma}
Let $\bad_L$ be a closed set and let $\Qu$ be as constructed a cover consisting of $(Q_i)_{i \in \N}$. Then
\begin{enumerate} [label=(Q\arabic*)] 
    \item $\bad_L = \bigcup_{i \in \N} Q_i$;
    \item for fixed $i \in \N$, $Q_i \cap Q_j \neq \emptyset$ for at most $C(d)$ cubes;
    \item for any $(t,x) \in \bad_L$, there is a small neighbourhood $O$, such that $O \cap Q_i \neq \emptyset$ of at most $C(d)$;
    \item if $Q_i \cap Q_j \neq \emptyset$, then $ \tfrac{1}{4} l_x(Q_i) \leq l_x(Q_j) \leq 4 l_x(Q_i)$;
    \item If $Q_i \in \Qu^{\el}$, then $\tfrac{1}{c_d} d_{\el}(Q_i,\good_L) \leq l_x(Q_i) \leq 2c_d d_{\el}(Q_i,\good_L)$;
    \item If $Q_i \in \Qu^{\pa}$, then $\tfrac{1}{c_d} d_{\pa}(Q_i,\good_L) \leq l_x(Q_i) \leq 2c_d d_{\pa}(Q_i,\good_L)$;
    \item There is a constant, such that if $Q_i \in \widehat{\Qu^{\el}} \cap \widehat{\Qu^{\pa}}$, then
    \begin{itemize}
        \item $C^{-1} l_x(Q_i) \leq \eta^{1/2}\kappa \leq C l_x(Q_i)$;
        \item $C^{-1} l_t(Q_i) \leq \eta \leq C l_t(Q_i)$.
    \end{itemize}
\end{enumerate}
\end{lemma}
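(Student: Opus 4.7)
The plan is to derive (Q1)--(Q7) by combining the already established properties (i)--(v) of the parabolic Whitney family $\Qu^{\pa,\ast}$ with the properties (i')--(v') of the elliptic family $\Qu^{\el,\ast}$; the single new ingredient is the scale dichotomy \eqref{est:1}, which pins the transition between the two metrics to the radius $\kappa\eta^{1/2}$.

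For (Q1) I will check that every $(t,x)\in\bad_L$ is still covered after the ``wrong-scale'' cubes have been removed from each family. By \eqref{est:1}, either $d_\pa((t,x),\good_L)\ge \kappa\eta^{1/2}$, and then property (v) forces the parabolic Whitney cube containing $(t,x)$ to have $l_x\ge \kappa\eta^{1/2}/c_d$ and hence to survive in $\Qu^\pa$; or $d_\el((t,x),\good_L)\le \kappa\eta^{1/2}$, in which case property (v') yields $l_x\le 2c_d\kappa\eta^{1/2}$ for the elliptic cube containing $(t,x)$, keeping it in $\Qu^\el$. Assertions (Q5) and (Q6) are then immediate restrictions of (v) and (v') to the surviving subcollections, and the within-family instances of (Q2)--(Q4) are unchanged from their standard proofs for Calder\'on--Zygmund-type cubes.

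The heart of the proof is the cross-family analysis, and it is most efficient to establish (Q7) first. If $Q_i\in \widehat{\Qu^\el}\cap\widehat{\Qu^\pa}$, then by definition $Q_i$ is within one touching-neighbour step of both a cube in $\Qu^\pa$ and a cube in $\Qu^\el$. Since within each family the sidelengths of touching cubes differ by at most a factor $4$ (properties (iv) and (iv')), combining this with the cutoff rules $l_x(Q)\ge \kappa\eta^{1/2}/(2c_d)$ for $Q\in\Qu^\pa$ and $l_x(Q)\le 2c_d\kappa\eta^{1/2}$ for $Q\in\Qu^\el$ forces $l_x(Q_i)$ to be comparable to $\kappa\eta^{1/2}$. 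The identities $l_t(Q^\pa)=\kappa^{-2}l_x(Q^\pa)^2$ and $l_t(Q^\el)=\kappa^{-1}\eta^{1/2}l_x(Q^\el)$ then both reduce to $l_t(Q_i)$ comparable to $\eta$, proving (Q7). With (Q7) at hand, the cross-family part of (Q4) is automatic, since any two touching cubes of different type both have $l_x$ comparable to $\kappa\eta^{1/2}$, and the bounded-overlap claims (Q2)--(Q3) follow from the within-family finite overlap together with the observation that only boundedly many cubes of comparable size can cluster near a single point.

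The only subtle point I expect is a matter of book-keeping at the transition scale $\kappa\eta^{1/2}$: the inflated cubes $Q_i=(1+\varepsilon)Q_i^*$ used to build the partition of unity may overlap cubes of the opposite type, so the constants $c_d$, $\varepsilon$, and the dyadic factor $4$ must remain consistent under the passage from the raw families $\Qu^{\pa,\ast}\cup\Qu^{\el,\ast}$ to the truncated combined cover $\Qu^\pa\cup\Qu^\el$. Taking $\varepsilon<\tfrac{1}{10}$ as stipulated is sufficient for all constants to propagate through at the cost of a purely dimensional factor, and the seven assertions then follow.
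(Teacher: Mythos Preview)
Your approach matches the paper's: there the lemma is stated without proof, with only the remark that the properties ``directly follow from this definition and the properties of $\Qu^{\el,\ast}$ and $\Qu^{\pa,\ast}$,'' and your outline is precisely the intended expansion of that remark.

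One point to tighten. In your argument for (Q7) you write that ``within each family the sidelengths of touching cubes differ by at most a factor $4$ (properties (iv) and (iv'))'' and combine this with the cutoffs to pin $l_x(Q_i)$ at scale $\kappa\eta^{1/2}$. But the relevant pair of touching cubes in (Q7) is of \emph{mixed} type (one elliptic, one parabolic), so (iv) and (iv') do not literally apply, and you would be invoking the cross-family instance of (Q4) which you only derive \emph{after} (Q7). The fix is to argue directly from the two Whitney relations (Q5)--(Q6) at a common point $(t,x)\in Q_i\cap Q_j$: if $Q_j\in\Qu^{\pa}$ then $d_\pa((t,x),\good_L)\approx l_x(Q_j)\geq \tfrac{1}{2c_d}\kappa\eta^{1/2}$, and a one-line computation with the explicit metrics shows that $d_\pa((t,x),\good_L)\geq c\kappa\eta^{1/2}$ forces $d_\el((t,x),\good_L)\geq \min(c,c^2)\kappa\eta^{1/2}$, whence $l_x(Q_i)\gtrsim\kappa\eta^{1/2}$; the symmetric bound is analogous. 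With this correction, your derivation of (Q7), and then of the cross-family parts of (Q2)--(Q4), goes through as written.
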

Using the functions $\phi_i^{\ast}$ introduced above, we finally define a partition of unity by
    \begin{equation}
        \phi_i(t,x) = \frac{\phi_i^{\ast}((t,x))}{\sum_{j \in \N}  \phi_j^{\ast}((t,x))}.
    \end{equation}
Due to the properties of $\phi_i^{\ast}$ (both in the parabolic and the elliptic regime) and the previous lemma we obtain that $\phi_i$ is still smooth and $\sum_{i \in \N} \phi_i = \chi_{\bad_L}$. Moreover, by applying the quotient rule and the estimates for the turnover region from the previous lemma (in particular for cubes in $\widehat{\Qu^{\pa}} \cap \widehat{\Qu^{\el}}$) we get the following:

\begin{lemma}
    Let $Q_i \in \Qu$ and let $(t,x) \in \bad_L$.
    \begin{enumerate} [label=(Q\arabic*)]
    \setcounter{enumi}{8}
        \item \label{Q:9} If $Q_i \in \widehat{\Qu^{\pa}}$, $(t,x) \in Q_i$, then 
         \begin{enumerate}[label=(\roman*)]
        \item $\vert \nabla \phi_i(t,x) \vert \leq C l_x(Q_i)^{-1}$;
        \item $\vert \nabla^2 \phi_i(t,x) \vert \leq C l_x(Q_i)^{-2}$;
        \item $\vert \partial_t \phi_i(t,x) \vert \leq C \kappa^2 l_x(Q_i)^{-2}$;
        \item $\vert \partial_t \nabla \phi_i (t,x) \vert \leq C \kappa^2 l_x(Q_i)^{-3}$;
        \item $\vert \partial_t^2 \phi_i(t,x) \vert \leq C \kappa^4 l_x(Q_i)^{-4}$.
    \end{enumerate}
    \item \label{Q:10} If $Q_i \in \widehat{\Qu^{\el}}$, $(t,x) \in Q_i$, then 
         \begin{enumerate}[label=(\roman*)]
        \item $\vert \nabla \phi_i(t,x) \vert \leq C l_x(Q_i)^{-1}$;
        \item $\vert \nabla^2 \phi_i(t,x) \vert \leq C l_x(Q_i)^{-2}$;
        \item $\vert \partial_t \phi_i(t,x) \vert \leq C \eta^{-1/2} \kappa l_x(Q_i)^{-1}$;
        \item $\vert \partial_t \nabla \phi_i (t,x) \vert \leq C \eta^{-1/2} \kappa l_x(Q_i)^{-2}$;
        \item $\vert \partial_t^2  \phi_i(t,x) \vert \leq C \eta^{-1} \kappa^2 l_x(Q_i)^{-2}$.
    \end{enumerate}
    \end{enumerate}
\end{lemma}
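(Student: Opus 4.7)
The plan is to prove \ref{Q:9} and \ref{Q:10} by a direct computation using the quotient rule and the bounds on $\phi_i^\ast$ from the previous paragraphs, combined with the finite-overlap and comparability properties of the Whitney-type cover $\Qu$. The key point is that on any open set $O$ around $(t,x) \in \bad_L$ only finitely many (at most $C(d)$) of the cutoffs $\phi_j^\ast$ are non-zero, so the denominator
\begin{equation*}
\Sigma(t,x) \coloneqq \sum_{j \in \N} \phi_j^\ast(t,x)
\end{equation*}
is actually a finite sum there, and since at least one $\phi_j^\ast$ equals one on $Q_j^\ast \ni (t,x)$ for some $j$ (namely any cube whose un-enlarged version contains $(t,x)$) we have $1 \leq \Sigma(t,x) \leq C(d)$ uniformly.

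I would first handle the purely parabolic regime. Fix $Q_i \in \Qu^{\pa}$ and $(t,x) \in Q_i$. Any $Q_j$ with $\phi_j^\ast(t,x) \neq 0$ must intersect $Q_i$, so it lies in $\widehat{\Qu^{\pa}}$ and by the comparability property satisfies $l_x(Q_j) \sim l_x(Q_i)$; moreover, if $Q_j$ is an elliptic cube that touches $Q_i$, it lies in $\widehat{\Qu^{\pa}} \cap \widehat{\Qu^{\el}}$ so $l_x(Q_j) \sim \kappa \eta^{1/2} \sim l_x(Q_i)$ and the elliptic and parabolic bounds on $\phi_j^\ast$ coincide up to constants (since for such cubes $\kappa\eta^{-1/2} l_x(Q_j)^{-1} \sim \kappa^2 l_x(Q_j)^{-2}$, and analogously for $\partial_t \nabla$ and $\partial_t^2$). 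Applying the quotient rule and Leibniz's formula gives, schematically, bounds of the form
\begin{equation*}
|\partial_t^a \nabla^b \phi_i| \leq C \sum_{\substack{(a_k,b_k)}} \frac{\prod_k |\partial_t^{a_k} \nabla^{b_k} \phi_{j_k}^\ast|}{\Sigma^{m}},
\end{equation*}
with the sum taken over finitely many index tuples and finitely many touching cubes $j_k$; each factor is controlled by the parabolic-regime bounds on $\phi_{j_k}^\ast$, and comparability of sidelengths converts every $l_x(Q_{j_k})$ into $l_x(Q_i)$.

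The elliptic case \ref{Q:10} is analogous, with the elliptic bounds on $\phi_j^\ast$ in place of the parabolic ones and using $l_x(Q_j) \sim l_x(Q_i)$ for touching cubes in $\widehat{\Qu^{\el}}$. The only genuinely delicate point—which I expect to be the main obstacle—is the \emph{turnover regime} $\widehat{\Qu^{\pa}} \cap \widehat{\Qu^{\el}}$, where $Q_i$ can be adjacent to cubes of the other type. There one must check that the parabolic bound one wants to prove is not stronger than the elliptic bound one has (or vice versa) on the contributing $\phi_j^\ast$'s. This is settled by invoking the estimate $l_x(Q_i) \sim \kappa\eta^{1/2}$, $l_t(Q_i) \sim \eta$ from the previous lemma, which makes the two scaling regimes identical modulo constants—so the time derivative bounds $\kappa^2 l_x^{-2}$ and $\kappa\eta^{-1/2} l_x^{-1}$ agree, as do $\kappa^2 l_x^{-3}$ with $\kappa \eta^{-1/2} l_x^{-2}$, and $\kappa^4 l_x^{-4}$ with $\kappa^2 \eta^{-1} l_x^{-2}$. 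With this observation the quotient-rule computation closes, yielding the claimed pointwise bounds in both \ref{Q:9} and \ref{Q:10}.
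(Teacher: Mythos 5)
Your proposal is correct and coincides with the argument the paper gestures at (the paper states only that the bounds follow "by applying the quotient rule and the estimates for the turnover region"): you compute derivatives of $\phi_i = \phi_i^\ast/\Sigma$ via Leibniz/quotient rule, control $\Sigma$ from below and above using the covering and finite-overlap properties, convert sidelengths of neighbouring cubes to $l_x(Q_i)$ by comparability, and then settle the mixed region $\widehat{\Qu^{\pa}}\cap\widehat{\Qu^{\el}}$ by noting that at the transition scale $l_x\sim\kappa\eta^{1/2}$, $l_t\sim\eta$, the parabolic and elliptic derivative bounds agree up to constants (as you verify for $\partial_t$, $\partial_t\nabla$, $\partial_t^2$). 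The only cosmetic slip is that you open the first case with $Q_i\in\Qu^{\pa}$ rather than $Q_i\in\widehat{\Qu^{\pa}}$; the leftover case ($Q_i$ an elliptic cube touching a parabolic one) is exactly the turnover situation you analyse at the end, so the argument still closes.
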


\begin{remark} \label{remark:choice2}
    The lower bound on $L$ that we derived in Remark \ref{remark:choiceL} yields an upper bound on the sidelength for any cube in $\Qu$. In particular, for $L$ as in Remark \ref{remark:choiceL} and a cover $\Qu$ of $\bad_L$ we have for any $Q_i \in \Qu$
    \begin{equation}
        l_x(Q_i) \leq 1.
    \end{equation}
    More precisely, if the cube $Q_i$ is elliptic, then $Q_i \subset \bad_L$ and $l_x(Q_i) \leq 2c_d\eta^{1/2}\kappa$ (cf. \eqref{est:1}). Writing $l_t(Q_i)=\kappa^{-1} \eta^{1/2} l_x(Q)$, we get $\mathcal{L}^{d+1}(Q_i) = \kappa^{-1} \eta^{1/2} l_x(Q_i)^{d+1}$. If $l_x(Q_i)$ was larger than one, one would obtain $ \mathcal{L}^{d+1}(\bad_L) \geq \mathcal{L}^{d+1}(Q_i) \geq \tfrac{1}{2c_d} \kappa^{-2}$, leading to a contradiction to the upper bound of $\mathcal{L}^{d+1}(\bad_L)$ obtained in Remark \ref{remark:choiceL}.
    A similar lower bound can be achieved for parabolic cubes, i.e. we obtain $\mathcal{L}^{d+1}(\bad_L) \geq \kappa^{-2} l_x(Q_i)^{d+2} \geq \kappa^{-2}$, also leading to a contradiction to the lower bound in Remark \ref{remark:choiceL}.
\end{remark}

\subsection{Definition of the truncation}
We consider the bad set $\bad_L$ as defined in \eqref{def:badset} and the cover $\Qu$ with cubes $Q_i$ and with the corresponding partition of unity $\phi_i$ as constructed in the previous section. We now define functions $v_i$ and $\tilde{v}_i$ as averaged Taylor polynomials of order one with respect to a cube $Q_i$, i.e.
\begin{equation} \label{def:vi}
    v_i(s,y)= \fint_{Q_i} v(t,x) + \nabla v(t,x) \cdot (y-x) + \partial_t v(t,x) \cdot (s-t) \dd t \dd x
\end{equation}
and, for later use,
\begin{equation} \label{def:vitilde}
    \tilde{v}_i(s,y) = \fint_{Q_i} v(t,x) + \nabla v(t,x) \cdot (y-x) \dd t \dd x.
\end{equation}
Then we can define the truncation $v^L$ as follows.

\begin{definition}
    Let $v$, $\bad_L$, the cover $\Qu$, the partition of unity $\phi_i$ and $v_i$ be as above. We then define the truncation $v^L$ by
        \begin{equation} \label{def:vL}
            v^L(s,y) = 
            \begin{cases} v(s,y) & \text{if } (s,y) \in \good_L \\
            \sum_{i \in \N} \phi_i(s,y) v_i(s,y) & \text{if } (s,y) \in \bad_L.
            \end{cases}
        \end{equation}
\end{definition}
We claim that $v^L$ as defined in \eqref{def:vL} enjoys all the properties of Proposition \ref{prop:hrtruncation}. 


\subsection{Lipschitz estimates} \label{section:Lipschitz}
In this section we prove an estimate on $(v_i-v_j)$ for $(s,y) \in Q_i \cap Q_j$. This turns out to be a crucial step when proving that the truncation $v^L$ enjoys all the properties of Proposition \ref{prop:hrtruncation}.
More precisely, we prove the following lemma.

\begin{lemma} \label{lemma:harrypotter}
Let $v$, $\bad_L$, the cover $\Qu$, the partition of unity $\phi_i$ and $v_i$ be as above.
    Suppose that $(s,y) \in Q_i \cap Q_j \subset \bad_L$. Then
    \begin{enumerate} [label=(\roman*)]
            \item \label{hp0} $\vert v_i(s,y) \vert + \vert \nabla v_i(s,y) \vert \leq CL$.
            \item \label{hp05} $ \vert \partial_t v_i(s,y) \vert  \leq C L^\alpha$.
           \item \label{hp1} $ \vert (v_i-v_j)(s,y) \vert \leq CL l_x(Q_i)^2$; 
           \item \label{hp2} $\vert \nabla (v_i-v_j) (s,y) \vert \leq CL l_x(Q_i)^1$;
           \item \label{hp3} If $Q_i,Q_j \in \widehat{\Qu^{\el}}$, then $\vert \partial_t (v_i-v_j)(s,y) \vert \leq CL \eta^{-1/2} \kappa l_x(Q_i)^1$;
           \item \label{hp4} If $Q_i,Q_j \in \widehat{\Qu^{\pa}}$, then $\vert \partial_t (v_i-v_j)(s,y) \vert \leq CL \kappa^2$;
    \end{enumerate}
\end{lemma}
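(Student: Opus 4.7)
\textbf{Proof plan for Lemma~\ref{lemma:harrypotter}.} The guiding observation is that $v_i$ is an averaged first-order Taylor polynomial, so its value, its spatial gradient, and its time derivative are \emph{literally} the cube-averages of $v$, $\nabla v$, $\partial_t v$ over $Q_i$. Each such average is controlled by the corresponding maximal function evaluated at any point of $\good_L$ close to $Q_i$. Since (Q5)/(Q6) ensure $d_\star(Q_i,\good_L)\lesssim l_x(Q_i)$ (with $\star\in\{\pa,\el\}$ matching the type of $Q_i$), enlarging an appropriate parabolic/elliptic ball around a good point $z^\ast\in\good_L$ gives
\begin{equation*}
    \fint_{Q_i}|f|\dd t\dd x \le C_d\, \M^\star f(z^\ast) \quad \text{for } f\in\{v,\nabla v,\nabla^2 v,\partial_t \nabla v,\barg,\barh,\barvarg,\barvarh\},
\end{equation*}
with $\M^\star f(z^\ast)\le$ the corresponding threshold in the definition of $\bad_L^\star$ (Remark~\ref{remarkgh} used to combine $\barg,\barh$ into the bound for $\partial_t v$).

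\emph{Steps (i)--(ii).} I would estimate directly from \eqref{def:vi} using $|y-x|\le Cl_x(Q_i)$, $|s-t|\le Cl_t(Q_i)$, and then invoke the maximal-function bounds at a good point $z^\ast$ near $Q_i$ to get $|v_i|\le CL$, $|\nabla v_i|\le CL$, and $|\partial_t v_i|\le CL^\alpha$ (after splitting $\partial_t v=\barg+\barh$).

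\emph{Steps (iii)--(iv).} By (Q4), the cubes have comparable sidelengths; embed $Q_i\cup Q_j$ in a slightly dilated cube $\widetilde Q$ with $l_x(\widetilde Q)\le Cl_x(Q_i)$ and $d_\star(\widetilde Q,\good_L)\lesssim l_x(Q_i)$. Since $v_i$ and $v_j$ are both affine, their difference $v_i-v_j$ is affine too, so it suffices to control $\|v_i-v_j\|_{L_\infty(\widetilde Q)}$ and $\|\nabla(v_i-v_j)\|_{L_\infty(\widetilde Q)}$. Using the triangle inequality $\|v_i-v_j\|_{L_\infty(\widetilde Q)}\le \|v-v_i\|_{L_\infty(\widetilde Q)}+\|v-v_j\|_{L_\infty(\widetilde Q)}$ and the standard Poincaré-type estimate for averaged Taylor polynomials,
\begin{equation*}
    \|v-v_i\|_{L_\infty(\widetilde Q)} \le C\,l_x(\widetilde Q)^2\fint_{\widetilde Q}|\nabla^2 v| + C\,l_x(\widetilde Q)\,l_t(\widetilde Q)\fint_{\widetilde Q}|\partial_t\nabla v| + C\,l_t(\widetilde Q)^2\fint_{\widetilde Q}|\partial_t^2 v|,
\end{equation*}
and the analogous estimate for $\nabla(v-v_i)$, I would then insert the maximal-function bounds $\fint|\nabla^2 v|\lesssim L$, $\fint|\partial_t\nabla v|\lesssim L^{p/2}\eta^{-1/2}$, $\fint|\partial_t^2 v|\lesssim \eta^{-1}L^\alpha$, combined with the aspect ratios $l_t\sim \kappa^{-2}l_x^2$ (parabolic) or $l_t\sim \kappa^{-1}\eta^{1/2}l_x$ (elliptic). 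Using $L^{p/2}=\kappa L$ and $L^\alpha=L\kappa^2$ (which follow from $\kappa=L^{(p-2)/2}$), each of the three contributions collapses to $CLl_x^2$ for (iii) and $CLl_x$ for (iv).

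\emph{Steps (v)--(vi).} Writing $\partial_t v_i-\partial_t v_j = \fint_{Q_i}\!\fint_{Q_j}[\partial_t v(t,x)-\partial_t v(t',x')]$ and running a path/telescope argument through an intermediate point gives
\begin{equation*}
    |\partial_t v_i-\partial_t v_j| \le C\,l_x(\widetilde Q)\fint_{\widetilde Q}|\partial_t\nabla v| + C\,l_t(\widetilde Q)\fint_{\widetilde Q}|\partial_t^2 v|.
\end{equation*}
In the elliptic regime the bounds $\fint|\partial_t\nabla v|\lesssim L^{p/2}\eta^{-1/2}$ and $\fint\eta|\partial_t^2 v|\lesssim L^\alpha$ (via $\barvarg,\barvarh$) together with $l_t\sim \kappa^{-1}\eta^{1/2}l_x$ produce both contributions of order $L\eta^{-1/2}\kappa\,l_x$, which is (v). In the parabolic regime, (vi) is simply (ii) and the triangle inequality, because $L\kappa^2=L^\alpha$. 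The main delicate point will be the transition zone $\widehat{\Qu^\pa}\cap\widehat{\Qu^\el}$, where I must switch between parabolic and elliptic maximal functions using the equivalences $l_x\sim\kappa\eta^{1/2}$, $l_t\sim\eta$ to confirm that the estimates are consistent on either side — but apart from this bookkeeping, no further difficulty arises.
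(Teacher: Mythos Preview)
Your approach works in the elliptic regime but has a genuine gap in the parabolic one. The Poincar\'e-type remainder you write contains the contributions $l_xl_t\fint|\partial_t\nabla v|$ and $l_t^2\fint|\partial_t^2 v|$. On a parabolic cube $l_t=\kappa^{-2}l_x^2$; inserting the maximal-function bounds $\fint|\partial_t\nabla v|\le CL^{p/2}\eta^{-1/2}=CL\kappa\,\eta^{-1/2}$ and $\fint|\partial_t^2 v|\le C\eta^{-1}L^\alpha=C\eta^{-1}L\kappa^2$ yields $CL\,\kappa^{-1}\eta^{-1/2}l_x^3$ and $CL\,\kappa^{-2}\eta^{-1}l_x^4$, respectively. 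Both are $\le CLl_x^2$ only when $l_x\le C\kappa\eta^{1/2}$, which is precisely the \emph{lower} threshold for cubes in $\Qu^{\pa}$; for larger parabolic cubes (up to $l_x\sim 1$) the estimate is off by a factor that blows up like $\eta^{-1}$. The same obstruction ruins the corresponding bound for $\nabla(v_i-v_j)$.

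The paper's remedy is that in the parabolic regime one must \emph{not} Taylor-expand to second order in time: since $l_t\sim l_x^2$ in the parabolic scaling, a single time derivative already counts as two space derivatives. Concretely one splits $v_i=\tilde v_i+\fint_{Q_i}\partial_t v\,(s-t)$; the second piece is controlled directly by $l_t\fint|\partial_t v|\le C\kappa^{-2}l_x^2\cdot L^\alpha=CLl_x^2$ without ever touching $\partial_t^2 v$ or $\partial_t\nabla v$, and $\tilde v_i-\tilde v_j$ is handled by a purely spatial second-order comparison at each fixed time (using only $\nabla^2 v$) followed by a first-order comparison across time (using only $\partial_t v$). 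A secondary issue: your displayed $L_\infty$-Poincar\'e estimate $\|v-v_i\|_{L_\infty}\le Cl_x^2\fint|\nabla^2 v|+\dots$ is not standard and in fact false in dimension $\ge 2$---averages of second derivatives do not control pointwise Taylor remainders. What holds is the $L_1$-averaged version; this suffices because $v_i-v_j$ is affine and hence equals its own average over any centred ball, which is exactly how the paper argues.
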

Observe that, as long as $Q_i$ and $Q_j$ are elliptic, the estimate on $\partial_t (v_i-v_j)$ is meaningful, whereas the estimate for parabolic cubes also just follows from \ref{hp05}.
\begin{proof}
We may distinguish the case where both $Q_i$ and $Q_j$ are contained in $\widehat{\Qu^{\el}}$ and the case where both are contained in $\widehat{\Qu^{\pa}}$. In both cases, the proof follows the lines of the elliptic/parabolic case, e.g. \cite{Liu} and \cite{BDS}, respectively.
\smallskip

The first and second assertion directly follow from the fact that $4c_d Q_i \cap \good_L \neq \emptyset$. In particular, for $f=v$, $f=\nabla v$ and $f=\partial_t v$, the definition of $\good_L$ as the sublevel set of the maximal function yields
    \[
    \fint_{Q_i}  \vert f \vert \dd x \dd t \leq C_d \fint_{4c_dQ_i} \vert  f \vert \dd x \dd t \leq 2C_dL.
    \] 
Moreover, observe that $v_i$ is an affine function (and so is $(v_i-v_j)$). As the bounds are valid \emph{for all} $(s',y')$ in $Q_i \cap Q_j$ and we have
    \begin{itemize}
        \item $\vert y- x \vert \leq l_x(Q_i)$ for all $(s,y),(t,x) \in Q_i$;
        \item $\vert s- t \vert \leq 2c_d \kappa^{-1} \eta^{1/2} l_x(Q_i)$ for all $(s,y),(t,x) \in Q_i$, if $Q_i \in \widehat{\Qu^{\el}}$;
        \item $\vert s- t \vert \leq 2c_d \kappa^{-2} l_x(Q_i)^2$ for all $(s,y),(t,x) \in Q_i$, if $Q_i \in \widehat{\Qu^{\pa}}$;
    \end{itemize}
for the derivative of the affine function we then may infer \ref{hp2}, \ref{hp3} and \ref{hp4}, respectively, from \ref{hp1}.
It remains to prove \ref{hp1}. We proceed by case distinction.
\smallskip

\noindent \textbf{Elliptic case: $Q_i,Q_j \in \widehat{\Qu^{\el}}$. } 
We first suppose that $v \in C^2(\R \times \R^d)$, the general case follows by a density argument. Let $\rho \coloneq \inf\{l_x(Q_i),l_x(Q_j)\}$ and consider $B^{\el}_{\rho}(s,y)$. We claim that
    \begin{equation} \label{hp:claim1}
        \left \vert v_i(s,y) - \fint_{B^{\el}_{\rho}(s,y)} v(s',y') \dd s' \dd y' \right \vert \leq CL\rho^2.
    \end{equation}
Using in \eqref{hp:claim1} the triangle inequality for $Q_i$ and $Q_j$ then directly yields \ref{hp1}.

To prove \eqref{hp:claim1}, first fix $(s',y') \in B^{\el}_{\rho}(s,y)$. As $Q_i \in \widehat{\Qu^{\el}}$, the sidelengths in time-scale and space-scale obey $c^{-1} \eta^{-1/2} \kappa l_t(Q_i) \leq l_x(Q_i) \leq c \eta^{-1/2} \kappa l_t(Q_i)$. Therefore, fixing a (purely dimensional) constant $c>0$, we have
    \[
        Q_i \subset B^{\el}_{c\rho}(s',y')
        \quad
        \text{and}
        \quad
        \mathcal{L}^{d+1}(Q_i) \geq c^{-1} \mathcal{L}^{d+1}(B^{\el}_{c\rho}(s',y')).
    \]
Consequently,
    \begin{align*}
        \vert v_i(s',y') - v(s',y') \vert 
        &= 
        \left \vert \fint_{Q_i} \bigl(v(t,x) + \nabla v(t,x) \cdot (y'-x) +\partial_t v(t,x) (s'-t)\bigr) - v(s',y')  \dd t \dd x \right \vert
        \\& 
        \leq C  \fint_{B^{\el}_{c\rho}(s',y')}
        \left \vert
        \bigl(v(t,x) + \nabla v(t,x) \cdot (y'-x) +\partial_t v(t,x) (s'-t)\bigr) - v(s',y')
        \right \vert
        \dd t \dd x.
    \end{align*}
Applying the fundamental theorem of calculus twice  then yields (for $t_h= ht+(1-h)s'$ and $x_h=hx+(1-h)y'$)
    \begin{align*}
        \vert v_i(s',y') - v(s',y') \vert& \leq  C \fint_{B^{\el}_{c\rho}(s',y')} \int_0^1 
            h \Bigl(\vert \nabla^2 v(t_h,x_h) (y'-x)^2 \vert+ \vert \partial_t \nabla v(t_h,x_h) \cdot (y'-x) (s'-t) \vert \\& \hspace{3cm} + \vert \partial_t^2 v(t_h,x_h) (s'-t)^2 \vert \Bigr) \dd h \dd t \dd x \\
            &= C \fint_{B^{\el}_{c\rho}(s',y')} \int_0^1 
            h^{-1} \Bigl(\vert \nabla^2 v(t_h,x_h) (y'-x_h)^2 \vert+ \vert \partial_t \nabla v(t_h,x_h) \cdot (y'-x_h) (s'-t_h) \vert
            \\& \hspace{3cm} + \vert \partial_t^2 v(t_h,x_h) (s'-t_h)^2 \vert \Bigr) \dd h \dd t \dd x
            \\
            &=(\ast).
    \end{align*}
Change of variables (rewriting in terms of $t_h$ and $x_h$) gives
    \begin{align*}
        (\ast) &= C\fint_{\B^{\el}_{c\rho}(s',y')} \Bigl(\vert \nabla^2 v(t,x) (y'-x)^2 \vert+ \vert \partial_t \nabla v(t,x) \cdot (y'-x) (s'-t) \vert
     + \vert \partial_t^2 v(t,x) (s'-t)^2 \vert \Bigr) \\& \hspace{2cm}
     \int_{d_{\el}((s',y'),(t,x))/(c\rho)}^{1} h^{-d-2} \dd h \dd t \dd x 
      \\
      &\leq C \rho^{d+1} \fint_{\B^{\el}_{c\rho}(s',y')}  \Bigl(\vert \nabla^2 v(t,x) (y'-x)^2 \vert+ \vert \partial_t \nabla v(t,x) \cdot (y'-x) (s'-t) \vert
     + \vert \partial_t^2 v(t,x) (s'-t)^2 \vert \Bigr) \\
     &\hspace{2cm} Cd_{\el}((s',y'),(t,x))^{-d-1} \dd t \dd x.
    \end{align*}
   We now use that \begin{enumerate} [label=(\roman*)]
       \item $(y'-x) \leq d_{\el}((s',y'),(t,x))$;
       \item $(s'-t) \leq \eta^{1/2} \kappa^{-1} d_{\el}((s',y'),(t,x))$;
   \end{enumerate}
    to obtain
   \begin{align*}
       (\ast) \leq 
       C \rho^{d+1} \fint_{\B^{\el}_{c\rho}(s',y')} d_{\el}((s',y'),(t,x))^{-d+1} \left( \vert \nabla^2 v(t,x) \vert + \eta^{1/2} \kappa^{-1} \vert \partial_t \nabla v(t,x) \vert + \eta \kappa^{-2} \vert \partial_t^2 v(t,x) \vert\right) \dd t \dd x.
   \end{align*}
Finally, we integrate over all $(s',y') \in B^{\el}_{\rho}(s,y)$, use Fubini and the estimate
    \begin{align*}
        \int_{B^{\el}_{\rho}(s,y)} d_{\el}((s',y'),(t,x))^{-d+1} \dd s' \dd y'  
        \leq C_d \rho^2
    \end{align*}
to obtain
    \begin{align*}
        &\left \vert \fint_{B^{\el}_{\rho}(s,y)} v_i(s',y') - v(s',y') \dd s' \dd y' \right \vert \\
        & \leq  C \rho^2 \fint_{B^{\el}_{(c+1)\rho}(s,y)} \left( \vert \nabla^2 v(t,x) \vert + \eta^{1/2} \kappa^{-1} \vert \partial_t \nabla v(t,x) \vert + \eta \kappa^{-2}\vert \partial_t^2 v(t,x) \vert\right) \dd t \dd x.        
    \end{align*}
Using that $B^{\el}_{(c+1)\rho}(y,s) \cap \good_L$ is non-empty and the definition of $\good_L$ as a sublevel set of the maximal function
gives that the right-hand side is bounded by 
    \[
    C\rho^2 \bigl(L + \eta^{1/2} \kappa^{-1} \eta^{-1/2} L^{p/2} + \eta \kappa^{-2} L^\alpha \eta^{-1}\bigr) 
    \leq C\rho^2 L.
    \]
As $v_i$ is a first-order polynomial, we have
    \[
        \fint_{B^{\el}_{\rho}(s,y)} v_i(s',y') \dd s' \dd y' = v_i(s,y),
    \]
finishing the proof of \eqref{hp:claim1} and \ref{hp1} is established (for the elliptic case). 

\medskip

\noindent \textbf{Parabolic case: $Q_i,Q_j \in \widehat{\Qu^{\pa}}$. } 
    In this case we have to separate the time-scale and the space-scale. Suppose that $v \in C^2(\R \times \R^d)$ and write $Q_i = (t_i,x_i) + [0,l_t(Q_i)] \times [0,l_x(Q_i)]^d$ (similarily for $Q_j$). Let $\rho \coloneq \inf\{l_x(Q_i),l_x(Q_j)\}$. We claim three estimates. First, we may forget about the additional time derivative in the definition of $v_i$:
    \begin{equation} \label{hp:claim2}
       \left \vert   \fint_{Q_i}  \partial_t v(t,x) \cdot (s-t) \dd t \dd x -   \fint_{Q_j}  \partial_t v(t,x) \cdot (s-t) \dd t \dd x \right \vert \leq CL l_x(Q_i)^2.  
    \end{equation}
    Second, we may show the following estimate in space (recall \eqref{def:vitilde}):
    \begin{equation} \label{hp:claim3}
        \left \vert \tilde{v}_i - \fint_{t_i}^{t_i+l_t(Q_i)} \fint_{B_\rho(y)} v(s',y') \dd y' \dd s' \right \vert  \leq C L l_x(Q_i)^2.
    \end{equation}
    Third, we compare in time, i.e.
    \begin{equation} \label{hp:claim4}
       \left \vert \fint_{t_i}^{t_i+l_t(Q_i)} \fint_{B_\rho(y)} v(s',y') \dd y' \dd s' - \fint_{t_j}^{t_j+l_t(Q_j)} \fint_{B_\rho(y)} v(s',y') \dd y' \dd s' \right \vert \leq CL l_x(Q_i)^2.
    \end{equation}
    If \eqref{hp:claim2}--\eqref{hp:claim4} are established, we may infer \ref{hp1} by using triangle inequality. It remains to prove those claims.
    
\medskip
\noindent 
\textbf{Proof of \eqref{hp:claim2}. } Observe that $\vert s-t \vert \leq C \kappa^{-2}l_x(Q_i)^2$, as we are in the parabolic regime. Therefore,
    \begin{align*}
    \left \vert \fint_{Q_i}  \partial_t v(t,x) \cdot (s-t) \dd t \dd x \right \vert &\leq C \kappa^{-2} l_x(Q_i)^2 \fint_{Q_i} \vert \partial_t v(t,x) \vert \dd t \dd x \\
    & \leq C \kappa^{-2} l_x(Q_i)^2  \fint_{2c_dQ_i} \vert \partial_t v(t,x) \vert \dd t \dd x 
    \leq C L^\alpha \kappa^{-2} l_x(Q_i)^2,
    \end{align*}
as $2c_dQ_i \cap \good_L \neq \emptyset$ and $\good_L$ is a sublevel set of the maximal function. Using $L^\alpha \kappa^{-2} = L$ and the same estimate for $Q_j$ yields \eqref{hp:claim2}.

\medskip
\noindent \textbf{Proof of \eqref{hp:claim3}. } 
We proceed as in the proof of \eqref{hp:claim1}, but only within space and not in space-time. For this purpose, fix a time $t$ and consider the space cube $\tilde{Q}_i= x_i + [0,l_x(Q_i)]^d$. We estimate the term
\begin{equation} \label{term}
    \left \vert \fint_{\tilde{Q}_i} v(t,x) + \nabla v(t,x) \cdot (x-y) \dd x - \fint_{B_{\rho}(y)} v(t,y') \dd y' \right \vert = (\heartsuit)
\end{equation}
and integrate in time afterwards. To estimate \eqref{term}, proceed as for \eqref{hp:claim1}, in particular, using that
    \[
        \fint_{\tilde{Q}_i} v(t,x) + \nabla v(t,x) \cdot (x-y) \dd x = \fint_{B_{\rho}(y)}  \fint_{\tilde{Q}_i} v(t,x) + \nabla v(t,x) \cdot (x-y') \dd x \dd y'. 
    \]
Then, for fixed $y'$ we may estimate 
\begin{align*}
    \left \vert \fint_{\tilde{Q}_i} v(t,x) + \nabla v(t,x) \cdot (x-y') \dd x - v(t,y')  \right \vert 
    &\leq
    \fint_{B_{c\rho}(y')} \vert \nabla^2 v(t,x) (y'-x)^2 \vert \int_{\vert y'-x \vert/(c\rho)}^1 h^{-d-1} \dd h \dd x \\
    & \leq 
    c \rho^{d}
    \fint_{B_{c\rho}(y')} \vert \nabla^2 v(t,x) \vert \cdot \vert x-y' \vert^{-d+2} \dd x.
\end{align*}
Integrating over $y'$ and using \schange{Fubini's theorem} and
    \[
        \int_{B_\rho(y)} \vert x- y' \vert^{-d+2} \dd x \leq C_d \rho^2
    \]
yield
    \begin{align*}
        (\heartsuit) \leq C \rho^2 \fint_{B_{(c+1) \rho}(y)} \vert \nabla^2 v(t,x) \vert \dd x.
    \end{align*}
Now, integrating in time for $t \in [t_i,t_i+l_t(Q_i)]$ yields
    \begin{align*}
         \left \vert \tilde{v}_i - \fint_{t_i}^{t_i+l_t(Q_i)} \fint_{B_\rho(y)} v(s',y') \dd y' \dd s' \right \vert 
         &\leq 
         C\rho^2 \fint_{t_i}^{t_i+l_t(Q_i)}  \fint_{B_{(c+1)\rho}(y)} \vert \nabla^2 v(t,x) \vert \dd x \dd t \\
        & \leq C\rho^2 \fint_{B^{\pa}_{(c+1)\rho}(s,y)} \vert \nabla^2 v(t,x) \vert \dd t \dd x.
    \end{align*}
    As ${B^{\pa}_{(c+1)\rho)}(s,y)} \cap \good_L \neq \emptyset$, and due to the definition of $\good_L$ as a sublevel set of the maximal function, we infer that the right-hand side of above equation is controlled by $C\rho^2L$.

\medskip
\noindent \textbf{Proof of \eqref{hp:claim4}. } This works as before, but even simpler as all integrals are one-dimensional. Indeed, one can show that, for $t' = \min\{t_i,t_j\}$ and $t''= \max\{t_i+l_t(Q_i),t_j+l_t(Q_j)\}$,
\begin{align*}    
    &
    \left \vert \fint_{t_i}^{t_i+l_t(Q_i)} \fint_{B_\rho(y)} v(s',y') \dd y' \dd s' - \fint_{t_j}^{t_j+l_t(Q_j)} \fint_{B_\rho(y)} v(s',y') \dd y' \dd s' \right \vert
    \\
    &\leq \fint_{t'}^{t''} \fint_{B_\rho(y)} \vert \partial_t v(s',y')\vert\, \vert t''-t'\vert \dd y' \dd s' \\
    &\leq 
    C \rho^2 \kappa^{-2} \fint_{B_{c\rho}^{\pa}(s,y)}\vert \partial_t v(s',y') \vert 
    \leq  
    C \rho^2 \kappa^{-2} L^\alpha \leq C \rho^2 L.
 \end{align*}
\end{proof}

\subsection{Proof of the high regularity truncation} 
\label{section:finish}
We now finish the proof of Proposition \ref{prop:hrtruncation} using the estimates derived in Lemma \ref{lemma:harrypotter} as a crucial ingredient.
\begin{proof}[Proof of Proposition \ref{prop:hrtruncation}]
Observe that $v^L$ is locally a finite sum of smooth functions such that $v^L \in C^{\infty}(\bad_L)$. Moreover, in a small neighbourhood of $\good_L$, $v^L$ coincides with (a rescaled version of) the usual Whitney extension (cf. \cite{Stein}). Therefore, we conclude $v^L \in W^{2}_{\infty}(\R \times \R^d)$. 

Moreover, Lemma \ref{lemma:ronweasley} gives the bound on the set where $v$ and $v^L$ do not coincide, i.e. \eqref{eq:hr:4}, as the truncation $v^L$ leaves $v$ untouched on the good set $\good_L$. It remains to prove the bounds Proposition \ref{prop:hrtruncation} \ref{hr:1}--\ref{hr:3}.

To this end, we may check if these bounds hold pointwise almost everywhere. Due to the definition of $\good_L$ and the observation that $\vert f \vert \leq \min\{ \M^{\pa} f, \M^{\el} f\}$ for any function $f$, we have 
    \begin{align*}
       & \Vert v^L \Vert_{L_{\infty}(\good_L)} +  \Vert \nabla v^L \Vert_{L_{\infty}(\good_L)} + \Vert \nabla^2 v^L \Vert_{L_{\infty}(\good_L)} \leq C(d)L,
       \\
       & \Vert \partial_t v^L \Vert_{L_{\infty}(\good_L)}  \leq L^\alpha, \\
       & \eta^{1/2} \Vert \partial_t \nabla v^L \Vert_{L_{\infty}(\good_L)} \leq C(d) L^{p/2} \leq C(d) L^\beta, \\
      & \eta \Vert \partial_t^2 v^L \Vert_{L_{\infty}(\good_L)} \leq C(d)L^\alpha,
    \end{align*}
so it remains to show the bounds on $\bad_L$. Recall that $L$ is chosen large enough, such that any cube $Q_i$ has sidelength $l_x(Q_i) \leq 1$, cf. Remark \ref{remark:choice2}. We consider the elliptic and the parabolic case separately: 

\medskip
\noindent \textbf{Elliptic regime: $d_{\el}((s,y),\good_L) \leq \kappa \eta^{1/2}$. } Then any cube containing $(s,y)$ is element of $\widehat{\Qu^{\el}}$. As $\sum_{i \in \N} \phi_i =1$, we have, due to Lemma \ref{lemma:harrypotter} \ref{hp0},
\begin{equation} \label{proof:hr1}
    \vert v^L(s,y) \vert \leq \sup_{i \in \N \colon \atop (s,y) \in Q_i} \vert v_i(s,y) \vert \leq CL.
\end{equation}

\medskip
\noindent \textit{First space derivative. } For the derivatives, we introduce an additional $v_j$, using that $\phi_j$ and $\phi_i$ are partitions of unity,
\begin{equation} \label{proof:hr2}
    \nabla v^L= \sum_{i \in \N} \nabla \phi_i \otimes v_i + \phi_i \nabla v_i = \sum_{i,j \in \N} \phi_j \nabla \phi_i \otimes (v_i-v_j) + \sum_{i \in \N} \phi_i \nabla v_i.
\end{equation}
The second sum can be estimated as \eqref{proof:hr1}, whereas for the first term we may estimate each summand by using \ref{Q:10} and Lemma \ref{lemma:harrypotter} \ref{hp1}, i.e.
\begin{equation} \label{proof:hr3}
   \bigl \vert \phi_j \nabla \phi_i \otimes (v_i -v_j) \bigr \vert 
   \leq 
   \Vert \nabla \phi_i \Vert_{L_\infty} \vert v_i -v_j \vert \leq C l_x(Q_i)^{-1} CL l_x(Q_i)^2 \leq CL l_x(Q_i) \leq CL.
\end{equation}
Indeed \eqref{proof:hr3} is justified, as $\phi_i$ and $\phi_j$ form a partitition of unity and the sum is locally finite.
Using that only $C(d)^2$ summands of the sum are nonzero yields
    \[
    \vert \nabla v^L(s,y) \vert \leq CL.
    \]
    
\medskip
\noindent \textit{Second space derivative. } The second derivative in space is handled in a similar fashion. First, observe that 
    \begin{equation} \label{proof:hr4}
        \nabla^2 v^L = \sum_{i,j \in \N} \phi_j\nabla^2 \phi_i \otimes (v_i-v_j) + 2\sum_{i,j \in \N} \phi_j \nabla \phi_i \otimes (\nabla v_i - \nabla  v_j).
    \end{equation}
Every summand of the second term may be estimated with Lemma \ref{lemma:harrypotter} \ref{hp2}:
    \begin{equation} \label{proof:hr5}
        \bigl \vert \phi_j \nabla \phi_i \otimes (\nabla v_i - \nabla  v_j) \bigr \vert \leq Cl_x(Q_i)^{-1} CLl_x(Q_i) \leq CL.
    \end{equation}
The first summand can again be estimated by  Lemma \ref{lemma:harrypotter} \ref{hp1} (and also \ref{Q:10}):
    \begin{equation} \label{proof:hr6}
        \bigl \vert \phi_j \nabla^2 \phi_i \otimes (v_i - v_j) \bigr \vert \leq Cl_x(Q_i)^{-2} CLl_x(Q_i)^2 \leq CL.
    \end{equation}
Again, using that the number of summands is uniformly bounded, yields
    \[
    \vert \nabla^2 v^L(s,y)  \vert \leq CL.
    \]
    
\medskip
\noindent \textit{First time derivative. } The first time derivative reads as 
    \begin{equation}\label{proof:hr65}
        \partial_t v^L = \sum_{i,j \in \N} \phi_j \partial_t \phi_i (v_i-v_j) + \sum_{i \in \N} \phi_i \partial_t v_i.
    \end{equation}
The second sum may be estimated through Lemma \ref{lemma:harrypotter} \ref{hp05}, whereas the summmands in the first sum are controlled by \ref{Q:10} and Lemma \ref{lemma:harrypotter} \ref{hp1}:
     \begin{equation} \label{proof:hr7}
        \bigl \vert \phi_j \partial_t \phi_i (v_i-v_j) \bigr \vert \leq \left(C \kappa\eta^{-1/2} l_x(Q_i)^{-1}\right) \left(CL l_x(Q_i)^2\right) \leq CL \kappa \eta^{-1/2} l_x(Q_i) \leq CL^\alpha,
     \end{equation}
as $l_x(Q_i) \leq C d_{\el}((s,y),\good_L) \leq C\eta^{1/2} \kappa$. Now \eqref{proof:hr7} and the estimate of the second sum give  
    \[
        \vert \partial_t v^L(s,y) \vert \leq CL^\alpha.
    \]
    
\medskip
\noindent \textit{Second time derivative. } The second time derivative is given by
    \begin{equation} \label{proof:hr8}
        \partial_t^2 v^L 
        =
        \sum_{i,j \in \N} \phi_j \partial_t^2 \phi_i (v_i-v_j) + 2\sum_{i,j \in \N} \phi_j \partial_t \phi_i (\partial_t v_i - \partial_t v_j).
   \end{equation}
The summands in the second sum may be bounded using Lemma \ref{lemma:harrypotter} \ref{hp3} and \ref{Q:10}:
    \begin{equation} \label{proof:hr85}
        \bigl \vert \phi_j \partial_t \phi_i (\partial_t v_i - \partial_t v_j) \bigr \vert
        \leq \left(C\kappa\eta^{-1/2} l_x(Q_i)^{-1}\right) \left( CL \kappa\eta^{-1/2} l_x(Q_i)^1\right) \leq C \kappa^2 \eta^{-1}L 
        \leq C \eta^{-1} L^\alpha
    \end{equation}
and the summands in the first sum can be handled with Lemma \ref{lemma:harrypotter} \ref{hp1} and \ref{Q:10}:
    \begin{equation} \label{proof:hr9}
        \bigl \vert \phi_j \partial_t^2 \phi_i (v_i - v_j) \bigr \vert
        \leq \left( C \eta^{-1} \kappa^{2} l_x(Q_i)^{-2} \right) \left( CL l_x(Q_i)^2\right) \leq C \eta^{-1}L \kappa^2 \leq C \eta^{-1} L^\alpha.
    \end{equation}
We finally arrive at
    \[
        \vert \partial_t^2 v^L(s,y) \vert \leq C \eta^{-1} L^\alpha.
    \]
    
\medskip

\noindent \textit{Mixed time-space derivative. }
    This derivative is given by 
        \begin{equation}\label{proof:hr10}
            \partial_t \nabla v^L = \sum_{i,j \in \N} \phi_j \nabla \partial_t \phi_i (v_i-v_j) + \sum_{i,j \in \N} \phi_j \partial_t \phi_i (\nabla v_i - \nabla v_j) + \sum_{i,j\in \N} \phi_j \nabla \phi_i \otimes (\partial_t v_i -\partial_t v_j).
        \end{equation}
Combining again Lemma \ref{lemma:harrypotter} and \ref{Q:10} we obtain
    \begin{align*}
        \bigl \vert \phi_j \nabla \partial_t \phi_i (v_i-v_j) \bigr \vert 
        \leq \left(C \kappa \eta^{-1/2}  l_x(Q_i)^{-2}\right) \left(CL l_x(Q_i)^2\right) \leq CL\kappa \eta^{-1/2}, 
        \\
        \bigl \vert \phi_j \partial_t \phi_i (\nabla v_i - \nabla v_j) \bigr \vert \leq \left(C \kappa \eta^{-1/2}  l_x(Q_i)^{-1}\right) \left(CL l_x(Q_i)^1 \right) \leq CL \kappa \eta^{-1/2}, 
        \\
        \bigl \vert \phi_j \nabla \phi_i \otimes (\partial_t v_i -\partial_t v_j)  \bigr \vert  \leq \left(C l_x(Q_i)^{-1} \right) \left(CL \kappa \eta^{-1/2} l_x(Q_i)^1\right) 
        \leq 
        CL\kappa \eta^{-1/2}.
    \end{align*}
Consequently, we obtain the bound 
    \[
    \vert \partial_t \nabla v^L(s,y) \vert \leq C \kappa\eta^{-1/2}L = C \eta^{-1/2} L^{p/2}
    \]
for $(s,y)$ in the elliptic regime.

\medskip
\noindent \textbf{Parabolic regime: $d_{\pa}((s,y),\good_L) \geq \kappa \eta^{1/2}$. } Then any cube containing $(s,y)$ is an element of $\widehat{\Qu^{\pa}}$. The estimates for $v^L$, $\nabla v^L$ and $\nabla^2 v^L$ work with the same bounds displayed in \eqref{proof:hr1}--\eqref{proof:hr5}. Thus, we only estimate time derivatives.

\medskip
\noindent \textit{First time derivative. }
Again, we write the time derivative $\partial_t v^L$ as in \eqref{proof:hr5} and estimate the second summand with Lemma \ref{lemma:harrypotter} \ref{hp0}. The first summand is estimated by Lemma \ref{lemma:harrypotter} and \ref{Q:9}:
 \begin{equation*}
        \bigl \vert \phi_j \partial_t \phi_i (v_i-v_j) \bigr \vert \leq \left( C \kappa^2 l_x(Q_i)^{-2}\right) \bigl(CL l_x(Q_i)^2\bigr) 
        \leq CL\kappa^2,
     \end{equation*}
so that we (again using that only $C(d)^2$ summands are nonzero) obtain
    \[
        \vert \partial_t v^L(s,y) \vert \leq CL^\alpha.
    \]
    
\medskip

\noindent \textit{Second time derivative. }
Write the second time derivative as in \eqref{proof:hr7} and argue parallel to \eqref{proof:hr8} and \eqref{proof:hr9}. First, using Lemma \ref{lemma:harrypotter} \ref{hp4} and \ref{Q:9}, we obtain
    \[
        \bigl \vert \phi_j \partial_t \phi_i (\partial_t v_i - \partial_t v_j) \bigr \vert \leq \left(C\kappa^2 l_x(Q_i)^{-2}\right) \bigl(CL\kappa^2\bigr)\leq CL \kappa^4 l_x(Q_i)^{-2} \leq C \eta^{-1}L^\alpha,
    \]
as $l_x(Q_i) \geq d_{\pa}((y,s),\good_L) \geq \kappa \eta^{1/2}$. Second, using Lemma \ref{lemma:harrypotter} \ref{hp1} and \ref{Q:9}, we obtain
    \[
        \bigl \vert \phi_j \partial_t^2 \phi_i (v_i - v_j) \bigr \vert
        \leq \left(C \kappa^4 l_x(Q_i)^{-4}\right) \left(CL l_x(Q_i)^2\right) \leq CL\kappa^4 l(Q_i)^{-2} \leq C\eta^{-1}L^{\alpha}
    \]
and thus,
    \[
    \vert \partial_t^2v^L(s,y) \vert \leq C\eta^{-1}L^\alpha.
    \]

\medskip
\noindent \textit{Mixed time-space derivative. }
Finally, using \eqref{proof:hr10} and Lemma \ref{lemma:harrypotter}, the bounds on the derivatives, \ref{Q:9}, and that $(s,y)$ is in the parabolic region, we obtain
  \begin{align*}
         \bigl \vert \phi_j \nabla \partial_t \phi_i (v_i-v_j) \bigr \vert 
         \leq \left(C \kappa l_x(Q_i)^{-3}\right) \left( CL l_x(Q_i)^2 \right) \leq CL\kappa l_x(Q_i)^{-1} \leq CL\kappa^2 \eta^{-1/2}, \\
          \bigl \vert \phi_j \partial_t \phi_i (\nabla v_i - \nabla v_j) \bigr \vert 
          \leq \left(C \kappa l_x(Q_i)^{-2}\right) \left(CL l_x(Q_i)^1\right) \leq  CL\kappa  l_x(Q_i)^{-1} \leq CL \kappa^2 \eta^{-1/2}, \\
           \bigl \vert \phi_j \nabla \phi_i \otimes (\partial_t v_i -\partial_t v_j)  \bigr \vert  \leq \left(C l_x(Q_i)^{-1} \right)\left(CL \kappa^2 \right)\leq CL\kappa^2 l_x(Q_i)^{-1} \leq CL\kappa^3 \eta^{-1/2}.
    \end{align*}
Consequently, we arrive at the desired bound 
    \[
    \vert \partial_t \nabla v^L(y,s) \vert \leq CL^\beta. 
    \]
This finishes the proof of \ref{hr:1}--\ref{hr:3}.    
\end{proof}


\subsection{From higher regularity back to $w$} \label{section:sol}
As a consequence of Proposition \ref{prop:hrtruncation}, we may finally prove Lemma \ref{lemma:trunc:v2}.

First, we realise that Proposition \ref{prop:hrtruncation} is also valid if the function $v \colon \R \times \R^d \to \R^{d \times d}_{\skw}$ is replaced by a periodic function $v \colon \R \times \T_d \to \R^{d\times d}_{\skw}$, as everything (in particular the cover of the bad set and the partition of unity) can be defined periodically. 

Second, we note that it suffices to localise in time by multiplying with an in-time cutoff that is equal to $1$ in $[-T,T]$ and vanishes outside of $[-2T,2T]$. Indeed, if we only consider radii smaller than one in the definition of the maximal function, it is clear that this does not affect the construction inside the interval $[-T/2,T/2]$.

The task now is to find such a $v$. For this purpose, consider the differential operator $\curl^{\ast} \colon C^{\infty}(\R^d;\R^{d\times d}_{\skw}) \to C^{\infty}(\R^d;\R^d)$ given by 
\[
    (\curl^{\ast} u)_i = \sum_{j=1}^d \partial_j u_{ij}.
\]
Then $\curl^{\ast}$ is a potential for $\diverg$, that is 
\begin{enumerate} [label=(\roman*)]
    \item $\diverg \circ \curl^{\ast} =0$;
    \item the following Proposition \ref{prop:pot} holds (cf. \cite{SW,GR}).
\end{enumerate}

\begin{proposition} \label{prop:pot}
    Let $\mu \in \R$, $1 <p<\infty$. There is a linear and bounded map $T \colon W^{\mu}_{p}(\T_d;\R^d) \to W^{\mu+1}_{p}(\T_d;\R^{d \times d}_{\skw})$ such that 
    \begin{equation*}
        \diverg w =0 \quad \text{and} \quad \int_{\T_d} w =0 \dd x \quad \Longrightarrow \quad w= \curl^{\ast} (Tw).
    \end{equation*}
    Moreover, the following estimate holds for any $w \in W^{\mu}_{p}(\T_d;\R^d)$:
    \begin{equation*}
        \Vert \curl^{\ast} Tw - w \Vert_{W^{\mu}_{p}} \leq  \Vert \diverg w \Vert_{W^{\mu-1}_{p}}.
    \end{equation*}
\end{proposition}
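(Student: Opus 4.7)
The plan is to construct $T$ explicitly by Fourier analysis on the torus, exploiting the fact that on $\T_d$ the divergence and $\curl^\ast$ operators are diagonalised simultaneously by the Fourier basis. For $w \in W^\mu_p(\T_d;\R^d)$ with Fourier series $w = \sum_{k \in \Z^d} \hat{w}(k) e^{2\pi i k \cdot x}$, I would define $T$ via the multiplier
\begin{equation*}
    \widehat{(Tw)}_{ij}(k)
    =
    \begin{cases}
    \dfrac{1}{2\pi i |k|^2} \bigl(k_j \hat{w}_i(k) - k_i \hat{w}_j(k)\bigr), & k \neq 0, \\
    0, & k = 0.
    \end{cases}
\end{equation*}
Antisymmetry in $(i,j)$ is immediate from the definition, so $Tw$ takes values in $\R^{d\times d}_{\skw}$.

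Second, I would verify the algebraic identity. A direct computation gives
\begin{equation*}
    \widehat{\curl^\ast Tw}_i(k)
    =
    \sum_j 2\pi i k_j \widehat{(Tw)}_{ij}(k)
    =
    \hat{w}_i(k) - \frac{k_i (k \cdot \hat{w}(k))}{|k|^2}
\end{equation*}
for $k \neq 0$. The second term is the Fourier symbol of the Helmholtz gradient projection and vanishes precisely when $k \cdot \hat{w}(k) = 0$ for all $k$, i.e. $\diverg w = 0$. Combined with the zero-mean condition $\hat{w}(0) = 0$, this yields $w = \curl^\ast Tw$, establishing the first claim. Moreover, rewriting in general, $\widehat{\diverg w}(k) = 2\pi i\, k \cdot \hat{w}(k)$, so
\begin{equation*}
    \widehat{\curl^\ast Tw - w}_i(k)
    =
    -\frac{k_i}{2\pi i |k|^2} \widehat{\diverg w}(k),
\end{equation*}
which displays $\curl^\ast Tw - w$ as a Fourier multiplier of order $-1$ applied to $\diverg w$.

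Third, the mapping properties come from the (periodic) Mikhlin--H\"ormander multiplier theorem. The symbols $k \mapsto \tfrac{k_j}{|k|^2}$ and $k \mapsto \tfrac{k_i}{|k|^2}$ (extended by zero at $k=0$) satisfy the standard differential bounds $|\partial^\alpha m(k)| \lesssim |k|^{-1-|\alpha|}$ and therefore define bounded operators from $L_p(\T_d)$ to $W^1_p(\T_d)$ for every $1<p<\infty$. Using the Bessel potential characterisation $W^\mu_p = (1-\Delta)^{-\mu/2} L_p$ and commuting $(1-\Delta)^{\mu/2}$ past the multipliers (which commute with all Fourier multipliers), this upgrades to boundedness $T \colon W^\mu_p \to W^{\mu+1}_p$ and $(\diverg w) \mapsto (w - \curl^\ast Tw)$ from $W^{\mu-1}_p \to W^\mu_p$ for arbitrary $\mu \in \R$, which yields both the continuity statement for $T$ and the claimed error estimate (up to a universal constant).

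The main obstacle is purely technical: invoking the multiplier theorem for all real $\mu$ and all $1<p<\infty$ in the periodic setting. For integer $\mu$ this reduces to the classical $L_p$--$L_p$ multiplier theorem for Riesz-type symbols; for general real $\mu$ it requires either the periodic Marcinkiewicz/Mikhlin theorem stated in, e.g., Grafakos, or a transference argument from $\R^d$ to $\T_d$ via Poisson summation. Once this black box is in place, the argument above assembles the claim without further difficulty.
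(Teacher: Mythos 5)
The paper does not actually prove Proposition~\ref{prop:pot}; it cites \cite{SW,GR} and remarks that $T$ may be defined through a Fourier multiplier. Your construction makes that remark concrete, and the approach is exactly the one the paper indicates: define $T$ by a degree-$(-1)$ Riesz-type multiplier, read off the algebraic identity $\widehat{\curl^\ast Tw}_i = \hat w_i - k_i(k\cdot\hat w)/|k|^2$, and invoke a periodic Mikhlin--H\"ormander theorem (plus the Bessel-potential description of $W^\mu_p$) for the mapping bounds. This is correct and complete as far as it goes.

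Two small points worth flagging, both of which you partially acknowledge. First, the proposition as written asserts the error bound with constant $1$; the multiplier argument only gives $\Vert \curl^\ast Tw - w\Vert_{W^\mu_p} \le C\Vert \diverg w\Vert_{W^{\mu-1}_p}$ with a constant from the multiplier theorem (independent of $\mu$ thanks to commuting $(1-\Delta)^{\mu/2}$, but not visibly equal to $1$). Your parenthetical ``up to a universal constant'' is the honest statement. Second, the error bound as stated in the paper cannot hold for \emph{all} $w\in W^\mu_p$: taking $w$ a nonzero constant vector gives $\diverg w =0$ and $Tw=0$, hence $\curl^\ast Tw - w = -w \neq 0$, contradicting the inequality. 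Your own Fourier computation makes this transparent --- the residual multiplier $-k_i/(2\pi i|k|^2)\,\widehat{\diverg w}(k)$ only accounts for the modes $k\neq 0$, while the $k=0$ mode contributes $-\hat w(0)$ to the left side and nothing to the right. This is a defect in the proposition's phrasing, not in your argument, and is harmless in the application (the $w_\eta$ there have zero spatial mean by \ref{P:5}); still, you should add a one-line remark restricting the error estimate to zero-mean $w$, or absorb the mean into the definition of the norm.
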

We shortly remark that writing $\int_{\T_d} w \dd x=0$
is also sensible for negative Sobolev spaces, as it is the dual pairing of $w$ with the (smooth) constant $1$-function. 
The map $T$ may be defined through a Fourier multiplier, i.e. independently of $\mu$ and $p$. Given some $w_\eta$ satisfying the assumptions \ref{P:1}--\ref{P:4}, we define $v_\eta$ as
    \begin{equation} \label{def:pot}
        v_\eta \coloneq T w_\eta.
    \end{equation}
Due to linearity of the operator $T$ and Proposition \ref{prop:pot} $v_\eta$ obeys the properties \ref{Q:1}--\ref{Q:4}.

\begin{lemma}
    Suppose that $w_\eta$ satisfies \ref{P:1}--\ref{P:4}. Then $v_\eta$ satisfies the following:
    \begin{enumerate}[label=(\roman*)]
        \item $\Vert v_\eta \Vert_{L_p((0,\infty);W^2_p)} \leq C(p) \Vert w_\eta \Vert_{L_p((0,\infty);V^1_p)} \leq C$;
        \item $\partial_t v_\eta = T\hat{g}_{\eta} + T\hat{h}_\eta$ with $\hat{g}_\eta \weakto 0$ in $L_q((0,\infty);(V^1_p)')$ and $\hat{h}_\eta \to 0$ in $L_{s'}((0,\infty);(V^1_s)')$;
        \item $\Vert \partial_t v_\eta \Vert_{L_2((0,\infty);W^1_2)} \leq \Vert \partial_t w_\eta \Vert_{L_2((0,\infty);L_2)} \leq C \eta^{-1/2} $;
        \item $\eta \partial_t v_\eta = T\hat{\varg}_\eta + T \hat{\varh}_\eta$ with $\hat{\varg}_\eta \weakto 0$ in
        $L_q((0,\infty);(V^1_p)')$ and $\hat{\varh}_\eta \to 0$ in $L_{s'}((0,\infty);(V^1_s)')$.
    \end{enumerate} 
\end{lemma}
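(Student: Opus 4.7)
The plan is to exploit three facts about the operator $T$ from Proposition \ref{prop:pot}: (a) $T$ is a linear bounded operator $W^{\mu}_p(\T_d;\R^d) \to W^{\mu+1}_p(\T_d;\R^{d\times d}_{\skw})$ for every $\mu \in \R$ and every $p \in (1,\infty)$; (b) being a spatial Fourier multiplier, $T$ commutes with $\partial_t$ in the distributional sense; and (c) continuous linear maps preserve both weak and strong convergence. With these, each of the four assertions reduces to a bookkeeping exercise transferring regularity and convergence properties from $w_\eta$ to $v_\eta = T w_\eta$, up to a gain of one spatial derivative.

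For (i) I would apply Proposition \ref{prop:pot} with $\mu = 1$ pointwise in time: since $w_\eta(t) \in V^1_p$ has vanishing divergence and, by \ref{P:5}, vanishing spatial mean, the bound $\Vert v_\eta(t)\Vert_{W^2_p} \leq C(p) \Vert w_\eta(t)\Vert_{W^1_p}$ is immediate. Raising to the $p$-th power, integrating in time, and invoking the uniform $L_p((0,\infty);V^1_p)$ bound on $w_\eta$ from \ref{P:1} proves (i). The estimate (iii) is analogous, now with $\mu = 0$ and $p = 2$: commuting $T$ with $\partial_t$ gives $\Vert \partial_t v_\eta(t)\Vert_{W^1_2} = \Vert T \partial_t w_\eta(t) \Vert_{W^1_2} \leq C \Vert \partial_t w_\eta(t) \Vert_{L_2}$ for a.e. $t$, and \ref{P:3} then yields the claimed $\eta^{-1/2}$ scaling.

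For (ii) and (iv) I would apply $T$ to the decompositions of $\partial_t w_\eta$ and $\eta \partial_t^2 w_\eta$ provided by \ref{P:2} and \ref{P:4}. Using commutativity of $T$ with $\partial_t$, this directly yields
\[
    \partial_t v_\eta = T \hat{g}_\eta + T \hat{h}_\eta
    \quad \text{and} \quad
    \eta \partial_t^2 v_\eta = T \hat{\varg}_\eta + T \hat{\varh}_\eta,
\]
which is the content of the stated identities. For the downstream use of these decompositions (that is, in order to verify \ref{Q:2} and \ref{Q:4}), one further needs that $T \hat{g}_\eta, T \hat{\varg}_\eta \weakto 0$ in $L_q((0,\infty);L_q)$ and $T \hat{h}_\eta, T \hat{\varh}_\eta \to 0$ in $L_{s'}((0,\infty);L_{s'})$; identifying $(V^1_p)'$ and $(V^1_s)'$ with quotients of $W^{-1}_q$ and $W^{-1}_{s'}$, this follows from the mapping properties of $T$ applied with $\mu = -1$ combined with (c).

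The main obstacle, which is only mild, is making precise what $T$ acting on $(V^1_p)'$ should mean: an element of $(V^1_p)'$ is only determined modulo gradient distributions, so one should verify that $\partial_t v_\eta$ does not depend on the chosen representative of $\partial_t w_\eta$. This is resolved by observing that $v_\eta = T w_\eta$ is defined unambiguously as a space-time distribution, so $\partial_t v_\eta$ is too, and the identity $\partial_t v_\eta = T \partial_t w_\eta$ holds in the sense of distributions without reference to any representative.
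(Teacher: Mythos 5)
Your proposal is correct and takes essentially the same approach the paper implicitly relies on: the lemma is stated without proof precisely because it is a direct consequence of the boundedness of $T$ from Proposition~\ref{prop:pot} (applied pointwise in time with $\mu=1$, $\mu=0$, and $\mu=-1$ respectively), the fact that $T$ is a spatial Fourier multiplier and therefore commutes with $\partial_t$, and the preservation of weak and strong convergence under bounded linear maps. Your closing observation that \ref{P:5} is implicitly used in part (i) — since Proposition~\ref{prop:pot} requires vanishing mean for the identity $w=\curl^\ast Tw$ — is a fair point the paper's hypotheses elide, and your discussion of how $T\hat g_\eta$ should be interpreted (namely, on the specific representatives supplied by \ref{P:2} and \ref{P:4}, under the paper's Hahn--Banach identification of $(V^1_p)'$ with a subspace of $W^{-1}_q$) is the correct resolution.
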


In particular, as $v_\eta$ satisfies \ref{Q:1}--\ref{Q:4} (to be precise: the version on the torus and not on the full space), we can now truncate $v_\eta$ to obtain $v_\eta^L$. We then finish the proof of Lemma \ref{lemma:trunc:v2} by setting 
    \begin{equation} \label{def:wetaL}
        w_\eta^L = \curl^{\ast} v_\eta^L.
    \end{equation}
As a final preparation, we need the following observation that is slightly stronger than in Proposition \ref{prop:hrtruncation}.
\begin{lemma} \label{lemma:better}
   Let $\bad_L^{\eta}$ be the bad set defined in \eqref{def:badset} for the function $v_\eta$. Then we can write $\bad_L^{\eta}= \bad_L^{\eta,1} \cup \bad_L^{\eta,2}$ such that:
   \begin{enumerate} [label=(\roman*)]
       \item $L^{p}\mathcal{L}^{d+1}(\bad_L^{\eta,1}) \to 0$ uniformly in $\eta$, as $L \to \infty$;
       \item $\mathcal{L}^{d+1}(\bad_L^{\eta,2}) \to 0$ for fixed $L>0$, as $\eta \to 0$.
   \end{enumerate}
\end{lemma}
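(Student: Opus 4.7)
The plan is to exploit the decomposition of Lemma \ref{lemma:timohorn} together with the Chebyshev-type bound from Lemma \ref{lemma:ronweasley} and split the bad set according to the equi-integrable and the concentration contributions.

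First I would apply Lemma \ref{lemma:timohorn} to each of the $\eta$-bounded sequences that enter the definition of $\B_L^{\eta}$ in the proof of Lemma \ref{lemma:ronweasley}, namely to
\begin{equation*}
    v_\eta,\ \nabla v_\eta,\ \nabla^2 v_\eta \ \text{in } L_p, \qquad \psi_\eta\coloneq \eta^{1/2}\partial_t\nabla v_\eta \ \text{in } L_2, \qquad \barg_\eta,\barvarg_\eta\ \text{in } L_q,
\end{equation*}
which are uniformly bounded in the indicated spaces by \ref{Q:1}--\ref{Q:4}. This yields for every such $f_\eta$ a decomposition $f_\eta=f_\eta^{\equi}+f_\eta^{\conc}$ with disjoint supports, where $f_\eta^{\equi}$ is equi-integrable in the corresponding $L_r$-space and $f_\eta^{\conc}\to 0$ strongly in $L_{r-\delta}$ with $\mathcal{L}^{d+1}(\{f_\eta^{\conc}\neq 0\})\to 0$ as $\eta\to 0$. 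The sequences $\barh_\eta,\barvarh_\eta$ already converge strongly to $0$ in $L_{s'}$ by \ref{Q:2}, \ref{Q:4} and so do not need to be split.

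Next I would use the sublinearity of the maximal functions (both parabolic and elliptic; see \eqref{maximalf:pointwise}) to split each threshold set in the definition \eqref{def:badset} of $\bad_L^\eta$ accordingly. For instance, $\{\M^{\pa} v_\eta > 2L\}\subset\{\M^{\pa} v_\eta^{\equi}> L\}\cup\{\M^{\pa} v_\eta^{\conc}>L\}$, and analogously for the other quantities entering $\bad_L^{\pa,1}\cup\bad_L^{\pa,2}\cup\bad_L^{\pa,3}$ and their elliptic counterparts. I would then define $\bad_L^{\eta,1}$ as the union of all the \emph{equi-integrable} pieces, and $\bad_L^{\eta,2}$ as the union of all the \emph{concentration} pieces together with $\bad_L^{\pa,4}\cup\bad_L^{\el,4}$ (the ones involving $\barh_\eta,\barvarh_\eta$).

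To handle $\bad_L^{\eta,1}$, I would run the argument of Lemma \ref{lemma:ronweasley} verbatim but with $f_\eta$ replaced by $f_\eta^{\equi}$, obtaining
\begin{equation*}
    L^p\,\mathcal{L}^{d+1}(\bad_L^{\eta,1})
    \leq C_d\!\!\int_{\{|v_\eta^{\equi}|>L\}}\!\!|v_\eta^{\equi}|^p + \dots +\!\!\int_{\{|\psi_\eta^{\equi}|>L^{p/2}\}}\!\!|\psi_\eta^{\equi}|^2+\!\!\int_{\{|\barg_\eta^{\equi}|>L^\alpha\}}\!\!|\barg_\eta^{\equi}|^q+\!\!\int_{\{|\barvarg_\eta^{\equi}|>L^\alpha\}}\!\!|\barvarg_\eta^{\equi}|^q,
\end{equation*}
and each tail integral tends to $0$ uniformly in $\eta$ by equi-integrability. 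For $\bad_L^{\eta,2}$ and fixed $L>0$, the sets arising from the concentration parts are controlled by $\mathcal{L}^{d+1}(\supp f_\eta^{\conc})$ through the weak $L_1$ bound on the maximal function applied to $|f_\eta^{\conc}|^r$ for suitable $r<p$ (respectively $r<2$, $r<q$), which vanishes as $\eta\to 0$; and the pieces coming from $\barh_\eta,\barvarh_\eta$ vanish by the same weak-type bound, using their strong convergence to $0$ in $L_{s'}$ and Chebyshev. The main subtlety is that the parabolic/elliptic maximal function does not preserve disjoint supports, so the decomposition must be used only via sublinearity rather than by splitting the \emph{integrand}; once this is observed, everything fits together.
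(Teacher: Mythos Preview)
Your proof is correct and follows essentially the same route as the paper's own argument: you decompose each of the relevant sequences via Lemma \ref{lemma:timohorn} into an equi-integrable and a concentrating part, use sublinearity of the maximal function to split the superlevel sets accordingly, control the equi-integrable pieces with the tail estimate \eqref{eq:ronweasley} (giving the uniform $L^p\mathcal{L}^{d+1}\to 0$), and the concentration and $\barh_\eta,\barvarh_\eta$ pieces via the weak-type bound and their strong convergence to zero in a lower $L_r$. The only cosmetic difference is that the paper packages the core step as an abstract claim for a single bounded sequence $f_\eta\in L_r$ and then applies it term by term; one small wording issue is that the concentration superlevel sets are not literally controlled by $\mathcal{L}^{d+1}(\supp f_\eta^{\conc})$ but rather by $\|f_\eta^{\conc}\|_{L_r}\to 0$, which you do invoke correctly a line later.
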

\begin{proof}
    It suffices to show that we can split up the set, where the \emph{parabolic} maximal function is large; the same can be done for the elliptic one.
    We are given the bad set $\bad_L^{\pa,\eta}$ as the union of four different sets. Observe as $\barh_\eta$ and $\barvarh_\eta$ converge to zero in $L_{s'}((0,T))$, hence
    \[
    \mathcal{L}^{d+1}(\bad_L^{\pa,4}) \to 0 \quad \text{as } \eta \to 0.
    \]
    For the remainder we use the following result: if 
    $f_\eta \in L_r(\R \times \R^d)$ we have (for $\M= \M^{\pa}$ or $\M=\M^{\el}$)
    \[
     \{ \M f_\eta \geq \lambda \} = A^{\eta,1} \cup A^{\eta,2},
    \]  
    with $\lambda^{r} \mathcal{L}^{d+1}(A^{\eta,1}) \to 0$ uniformly in $\eta >0$, as $\lambda \to \infty$ and $\mathcal{L}^{d+1}(A^{\eta,1}) \to 0$, as $\eta \to 0$.
    To this end, recall that we can divide $f_\eta$ into an $r$-equi-integrable part $f_\eta^{\equi}$ and a concentrating part $f_\eta^{\conc}$. Then, due to sublinearity,
    \[
        \{ \M f_\eta \geq \lambda \} \subset \{ \M f_\eta^{\equi} \geq \lambda/2 \} \cup  \{ \M f_\eta^{\conc} \geq \lambda/2 \}. 
    \]  
    On the one hand, as established in \eqref{eq:ronweasley}, we have
         \begin{equation*} 
            \lambda^r \mathcal{L}^{d+1}( \{ \M f_\eta^{\equi} \geq \lambda\}) 
            \leq 
            C_d \int_{\{\vert f_\eta^{\equi} \vert \geq \lambda/2\}}\vert f_\eta \vert^r \dd t \dd x.
        \end{equation*}
    The right-hand side of this equation converges uniformly to zero, as $L \to \infty$ since $f_\eta^{\equi}$ is $r$-equi-integrable. On the other hand, $f_\eta^{\conc} \to 0$ in $L_{\tilde{r}}(\R \times \R^d)$ for some $\tilde{r}<r$, and therefore 
        \begin{equation*}
            \mathcal{L}^{d+1}( \{ \M f_\eta^{\conc} \geq \lambda\}) \longrightarrow 0, 
            \quad \text{as} \quad
            \eta \to 0.
        \end{equation*}
    Choosing $A^{\eta,1} = \{ \M f_\eta^{\equi} \geq \lambda/2 \} \cap \{ \M f_\eta >\lambda \}$ and $A^{\eta,2} = \{ \M f_\eta^{\conc} \geq \lambda/2 \} \cap \{ \M f_\eta >\lambda \}$ proves the claim.
    Now applying this abstract result for
    \begin{enumerate} [label=(\roman*)]
        \item $f=v$, $f= \nabla v$, $f=\nabla^2 v$, $r=p$ and $\lambda=2L$;
        \item $f=\partial_t \nabla v$, $r=2$ and $\lambda=2L^{p/2}$;
        \item $f= \barg$, $f=\barvarg$, $r=q$ and $\lambda=2L^\alpha$
    \end{enumerate}
    and splitting the different sublevel sets according to this claim yields the statement of the lemma.   
\end{proof}
We are now ready to give a proof of the truncation lemma.

\begin{proof}[Proof of Lemma \ref{lemma:trunc:v2}]
By construction we directly have $\diverg w_\eta^L=0$, as $\diverg \circ \curl^{\ast}=0$. Moreover, as $v_\eta^L \in L_{\infty}((0,\infty);V^1_\infty) \cap W^1_{\infty}((0,\infty);L_{\infty})$ with both norms bounded by $CL$. This is \ref{hr:1} in Proposition \ref{prop:hrtruncation} and \ref{trunc1:1} and \ref{trunc2:1} directly follow. The same argument works for \ref{trunc5:1}, as $\eta^{1/2} \partial_t v_\eta^L$ is uniformly bounded in $L_2((0,\infty);V^1_2)$. It remains to verify \ref{trunc3:1} and \ref{trunc4:1}.
We only show \ref{trunc3:1} and \ref{trunc4:1}, \ref{trunc45:1} follows in the exact same fashion as \ref{trunc4:1}.
\smallskip

We can subdivide $\nabla^2 v_\eta$ into a concentrating and a $p$-equi-integrable part. Moreover, $\nabla^2 v_\eta -\nabla^2 v_\eta^L$ does not vanish on $\bad_L^\eta$. For $\bad_L^{\eta,1}$ and $\bad_L^{\eta,2}$ as in the previous lemma we define
\[
    \tilde{G}_\eta^L 
    \coloneq 
    1_{\bad_L^{\eta,1}} \cdot \left((\nabla^2 v_\eta)^{\equi} -\nabla^2 v_\eta^L\right)
    \quad \text{and} \quad
    \tilde{H}_\eta^L 
    \coloneq
    1_{\bad_L^{\eta,2}} \cdot \left(\nabla^2 v_\eta -\nabla^2 v_\eta^L\right) + 1_{\bad_L^{\eta,1}} (\nabla^2 v_\eta)^{\conc}.
\]
Then $\nabla^2 v_\eta- \nabla^2 v_\eta^L= \tilde{G}_\eta^L + \tilde{H}_\eta^L$. A straightforward calculation (cf. proof of \ref{coro:finale}) gives 
\begin{align*}
    \lim_{L \to \infty} \sup_{\eta>0} \Vert \tilde{G}^L_\eta \Vert_{L_p((0,\infty);L_p)} =0
    \quad \text{and} \quad
    \lim_{\eta \to 0} \sup_{L>0} \Vert \tilde{H}^L_\eta \Vert_{L_{s'}((0,T);L_{s'})} =0 \quad \text{for some } s'<\infty.
\end{align*}
Now applying \cite[Lemma 2.14]{FM} for any given time $t$ gives sequences $\hat{G}_\eta^L$ and $\hat{H}_\eta^L$ such that 
\begin{align*}
    \nabla^2 \left( \hat{G}_\eta^L + \hat{H}_\eta^L \right) = \tilde{G}_\eta^L + \tilde{H}_\eta^L
\end{align*}
with 
\begin{equation*}
    \lim_{L \to \infty} \sup_{\eta>0} \Vert \hat{G}^L_\eta \Vert_{L_p((0,\infty);W^2_p)} =0
    \quad \text{and} \quad
    \lim_{\eta \to 0} \sup_{L>0} \Vert \hat{H}^L_\eta \Vert_{L_{s'}((0,T);W^2_{s'})} =0 \quad \text{for some } s'<\infty.
\end{equation*}
Finally, defining $G_\eta^L \coloneq \curl^{\ast} \hat{G}^L_\eta$ and $H_\eta^L \coloneq \curl^{\ast} \hat{H}^L_\eta$, yields \eqref{GLeta_v2} and \eqref{HLeta_v2}, i.e. the first part of property \ref{trunc3:1}.

The second part, i.e. the decomposition $w_\eta^L= w_{\eta,1}^L + w_{\eta,2}^L$, follows similarly. First of all, $w_\eta^L = w_\eta + (w_\eta^L-w_\eta)$ and $w_\eta$ is uniformly bounded in $L^p((0,T);V^1_p)$. Observe that by construction one may show
\begin{align*}
    \lim_{L \to \infty} \sup_{\eta>0} \Vert \tilde{G}^L_\eta \Vert_{L_p((0,\infty);L_p)} =0
    \quad \text{and} \quad
    \lim_{\eta \to 0}\Vert \tilde{H}^L_\eta \Vert_{L_{p}((0,T);L_{p})} =0 \quad \text{for fixed } L>0.
\end{align*}
Hence, arguing as before with \cite[Lemma 2.14]{FM} we can decompose $w_\eta^L-w_\eta$ accordingly into $(G_\eta^L)'$ and $(H_\eta^L)'$ such that 
\begin{equation*}
    \lim_{L \to \infty} \sup_{\eta>0} \Vert (G_\eta^L)' \Vert_{L_p((0,\infty);V^1_p)} =0
    \quad \text{and} \quad
    \lim_{\eta \to 0} \sup_{L>0} \Vert (H_\eta^L)' \Vert_{L_{p}((0,T);V^1_{p})} =0 \quad \text{for some } s'<\infty.
\end{equation*}
Now defining $w_{\eta,1}^L \coloneq w_\eta + (G_\eta^L)'$ and $w_{\eta,2}^L \coloneq (H_\eta^L)'$ yields the result.
\smallskip

For the time derivative, we need to additionally use the splitting already inferred for $\partial_t v_\eta$. In particular, write
\[
\partial_t v_\eta - \partial_t v_\eta^L = \bar{g}_\eta +\bar{h}_\eta - \partial_t v_\eta^L,
\]
split $\hat{g}_\eta$ into an equi-integrable and a concentrating part and write:
\begin{align*}
    \tilde{g}_\eta^L &\coloneq 1_{\bad^{\eta,1}_L} \cdot \left(\bar{g}_\eta^{\equi} -\partial_t v_\eta^L \right) \\
     \tilde{h}_\eta^L &\coloneq 1_{\bad^{\eta,2}_L} \cdot (\partial_t v_\eta - \partial_t v_\eta^L ) + 1_{\bad^{\eta,1}_L} \cdot (\barh_\eta + g_\eta^{\conc}).
\end{align*}
Again, a calculation gives
\begin{align*}
    \lim_{L \to \infty} \sup_{\eta>0} \Vert \tilde{g}^L_\eta \Vert_{L_q((0,\infty);L_q)} =0
    \quad \text{and} \quad
    \lim_{\eta \to 0} \sup_{L>0} \Vert \tilde{h}^L_\eta \Vert_{L_{s'}((0,T);L_{s'})} =0.
\end{align*}
Then defining $g_\eta^L \coloneq \curl^{\ast} \tilde{g}_\eta^L$ and $h_\eta^L \coloneq \curl^{\ast} \tilde{h}_\eta^L$ yields \ref{trunc4:1}.
\end{proof}

\bibliographystyle{abbrv}
\bibliography{literature}

\end{document}